\theoremstyle{plain}
\newtheorem{theorem}                 {\bf Theorem}      [chapter]
\newtheorem{proposition}  [theorem]  {\bf Proposition}
\newtheorem{corollary}    [theorem]  {\bf Corollary}
\newtheorem{lemma}        [theorem]  {\bf Lemma}
\theoremstyle{definition}
\newtheorem{example}      [theorem]  {\bf Example}
\newtheorem{definition}   [theorem]  {\bf Definition}
\newtheorem{remark}       [theorem]  {\bf Remark}
\numberwithin{equation}{chapter}
\authors{Adam Lindstr\"om}
\begin{document}

\def\restr#1#2{{
  \left.\kern-\nulldelimiterspace 
  #1 
  \vphantom{\big|} 
  \right|_{#2} 
  }}

\def\nab#1#2#3{\nabla^{\hbox{$\scriptstyle{#1}$}}_{\hbox{$\scriptstyle{#2}$}}{\hbox{$#3$}}}

\def\onab#1#2#3{\overline{\nabla}^{\hbox{$\scriptstyle{#1}$}}_{\hbox{$\scriptstyle{#2}$}}{\hbox{$#3$}}}

\def\pnab#1#2{\nab{\perp}{#1}{#2}}

\def\tnab#1#2{\widetilde{\nabla}_{\hbox{$\scriptstyle{#1}$}}{\hbox{$#2$}}}

\def\shape#1#2{S_{\hbox{$\scriptstyle{#1}$}}{\hbox{$#2$}}}

\def\ip#1#2{\left< #1, #2 \right>}

\def\wt#1{\widetilde{#1}}

\def\ind#1{\mathcal{#1}}

\def\snab#1#2#3{\hbox{$\nabla$\kern-.1em\raise 1.2 ex\hbox{$\scriptstyle{#1}$}\kern-.5em\lower 0.8 ex\hbox{$#2$}\kern-.0em{$#3$}}}

\def\nnab#1{\nabla^{\hbox{$\scriptstyle{#1}$}}}
\def\nsnab#1{\hbox{$\nabla$\kern-.1em\raise 1.0 ex\hbox{$\scriptstyle{#1}$}}}

\newcommand{\al}[1]{{\textcolor{blue}{#1}}}
\newcommand{\sed}{\qquad\qquad}

\def\tnabla{\widetilde{\nabla}}
\def\rn{\mathbb R}
\def\cn{\mathbb C}
\def\hn{\mathbb H}
\def\cp{\cn \text{\rm P}}
\def\s{\mathbb{S}}
\def\ci{\mathcal I}
\def\tr{\textrm{\upshape{trace}}}
\def\pdt{\frac{\partial}{\partial t}}
\def\smooth#1{C^\infty({#1})}
\def\dtatzero{\restr{\frac{d}{dt}}{t=0}}
\def\Ad#1{\mathrm{Ad}_{#1}}
\def\ad{\mathrm{ad}}
\def\re{\mathrm{Re}}
\def\basis{\mathcal{B}}
\def\grad{\textrm{grad }}
\def\div{\textrm{div}}

\def\hor{\mathcal{H}}
\def\vert{\mathcal{V}}
\def \U#1{\mathrm{\bf U}(#1)}
\def \O#1{\mathrm{\bf O}(#1)}
\def \SU#1{\mathrm{\bf SU}(#1)}
\def \SO#1{\mathrm{\bf SO}(#1)}
\def \Sp#1{\mathrm{\bf Sp}(#1)}
\def \GL#1{\mathrm{\bf GL}(#1)}
\def \SL#1{\mathrm{\bf SL}(#1)}
\def \la#1{\mathfrak{#1}}
\def \g{\mathfrak{g}}
\def \k{\mathfrak{k}}
\def \h{\mathfrak{h}}
\def \m{\mathfrak{m}}
\def \u#1{\mathfrak{u}(#1)}
\def \o#1{\mathfrak{o}(#1)}
\def \su#1{\mathfrak{su}(#1)}
\def \sp#1{\mathfrak{sp}(#1)}
\def \so#1{\mathfrak{so}(#1)}
\def \gl#1#2{\mathfrak{gl}_{#1}(#2)}
\def \sl#1#2{\mathfrak{sl}_{#1}(#2)}
\def \sym#1{\mathrm{\bf Sym}(#1)}


\large 

\frontcover 

\thispagestyle{empty}
\centerline {\bf\Large Abstract}
\vskip1cm

Given a symmetric triple $(G,K,\sigma)$ of compact type, with $G^{\sigma} = K$, the well known \emph{Cartan embedding} $\hat{\Phi}: G/K \to G$ homothetically embeds the symmetric space $M = G/K$ as a totally geodesic submanifold of $G$. In this thesis we show that $\hat{\Phi}$ and the related $K$-invariant \emph{Cartan map} $\Phi = \hat{\Phi}\circ \pi$ are harmonic. This yields simple formulae relating the tension field $\tau$ and the recently introduced conformality operator $\kappa$ on the symmetric space $M$ to those on the image $\Phi(G)$ of the Cartan map.
\par
We use these formulae to construct common eigenfunctions of the tension field and conformality operator on all the classical irreducible compact symmetric spaces. On the complex and quaternionic Grassmannians we, in addition, construct \emph{eigenfamilies}, which are families of compatible eigenfunctions. In the case of the quaternionic Grassmannians, some of these eigenfamilies are entirely new.
\par
It was recently discovered by S. Gudmundsson, A. Sakovich and M. Sobak that such eigenfunctions can be employed to construct proper $p$-harmonic functions, and that eigenfamilies can be applied to construct complex-valued harmonic morphisms. The results of this thesis can thus be used to construct harmonic morphisms and $p$-harmonic functions on the classical compact irreducible symmetric spaces, and in the case of the quaternionic Grassmannians, new examples of such maps.

\vskip8cm
{\it Throughout this work it has been my firm intention to give reference to the stated results and credit to the work of others. All theorems, propositions, lemmas and examples left unmarked are assumed to be too well known for a reference to be given, or are fruits of my own efforts.}


\newpage 
\thispagestyle{empty}
\phantom{m}

\newpage 
\thispagestyle{empty}
\centerline {\bf\Large Acknowledgments}
\vskip1cm
First and foremost I would like to thank my supervisor Prof. Sigmundur Gudmundsson for helping me find such an interesting topic for a thesis. I would also like to thank him for his patience and attentiveness in helping me write a thesis I can be truly proud of. Also of great help were a few good comments from Thomas Munn for which I am very thankful. Many thanks also go to Johanna Marie Gegenfurtner for her helpful comments and corrections. I also thank Prof. Wolfgang Ziller for giving me access to his very well-written lecture notes, as well as some nice correspondence during the writing of my thesis. Finally I would like to express my deepest gratitude to my friends and family for being so supportive throughout my time here in Lund.
\vskip1pc
\hskip9cm Adam Lindstr\"om
\phantom{m}

\newpage 
\thispagestyle{empty}

\newpage 
\tableofcontents
\thispagestyle{empty}
\phantom{m}

\newpage 
\setcounter{page}{0} 
\thispagestyle{empty}

\chapter{Introduction}\label{ch: intro}

In this thesis we employ the recently discovered tecniques of eigenfamilies and eigenfunctions to find harmonic morphisms and $p$-harmonic maps, respectively, on the classical irreducible symmetric spaces of compact type. In the case of the quaternionic Grassmannians
$$G_m(\hn^{m+n}) = \Sp{m+n}/\Sp{m} \times \Sp{n}$$
we find new eigenfamilies, extending those found by Gudmundsson and Ghandour in \cite{Gud-Gha 2}. These can then be used, as explained in Chapter \ref{ch: eigen}, to construct new $p$-harmonic functions and complex-valued harmonic morphisms on these spaces.

We also describe a new method of constructing eigenfunctions and eigenfamilies on a Riemannian product $N = M_1 \times M_2$, given eigenfunctions and eigenfamilies on the factors $M_1$ and $M_2$. The eigenfunctions and eigenfamilies constructed in this way further do not, in general, descend to eigenfunctions and eigenfamilies on the factors.
\medskip

For the remainder of this introduction we shall give an overwiew and background of the equations studied in this thesis, the domains considered and the methods used for finding solutions.

Recall that a complex-valued function $\phi \in C^2(U)$, defined on some open subset $U$ of $\rn^n$, is said to be harmonic if it satisfies the Laplace equation
\begin{align}\label{intr eq: Laplace rn}
    \Delta(\phi) = \sum_{k=1}^n \frac{\partial^2\phi}{\partial x_k^2} = 0.
\end{align}
Further recall that we may also express the action of the Laplace operator on $\phi$ as the divergence of the gradient, so that Equation \eqref{intr eq: Laplace rn} reads
\begin{align*}
    \Delta(\phi) = \div(\nabla \phi) = 0.
\end{align*}
This second formulation extends readily to the more general setting of the domain being a Riemannian manifold $(M,g)$. For $C^2$-functions $\phi: M \to \cn$ we thus define the \emph{tension field} $\tau$ to be the operator 
\begin{align*}
    \tau(\phi) = \div(\nabla \phi).
\end{align*}
We say that $\phi$ is \emph{harmonic} if it satisfies 
\begin{align*}
    \tau(\phi) = 0.
\end{align*}
\par
Having extended the notion of harmonicity to functions defined on Riemannian manifolds, it is natural to ask which maps between Riemannian manifolds preserve this property, which brings us to the topic of harmonic morphisms.
\par

Let $(M,g)$ and $(N,h)$ be Riemannian manifolds. A mapping $\varphi: M \to N$ of $M$ into $N$ is called a \emph{harmonic morphism} if for each locally defined harmonic function $\phi: U \subset N \to \cn$, the composition $\phi \circ \varphi: \varphi^{-1}(U) \subset M \to \cn$ is harmonic as well. 
\par
The study of such maps began with the 1848 paper \cite{Jacobi} by Jacobi. Motivated by the desire to find new harmonic function on $\rn^3$ by lifting harmonic functions defined on the complex plane, he gave a classification of the harmonic morphisms in the special case of $M = \rn^3$ and $N = \cn \cong \rn^2$. He was able to show that such maps must satisfy both the Laplace equation \eqref{intr eq: Laplace rn} and the equation
\begin{align*}
    \ip{\nabla \varphi}{\nabla \varphi} = \sum_{k=1}^3 \left(\frac{\partial\varphi}{\partial x_k}\right)^2 = 0.
\end{align*}
\par

In this thesis we are mainly interested in complex-valued harmonic morphisms. By adapting a result due, independently, to Fuglede \cite{Fuglede} and Ishihara \cite{Ishihara}, we find a remarkably similar characterisation. The complex valued function $\varphi: (M,g) \to \cn$ is a harmonic morphism if and only if it is harmonic and satisfies
\begin{align}\label{intr eq: conformal}
    g(\nabla \varphi,\nabla\varphi) = 0.
\end{align}
By introducing the \emph{conformality operator} $\kappa(\phi,\psi) = g(\nabla \phi,\nabla \psi)$ we can rewrite equation \eqref{intr eq: conformal} as
\begin{align*}
    \kappa(\varphi,\varphi) = 0.
\end{align*}
This is a non-linear and overdetermined system of partial differential equations, hence the problem of existence is highly non-trivial.
\medskip

Acting with the tension field $\tau$ of a Riemannian manifold $(M,g)$ on a smooth function $\phi: M \to \cn$ yields another smooth function $\tau(\phi): M \to \cn$. It is a natural question under which condition this resulting function is harmonic. In this case we say that $\phi$ is \emph{bi-harmonic}. In general we call the function $\phi$ $p$-harmonic if it satisfies the equation
\begin{align*}
    \tau^p(\phi) = 0.
\end{align*}
Here $\tau^p$ is the $p$-th iterate of the tension field, defined recursively by
$$
    \tau^0(\phi) = \phi,
$$
$$
\tau^p(\phi) = \tau(\tau^{p-1}(\phi)).
$$
Of course a $p$-harmonic function is also $q$-harmonic for any $q \geq p$, so what we are in particular trying to find are $p$-harmonic functions which are not $(p-1)$-harmonic. These are known as \emph{proper} $p$-harmonic functions.
\medskip

In their 2008 paper \cite{Gud-Sak}, Gudmundsson and Sakovich developed a new method for generating complex valued harmonic morphisms. Even more recently in the 2020 paper \cite{Gud-Sob}, Gudmundsson and Sobak describe a new method for generating $p$-harmonic functions. Both of these methods rely on \emph{eigenfunctions} and \emph{eigenfamilies}.
\par
We say that complex-valued function $\phi: (M,g) \to \cn$ is an eigenfunction if there exist complex numbers $\lambda$ and $\mu$ such that
$$
\tau(\phi) = \lambda \cdot \phi \quad \text{and} \quad \kappa(\phi,\phi) = \mu \cdot \phi^2.
$$
Let $\mathcal{E}_{\lambda,\mu}$ is a collection of eigenfunctions, all with the same eigenvalues $\lambda$ and $\mu$. We call $\mathcal{E}_{\lambda,\mu}$ an eigenfamily if we also have
$$
\kappa(\phi,\psi) =\mu \cdot \phi\psi
$$
for all $\phi,\psi \in \mathcal{E}_{\lambda,\mu}$.
\par

These methods are described in detail in Chapter \ref{ch: eigen}. The problem of finding harmonic morphisms and $p$-harmonic maps is thus reduced to this joint eigenvalue problem. 
\medskip

The choice of Riemannian manifold $(M,g)$ to serve as the domain has a great impact on the difficulty of, and methods needed to solve, this eigenvalue problem. In this thesis we focus our attention on the case when $(M,g)$ is a Riemannian symmetric space. These are Riemannian manifolds where for each point $p\in M$ there is an isometry $s_p$ of $M$ fixing $p$ and reversing geodesics through $p$. These spaces are always Riemannian homogeneous, so that we may write them as a quotient
$$
M = G/K
$$
of the identity component $G$ of the isometry group of $M$ and a compact subgroup $K$ being the isotropy group of a fixed point $p_0 \in M$. One can then show that there is a one-to-one correspondence between eigenfunctions $\phi$ on $M = G/K$ and $K$-invariant eigenfunctions on the Lie group $G$. This correspondence also holds for eigenfamilies. Our goal in this thesis is then to find such eigenfunctions and eigenfamilies in the special case that $M$ is one of the classical compact irreducible symmetric spaces, which are described in detail in Chapter \ref{ch: symmetric spaces} and Chapter \ref{ch: results}.
\par

It can be shown that for each symmetric space $M = G/K$, there exists an involutive automorphism $\sigma$ of $G$ such that $K$ lies between the compact subgroup $G^{\sigma}$ fixed by $\sigma$ and its identity component $G_{0}^\sigma$. We refer to $(G,K,\sigma)$ as a symmetric triple whenever this holds. More remarkable is that this correspondence holds in both directions, so that for each symmetric triple $(G,K,\sigma)$, there exists a suitable metric $g$ on the homogeneous space $M = G/K$ making $(M,g)$ a Riemannian symmetric space.
\par

In the particular case that the symmetric space $M = G/K$ is of compact type, with $K = G^{\sigma}$ we can use $\sigma$ to define a totally geodesic embedding of $M$ into $G$ known as the \emph{Cartan embedding}. We first define the \emph{Cartan map} $\Phi: G \to G$ by
\begin{align*}
    \Phi(g) = g\sigma(g^{-1}).
\end{align*}
This map is clearly $K$-invariant, and the Cartan embedding is the induced map $\hat{\Phi}: M \to G$ given as
\begin{align}
    \hat{\Phi}(gK) = \Phi(g),
\end{align}
\par

We show in this thesis that both the Cartan map and Cartan embedding are harmonic, in a sense defined in Chapter \ref{ch: harmonic morphisms}. We then use this to find workable formulae to calculate the action of the tension field and conformality operator of $G$ on $K$-invariant functions of the form $\psi = \phi \circ \Phi: G \to \cn$. Here $\phi: G \to \cn$ is an arbitrary smooth complex-valued function on $G$. 
\par
In Chapter \ref{ch: results} we use this to find $K$-invariant eigenfunctions of this form for all of the classical compact irreducible symmetric spaces $G/K$. In the case of the complex and quaternionic Grassmannians we also find eigenfamilies of such functions. By $K$-invariance these induce eigenfunctions and eigenfamilies on the corresponding symmetric spaces.

\chapter{Lie Groups and Lie Algebras}\label{ch: lie groups}

In order to describe the properties of the Cartan embedding, and how it can be used to find harmonic morphisms and $p$-harmonic functions, it will be neccessary to give a solid introduction to both Lie groups and symmetric spaces. In this first chapter we give an introduction to what Lie groups and Lie algebras are, and how they are connected. We introduce important tools for the study of Lie groups, such as the exponential map and the adjoint representation. A standard reference for this theory is the book \emph{Differential Geometry, Lie groups, and Symmetric Spaces} \cite{Helgason} by S. Helgason. An alternative more recent introduction to these topics can be found in the lecture notes \cite{Ziller's notes} by W. Ziller.
\par
Particular attention will be given to the class of compact Lie groups, as this is the case treated mostly in the rest of the thesis. For these we shall show that they can be equipped with a natural bi-invariant Riemannian metric. In chapter \ref{ch: classical groups} we will give a more detailed list of examples consisting of some of the classical compact matrix Lie groups.
\par
A Lie group is a mathematical object which is at the same time both a group and a smooth manifold, with these two different structures being compatible. More precisely we define them as follows.

\begin{definition}
A \emph{Lie group} is a smooth manifold $G$ equipped with a group structure such that the multiplication map $\mu:G \times G \to G$
\begin{align*}
    \mu:(p,q) \mapsto p\cdot q
\end{align*}
and the inversion map $\iota: G \to G$
\begin{align*}
    \iota: p \mapsto p^{-1}
\end{align*}
are both smooth.
\end{definition}

\begin{definition}
Let $G$ be a Lie group and $p$ an element of $G$. Then we refer to the maps $L_p, R_p: G \to G$ given by
\begin{align*}
    L_p: q \mapsto p\cdot q,
\end{align*}
\begin{align*}
    R_p: q \mapsto q \cdot p, 
\end{align*}
as left and right translation by $p$, respectively.
\end{definition}
\begin{proposition}
Let $G$ be a Lie group and $p$ an element of $G$. Then the left and right translations by $p$ are diffeomorphisms of $G$.
\end{proposition}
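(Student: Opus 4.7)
The plan is to show that $L_p$ is smooth with a smooth two-sided inverse; the argument for $R_p$ will then be entirely analogous. To get smoothness of $L_p$, I would write it as a composition of manifestly smooth maps: the inclusion $q \mapsto (p,q)$ of $G$ into $G\times G$ followed by the multiplication $\mu$. The first map is smooth because it is the product of a constant map $q\mapsto p$ with the identity $q\mapsto q$, and smoothness is preserved under such products. Composition with the smooth multiplication $\mu$ then yields a smooth map $L_p : G \to G$.

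Next, I would produce a two-sided inverse. The natural candidate is $L_{p^{-1}}$, and a direct computation using associativity of the group operation gives
\begin{align*}
(L_p \circ L_{p^{-1}})(q) = p\cdot(p^{-1}\cdot q) = (p\cdot p^{-1})\cdot q = e\cdot q = q,
\end{align*}
and similarly $L_{p^{-1}}\circ L_p = \mathrm{id}_G$. By the preceding paragraph, $L_{p^{-1}}$ is also smooth, so $L_p$ is a bijection with smooth inverse, i.e.\ a diffeomorphism.

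The argument for $R_p$ is identical, with $R_{p^{-1}}$ as the inverse. There is no real obstacle here; the statement is essentially a formal consequence of the definition of a Lie group together with the fact that compositions and products of smooth maps are smooth. The only point worth being careful about is ensuring that the constant map $q\mapsto p$ is indeed smooth (trivially true) and that smoothness of $\mu$ as a map out of the product manifold $G\times G$ passes correctly to the partial map obtained by fixing one of its arguments.
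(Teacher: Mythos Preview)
Your proof is correct and follows essentially the same approach as the paper, just with more detail: the paper's proof simply asserts that the maps are bijective since $G$ is a group and smooth since multiplication is smooth, whereas you spell out the composition $L_p = \mu \circ (q \mapsto (p,q))$ and explicitly verify that $L_{p^{-1}}$ is a smooth two-sided inverse.
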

\begin{proof}
The maps are bijective since $G$ is a group, and they are smooth since the multiplication in $G$ is smooth. 
\end{proof}
\begin{definition}
Let $\mathbb{F} \in \{\rn,\cn\}$ be either the field of real numbers or that of complex numbers. Then a \emph{Lie algebra} over $\mathbb{F}$ is a vector space $\la{g}$ over $\mathbb{F}$ equipped with a bilinear form $[\cdot,\cdot]: \la{g}\times \la{g} \to \la{g}$ called the Lie bracket on $\la{g}$ satisfying,
\begin{enumerate}[label = (\roman*)]
    \item $[X,Y] = -[Y,X]$,
    \item $[X,[Y,Z]] + [Y,[Z,X]] + [Z,[X,Y]] = 0$,
\end{enumerate}
for all $X,Y,Z \in \la{g}$. Condition (i) is simply saying that the Lie bracket is antisymmetric, while condtion (ii) is refered to as the Jacobi identity.
\end{definition}
\begin{remark}
If $\la{g}$ is a Lie algebra and $\la{h}$ and $\la{k}$ are vector subspace of $\la{g}$. Then by the notation $[\la{h}, \la{k}]$ we mean the subspace of $\la{g}$ spanned by all elements of the form $[X,Y]$ with $X \in \ \la{h}$ and $Y \in \la{k}$.
\end{remark}
Most of the Lie algebras encountered in this thesis will be real. Before we continue we give a few examples of Lie algebras which should be familiar.
\begin{example}
Let V = $\rn^3$ and $[X,Y] = X \times Y$ for $X,Y \in \rn^3$ be the ordinary cross product. Then $(\rn^3,\times)$ is a real Lie algebra. The skew symmetry is immediate from the definition of the cross product, while showing the Jacobi identity is a standard exercise in linear algebra.
\end{example}
\begin{example}
Let $M$ be a smooth manifold and recall that the commutator $[X,Y]$ of a pair of smooth vector fields $X, Y \in \smooth{TM}$ on $M$ is given by
\begin{align*}
    [X,Y](f) = X(Y(f))-Y(X(f)),
\end{align*}
for smooth functions $f \in \smooth{TM}$. Then $(\smooth{TM},[\cdot,\cdot])$ is a real Lie algebra of infinite dimension. For a proof of this see for example \cite{Sigmundur's notes}.
\end{example}
\begin{definition}
Let $G$ and $H$ be Lie groups such that $H$ is a subgroup of $G$. If the inclusion map of $H$ in $G$ is an immersion, we say that $H$ is a \emph{Lie subgroup} of $G$.
\end{definition}

\begin{definition}
Let $\la{g}$ be a Lie algebra and $\la{h}$ be a subspace of $\la{g}$. Then we say that $\la{h}$ is a \emph{Lie subalgebra} of $\la{g}$ if 
\begin{align*}
    [\la{h},\la{h}] \subset \la{h}.
\end{align*}
If in addition 
\begin{align*}
    [\la{h},\la{g}] \subset \la{h}
\end{align*}
holds, we say that $\la{h}$ is an \emph{ideal} in $\la{g}$. 
\end{definition}
The following important result is usually refered to as the closed subgroup theorem. It will be very useful later on when we want to check whether certain subgroups are Lie subgroups.
\begin{theorem}[\cite{Helgason}]\label{closed subgroup}
Let $G$ be a Lie group and $H$ a subgroup of $G$. Then if $H$ is a closed subset of $G$ it is a Lie subgroup.
\end{theorem}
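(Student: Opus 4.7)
The plan is to produce a Lie subalgebra $\h \subseteq \g$ whose image under the exponential map locally coincides with $H$, and then transport this local model around $H$ by left translation to obtain a smooth atlas. The natural candidate is
\[
    \h = \{X \in \g : \exp(tX) \in H \text{ for all } t \in \rn\}.
\]
First I would verify that $\h$ is a real vector subspace of $\g$. Closure under scalar multiplication is immediate from the definition, so the only content is closure under addition, which follows from the Trotter product formula $\exp(X+Y) = \lim_{n\to\infty} (\exp(X/n)\exp(Y/n))^n$ combined with the hypothesis that $H$ is closed in $G$. An analogous argument with the commutator formula $\exp([X,Y]) = \lim_{n\to\infty}(\exp(X/n)\exp(Y/n)\exp(-X/n)\exp(-Y/n))^{n^2}$ shows that $\h$ is in fact a Lie subalgebra.

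Next, and this is the heart of the argument, I would establish the local description: there exists a neighborhood $U$ of $0$ in $\g$ on which $\exp$ is a diffeomorphism, and such that
\[
    \exp(U \cap \h) = \exp(U) \cap H.
\]
The inclusion $\subseteq$ is trivial. For $\supseteq$, pick any vector-space complement $\m$ of $\h$ in $\g$ so that the map $(X,Y) \mapsto \exp(X)\exp(Y)$ is a local diffeomorphism $\h \times \m \to G$ near $(0,0)$. If the claim failed, one would obtain a sequence $p_n = \exp(X_n)\exp(Y_n) \in H$ with $X_n \in \h$, $0 \neq Y_n \in \m$, and $Y_n \to 0$; since $\exp(X_n) \in H$ automatically, also $\exp(Y_n) \in H$. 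Choose integers $k_n$ so that $k_n Y_n$ remains bounded and bounded away from $0$; after passing to a subsequence, $k_n Y_n \to Y \in \m \setminus \{0\}$. A density-plus-closedness argument (approximating arbitrary $t \in \rn$ by $m_n/k_n$ and using $\exp(m_n Y_n) = \exp(Y_n)^{m_n} \in H$) forces $\exp(tY) \in H$ for all $t \in \rn$, hence $Y \in \h \cap \m = \{0\}$, a contradiction.

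Armed with this local picture, the Lie subgroup structure on $H$ comes for free. Around any $p \in H$, the map $X \mapsto p \cdot \exp(X)$ restricted to a small neighborhood of $0$ in $\h$ is a homeomorphism onto a neighborhood of $p$ in $H$ (with the subspace topology) and yields a smooth chart; the change-of-chart maps are smooth because they are compositions of the group operations of $G$. The resulting manifold structure turns $H$ into an embedded submanifold of $G$ of dimension $\dim \h$, the inclusion is an immersion, and the group operations inherited from $G$ are smooth, so $H$ is a Lie subgroup in the sense of the definition above.

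The main obstacle is the local equality $\exp(U \cap \h) = \exp(U) \cap H$: closure under addition and bracket, as well as the construction of the atlas, are routine once this is available, but this step is the only place where the closedness of $H$ genuinely enters, and it requires the delicate rescaling-and-limit argument sketched above to rule out "transverse" elements of $H$ accumulating at the identity.
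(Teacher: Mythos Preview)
Your argument is correct and is precisely the classical proof of Cartan's closed subgroup theorem. The paper, however, does not actually prove this result: its ``proof'' consists of a single sentence referring the reader to Theorem~2.3 in Helgason~\cite{Helgason}. Helgason's proof is essentially the one you have outlined---define $\h$ as the set of $X\in\g$ with $\exp(tX)\in H$ for all $t$, show it is a subalgebra using limit formulae and closedness of $H$, establish the key local equality $\exp(U\cap\h)=\exp(U)\cap H$ via the rescaling-and-compactness argument you sketch, and then propagate by left translation---so you have in effect supplied the proof that the paper chose to omit.
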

\begin{proof}
 For a proof of this result see the proof of Theorem 2.3. in \cite{Helgason}.
\end{proof}
The Lie algebra associated to a given Lie group will be a finite dimensional subalgebra of the Lie algebra of smooth vector fields. To define it we first will need to introduce the notion of a left-invariant vector field.
\begin{definition}
Let $G$ be a Lie group. We say that a vector field $X \in \smooth{TG}$ is \emph{left-invariant} if for all points $p,q \in G$ 
\begin{align*}
    (dL_p)_q(X_q) = X_{pq},
\end{align*}
or at the level of vector fields
\begin{align*}
    dL_p(X) = X.
\end{align*}
One easily sees that the left-invariant vector fields form a real vector space, which we denote by $\la{g}$.
\end{definition}
The following well known result often allows for a simple description of the space of left invariant vector fields on a Lie group.
\begin{proposition}\label{left-invariant tangent space correspondence}
Let $G$ be a Lie group. Then the space $\la{g}$ of left-invariant vector fields is isomorphic to the tangent space $T_e G$ at the identity of $G$. In particular, we have $\dim(\la{g}) = \dim(G)$.
\end{proposition}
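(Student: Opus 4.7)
The plan is to construct an explicit linear isomorphism between $\mathfrak{g}$ and $T_e G$, namely the evaluation map $\mathrm{ev}_e : \mathfrak{g} \to T_e G$ defined by $\mathrm{ev}_e(X) = X_e$. Linearity is immediate from the definitions, so the content of the proposition reduces to establishing injectivity and surjectivity of this map.

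For injectivity, I would use left-invariance to show that a left-invariant vector field is completely determined by its value at the identity. Concretely, if $X \in \mathfrak{g}$ satisfies $X_e = 0$, then for any $p \in G$ the left-invariance relation $(dL_p)_e(X_e) = X_{p\cdot e} = X_p$ forces $X_p = 0$, so $X$ vanishes identically.

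For surjectivity, given $v \in T_e G$ I would define a candidate vector field $X^v$ by the formula $X^v_p = (dL_p)_e(v)$. Two things then need to be verified. First, left-invariance: this follows from the chain rule together with the identity $L_q \circ L_p = L_{qp}$, which gives
\begin{align*}
(dL_q)_p(X^v_p) = (dL_q)_p \circ (dL_p)_e(v) = d(L_q \circ L_p)_e(v) = (dL_{qp})_e(v) = X^v_{qp}.
\end{align*}
Second, and this is the one technical point of the proof, I need to check that $X^v$ is actually a smooth vector field on $G$, since a priori the construction only produces a map $G \to TG$ covering the identity. The main obstacle is therefore this smoothness claim; left-invariance and injectivity are essentially formal.

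To handle smoothness, I would argue as follows. Consider the multiplication map $\mu : G \times G \to G$, which is smooth by definition of a Lie group. For a fixed $v \in T_e G$, let $\tilde v$ be the vector field on $G \times G$ whose value at $(p,e)$ is $(0_p, v) \in T_p G \oplus T_e G \cong T_{(p,e)}(G\times G)$; its values along the slice $G \times \{e\}$ depend smoothly on $p$. Applying the smooth map $d\mu$ and restricting to this slice gives $d\mu_{(p,e)}(0_p, v) = (dL_p)_e(v) = X^v_p$, which exhibits $X^v_p$ as a smooth function of $p$. Once smoothness is in hand, $\mathrm{ev}_e(X^v) = X^v_e = (dL_e)_e(v) = v$, proving surjectivity. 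The dimension statement $\dim(\mathfrak{g}) = \dim(G)$ follows immediately since $\dim(T_e G) = \dim(G)$.
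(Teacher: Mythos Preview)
Your proof is correct and follows essentially the same approach as the paper: both construct the evaluation map $X \mapsto X_e$ and its inverse $v \mapsto (p \mapsto (dL_p)_e(v))$, then check that these are mutual inverses. You are in fact more thorough than the paper's own argument, which simply asserts that $\psi$ lands in $\mathfrak{g}$ without verifying smoothness, whereas you supply this via the differential of the multiplication map.
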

\begin{proof}
Define the maps $\phi: \la{g} \to T_e G$ and $\psi: T_e G \to \la{g}$ by 
\begin{align*}
    \phi: X \mapsto X_e
\end{align*}
and 
\begin{align*}
    \psi: X_e \mapsto (X: g \mapsto (dL_g)_e (X_e)),
\end{align*}
respectively. Then both $\phi$ and $\psi$ are linear and injective and we have
\begin{align*}
    \psi(\phi(X))_g = (dL_g)_e(X_e) = X_g
\end{align*}
since $X$ is left-invariant, and
\begin{align*}
    \phi(\psi(X_e)) = (dL_e)_e(X_e) = X_e
\end{align*}
so that $\phi$ and $\psi$ are inverses of each other and $\la{g} \simeq T_e G$ as vector spaces.
\end{proof}

\begin{remark}
In view of Proposition \ref{left-invariant tangent space correspondence} we shall often use the symbol $\la{g}$ to denote both the set of left-invariant vector fields on the Lie group $G$ and its tangent space at $e$. For the most part this should not cause any confusion, but whenever ambiguity may arise we shall point out explicitly which vector space is meant.
\end{remark}

 We shall show that the set of left-invariant vector fields on a Lie group $G$ forms a Lie subalgebra $\la{g}$ of the set of smooth vector fields on $G$. For this we first need the following result.
\begin{lemma}[\cite{Sigmundur's notes}]\label{lemma phi related}
Let $M$ and $N$ be smooth manifolds, $\varphi:M \to N$ be a smooth surjective map and the vector fields $\bar{Y}$, $ \bar{X} \in \smooth{TN}$ be $\varphi$-related to $X$, $Y \in \smooth{TM}$, i.e. $d\varphi(X) = \bar{X}$ and $d\varphi(Y) = \bar{Y}$. Then we have
\begin{align*}
    d\varphi_p([X,Y]_p) = [\bar{X},\bar{Y}]_{\varphi(p)},
\end{align*}
or equivalently, at the level of vector fields
\begin{align*}
    d\varphi([X,Y]) = [\bar{X},\bar{Y}].
\end{align*}
\end{lemma}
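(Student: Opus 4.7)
The plan is to verify the asserted equality of tangent vectors at $\varphi(p) \in N$ by checking that both sides agree as derivations on an arbitrary $f \in \smooth{N}$; since a tangent vector is determined by its action on germs of smooth functions, this will suffice. Everything should then follow from the chain rule combined with the defining formula for the Lie bracket of vector fields, $[X,Y](f) = X(Y(f)) - Y(X(f))$.

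The first step is to establish the preliminary pointwise identity that, for any $\varphi$-related pair $X, \bar{X}$ and any $f \in \smooth{N}$, the equality $X(f \circ \varphi) = (\bar{X}(f)) \circ \varphi$ holds as smooth functions on $M$. This is immediate from the chain rule: for $q \in M$ one has $X_q(f \circ \varphi) = df_{\varphi(q)}(d\varphi_q(X_q)) = df_{\varphi(q)}(\bar{X}_{\varphi(q)}) = \bar{X}_{\varphi(q)}(f)$, and the same identity applies with $Y, \bar{Y}$ in place of $X, \bar{X}$.

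The second step is to iterate this preliminary identity inside the commutator. Writing out $[X,Y](f \circ \varphi) = X(Y(f \circ \varphi)) - Y(X(f \circ \varphi))$ and substituting the identity successively on each term yields $[X,Y](f \circ \varphi) = ([\bar{X},\bar{Y}](f)) \circ \varphi$. Evaluating both sides at $p$ and rewriting the left-hand side one more time via the chain rule as $d\varphi_p([X,Y]_p)(f)$ gives $d\varphi_p([X,Y]_p)(f) = [\bar{X},\bar{Y}]_{\varphi(p)}(f)$ for every $f$, from which the equality of tangent vectors follows.

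The argument is essentially mechanical and I do not anticipate a genuine obstacle; the only point worth monitoring is that the preliminary identity for $\varphi$-related fields is applied in the correct direction (pulling functions back from $N$ to $M$, not pushing forward). I note also that the surjectivity hypothesis on $\varphi$ is not truly needed for the bracket identity itself; it appears to be included to guarantee the uniqueness of the $\varphi$-related fields $\bar{X}, \bar{Y}$ given $X, Y$, which is the usual context in which one wants to speak of \emph{the} pushed-forward vector field.
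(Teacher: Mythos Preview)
Your proof is correct and is the standard argument for this well-known result. The paper does not supply its own proof of this lemma but instead refers the reader to Proposition 4.27 in \cite{Sigmundur's notes}, so there is no in-paper argument to compare against; your approach is precisely the one found in most textbooks and is almost certainly what the cited reference contains.
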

For a proof of Lemma \ref{lemma phi related}, see Proposition 4.27 in \cite{Sigmundur's notes}.
\begin{proposition}[\cite{Sigmundur's notes}]\label{Liealg of Liegrp}
Let $G$ be a Lie group and $X, Y \in \la{g}$ be left-invariant vector fields on $G$. Then their Lie bracket $[X,Y]$ is also left-invariant.
Hence, $(\la{g},[\cdot,\cdot])$ is a Lie subalgebra of $\smooth{TM}$. We will refer to $\la{g}$ as the \emph{Lie algebra of $G$}.
\end{proposition}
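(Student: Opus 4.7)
The plan is to exploit left-invariance of $X$ and $Y$ together with Lemma \ref{lemma phi related} on brackets of related vector fields. Recall that left-invariance of $X \in \la{g}$ amounts to the identity $dL_p(X) = X$ of vector fields on $G$, for every $p \in G$. Since $L_p: G \to G$ is a diffeomorphism by the preceding proposition, this is exactly the statement that $X$ is $L_p$-related to itself (and likewise for $Y$).

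Given this observation, I would fix an arbitrary $p \in G$ and apply Lemma \ref{lemma phi related} with $M = N = G$, $\varphi = L_p$, and both $\bar X = X$, $\bar Y = Y$. The conclusion is
\begin{align*}
    dL_p([X,Y]) = [dL_p(X), dL_p(Y)] = [X,Y],
\end{align*}
which says precisely that $[X,Y]$ is left-invariant, i.e. $[X,Y] \in \la{g}$. Since $p$ was arbitrary, closedness of $\la{g}$ under the bracket is established.

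To finish, I would note that $\la{g}$ is already a real vector subspace of $\smooth{TG}$ (this was recorded when left-invariant vector fields were defined), and that the ambient bracket $[\cdot,\cdot]$ on $\smooth{TG}$ is bilinear, antisymmetric, and satisfies the Jacobi identity. Restricting this bracket to $\la{g}$ therefore endows $\la{g}$ with a Lie algebra structure, making it a Lie subalgebra of $\smooth{TG}$ in the sense of the earlier definition.

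There is essentially no obstacle once Lemma \ref{lemma phi related} is invoked; the only subtle point to be explicit about is the reformulation of left-invariance as $L_p$-relatedness of a vector field with itself, which is what makes the lemma applicable in this degenerate $\bar X = X$, $\bar Y = Y$ setting.
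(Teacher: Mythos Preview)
Your proposal is correct and follows essentially the same approach as the paper: both invoke Lemma \ref{lemma phi related} applied to $\varphi = L_p$ to obtain $dL_p([X,Y]) = [dL_p(X),dL_p(Y)] = [X,Y]$ from the left-invariance of $X$ and $Y$. Your version is simply more explicit about why the lemma applies and about the remaining subalgebra verification, which the paper leaves implicit.
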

\begin{proof}
Let $g \in G$. Then by Lemma \ref{lemma phi related}
\begin{align*}
    dL_p([X,Y]) = [dL_p(X),dL_p(Y)] = [X,Y] 
\end{align*}
since $X$ and $Y$ are left-invariant.
\end{proof}
We will now desccribe some of the most important examples of Lie groups, those being the general linear groups $\GL{n,\rn}$ and $\GL{n,\cn}$.
\begin{definition}
Let $n > 0$ be an integer and define the general linear group $\GL{n,\mathbb{F}}$, where $\mathbb{F}$ is either of the fields $\rn$ or $\cn$, by
\begin{align*}
    \GL{n,\mathbb{F}} = \{ a \in \mathbb{F}^{n \times n} \ | \ \det(a) \neq 0 \}.
\end{align*}
Equivalently, $\GL{n,\mathbf{F}}$ is the set of invertible $n$ by $n$ matricies over $\mathbb{F}$. 
\end{definition}
$\GL{n,\rn}$ and $\GL{n,\cn}$ are clearly groups under matrix multiplication. We shall show that they satisfy the requirements of being Lie groups.
\begin{proposition}\label{GL a Lie group}
Let $n>0$ be an integer. Then the matrix groups $\GL{n,\rn}$ and $\GL{n,\cn}$ are Lie groups of dimensions $n^2$ and $2n^2$, with Lie algebras $\gl{n}{\rn} = \rn^{n\times n}$ and $\gl{n}{\cn} = \cn^{n\times n}$, respectively. In both cases the Lie bracket is given by
\begin{align*}
    [X,Y] = X\cdot Y-Y\cdot X
\end{align*}
of $n\times n$ matrices.
\end{proposition}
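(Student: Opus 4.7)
The plan is to prove each claim in turn: the manifold structure, smoothness of the group operations, the dimensions, the identification of the Lie algebra as a vector space, and finally the formula for the Lie bracket.

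First I would observe that $\GL{n,\mathbb{F}}$ is an open subset of the vector space $\mathbb{F}^{n\times n}$, since the determinant is continuous and $\GL{n,\mathbb{F}} = \det^{-1}(\mathbb{F}\setminus\{0\})$. As an open subset of $\rn^{n^2}$ (respectively $\rn^{2n^2}$ in the complex case), it inherits a smooth manifold structure of the claimed dimension. Smoothness of the multiplication map $\mu$ is immediate since the entries of a matrix product are polynomials in the entries of the factors. Smoothness of inversion follows from Cramer's formula $A^{-1} = \frac{1}{\det A}\,\mathrm{adj}(A)$, which is a rational function in the matrix entries with nowhere-vanishing denominator on $\GL{n,\mathbb{F}}$. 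This verifies that $\GL{n,\mathbb{F}}$ is indeed a Lie group.

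Next I would identify the Lie algebra as a vector space. Since $\GL{n,\mathbb{F}}$ is open in $\mathbb{F}^{n\times n}$, the tangent space $T_e\GL{n,\mathbb{F}}$ is canonically identified with $\mathbb{F}^{n\times n}$ itself, giving the required dimensions. By Proposition \ref{left-invariant tangent space correspondence}, the space of left-invariant vector fields $\gl{n}{\mathbb{F}}$ is isomorphic to $T_e\GL{n,\mathbb{F}} \cong \mathbb{F}^{n\times n}$. Concretely, for $X \in \mathbb{F}^{n\times n}$ the associated left-invariant vector field $\tilde X$ satisfies $\tilde X_g = (dL_g)_e X = gX$, since left translation is the restriction of a linear map.

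The main step is then to verify that the Lie bracket of left-invariant vector fields agrees with the matrix commutator. For this I would work with the coordinate functions $x_{ij}: \GL{n,\mathbb{F}} \to \mathbb{F}$, $A \mapsto A_{ij}$, which separate tangent vectors. For a left-invariant field $\tilde X$ one computes
\begin{align*}
    \tilde X(x_{ij})(g) \;=\; \left.\frac{d}{dt}\right|_{t=0} x_{ij}(g + tgX) \;=\; (gX)_{ij} \;=\; \sum_k x_{ik}(g)\, X_{kj},
\end{align*}
so $\tilde X(x_{ij}) = \sum_k X_{kj}\, x_{ik}$, which is linear in the coordinates. Applying $\tilde Y$ to this and evaluating at $e$ gives $\sum_k Y_{ik} X_{kj} = (YX)_{ij}$, and symmetrically $\tilde X(\tilde Y(x_{ij}))(e) = (XY)_{ij}$. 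Therefore $[\tilde X,\tilde Y]_e(x_{ij}) = (XY - YX)_{ij}$, which under the identification of left-invariant fields with their values at $e$ gives exactly $[X,Y] = XY - YX$.

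The mildly subtle point, and the one that takes the most care, is the bracket computation: one must be consistent about how the left-invariant extension is defined and evaluate the iterated derivatives in the correct order. Everything else is essentially bookkeeping, relying on the fact that $\GL{n,\mathbb{F}}$ is an open submanifold of a vector space and that the group operations are given by polynomial/rational formulas.
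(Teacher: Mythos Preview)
Your proposal is correct and follows essentially the same approach as the paper: openness via the continuity of the determinant, smoothness of the group operations by their polynomial/rational nature, identification of $T_e\GL{n,\mathbb{F}}$ with $\mathbb{F}^{n\times n}$, and the bracket computation via the coordinate functions $x_{ij}$ using $\tilde X_g = gX$. The only cosmetic difference is that you spell out Cramer's rule explicitly and write the intermediate sums, whereas the paper is slightly terser.
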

\begin{proof}
We shall prove the proposition holds for $\GL{n,\cn}$, as the proof for the real case is identical.

The determinant $\det: \cn^{n\times n} \to \cn$ is a continuous map. Hence, since the set $\{0\}\subset\cn$ is closed, so is the set of matrices with zero determinant.
Thus the complement, $\GL{n,\cn}$, is an open subset of $\cn^{n\times n}$. Since open subsets of euclidean spaces are always smooth manifolds, this shows $\GL{n,\cn}$ is a smooth manifold of dimension $2n^2$. Multiplication and inversion of matrices is given by polynomial, respectively, rational functions of the components. Both are hence smooth operations, making $\GL{n,\cn}$ a Lie group. 
\par
For the Lie algebra, we can identify the tangent space at the identity of $\GL{n,\cn}$ by the ambient euclidean space $\cn^{n \times n}$. Hence, $\gl{n}{\cn} = \cn^{n\times n}$.
To see that the Lie bracket is given by the matrix commutator, we note first that left translation in these cases is a linear operation. Hence, for  $X \in T_e \GL{n,\cn}$ and $a \in \GL{n,\cn}$, we have
\begin{align*}
    (dL_a)_e(X) = a\cdot X.
\end{align*}
For a  vector field $X \in \gl{n}{\cn}$ we then have $X_a = a\cdot X_e$. 

Let $x_{ij}$ be the coordinate functions on $\cn^{n\times n}$ given by $x_{ij}(a) = a_{ij}$. Since these functions are also linear we have 
\begin{align*}
    X_a(x_{ij}) = x_{ij}(X_a) = (aX)_{ij}.
\end{align*}
Further, $X_e(Y(x_{ij}))$ is given by
\begin{align*}
    X_e(Y(x_{ij})) = \restr{(X_e aY_e)_{ij}}{a = e} = (X_e Y_e)_{ij}
\end{align*}
and hence, 
\begin{align*}
    ([X,Y]_e)_{ij} = (X_e Y_e - Y_e X_e)_{ij}
\end{align*}
for $0 < i,j < n$, which proves the claim.
\end{proof}
Another very important class of Lie groups are the isometry groups of Riemannian manifolds. This result was first proven by S. B Myers and N. B. Steenrod in their 1939 paper {\it The group of isometries of a Riemannian manifold} \cite{Myers-Steenrod}. 
\begin{theorem}[\cite{Myers-Steenrod}]\label{isometry lie}
Let $(M,g)$ be a Riemannian manifold and $G = I(M)$ the isometry group of $M$. Then $G$ is a Lie group when endowed with the compact open topology.
\end{theorem}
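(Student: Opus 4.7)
The plan is to realise $I(M)$ as a closed embedded submanifold of the orthonormal frame bundle $O(M)$, thereby transporting the smooth manifold structure of $O(M)$ to $I(M)$. Fix a basepoint $p_0 \in M$ and an orthonormal frame $u_0$ of $T_{p_0}M$, and consider the evaluation map
\[
\Psi: I(M) \to O(M), \qquad \Psi(\phi) = d\phi_{p_0}(u_0),
\]
which lands in the orthonormal frame bundle since every $\phi \in I(M)$ preserves $g$. The first key step is to show that $\Psi$ is injective: any isometry is determined by its value and differential at a single point. This uses that isometries send geodesics to geodesics, so that $\phi(\exp_{p_0}(tX)) = \exp_{\phi(p_0)}(t\, d\phi_{p_0}(X))$ for all $X \in T_{p_0}M$. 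Two isometries with the same $1$-jet at $p_0$ must then coincide on a normal neighbourhood of $p_0$, and the connectedness of $M$ propagates the equality globally.

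Next I would show that $\Psi$ is continuous with respect to the compact--open topology on $I(M)$ and that its image is closed in $O(M)$. Continuity follows from the fact that uniform convergence of isometries on compact sets forces convergence of their first derivatives, a consequence of distance preservation together with the identification of $d\phi_{p_0}$ with the infinitesimal behaviour of $\phi$ along short geodesics emanating from $p_0$. For closedness, given a sequence $\phi_n \in I(M)$ with $\Psi(\phi_n) \to (q,v)$, the family $\{\phi_n\}$ is equicontinuous (in fact $1$-Lipschitz with respect to the Riemannian distance) and determined on $M$ by its behaviour at $p_0$, so Arzel\`a--Ascoli yields a uniform limit $\phi$ on compact sets. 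One then verifies that $\phi$ is a distance-preserving bijection with $\Psi(\phi) = (q,v)$, hence an isometry.

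Having embedded $I(M)$ as a closed subset of $O(M)$, one declares the smooth structure on $I(M)$ to be the unique one making $\Psi$ a diffeomorphism onto its image. It remains to check that $\Psi(I(M))$ is an embedded submanifold and that multiplication and inversion in $I(M)$ are smooth. These follow because $I(M)$ acts freely on $O(M)$ (by injectivity of $\Psi$) and smoothly in the frame variable, so the orbit through $u_0$ is a smooth embedded submanifold, and the group operations on $I(M)$ correspond to smooth operations on frames. As a byproduct one obtains the dimension bound $\dim I(M) \leq \dim O(M) = n + \tfrac{1}{2}n(n-1)$.

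The main obstacle is the regularity statement underlying the whole argument: every distance-preserving bijection of a Riemannian manifold is automatically smooth. This is the technical heart of the Myers--Steenrod theorem and requires a careful analysis of how metric balls interact with local geodesic normal coordinates, together with a bootstrap from Lipschitz to $C^\infty$ regularity. Once this is granted, the frame-bundle embedding sketched above proceeds essentially formally.
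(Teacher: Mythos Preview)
The paper does not prove this theorem: it is stated with a citation to Myers--Steenrod and no argument is given, the text moving directly to the next definition. There is therefore no proof in the paper to compare your proposal against.

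For what it is worth, your sketch follows the classical Myers--Steenrod strategy (embed $I(M)$ into the orthonormal frame bundle via $\phi \mapsto d\phi_{p_0}(u_0)$, use that isometries are determined by their $1$-jet at a point, and invoke the regularity theorem that distance-preserving bijections are smooth). The outline is sound. Two places where more care would be needed in a full proof: your injectivity argument via geodesic propagation tacitly assumes $M$ is connected, and the step from ``$I(M)$ acts freely on $O(M)$'' to ``the orbit is an embedded submanifold'' is not automatic for non-compact groups and requires the properness of the action, which in turn leans on the closedness you established. You are right that the hard analytic core is the smoothness of distance-preserving maps, and that everything else is comparatively formal once that is granted.
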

\begin{definition}
Let $G$ be a Lie group with Lie algebra $\la{g}$. A Riemannian metric $g$ on $G$ is said to be \emph{left-invariant} if all left translations are isometries with respect to $g$.
\end{definition}

\begin{proposition}[\cite{Sigmundur's notes,Ziller's notes}]\label{left invariant metrics}
Let $G$ be a Lie group. Then there exists a left invariant Riemannian metric $g$ on $G$.
\end{proposition}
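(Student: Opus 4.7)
The plan is to construct the desired metric by starting from an arbitrary inner product on the Lie algebra $\la{g} \simeq T_e G$ and transporting it to every other tangent space via left translations. Concretely, I would fix any positive definite symmetric bilinear form $\langle \cdot,\cdot\rangle_e$ on $T_e G$ (which exists since $T_e G$ is a finite dimensional real vector space by Proposition \ref{left-invariant tangent space correspondence}), and then for each $p \in G$ and $X, Y \in T_p G$ define
\begin{align*}
    g_p(X,Y) = \langle (dL_{p^{-1}})_p(X),\, (dL_{p^{-1}})_p(Y) \rangle_e.
\end{align*}

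The first step is to verify that $g_p$ is an inner product on $T_p G$ for every $p$. Bilinearity and symmetry are immediate from those properties of $\langle \cdot,\cdot\rangle_e$ together with the linearity of the differential $(dL_{p^{-1}})_p$. Positive definiteness follows because $L_{p^{-1}}$ is a diffeomorphism of $G$ by the proposition stated just after the definition of left and right translation, so $(dL_{p^{-1}})_p : T_p G \to T_e G$ is a linear isomorphism and therefore maps nonzero vectors to nonzero vectors.

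The second step is to check smoothness of $g$ as a $(0,2)$-tensor field, which amounts to showing that $g(X,Y) \in \smooth{G}$ whenever $X, Y$ are smooth vector fields. Here I would argue via a local frame of left-invariant vector fields $\{E_1,\dots,E_n\}$ obtained by applying the isomorphism $\psi$ of Proposition \ref{left-invariant tangent space correspondence} to a basis of $T_e G$; with respect to such a frame the components $g(E_i,E_j)$ are by construction the constants $\langle (E_i)_e,(E_j)_e\rangle_e$, and then smoothness for arbitrary $X, Y$ follows by expanding in the frame with smooth coefficient functions.

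The last step is to verify left invariance. For $q \in G$ and $X, Y \in T_p G$ one computes
\begin{align*}
    g_{qp}\bigl((dL_q)_p X,\, (dL_q)_p Y\bigr)
    &= \langle (dL_{(qp)^{-1}})_{qp}(dL_q)_p X,\, (dL_{(qp)^{-1}})_{qp}(dL_q)_p Y \rangle_e \\
    &= \langle d(L_{p^{-1}q^{-1}} \circ L_q)_p X,\, d(L_{p^{-1}q^{-1}} \circ L_q)_p Y \rangle_e \\
    &= \langle (dL_{p^{-1}})_p X,\, (dL_{p^{-1}})_p Y \rangle_e = g_p(X,Y),
\end{align*}
using the chain rule together with $L_{p^{-1}q^{-1}} \circ L_q = L_{p^{-1}}$. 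I do not anticipate a serious obstacle: the main subtlety is just the bookkeeping in the last display to see that the two differentials composed give $(dL_{p^{-1}})_p$, which is the whole point of pulling everything back to the identity.
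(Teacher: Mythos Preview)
Your proposal is correct and follows essentially the same approach as the paper: pick an inner product on $T_eG$, pull it back along $L_{p^{-1}}$ to define $g_p$, and verify left invariance via the chain rule and $L_{p^{-1}q^{-1}}\circ L_q = L_{p^{-1}}$. Your treatment is in fact slightly more detailed than the paper's, which handles smoothness in a single clause rather than via the left-invariant frame argument you sketch.
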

\begin{proof}
Let $\la{g} \cong T_e G$ be the Lie algebra of $G$ and let $\ip{\cdot}{\cdot}$ be an inner product on $\la{g}$. We will extend it to a Riemannian metric $g$ on $G$ by defining
\begin{align*}
    g_p(X_p,Y_p) = \ip{(dL_{p^{-1}})_{p}X_p}{(dL_{p^{-1}})_{p}Y_p},
\end{align*}
for $p \in G$ and $X_p, Y_p \in T_p G$.
Clearly $g_p$ is an inner product on $T_p G$ and the metric thus defined on $G$ is smooth since left translations are smooth. To see that $g$ is left invariant let $p, q \in G$ and $X_p, Y_p \in T_p G$. Then,
\begin{align*}
    g_{L_q(p)}((dL_q)_p X_p, (dL_q)_p Y_p) &= \ip{(dL_{p^{-1}q^{-1}})_{qp}(dL_q)_p X_p}{(dL_{p^{-1}q^{-1}})_{qp}(dL_q)_p Y_p}\\
    &= \ip{(dL_{p^{-1}})_p X_p}{(dL_{p^{-1}})_p Y_p}\\
    &= g_p(X_p,Y_p)
\end{align*}
as desired.
\end{proof}
\begin{definition}\label{Homomorphisms}
Let $G$ and $H$ be Lie groups and $\la{g}$ and $\la{h}$ be their Lie algebras. Then a group homomorphism $\varphi: G \to H$ is a \emph{Lie group homomorphism} if it is also smooth. A linear map $\psi: \la{g} \to \la{h}$ is a \emph{Lie algebra homomorphism} if it satisfies
\begin{align*}
    \psi([X,Y]_{\la{g}}) = [\psi(X),\psi(Y)]_{\la{h}},
\end{align*}
for all $X, Y \in \la{g}$. If the maps $\varphi$ and $\psi$ are isomorphisms of the respective structures we call them Lie group, respectively, algebra - isomorphisms.
\end{definition}
\begin{proposition}[\cite{Ziller's notes}]
Let $G$ and $H$ be Lie groups with Lie algebras $\la{g}$ and $\la{h}$, and let $\varphi: G \to H$ be a Lie group homomorphism. Then the differential $d\varphi_e: \la{g} \to \la{h}$ of $\varphi$ at the identity $e \in G$ is a Lie algebra homomorphism.
\end{proposition}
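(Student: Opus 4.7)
The plan is to reduce the claim to an application of Lemma \ref{lemma phi related} on $\varphi$-related vector fields. By Proposition \ref{left-invariant tangent space correspondence} we identify $\la{g}$ with $T_e G$, and for each $X \in T_e G$ write $\wt{X}$ for the corresponding left-invariant vector field on $G$, defined by $\wt{X}_g = (dL_g)_e(X)$; the Lie bracket on $\la{g}$ is then inherited from the bracket of left-invariant vector fields, as in Proposition \ref{Liealg of Liegrp}. Analogously, set $\wt{Y}$ on $G$ and let $\wt{d\varphi_e(X)}, \wt{d\varphi_e(Y)}$ be the corresponding left-invariant extensions on $H$.

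The heart of the proof is to check that $\wt{X}$ and $\wt{d\varphi_e(X)}$ are $\varphi$-related, i.e.\ $d\varphi_g(\wt{X}_g) = \wt{d\varphi_e(X)}_{\varphi(g)}$ for every $g \in G$. This is where the homomorphism hypothesis enters: from $\varphi(g\cdot q) = \varphi(g)\cdot\varphi(q)$ one has the identity of smooth maps $\varphi \circ L_g = L_{\varphi(g)} \circ \varphi$. Differentiating at $e$ and applying the chain rule yields
\begin{align*}
    d\varphi_g \circ (dL_g)_e = (dL_{\varphi(g)})_e \circ d\varphi_e,
\end{align*}
and applying both sides to $X \in T_e G$ gives precisely the required relation. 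The same holds for $Y$.

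With $\varphi$-relatedness established, Lemma \ref{lemma phi related} yields
\begin{align*}
    d\varphi\bigl([\wt{X},\wt{Y}]\bigr) = \bigl[\wt{d\varphi_e(X)},\,\wt{d\varphi_e(Y)}\bigr],
\end{align*}
and evaluating at $e \in G$ produces $d\varphi_e([X,Y]_{\la{g}}) = [d\varphi_e(X), d\varphi_e(Y)]_{\la{h}}$, which is the desired Lie algebra homomorphism property.

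The one mildly subtle point is that Lemma \ref{lemma phi related} as stated in the excerpt assumes $\varphi$ is surjective, whereas a Lie group homomorphism need not be. This is not a real obstruction: the identity $d\varphi_p([X,Y]_p) = [\bar{X},\bar{Y}]_{\varphi(p)}$ is a pointwise statement that follows from $\varphi$-relatedness alone, without any surjectivity hypothesis, and can be verified directly in local coordinates around $p$ and $\varphi(p)$. If one prefers to invoke the lemma verbatim, one may instead work locally on a neighbourhood of $e$ in $G$ and a neighbourhood of $\varphi(e) = e_H$ in $H$, after which the left-invariance of the vector fields propagates the identity to all of $G$ by translation.
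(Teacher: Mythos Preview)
Your proof is correct and follows essentially the same route as the paper: both establish $\varphi$-relatedness of left-invariant extensions via the identity $\varphi\circ L_g = L_{\varphi(g)}\circ\varphi$, then invoke Lemma~\ref{lemma phi related}. You additionally address the surjectivity hypothesis in that lemma, a point the paper passes over silently.
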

\begin{proof}
Let $X$ and $Y$ be left-invariant vector fields on $G$ and denote by $\bar{X}$ and $\bar{Y}$ the left-invariant vector fields on $H$ such that $\bar{X}_e = d\varphi_e(X_e)$ and $\bar{Y}_e = d\varphi_e(Y_e)$. We shall show that $\bar{X}$ and $\bar{Y}$ are $\varphi$-related to $X$ and $Y$, respectively. Indeed we have
\begin{align*}
    \bar{X}_{\varphi(g)} &= (dL_{\varphi(g)})_e(\bar{X}_e)\\
    &= (dL_{\varphi(g)})_e(d\varphi_e(X_e))\\
    &= d(L_{\varphi(g)}\circ \varphi)_e(X_e)\\
    &= d(\varphi \circ L_g)_e(X_e)\\
    &= d\varphi_g(X_g),
\end{align*}
so that $\bar{X} = d\varphi(X)$. A similar computation shows that $\bar{Y} = d\varphi(Y)$. With this we are done as then
\begin{align*}
    d\varphi_e([X,Y]_e) = [\bar{X},\bar{Y}]_e,
\end{align*}
 as desired, since differentials respect the Lie bracket.
\end{proof}
\begin{corollary}
Let $G$ be a Lie group with Lie algebra $\la{g}$ and let $H$ be a Lie subgroup of $G$. Then the Lie algebra of $H$ satisfies $\la{h} \cong T_e H \subset T_e G \cong \la{g}$ and $\la{h}$ is a subalgebra of $\la{g}$
\end{corollary}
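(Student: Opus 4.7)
The plan is to apply the preceding proposition to the natural inclusion map. Let $\iota: H \hookrightarrow G$ denote the inclusion of $H$ into $G$. By the definition of a Lie subgroup (Definition just before Lemma \ref{lemma phi related} in the text), $\iota$ is a smooth immersion, and of course it is also a group homomorphism, so it qualifies as a Lie group homomorphism. This is the only structural fact I need before invoking the machinery already developed.

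First I would use Proposition \ref{left-invariant tangent space correspondence} to make the identifications $\la{g} \cong T_e G$ and $\la{h} \cong T_e H$, so that the Lie bracket on each side can be transported to the respective tangent space at the identity. Next, since $\iota$ is an immersion, the differential $d\iota_e: T_e H \to T_e G$ is injective, which realises $T_e H$ as a linear subspace of $T_e G$. This gives the first assertion $\la{h} \cong T_e H \subset T_e G \cong \la{g}$.

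For the subalgebra statement, I would invoke the preceding proposition applied to the Lie group homomorphism $\iota$. It yields that $d\iota_e: \la{h} \to \la{g}$ is a Lie algebra homomorphism, that is,
\begin{align*}
    d\iota_e([X,Y]_{\la{h}}) = [d\iota_e(X), d\iota_e(Y)]_{\la{g}}
\end{align*}
for all $X, Y \in \la{h}$. Identifying $\la{h}$ with its image $d\iota_e(\la{h}) \subset \la{g}$, this equation says precisely that the bracket of two elements of $\la{h}$, computed in $\la{g}$, lies in $\la{h}$, so $[\la{h},\la{h}] \subset \la{h}$, which is the defining condition of a Lie subalgebra.

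There is essentially no obstacle here beyond bookkeeping: the real content has already been packaged into Proposition \ref{left-invariant tangent space correspondence} and the preceding proposition on differentials of Lie group homomorphisms. The only subtlety worth flagging is that one must use the immersion hypothesis in the definition of a Lie subgroup to guarantee injectivity of $d\iota_e$; without it, one would only obtain a Lie algebra homomorphism into $\la{g}$, rather than an identification of $\la{h}$ with a genuine subspace. Once this point is noted, the corollary follows immediately.
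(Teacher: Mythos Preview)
Your proof is correct and follows exactly the same route as the paper: apply the preceding proposition on differentials of Lie group homomorphisms to the inclusion map $\iota: H \to G$. Your version is in fact slightly more careful, as you explicitly invoke the immersion hypothesis to justify injectivity of $d\iota_e$, a point the paper glosses over.
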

\begin{proof}
All we need to do is notice that the inclusion map $i:H \to G$ is a Lie group homomorphism. Hence the differential at identity is a Lie algebra homomorphism. But $di_e:T_{}$ is clearly just the inclusion map of $\la{h} \cong T_e H$ into $\la{g}\cong T_e G$. Hence $\la{h}$ is a Lie subalgebra of $\la{g}$ 
\end{proof}

\begin{definition}
Let $M$ be a smooth manifold and $X \in \smooth{M}$ a vector field on $M$. We say that a smooth curve $\gamma:I \to M$ is an \emph{integral curve to $X$} if for each $t \in I$
\begin{align*}
    \dot{\gamma}(t) = X_{\gamma(t)}
\end{align*}
\end{definition}
One can show both local existence and uniqueness of such curves in the following sense.
\begin{proposition}[\cite{Lee}]\label{local existence and uniqueness}
Let $M$ be a smooth manifold and $X$ a smooth vector field on $M$. Then for each point $p$ of $M$ there exists a real number $\epsilon > 0$ and a curve $\gamma:(-\epsilon,\epsilon) \to M$ such that $\gamma$ is an integral curve of $X$ and $\gamma(0) = p$.
Furthermore, if $\gamma_1:(-a,a) \to M$ and $\gamma_2: (-b,b) \to M$ are integral curves of $X$ such that $\gamma_1(0) = \gamma_2(0)$, then $\gamma_1(t) = \gamma_2(t)$ for all $t \in (-a,a)\cap(-b,b)$.
\end{proposition}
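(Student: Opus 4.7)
The plan is to reduce the statement to the classical existence/uniqueness theorem for ordinary differential equations (Picard-Lindel\"of) by passing to local coordinates, and then to use a standard connectedness argument to promote the local uniqueness to uniqueness on the full overlap of the two domains of definition.

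First I would choose a smooth coordinate chart $(U,\varphi)$ around $p$ with $\varphi(p)=0$. In these coordinates the vector field $X$ is represented by a smooth map $F:\varphi(U)\to\rn^n$, and the condition $\dot\gamma(t)=X_{\gamma(t)}$ becomes the system
\begin{equation*}
\dot x(t)=F(x(t)),\qquad x(0)=0.
\end{equation*}
Because $F$ is smooth it is in particular locally Lipschitz on a neighbourhood of the origin, so the Picard-Lindel\"of theorem yields an $\epsilon>0$ and a unique $C^1$ solution $x:(-\epsilon,\epsilon)\to\varphi(U)$. Pulling back, $\gamma:=\varphi^{-1}\circ x$ is a smooth integral curve of $X$ through $p$, which takes care of existence.

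For the uniqueness statement let $\gamma_1,\gamma_2$ be integral curves of $X$ with $\gamma_1(0)=\gamma_2(0)$, defined on $(-a,a)$ and $(-b,b)$ respectively, and set $J=(-a,a)\cap(-b,b)$ and
\begin{equation*}
A=\{t\in J:\gamma_1(t)=\gamma_2(t)\}.
\end{equation*}
The set $A$ is nonempty since $0\in A$, and it is closed in $J$ by continuity of $\gamma_1$ and $\gamma_2$. To show $A$ is open, fix $t_0\in A$ and pick a chart about the common value $q=\gamma_1(t_0)=\gamma_2(t_0)$; near $t_0$ both curves satisfy the same ODE in that chart with the same initial condition, so the local uniqueness part of Picard-Lindel\"of forces them to agree on an open interval around $t_0$. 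Hence $A$ is open, closed and nonempty in the connected set $J$, so $A=J$, which is exactly the desired uniqueness.

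The main obstacle is really just bookkeeping: making sure that the two curves stay in a common coordinate chart long enough for the local uniqueness theorem to apply at each $t_0\in A$, which is what the connectedness argument above is designed to handle. Once this is done, everything else is a direct translation between the manifold formulation of integral curves and the Euclidean ODE formulation, where the theorem is classical.
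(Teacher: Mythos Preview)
Your proof is correct and follows the standard argument: reduce to Picard--Lindel\"of in a coordinate chart for local existence and uniqueness, then run an open--closed connectedness argument on the agreement set to get uniqueness on the full overlap. The paper itself does not supply a proof of this proposition; it merely states the result and cites \cite{Lee}, so there is nothing to compare against beyond noting that your argument is the one typically found in such references.
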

\begin{definition}\label{definition of local flow}
Let $M$ be a smooth manifold and $X$ a smooth vector field on $M$. A \emph{flow box} for $X$ at a point $p\in M$ is then a triple $(U,\epsilon,\varphi^X)$ where $U$ is an open neighbourhood of $p$, $\epsilon$ a positive real number, and $\varphi^X$ a smooth map
\begin{align*}
    \varphi^X: (-\epsilon, \epsilon)\times U \to M
\end{align*}
such that for each point $q \in U$ the curve $\gamma_q: t \mapsto \varphi^X(t,q)$ is an integral curve of $X$ with $\gamma_q(0) = q$. We call the map $\varphi^X$ the \emph{local flow} of the vector field $X$. It is also common to use the notation $\varphi^X(t,q) = \varphi^X_t(q)$ for the local flow.
\end{definition}
The following shows that local flows always exist for smooth vector fields. This corresponds to Theorem 2.91 in \cite{Lee}.
\begin{proposition}[\cite{Lee}]\label{existence of local flows}
Let $M$ be a smooth manifold and $X$ a smooth vector field on $M$. Then for each point $p \in M$ there exists a local flow box $(U,\epsilon,\varphi^X)$. Furthermore, if $(U_1, a, \varphi_1^X)$ and $(U_2,b,\varphi_2^X)$ are flow boxes at $p \in M$, then $\varphi_1^X$ and $\varphi_2^X$ agree on the intersection $((-a,a)\cap (-b,b))\times (U_1\cap U_2)$.
\end{proposition}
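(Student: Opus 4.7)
The plan is to reduce the statement to the standard existence and smooth-dependence theorem for ordinary differential equations on Euclidean space, and then to deduce the uniqueness statement from the pointwise uniqueness of integral curves already established in Proposition \ref{local existence and uniqueness}.

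First I would choose a smooth chart $(W,x)$ around $p$ with $x(W)\subset\rn^n$ open, and push the vector field $X$ forward to a smooth map $F:x(W)\to\rn^n$ defined by $F(x(q))=dx_q(X_q)$. An integral curve of $X$ contained in $W$ corresponds, under $x$, to a solution of the first-order system
\begin{align*}
    \dot y(t)=F(y(t)),\qquad y(0)=x(q).
\end{align*}
Now I would invoke the classical ODE theorem on existence and smooth dependence on initial data (e.g.\ Picard--Lindel\"of together with the parameter-smoothness theorem; see Theorem 2.91 in \cite{Lee}): for some open neighbourhood $V\subset x(W)$ of $x(p)$ and some $\epsilon>0$, there exists a smooth map $\Psi:(-\epsilon,\epsilon)\times V\to x(W)$ such that for each $y_0\in V$ the curve $t\mapsto\Psi(t,y_0)$ solves the above ODE with $\Psi(0,y_0)=y_0$. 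Setting $U=x^{-1}(V)$ and $\varphi^X(t,q)=x^{-1}(\Psi(t,x(q)))$ yields a smooth map $\varphi^X:(-\epsilon,\epsilon)\times U\to M$ with the required properties, because by construction $t\mapsto\varphi^X(t,q)$ is an integral curve of $X$ starting at $q$. This gives the existence of a flow box.

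For the second statement, fix $q\in U_1\cap U_2$ and consider the two smooth curves $\gamma_1(t)=\varphi_1^X(t,q)$ and $\gamma_2(t)=\varphi_2^X(t,q)$ defined on $(-a,a)$ and $(-b,b)$ respectively. Both are integral curves of $X$ by the definition of a flow box, and both satisfy $\gamma_1(0)=\gamma_2(0)=q$. Hence the uniqueness assertion in Proposition \ref{local existence and uniqueness} forces $\gamma_1(t)=\gamma_2(t)$ for all $t\in(-a,a)\cap(-b,b)$, which is exactly the desired agreement on $((-a,a)\cap(-b,b))\times(U_1\cap U_2)$.

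The main obstacle is not the uniqueness part, which follows directly from the previous proposition, but rather securing joint smoothness of $\varphi^X$ in the variables $(t,q)$ simultaneously. Pointwise-in-$q$ existence of integral curves only gives smoothness in $t$ for each fixed starting point; the fact that the resulting map is smooth as a function of both $t$ and the initial condition is a non-trivial consequence of the smooth dependence theorem for ODEs, which I would use as a black box, citing \cite{Lee}.
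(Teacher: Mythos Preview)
Your argument is correct and is precisely the standard chart-and-ODE approach; the paper itself does not supply a proof but simply refers the reader to Theorem~2.91 in \cite{Lee}, which is exactly the source you invoke for the smooth-dependence step. There is nothing to add: your reduction to local coordinates for existence and your appeal to Proposition~\ref{local existence and uniqueness} for the agreement of two flow boxes match the intended line of reasoning.
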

For a proof of this result see the proof of Theorem 2.91 in the book \emph{Manifolds and differential geometry} by J. M. Lee (\cite{Lee}). We recommend chapter 2 of the aforementioned book for a thorough exposition on the properties of flows of vector fields.
\begin{remark}
If $(U,\epsilon, \varphi^X)$ is a local flow box for the smooth vector field $X$ on $M$, then it follows from the properties of integral curves that the maps $\varphi_t^X:q \mapsto \varphi^X(t,q)$ and $\varphi^X_{-t}: q\mapsto \varphi^X(-t,q)$ are mutual inverses. Since they are also smooth it follows that the local flow of a smooth vector field is a family of local diffeomorphism of the manifold $M$.
\end{remark}
Local flows can be used to give an alternative definition and interpretation of the Lie bracket between smooth vector fields as the rate of change of one vector field along the flow of another.
\begin{proposition}[\cite{Lee}]\label{Lie bracket from flow}
Let $M$ be a smooth manifold and $X, Y \in \smooth{TM}$. Let $(U,\epsilon, \varphi^X)$ be a local flow box for $X$ at the point $p \in M$. Set $\varphi^X_t: q \mapsto \varphi^X(t,q)$. Then the Lie bracket of $X$ and $Y$ at $p$ satisfies
\begin{align*}
    [X,Y]_p
    = \lim_{t\to 0}\frac{(d\varphi^X_{-t})_{\varphi^X_t(p)}(Y_{\varphi^X_t(p)})-Y_p}{t}
    = \dtatzero(d\varphi^X_{-t})_{\varphi^X_t(p)}(Y_{\varphi^X_t(p)}).
\end{align*}
\end{proposition}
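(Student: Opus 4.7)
The plan is to verify the identity by testing both sides against an arbitrary smooth function $f$ defined near $p$; a tangent vector at $p$ is determined by its action on germs of smooth functions, so it suffices to show that the derivation associated to the proposed limit agrees with $[X,Y]_p$ on every such $f$.

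First I would establish a standard ``Hadamard-type'' lemma: for a smooth function $f$ defined on a neighbourhood of $p$, there is a smooth family of functions $g_t$ on a neighbourhood of $p$, depending smoothly on the parameter $t$ near $0$, such that
\begin{align*}
    f \circ \varphi^X_t = f + t\, g_t, \qquad g_0 = X(f).
\end{align*}
This is obtained from the fundamental theorem of calculus applied to the curve $s \mapsto f(\varphi^X_{ts}(q))$, whose $s$-derivative is $t\,X(f)(\varphi^X_{ts}(q))$ by the defining property of integral curves in Definition \ref{definition of local flow}.

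Next I would unwind the push-forward: using the definition of the differential, for each $t$ the vector $Z_t := (d\varphi^X_{-t})_{\varphi^X_t(p)}(Y_{\varphi^X_t(p)})$ acts on $f$ by
\begin{align*}
    Z_t(f) = Y_{\varphi^X_t(p)}(f\circ \varphi^X_{-t}).
\end{align*}
Applying the Hadamard lemma with $-t$ in place of $t$ gives $f\circ \varphi^X_{-t} = f - t\, \tilde g_t$ with $\tilde g_0 = X(f)$, and therefore
\begin{align*}
    Z_t(f) - Y_p(f) = \bigl(Y(f)(\varphi^X_t(p)) - Y(f)(p)\bigr) - t\, Y(\tilde g_t)(\varphi^X_t(p)).
\end{align*}
Dividing by $t$ and letting $t \to 0$, the first bracket tends to $X(Y(f))(p)$ because $t\mapsto \varphi^X_t(p)$ is the integral curve of $X$ through $p$, and the second summand tends to $Y(X(f))(p)$ by joint continuity of $\tilde g_t$ in $(t,q)$ together with $\tilde g_0 = X(f)$. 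The difference is precisely $[X,Y]_p(f)$, which completes the identification.

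The delicate step is the second limit: one must carefully justify that $Y(\tilde g_t)(\varphi^X_t(p)) \to Y(\tilde g_0)(p) = Y(X(f))(p)$, which requires smooth dependence of $\tilde g_t$ on $t$ so that $Y$ may be applied pointwise and the resulting function is continuous in $(t,q)$. Once this is in hand, matching signs and assembling the pieces gives the claimed formula; writing the limit in the equivalent form $\dtatzero (d\varphi^X_{-t})_{\varphi^X_t(p)}(Y_{\varphi^X_t(p)})$ is then purely notational.
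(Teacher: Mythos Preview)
Your argument is correct and follows the standard approach: the Hadamard-type lemma $f\circ\varphi^X_t = f + t\,g_t$ with $g_0 = X(f)$, followed by unwinding the push-forward and taking the limit termwise, is exactly the route taken in Lee's \emph{Manifolds and Differential Geometry} (Proposition 2.105). The paper itself does not supply a proof of this proposition; it simply refers the reader to Lee, so there is no independent argument to compare against---your write-up effectively fills in what the paper omits, by the same method the cited reference uses.
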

For a proof of this result see the proof of Proposition 2.105 in \cite{Lee}. 
As we shall later see, the flows of left invariant vector fields on Lie groups can be described explicitly.
In order to define the important exponential map of a Lie group, we must first define what is meant by a one-parameter subgroup.
\begin{definition}
Let $G$ be a Lie group. Then a \emph{one-parameter subgroup} is a Lie group homomorphism $\varphi$ of $(\rn,+)$ into $G$. It is hence a smooth map $\varphi: \rn \to G$ satisfying $\varphi(s+t)=\varphi(s)\cdot \varphi(t)$ for all $s,t \in \rn$.
\end{definition}
The one-parameter subgroups are by definition smooth curves in $G$ through the identity $e$. In fact, much like for geodesics, there is a unique one-parameter subgroup $\varphi_X$ with $\dot{\varphi}_X (0) = X$ for each vector $X \in T_e G \cong \la{g}$. To see this we will need the following result. 
\begin{proposition}[\cite{Ziller's notes}]\label{uniquenes lemma}
Let $G$ and $H$ be Lie groups with Lie algebras $\la{g}$ and $\la{h}$, and with $H$ simply connected. Then for any Lie algebra homomorphism $\psi: \la{h} \to \la{g}$, there is a unique Lie group homomorphism $\varphi: H \to G$ satisfying $d\varphi_e = \psi$.
\end{proposition}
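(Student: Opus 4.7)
The plan is to apply the classical graph construction inside the product Lie group $G\times H$, whose Lie algebra is $\la{g}\oplus\la{h}$ equipped with the componentwise bracket. First I would form the graph of $\psi$,
$$\la{k} = \{(\psi(Y),Y)\; : \;Y\in\la{h}\}\subset \la{g}\oplus\la{h},$$
and verify that $\la{k}$ is a Lie subalgebra; the computation
$$[(\psi(Y_1),Y_1),(\psi(Y_2),Y_2)] = (\psi[Y_1,Y_2],[Y_1,Y_2])$$
reduces this immediately to the hypothesis that $\psi$ respects the bracket. Invoking the standard subalgebra--subgroup correspondence (the Lie group version of the Frobenius theorem applied to the left-invariant distribution on $G\times H$ generated by $\la{k}$), let $K\subset G\times H$ denote the unique connected Lie subgroup with Lie algebra $\la{k}$.

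Next I would analyse the restrictions of the coordinate projections $\pi_G\colon G\times H\to G$ and $\pi_H\colon G\times H\to H$ to $K$. The differential $(d\pi_H|_K)_e$ is the linear isomorphism $(\psi(Y),Y)\mapsto Y$, so $\pi_H|_K$ is a local diffeomorphism at $e$. Being a Lie group homomorphism that is locally invertible, $\pi_H|_K\colon K\to H$ is a covering map onto its image, and this image is an open subgroup of $H$; any open subgroup of the connected group $H$ is all of $H$, so the covering is surjective. This is the point at which the hypothesis that $H$ is simply connected is invoked: the covering $\pi_H|_K$ must then be trivial, hence a diffeomorphism. I would then set
$$\varphi := \pi_G\circ(\pi_H|_K)^{-1}\colon H\to G,$$
which is a composition of Lie group homomorphisms and thus itself a Lie group homomorphism. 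A direct computation of the differential yields $d\varphi_e(Y) = \pi_G(\psi(Y),Y) = \psi(Y)$, confirming $d\varphi_e = \psi$.

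For uniqueness I would use the subgroup correspondence in reverse. If $\varphi_1,\varphi_2\colon H\to G$ are two Lie group homomorphisms with $(d\varphi_1)_e = (d\varphi_2)_e = \psi$, their graphs in $H\times G$ are connected Lie subgroups both having the same Lie algebra $\{(Y,\psi(Y))\; : \;Y\in\la{h}\}$, and must therefore coincide by the uniqueness clause of the subgroup correspondence; hence $\varphi_1 = \varphi_2$. The main obstacle, as expected, is arranging that the covering $\pi_H|_K$ is one-sheeted rather than a non-trivial cover; this is precisely where simple connectedness enters, and the result genuinely fails without it, as exemplified by the absence of any Lie group homomorphism $\U{1}\to\rn$ realising the natural isomorphism between their Lie algebras.
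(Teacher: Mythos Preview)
Your proof is correct and follows the standard graph-construction argument. The paper itself does not supply a proof of this proposition: it states only that ``the proof of Proposition \ref{uniquenes lemma} relies on some notions from covering space theory, and will not be provided here,'' referring the reader to Ziller's notes. Your argument is precisely the covering-space proof the paper alludes to, so there is nothing to compare against; you have filled in what the paper deliberately omitted.

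One small point worth tightening: in the passage from ``$\pi_H|_K$ is a local diffeomorphism at $e$'' to ``$\pi_H|_K$ is a covering map,'' you should make explicit that a Lie group homomorphism whose differential at $e$ is an isomorphism is a local diffeomorphism \emph{everywhere} (left translation carries $e$ to any point), and that a surjective Lie group homomorphism with discrete kernel is a covering map. You have the ingredients, but the logical order is slightly compressed.
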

The proof of Proposition \ref{uniquenes lemma} relies on some notions from covering space theory, and will not be provided here. For the full proof, as well as an introduction to the covering space theory of Lie groups, we recommend again the lecture notes \cite{Ziller's notes} of W. Ziller.

\begin{corollary}\label{existence of 1-par subgroups}
Let $G$ be a Lie group with Lie algebra $\la{g}$. Then for each $X \in \la{g}$ there exists a unique one-parameter subgroup $\varphi_X$ such that $\dot{\varphi}_X(0) =X$.
\end{corollary}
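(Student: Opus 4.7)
The plan is to deduce this directly from Proposition \ref{uniquenes lemma} by taking $H = (\rn,+)$, which is a simply connected Lie group, so that every Lie algebra homomorphism from its Lie algebra into $\la{g}$ lifts uniquely to a Lie group homomorphism $\rn \to G$. That is exactly what a one-parameter subgroup is, so the result should fall out immediately once we supply the right Lie algebra map.

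First I would identify the Lie algebra of $(\rn,+)$. Since $\rn$ is abelian, the Lie bracket on its tangent space at $0$ is trivial, and one can take the standard generator $\partial_t|_0$ to identify this Lie algebra with $\rn$ as a vector space with zero bracket. Then, given $X \in \la{g}$, I would define the linear map
\begin{align*}
    \psi : \rn \longrightarrow \la{g}, \qquad \psi(t) = tX.
\end{align*}
This is automatically a Lie algebra homomorphism, since for any $s,t \in \rn$ we have $\psi([s,t]) = \psi(0) = 0$ and $[\psi(s),\psi(t)] = st\,[X,X] = 0$.

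Next, applying Proposition \ref{uniquenes lemma} with $H = \rn$ (which is simply connected) yields a unique Lie group homomorphism $\varphi_X : \rn \to G$ with $(d\varphi_X)_0 = \psi$. By definition, $\varphi_X$ is smooth and satisfies $\varphi_X(s+t) = \varphi_X(s)\cdot\varphi_X(t)$, so it is a one-parameter subgroup. Moreover,
\begin{align*}
    \dot{\varphi}_X(0) = (d\varphi_X)_0(1) = \psi(1) = X,
\end{align*}
which gives existence with the correct initial velocity.

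For uniqueness, suppose $\tilde{\varphi} : \rn \to G$ is another one-parameter subgroup with $\dot{\tilde{\varphi}}(0) = X$. Then $\tilde{\varphi}$ is a Lie group homomorphism whose differential at $0$ sends $1 \mapsto X$, hence coincides with $\psi$ on the whole of $\rn$ by linearity. The uniqueness clause of Proposition \ref{uniquenes lemma} then forces $\tilde{\varphi} = \varphi_X$. The only subtle point — and the reason this is stated as a corollary rather than proved from scratch — is the lifting assertion in Proposition \ref{uniquenes lemma}, which is the genuine work; here everything else is a bookkeeping exercise in identifying $\mathrm{Lie}(\rn)$ with $\rn$.
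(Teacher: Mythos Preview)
Your proof is correct and follows exactly the same approach as the paper: define the Lie algebra homomorphism $\psi(t) = tX$ from $\rn$ to $\la{g}$, apply Proposition \ref{uniquenes lemma} using that $(\rn,+)$ is simply connected, and read off $\dot{\varphi}_X(0) = (d\varphi_X)_0(1) = X$. Your version is slightly more detailed (you verify explicitly that $\psi$ is a Lie algebra homomorphism and spell out the uniqueness argument), but the substance is identical.
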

\begin{proof}
Let $\psi_X:\rn \to \la{g}$ be the Lie algebra homomorphism $\psi_X(t) = tX$. Then since $\rn$ is the Lie algebra of $(\rn,+)$ and the later is simply connected, there exists by Proposition \ref{uniquenes lemma} a unique Lie group homomorphism $\varphi:\rn \to G$ such that $(d\varphi_X)_0 = \psi_X$. Then
$$\dot{\varphi}_X(0) = (d\varphi_X)_0(1) = \psi_X(1) = X.$$
\end{proof}
\begin{definition}\label{exponential map}
Let $G$ be a Lie group with Lie algebra $\la{g}$, and define the \emph{exponential map}, $\exp: \la{g} \to G$ of G by
\begin{align*}
    \exp(X) = \varphi_X(1)
\end{align*}
where $\varphi_X$ is the unique one-parameter subgroup with $\dot{\varphi}_X(0) = X$.
\end{definition}
\begin{proposition}[\cite{Ziller's notes}]\label{properties of exp}
Let $G$ be a Lie group with Lie algebra $\la{g}$. Then the exponential map of $G$ satisfies the following properties.
\begin{enumerate}[label = (\roman*)]
    \item For each $X \in \la{g}$, $\varphi(t) = \exp(tX)$ is the one-parameter subgroup $\varphi_{X}(t)$ of $G$ with $\dot{\varphi}(0) = X$.
    \item The integral curve of $X\in \la{g}$ through the point $p\in G$ is given by $\gamma(t) = p\exp(tX)$.
    \item If $H$ is a Lie group and $\varphi: G \to H$ a Lie group homomorphism, then
    $$\varphi(\exp_{G}(X)) = \exp_{H}(d\varphi_e(X)).$$
    \item If $H\subset G$ is a Lie subgroup with Lie algebra $\la{}$, then  $X\in \la{g}$ satisfies $\exp(tX) \in H$ for all $t$ if and only if $X\in \la{h}$.
\end{enumerate}
\end{proposition}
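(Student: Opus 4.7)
The plan is to establish the four properties in order, deriving each from the preceding ones together with the uniqueness statement in Corollary \ref{existence of 1-par subgroups}. The common strategy is: whenever we want to identify a curve in a Lie group as an exponential, we check that the curve is a one-parameter subgroup with the right initial velocity, and then invoke uniqueness.

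For (i), I would fix $X \in \la{g}$ and a real number $s$, and define $\psi(t) = \varphi_X(st)$. Since $\varphi_X$ is a Lie group homomorphism from $(\rn,+)$ into $G$ and $t \mapsto st$ is a homomorphism of $(\rn,+)$, their composition $\psi$ is also a one-parameter subgroup. Its initial velocity is $\dot\psi(0) = s\,\dot\varphi_X(0) = sX$, so by the uniqueness part of Corollary \ref{existence of 1-par subgroups} we have $\psi = \varphi_{sX}$. Evaluating at $t = 1$ gives $\exp(sX) = \varphi_{sX}(1) = \varphi_X(s)$, which is exactly statement (i). Differentiating at $s = 0$ recovers $\dot\varphi(0) = X$.

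For (ii), I would use (i) together with the fact that $\varphi_X$ is the integral curve of $X$ through $e$. To see the latter, differentiating the identity $\varphi_X(t+h) = \varphi_X(t)\varphi_X(h) = L_{\varphi_X(t)}(\varphi_X(h))$ in $h$ at $h = 0$ yields
\begin{align*}
    \dot\varphi_X(t) = (dL_{\varphi_X(t)})_e(X_e) = X_{\varphi_X(t)},
\end{align*}
so $\varphi_X$ is indeed an integral curve of $X$ starting at $e$. For the general starting point $p$, set $\gamma(t) = p\exp(tX) = L_p(\varphi_X(t))$; by the chain rule and left-invariance of $X$,
\begin{align*}
    \dot\gamma(t) = (dL_p)_{\varphi_X(t)}(X_{\varphi_X(t)}) = X_{L_p(\varphi_X(t))} = X_{\gamma(t)},
\end{align*}
and $\gamma(0) = p$, so uniqueness of integral curves (Proposition \ref{local existence and uniqueness}) gives (ii).

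For (iii), I would define $\psi: \rn \to H$ by $\psi(t) = \varphi(\exp_G(tX))$. Since $\varphi$ is a group homomorphism and $t \mapsto \exp_G(tX)$ is a one-parameter subgroup of $G$, $\psi$ is a one-parameter subgroup of $H$. Its velocity at $0$ is $(d\varphi)_e((d\exp_G)_0(X)) = d\varphi_e(X)$ by (i). Uniqueness of one-parameter subgroups in $H$ then gives $\psi(t) = \exp_H(t\,d\varphi_e(X))$, and evaluation at $t = 1$ yields (iii).

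Finally, for (iv), the "if" direction is immediate from (iii) applied to the inclusion homomorphism $i:H \hookrightarrow G$: for $X \in \la{h}$, $\exp_G(tX) = i(\exp_H(tX)) \in H$. The converse requires more care because $H$ is only an immersed Lie subgroup, so $H$-smoothness of $t \mapsto \exp_G(tX)$ is not automatic from its continuity as a map into $G$. The main obstacle here is precisely this point: one needs the classical fact that a continuous one-parameter subgroup of $G$ whose image lies in an immersed Lie subgroup $H$ is automatically smooth as a map into $H$. Granting this, $t \mapsto \exp_G(tX)$ becomes a smooth curve in $H$ with velocity $X$ at $t = 0$, so $X \in T_e H \cong \la{h}$. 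I would either cite this standard result (e.g.\ from \cite{Helgason}) or, if $H$ is closed (the case relevant for the later chapters by Theorem \ref{closed subgroup}), note that smoothness follows directly from the closed subgroup theorem.
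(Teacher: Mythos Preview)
Your proof is correct and follows essentially the same route as the paper's: the reparametrisation argument for (i), left-translating the integral curve through $e$ for (ii), and uniqueness of one-parameter subgroups for (iii) all match. For (iv) you are in fact more careful than the paper, which simply asserts that the derivative of $t\mapsto\exp(tX)$ at $0$ lies in $T_eH$ without addressing the immersed-versus-embedded subtlety you flag; your remark that one needs automatic smoothness of the curve as a map into $H$ (or the closed-subgroup hypothesis) is a genuine refinement rather than a deviation.
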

\begin{proof}
To show parts (i) and (ii) we first show that the one-parameter subgroup $\varphi_X$ is an integral curve of the vector field $X$ through $e$. Indeed we have,
\begin{align*}
    \dot{\varphi}_X(s) &= \dtatzero \varphi_X(s+t)\\
    &= \dtatzero\varphi_X(s)\varphi_X(t)\\
    &= \dtatzero(L_{\varphi_X(s)})\circ \varphi_X(t)\\
    &=(dL_{\varphi_X(s)})_e(\dot{\varphi}_X(0))\\
    &= (dL_{\varphi_X(s)})_e(X)\\
    &= X_{\varphi_X(s)},
\end{align*}
as desired. Now fix $s$ and consider the two curves $\varphi_{sX}(t)$ and $\varphi_{X}(s\cdot t)$. We claim that these are both one-parameter subgroups with derivative $sX$ at $t=0$. For the first curve this is immediate by definition, while for the second curve, the chain rule gives
\begin{align*}
    \dtatzero\varphi_{X}(s\cdot t) = s\cdot(d\varphi_X)_0(1) = sX.
\end{align*}
Then by uniqueness of one-parameter subgroups the two curves are the same. Taking $s=1$ property (i) follows since then
\begin{align*}
    \exp(tX) = \varphi_{tX}(1) = \varphi_{X}(t)
\end{align*}
is a one-parameter subgroup. Since it is also by the previous computations an integral curve, and since left translations take integral curves of the left invariant vector field $X$ to integral curves of $X$, we have also established (ii). For (iii) we note that both of the curves $\varphi(\exp_G(tX))$ and $\exp_H(d\varphi_e(tX))$ are one-parameter subgroups in $H$. Hence, we need only show that they have the same derivative at identity. For the second curve we have,
\begin{align*}
    \dtatzero \exp_H(d\varphi_e(tX)) = \dtatzero \exp_H(t\cdot d\varphi_e(X)) = d\varphi_e(X), 
\end{align*}
while for the first we get,
\begin{align*}
    \dtatzero \varphi(\exp_G(tX)) = d\varphi_e(\dtatzero \exp_G(tX)) = d\varphi_e(X).
\end{align*}
By the uniqueness of one-parameter subgroups we have then established (iii). For (iv) to see that $X \in \la{h}$ implies $\exp(tX) \in H$ for all $t$ we simply apply (iii) to the inclusion map of $H$ in $G$. The other direction is straightforward since the derivative at $t = 0$ of the curve $\exp(tX)$ is certainly in $\la{h} \cong T_e H$.
\end{proof}
\begin{corollary}[\cite{Ziller's notes}]\label{flow of left invariant}
Let $G$ be a Lie group and $X \in \la{g}$ a left invariant vector field on $G$. Then the flow of $X$ is globaly defined and given by
\begin{align*}
    \varphi^X_t = R_{\exp(tX)}.
\end{align*}
\end{corollary}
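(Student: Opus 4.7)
The plan is to deduce the corollary almost immediately from part (ii) of Proposition \ref{properties of exp}, which already identifies the integral curves of a left-invariant vector field. The only thing that really needs to be checked is that this integral curve description assembles into a globally defined smooth flow, and that this flow equals right translation by $\exp(tX)$.

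First I would fix $p \in G$ and invoke Proposition \ref{properties of exp}(ii), which tells us that the unique integral curve of $X$ passing through $p$ at time $0$ is
$$\gamma_p(t) = p\cdot \exp(tX).$$
Since $\exp$ is defined on all of $\la{g}$ (so $\exp(tX)$ exists for every $t \in \rn$) and since the group multiplication $G \times G \to G$ is defined everywhere, the curve $\gamma_p$ is defined for all $t\in\rn$. In particular, the integral curves through every point exist for all time, so any local flow box $(U,\epsilon,\varphi^X)$ at $p$ extends to arbitrary $t\in \rn$; by the agreement of flow boxes on overlaps (Proposition \ref{existence of local flows}) these piece together to a globally defined smooth flow $\varphi^X: \rn \times G \to G$.

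Next I would simply recognise the right-hand side as a right translation: by the definition of $R_q$ we have
$$\gamma_p(t) = p\cdot \exp(tX) = R_{\exp(tX)}(p),$$
so $\varphi^X_t(p) = R_{\exp(tX)}(p)$ for every $p \in G$ and every $t\in\rn$, which is precisely $\varphi^X_t = R_{\exp(tX)}$.

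There is really no serious obstacle here, since the heavy lifting has been done in Proposition \ref{properties of exp}; the only point worth stressing in the writeup is the step from local existence of integral curves to a genuinely global flow, which is immediate once one notices that $t \mapsto p\exp(tX)$ is defined for all real $t$ and is smooth jointly in $(t,p)$ (smoothness in $p$ is just smoothness of right translation by the fixed element $\exp(tX)$, and joint smoothness follows from smoothness of multiplication and of $\exp$).
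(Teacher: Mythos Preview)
Your proof is correct and follows essentially the same approach as the paper: both invoke Proposition \ref{properties of exp}(ii) to identify the integral curve of $X$ through $p$ as $t\mapsto p\cdot\exp(tX)$, then observe that this equals $R_{\exp(tX)}(p)$ and is defined for all $t\in\rn$. Your version is slightly more detailed about why the flow is globally defined and smooth, but the argument is the same.
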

\begin{proof}
Set $\varphi_t = R_{\exp(tX)}$. Then $\varphi_t$ is clearly a one parameter family of diffeomorphisms of $G$ defined for all values of $t$. Fixing $p \in G$ we have
\begin{align*}
    \varphi_t(p) = p\cdot \exp(tX)
\end{align*}
which by property (ii) is the integral curve of $X$ through the point $p$. Hence $\varphi_t$ is a globaly defined local flow of the vector field $X$ as desired.
\end{proof}
\begin{remark}\label{uniqueness of exp}
By the uniqueness of one-parameter subgroups, the exponential map is determined uniquely by property (i) of Proposition \ref{properties of exp}. So that if we can find a map $\psi:T_e G \to G$ for a Lie group $G$ satisfying this property we can be sure it is the unique exponential map.
\end{remark}
With this remark in mind we shall show that the exponential map of the general linear groups is nothing but the ordinary matrix exponential, explaining the name exponential map. First we will need to recall some properties of the matrix exponential. These will also be of use when we later give more examples of matrix Lie groups
\begin{lemma}\label{matrix exp properties}
Recall that the matrix exponential map $\exp: \cn^{n\times n} \to \cn^{n \times n}$ is defined by the convergent power series
\begin{align*}
    \exp(X) = e^X = \sum_{k = 0}^{\infty} \frac{X^k}{k!}.
\end{align*}
This map then satisfies
\begin{enumerate}[label = (\roman*)]
    \item $e^{(X^t)} = (e^X)^t$,
    \item $e^{\bar{X}} = \overline{e^X}$,
    \item $\det(e^X) = e^{\tr(X)}$,
    \item If $XY = YX$, then $e^{X+Y} = (e^X)(e^Y)$.
\end{enumerate}
\end{lemma}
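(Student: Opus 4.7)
The plan is to exploit the uniform convergence on compact sets of the defining power series, which allows us to commute the series with any continuous linear operation or conjugation by an invertible matrix. The four properties are of increasing difficulty, and I would address them in roughly the order (i), (ii), (iv), (iii).

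For (i) and (ii) I would observe that matrix transposition $X \mapsto X^t$ and complex conjugation $X \mapsto \bar X$ are continuous $\mathbb{R}$-linear maps satisfying $(X^k)^t = (X^t)^k$ and $\overline{X^k} = (\bar X)^k$ for every $k$. Pushing either operation through the convergent series and swapping with the infinite sum, which is legal because both maps are continuous and the series converges in any submultiplicative norm on $\cn^{n\times n}$, gives (i) and (ii) directly.

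For (iv) I would use the commutativity assumption $XY = YX$ to invoke the binomial theorem
\begin{align*}
(X+Y)^k = \sum_{j=0}^{k} \binom{k}{j}\, X^j Y^{k-j},
\end{align*}
which is precisely where the hypothesis is needed. Substituting into the definition of $e^{X+Y}$, dividing by $k!$ and rearranging as a Cauchy product, one recognises the resulting double series as the product $e^X e^Y$. The rearrangement is justified by absolute convergence: the bound $\|X^j Y^{k-j}\| \leq \|X\|^j \|Y\|^{k-j}$ in any submultiplicative norm reduces the matter to the scalar Cauchy product for $e^{\|X\|}e^{\|Y\|}$.

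For (iii) I would pass to Schur triangular form. Every $X \in \cn^{n\times n}$ can be written $X = P T P^{-1}$ with $P \in \GL{n,\cn}$ and $T$ upper triangular, with the eigenvalues $\lambda_1,\dots,\lambda_n$ of $X$ on the diagonal. Since the power series definition immediately gives $e^{PTP^{-1}} = P e^T P^{-1}$, and since both $\det$ and $\tr$ are invariant under conjugation, it suffices to verify the identity for $T$ itself. Each $T^k$ is upper triangular with diagonal entries $\lambda_i^k$, so termwise summation shows that $e^T$ is upper triangular with diagonal entries $e^{\lambda_i}$. The determinant of an upper triangular matrix being the product of its diagonal entries, we obtain $\det e^T = \prod_i e^{\lambda_i} = e^{\sum_i \lambda_i} = e^{\tr T}$, and this transfers back to $X$. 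The main obstacle throughout is the careful justification of interchanging sums with continuous operations, but after fixing a submultiplicative matrix norm this is a routine absolute-convergence argument; property (iii) is the most delicate since it invokes Schur triangularisation, though once that tool is available the verification reduces to inspecting diagonals.
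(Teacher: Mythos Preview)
Your proof proposal is correct and follows the standard textbook route to these identities. However, the paper does not actually prove this lemma: it is stated as a recall of well-known properties of the matrix exponential, with no proof supplied, and the text moves directly to the next proposition. So there is nothing to compare against, and your argument stands on its own as a valid proof of a result the paper takes for granted.
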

\begin{proposition}\label{GL exp}
Let $G$ be either of the Lie groups $\GL{n,\rn}$ or $\GL{n,\cn}$. Then the exponential map $\exp_G$ is the ordinary matrix exponential
\begin{align*}
    \exp_G(X) = e^{X}
\end{align*}
\end{proposition}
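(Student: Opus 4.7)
The plan is to appeal directly to Remark \ref{uniqueness of exp}: it suffices to exhibit, for each $X \in \gl{n}{\mathbb{F}}$, a one-parameter subgroup $\varphi_X : \rn \to G$ with $\dot{\varphi}_X(0) = X$ and check that the candidate $\varphi_X(t) = e^{tX}$ works. By the uniqueness of such one-parameter subgroups, this will force $\exp_G(X) = \varphi_X(1) = e^X$.

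First I would verify that $t \mapsto e^{tX}$ actually takes values in $G$. The defining power series converges absolutely for every matrix, so $e^{tX}\in \mathbb{F}^{n\times n}$. Since $tX$ and $-tX$ commute, property (iv) of Lemma \ref{matrix exp properties} gives $e^{tX} e^{-tX} = e^{0} = I$, so $e^{tX}$ is invertible and hence lies in $\GL{n,\mathbb{F}}$. (Alternatively, one may invoke property (iii) of the same lemma to see that $\det(e^{tX}) = e^{t\tr X} \neq 0$.) Smoothness of $t\mapsto e^{tX}$ as a map into the ambient $\mathbb{F}^{n\times n}$ is immediate from the absolute and uniform convergence of the defining series on compact subsets of $\rn$, so by restriction it is smooth into $\GL{n,\mathbb{F}}$.

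Next I would check the homomorphism condition. For fixed $s,t\in\rn$ the matrices $sX$ and $tX$ commute, so property (iv) of Lemma \ref{matrix exp properties} yields
\begin{align*}
    e^{(s+t)X} \;=\; e^{sX+tX} \;=\; e^{sX}\,e^{tX},
\end{align*}
which is exactly the one-parameter subgroup property. Combined with smoothness, this shows $\varphi_X : t\mapsto e^{tX}$ is a genuine one-parameter subgroup of $G$.

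Finally I would compute the initial velocity. Under the canonical identification $T_e \GL{n,\mathbb{F}} \cong \mathbb{F}^{n\times n}$ used in the proof of Proposition \ref{GL a Lie group}, the derivative of a curve of matrices is just its termwise derivative. Differentiating the power series term by term at $t=0$ gives
\begin{align*}
    \dtatzero e^{tX} \;=\; \dtatzero \sum_{k=0}^{\infty} \frac{t^k X^k}{k!} \;=\; X.
\end{align*}
Hence $\varphi_X$ is the unique one-parameter subgroup of $G$ with $\dot{\varphi}_X(0) = X$, and Remark \ref{uniqueness of exp} gives $\exp_G(X) = \varphi_X(1) = e^X$, as claimed. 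The argument is essentially a verification; the only subtle point is the tangent-space identification, which we have already settled in Proposition \ref{GL a Lie group}, so no real obstacle remains.
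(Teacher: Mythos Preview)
Your proof is correct and follows essentially the same approach as the paper: both appeal to Remark \ref{uniqueness of exp}, verify that $t\mapsto e^{tX}$ is a one-parameter subgroup using property (iv) of Lemma \ref{matrix exp properties}, and compute $\dtatzero e^{tX}=X$ by differentiating the power series. Your version adds a little more detail on invertibility and smoothness, but the structure and key ingredients are the same.
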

\begin{proof}
We shall prove the proposition for $\GL{n,\cn}$ as the proof for the real case is identical. By property (iii) of Proposition \ref{matrix exp properties} the image of $\gl{n}{\cn} \cong \cn^{n\times n}$ lies in $\GL{n,\cn}$.
Therefore, by Remark \ref{uniqueness of exp} we need only show that the matrix exponential satisfies property (i) of Proposition \ref{properties of exp}. For this let $s,t \in \rn$ and $X \in \gl{n}{\cn}$. Then $sX$ and $tX$ commute, so by property (iv) of Proposition \ref{matrix exp properties} we have
\begin{align*}
    e^{(s+t)X} = e^{sX + tX} = (e^{sX})(e^{tX}).
\end{align*}
Since $e^{0\cdot X} = e^0 = id$, we have shown $e^{tX}$ is a one-parameter subgroup. For the derivative at $0$ we compute
\begin{align*}
    \dtatzero e^{tX} = \dtatzero \sum_{k=0}^{\infty} t^k \frac{X^k}{k!} = \restr{\left(X + \sum_{k=1}^{\infty}t^k \frac{X^{k+1}}{k!} \right)}{t=0} = X.
\end{align*}
Hence, the matrix exponential is the exponential map of the general linear group, as desired.
\end{proof}
\begin{remark}
The Lie group $\GL{n,\rn}$ is not connected since there can be no continuous path connecting any two matrices with determinants of opposite signs lying in $\GL{n,\rn}$. Hence for example, the matrix exponential cannot be surjective in this case.
\end{remark}
It will be of use later to give an expression for the differentials of the multiplication map $\mu$ and the inversion map $\iota$ of a Lie group $G$ at generic points of $G\times G$ and $G$, respectively. We can use the exponential map to derive such an expression.
\begin{proposition}\label{diff mul}
Let $G$ be a Lie group and $\mu: G \times G \to G$, $\iota: G \to G$ be the multiplication map and inversion map, respectively, of $G$. Then for $p, q \in G$, $(X_p,Y_q) \in T_{(p,q)}(G\times G)$ and $Z_p \in T_p G$ we have
\begin{align*}
    d\mu_{(p,q)}(X_p,Y_q) = (dR_q)_p(X_p) + (dL_p)_q(Y_q)
\end{align*}
and
\begin{align*}
   d\iota_{p}(Z_p) = -(dR_{p^{-1}})_e(dL_{p^{-1}})_p(Z_p) 
\end{align*}
\end{proposition}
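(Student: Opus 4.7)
My plan is to prove the two formulae in sequence, first the one for $d\mu$ by reducing to restrictions along coordinate slices, and then the one for $d\iota$ by differentiating the identity $\mu(g,\iota(g)) = e$.

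For the multiplication formula, I would exploit linearity of $d\mu_{(p,q)} \colon T_p G \oplus T_q G \to T_{pq}G$, splitting $(X_p,Y_q) = (X_p,0) + (0,Y_q)$ and evaluating each piece separately. Consider the smooth inclusion $i_q \colon G \to G\times G$, $g\mapsto(g,q)$. Then $\mu\circ i_q = R_q$, so the chain rule gives $d\mu_{(p,q)}(X_p,0) = (dR_q)_p(X_p)$. By the symmetric argument applied to $j_p \colon g\mapsto (p,g)$, using $\mu\circ j_p = L_p$, one obtains $d\mu_{(p,q)}(0,Y_q) = (dL_p)_q(Y_q)$. Summing yields the first formula.

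For the inversion formula, I would apply the chain rule to the constant map $g \mapsto \mu(g,\iota(g)) = e$ at $g=p$, which gives $d\mu_{(p,p^{-1})}(Z_p, d\iota_p(Z_p)) = 0$. The multiplication formula just proved then produces
\[ (dR_{p^{-1}})_p(Z_p) + (dL_p)_{p^{-1}}(d\iota_p(Z_p)) = 0. \]
Since $L_{p^{-1}}$ is the inverse diffeomorphism of $L_p$, the differential $(dL_p)_{p^{-1}}$ has inverse $(dL_{p^{-1}})_e$; applying it to both sides yields $d\iota_p(Z_p) = -(dL_{p^{-1}})_e(dR_{p^{-1}})_p(Z_p)$. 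To match the stated form $-(dR_{p^{-1}})_e(dL_{p^{-1}})_p(Z_p)$, I would invoke that left and right translations commute as self-maps of $G$ (since $L_a \circ R_b(g) = agb = R_b\circ L_a(g)$), so their differentials commute as well once the base points are correctly aligned.

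The entire argument is a careful application of the chain rule; there is no conceptual obstacle, but the one point requiring genuine attention is bookkeeping of base points in the compositions of differentials, and in particular recognising at the final step that the two apparent orderings of $(dL_{p^{-1}})$ and $(dR_{p^{-1}})$ agree by the commutativity of left and right translations.
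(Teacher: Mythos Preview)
Your proposal is correct. For the multiplication formula, your argument via the inclusion maps $i_q$ and $j_p$ is essentially the paper's approach in different clothing: the paper realises the same coordinate-slice restrictions by differentiating the curves $t \mapsto (p\exp(tX), q)$ and $t \mapsto (p, q\exp(tY))$, which amounts to recognising $\mu \circ i_q = R_q$ and $\mu \circ j_p = L_p$ just as you do.

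For the inversion formula, your route genuinely differs from the paper's. The paper computes directly via the exponential map: picking $Z \in \mathfrak{g}$ with $Z_p = (dL_p)_e(Z)$, it differentiates $t \mapsto (p\exp(tZ))^{-1} = \exp(-tZ)\cdot p^{-1}$ at $t=0$ to obtain $-(dR_{p^{-1}})_e(Z) = -(dR_{p^{-1}})_e(dL_{p^{-1}})_p(Z_p)$ in one stroke, with no rearrangement needed. Your argument instead differentiates the identity $\mu(g, \iota(g)) = e$ and then invokes the commutativity of left and right translations to reorder the differentials. Your approach is more elementary in that it never touches the exponential map and would go through in any smooth group; the paper's approach is natural given that $\exp$ has just been developed, and it lands directly on the stated form without the extra commutation step.
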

\begin{proof}
We shall proceed by calculating the differential when only one of the vectors ia nonzero, and then use the linearity of the differential to get a general formula. If $Y_q = 0$ and $X \in \la{g}$ is such that $X_p = (dL_p)_e(X)$ we have
\begin{align*}
    d\mu_{(p,q)}(X_p,0) &= \dtatzero [\mu(p\cdot \exp(tX),q)]\\
    &= \dtatzero[ p \cdot \exp(tX)\cdot q]\\
    &= \dtatzero[(R_q \circ L_p)(\exp(tX))]\\
    &= (dR_q)_p(X_p).
\end{align*}
If instead $X_p = 0$ and $Y \in \la{g}$ is such that $Y_q = (dL_q)_e(Y)$ we get
\begin{align*}
    d\mu_{(p,q)}(0,Y_q) &= \dtatzero[\mu(p,q \cdot \exp(tX))]\\
    &= \dtatzero[p\cdot q \cdot \exp(tX)]\\
    &= \dtatzero[(L_p \circ L_q)(\exp(tX))]\\
    &= (dL_p)_q(Y_q).
\end{align*}
Then by linearity we have
\begin{align*}
    d\mu_{(p,q)}(X_p,Y_q) = (dR_q)_p(X_p) + (dL_p)_q(Y_q),
\end{align*}
 so that we have established the first formula. For the second formula, let $Z\in \la{g}$ be such that $Z_p = (dL_p)_e(Z)$. Then
\begin{align*}
    d\iota_p(Z_p) &= \dtatzero (p\cdot\exp{tZ})\\
    &= \dtatzero \exp(-tZ)\cdot p^{-1}\\
    &= (dR_{p^{-1}})_e(-Z)\\
    &= -(dR_{p^{-1}})_e(dL_{p^{-1}})_p(Z_p)
\end{align*}
as desired.
\end{proof}
We now introduce the important adjoint representations of Lie groups and Lie algebras
\begin{definition}\label{Adjoint def}
Let $G$ be a Lie group with Lie algebra $\la{g}$. The conjugation map $C_p: q \mapsto pqp^{-1}$ is a smooth Lie group automorphim of $G$ for each $q \in G$. We use this to define an automorphism $\Ad{q}$ of $\la{g}$ by
\begin{align*}
    \Ad{q}(X) = (dC_q)_e(X).
\end{align*}
We call the map $\Ad: G \to \GL{\la{g}}$ given by
\begin{align*}
    \Ad{}: q \mapsto \Ad{q}
\end{align*}
the \emph{adjoint representation of $G$}.
\end{definition}
\begin{definition}\label{adjoint def}
Let $\la{g}$ be a Lie algebra. Let $X \in \la{g}$ and denote by $\ad_X$ the vector space endomorphism 
\begin{align*}
    \ad_X(Y) = [X,Y].
\end{align*}
We call the map $\ad{}: \la{g} \to \mathrm{End}(\la{g}) \cong \mathfrak{gl}(\la{g})$ given by
\begin{align*}
    \ad: X  \mapsto \ad_X
\end{align*}
the \emph{adjoint representaion of $\la{g}$}.
\end{definition}
\begin{proposition}
Let $G$ be a Lie group with Lie algebra $\la{g}$. Then $\Ad{}: G \to \mathrm{Aut}(\la{g})$ is a Lie group homomorphims, and $\ad: \la{g} \to \mathrm{End}(\la{g})$ is a Lie algebra homomorphism.
\end{proposition}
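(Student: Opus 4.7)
The plan is to treat the two claims separately, since the statement about $\ad$ admits a direct argument from the Jacobi identity that does not rely on the first claim as input.

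For $\Ad$, I would first verify that $\Ad_q \in \mathrm{Aut}(\la{g})$ for every $q\in G$. Since the conjugation $C_q$ is a Lie group automorphism of $G$, the preceding proposition on differentials of Lie group homomorphisms gives that $(dC_q)_e = \Ad_q$ is a Lie algebra homomorphism; invertibility follows from the fact that $C_{q^{-1}}$ is a two-sided inverse of $C_q$, so $\Ad_{q^{-1}}$ is a two-sided inverse of $\Ad_q$. Next I would check the homomorphism property $\Ad_{pq} = \Ad_p \circ \Ad_q$. This reduces, via the chain rule applied at the identity, to the obvious identity $C_{pq} = C_p \circ C_q$ of maps $G \to G$.

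What remains for the first claim is the smoothness of $q \mapsto \Ad_q$ as a map into $\GL{\la{g}}$. The cleanest route is to use Proposition \ref{diff mul}, which, combined with the decomposition $C_q = R_{q^{-1}} \circ L_q$ and the chain rule, yields
$$\Ad_q(X) = (dR_{q^{-1}})_q \circ (dL_q)_e(X).$$
Smoothness in $q$ then reduces to smoothness of the maps $q \mapsto (dL_q)_e(X)$ and $q \mapsto (dR_{q^{-1}})_q$, both of which are consequences of the smoothness of the multiplication and inversion maps on $G$.

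For $\ad$, linearity in $X$ is immediate from the bilinearity of the Lie bracket. The content of the statement is the identity $\ad_{[X,Y]} = [\ad_X, \ad_Y]$, where the bracket on the right denotes the commutator of endomorphisms of $\la{g}$. Evaluated on an arbitrary $Z\in\la{g}$, this unpacks to $[[X,Y],Z] = [X,[Y,Z]] - [Y,[X,Z]]$, which follows directly from antisymmetry together with the Jacobi identity. I expect the only real obstacle to be the smoothness part of the first claim: it is tempting to argue merely from the smoothness of each individual $C_q$, but what is actually needed is joint smoothness of conjugation in both variables, which the displayed formula above makes manifest.
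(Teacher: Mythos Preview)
Your proposal is correct and follows essentially the same route as the paper: the homomorphism property of $\Ad$ is deduced from $C_{pq} = C_p \circ C_q$ via the chain rule, and the Lie algebra homomorphism property of $\ad$ is unpacked from the Jacobi identity. The paper's proof is considerably terser than yours---it records only those two observations and does not explicitly verify that each $\Ad_q$ lands in $\mathrm{Aut}(\la{g})$, nor does it discuss smoothness of $q \mapsto \Ad_q$; your treatment of these points is a welcome addition rather than a departure in method.
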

\begin{proof}
The conjugation map of $G$ satisfies $C_p\circ C_q = C_{pq}$, so that by the chain rule $\Ad{p}\Ad{q} = \Ad{pq}$. For the Lie algebra adjoint we can write the jacobi identity as
\begin{align*}
    \ad_X\circ \ad_Y(Z) - \ad_Y \circ \ad_X (Z) - \ad_{[X,Y]}(Z) = 0,
\end{align*}
for all $X, Y$ and $Z \in \la{g}$. Hence,
\begin{align*}
    \ad([X,Y]) = [\ad_X,\ad_Y],
\end{align*}
as desired.
\end{proof}
As the names suggest the Lie group and Lie algebra adjoint representations are related.
\begin{proposition}[\cite{Ziller's notes}]\label{adjoint properties}
Let $G$ be a Lie group with Lie algebra $\la{g}$. Let $X \in \la{g}$ and $p \in G$. Then the adjoint representations of $G$ and $\la{g}$ satisfy the following properties.
\begin{enumerate}[label = (\roman*)]
    \item $d(\Ad{})_e(X) = \ad_X$.
    \item $\Ad{\exp(X)} = e^{\ad_X}$.
    \item $\exp(\Ad{p}(X)) = p\exp(X)p^{-1}$.
\end{enumerate}
\end{proposition}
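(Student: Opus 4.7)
The plan is to prove the three properties in the order (iii), (i), (ii), establishing (iii) and (ii) by direct appeals to Proposition \ref{properties of exp} (iii) while doing the genuine work in (i). For (iii), note that the conjugation map $C_p: G \to G$ is a Lie group homomorphism with $(dC_p)_e = \Ad{p}$ by the very definition of the adjoint representation. Applying Proposition \ref{properties of exp} (iii) to $C_p$ then gives
\[
p\exp(X)p^{-1} = C_p(\exp(X)) = \exp\bigl((dC_p)_e(X)\bigr) = \exp(\Ad{p}(X)).
\]

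The heart of the proof is (i), and the main obstacle there is to recognise the differential of $\Ad{}$ at the identity as a Lie bracket. I would begin by differentiating $\Ad{}$ along the curve $t \mapsto \exp(tX)$. Since conjugation factors as $C_p = R_{p^{-1}} \circ L_p$, the chain rule together with the left-invariance of $Y \in \la{g}$ yields
\[
\Ad{\exp(tX)}(Y) = (dR_{\exp(-tX)})_{\exp(tX)}(Y_{\exp(tX)}).
\]
Now Corollary \ref{flow of left invariant} tells us that the global flow of the left-invariant vector field $X$ is $\varphi^X_t = R_{\exp(tX)}$, so the right-hand side above is precisely the expression that appears in the flow-theoretic formula for $[X,Y]_e$ from Proposition \ref{Lie bracket from flow} (taken at $p = e$, where $\varphi^X_t(e) = \exp(tX)$ and $d\varphi^X_{-t} = dR_{\exp(-tX)}$). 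Differentiating at $t=0$ thus produces
\[
d(\Ad{})_e(X)(Y) = \dtatzero \Ad{\exp(tX)}(Y) = [X,Y]_e = \ad_X(Y),
\]
which is (i).

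With (i) in hand, property (ii) follows from yet another application of Proposition \ref{properties of exp} (iii), this time to the Lie group homomorphism $\Ad{}: G \to \GL{\la{g}}$. Using (i) to compute its differential at the identity, and Proposition \ref{GL exp} to identify $\exp_{\GL{\la{g}}}$ with the matrix exponential, we obtain
\[
\Ad{\exp(X)} = \exp_{\GL{\la{g}}}\bigl(d(\Ad{})_e(X)\bigr) = \exp_{\GL{\la{g}}}(\ad_X) = e^{\ad_X}.
\]
Apart from (i), the argument is essentially bookkeeping: the real content is that the decomposition $C_p = R_{p^{-1}} \circ L_p$ is exactly what is needed to align the differential of $\Ad{}$ with the flow-formula for the Lie bracket provided by Corollary \ref{flow of left invariant} and Proposition \ref{Lie bracket from flow}.
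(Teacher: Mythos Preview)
Your proof is correct and follows essentially the same approach as the paper: part (i) is handled by differentiating $\Ad{\exp(tX)}(Y)$, decomposing conjugation as $R_{p^{-1}}\circ L_p$, using left-invariance, and invoking the flow formula for the Lie bracket (Proposition~\ref{Lie bracket from flow} together with Corollary~\ref{flow of left invariant}), while (ii) and (iii) are both deduced from Proposition~\ref{properties of exp}~(iii). The only cosmetic difference is that you present the parts in the order (iii), (i), (ii) and are slightly more explicit about invoking Proposition~\ref{GL exp} for (ii), whereas the paper proves (i) first and then remarks that (ii) and (iii) follow.
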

\begin{proof}
For (i) let $Y \in \la{g}$. We then have
\begin{align*}
    d(\Ad{})_e(X)(Y) &= \dtatzero \Ad{\exp(tX)}(Y)\\
    &= \dtatzero (dR_{\exp(-tX)})_{\exp(tX)}(dL_{\exp(tX)})_e(Y)\\
    &= \dtatzero(dR_{\exp(-tX)})_{\exp(tX)}(Y_{\exp(tX)})\\
    &= [X,Y],
\end{align*}
where in the last step we use the formula for the Lie bracket from Proposition \ref{Lie bracket from flow}, since the flow of $X$ is given by $R_{\exp{tX}}$. Both properties (ii) and (iii) then follow from property (iii) in Proposition \ref{properties of exp}.
\end{proof}
\begin{proposition}\label{adjoint on matrix groups}
The adjoint representations of $\GL{n,\rn}$ and $\GL{n,\cn}$ are 
\begin{align*}
    \Ad{p}(X) = pXp^{-1}
\end{align*}
for $p$ in $\GL{n,\rn}$ or $\GL{n,\cn}$ and $X$ in $\gl{n}{\rn}$ or $\gl{n}{\cn}$.
\end{proposition}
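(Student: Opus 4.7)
The plan is to leverage the fact that for the general linear group, the conjugation map $C_p$ is (the restriction of) a linear map on the ambient matrix algebra, so computing its differential reduces to reading off the formula. First I would invoke Proposition \ref{GL a Lie group}, which identifies the tangent space $T_e\GL{n,\mathbb{F}}$ at the identity with the full matrix space $\mathbb{F}^{n\times n}$ (for $\mathbb{F}\in\{\rn,\cn\}$), so that a tangent vector $X\in\mathfrak{gl}_n(\mathbb{F})$ is literally an $n\times n$ matrix.

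Next I would observe that for fixed $p\in\GL{n,\mathbb{F}}$, the map $\Psi_p:\mathbb{F}^{n\times n}\to\mathbb{F}^{n\times n}$ defined by $\Psi_p(Y)=pYp^{-1}$ is $\mathbb{F}$-linear in $Y$, and that its restriction to $\GL{n,\mathbb{F}}$ coincides with the conjugation map $C_p$ from Definition \ref{Adjoint def}. Since a linear map between finite-dimensional vector spaces equals its own differential (under the canonical identification of the tangent space at any point with the vector space itself), we conclude that
\begin{align*}
\Ad{p}(X) \;=\; (dC_p)_e(X) \;=\; (d\Psi_p)_e(X) \;=\; \Psi_p(X) \;=\; pXp^{-1},
\end{align*}
as required.

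As a sanity check, one can verify this directly from the one-parameter subgroup description: by Proposition \ref{GL exp} the curve $\gamma(t)=e^{tX}$ is a smooth curve in $\GL{n,\mathbb{F}}$ with $\gamma(0)=e$ and $\dot{\gamma}(0)=X$, whence
\begin{align*}
\Ad{p}(X) \;=\; \dtatzero C_p(e^{tX}) \;=\; \dtatzero\, p\, e^{tX}\, p^{-1} \;=\; p\,\Bigl(\dtatzero e^{tX}\Bigr)\,p^{-1} \;=\; pXp^{-1}.
\end{align*}
There is no real obstacle here; the only subtlety is the implicit identification of $T_e\GL{n,\mathbb{F}}$ with $\mathbb{F}^{n\times n}$, which must be used consistently on both sides of the equality.
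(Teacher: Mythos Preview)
Your proof is correct and follows the same idea as the paper's: the conjugation map $C_p$ is the restriction of a linear map on the ambient matrix space, so its differential at the identity is itself. The paper compresses this into a single sentence (``This follows immediately by the linearity of matrix multiplication''), whereas you spell out the identification $T_e\GL{n,\mathbb{F}}\cong\mathbb{F}^{n\times n}$ and add the exponential-curve sanity check, but the underlying argument is identical.
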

\begin{proof}
This follows immediately by the linearity of matrix multiplication.
\end{proof}
\begin{definition}
Let $\la{g}$ be a Lie algebra. Then the \emph{Killing form} on $\la{g}$ is the symmetric bilinear form given by 
\begin{align*}
    B(X,Y) = \tr(\ad_X \circ \ad_Y).
\end{align*}
We note that taking the trace here is a well defined operation, as $\ad_X$ and $\ad_Y$ are linear maps on a finite dimensional vector space. Hence they can be represented by matrices given some basis for $\la{g}$ and the trace is then independent of the particular choice of basis.
\end{definition}
\begin{proposition}\label{Killing form properties}
Let $G$  be a Lie group with Lie algebra $\la{g}$ and let $B$ be the Killing form on $\la{g}$. Then,  for all $X,Y, Z \in \la{g}$ and all $p \in G$,  $B$ satisfies
\begin{enumerate}[label = (\roman*)]
    \item $B(\Ad{p}(X),\Ad{p}(Y)) = B(X,Y)$,
    \item $B(\ad_Z X, Y) + B(X,\ad_Z Y) = 0$,
    \item $\ker(B) = \{ X \in \la{g} \ | \ B(X,Y) = 0 \ \text{for all } Y \in \la{g}\}$, is an ideal in $\la{g}$.
\end{enumerate}
\end{proposition}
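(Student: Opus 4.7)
The plan is to establish the three parts in order, with each leveraging the preceding one.

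For part (i), the key observation is that $\Ad{p}: \la{g} \to \la{g}$ is a Lie algebra automorphism for every $p \in G$, as noted in the preceding proposition where $\Ad{}$ is shown to map into $\mathrm{Aut}(\la{g})$. Consequently $\Ad{p}([X,Y]) = [\Ad{p}(X), \Ad{p}(Y)]$, which rearranges into the conjugation identity
\begin{align*}
    \ad_{\Ad{p}(X)} = \Ad{p}\circ \ad_X \circ \Ad{p}^{-1}.
\end{align*}
Composing two such expressions and taking traces, cyclic invariance of the trace immediately yields $B(\Ad{p}(X), \Ad{p}(Y)) = B(X, Y)$.

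For part (ii), the natural strategy is to linearise (i) at the identity of $G$. Substituting $p = \exp(tZ)$ and invoking Proposition \ref{adjoint properties} (ii) to write $\Ad{\exp(tZ)} = e^{t\ad_Z}$, I would differentiate
\begin{align*}
    B(e^{t\ad_Z} X, e^{t\ad_Z} Y) = B(X, Y)
\end{align*}
at $t = 0$. The bilinearity of $B$ together with the chain rule then produces the stated derivation identity $B(\ad_Z X, Y) + B(X, \ad_Z Y) = 0$.

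Part (iii) follows almost immediately from (ii). That $\ker(B)$ is a linear subspace is clear from the bilinearity of $B$. To see that it is an ideal, I would take $X \in \ker(B)$ and arbitrary $Z \in \la{g}$, then verify for every $Y \in \la{g}$ that
\begin{align*}
    B([Z,X], Y) = B(\ad_Z X, Y) = -B(X, \ad_Z Y) = 0,
\end{align*}
the last equality using $X \in \ker(B)$. Hence $[Z,X] \in \ker(B)$, so $\ker(B)$ is indeed an ideal. I do not foresee any genuine obstacle; the only matter of strategy is to recognise that (ii) is the infinitesimal form of (i) and (iii) is an immediate consequence of (ii), so these should be proved in the order given rather than by an independent computation using only the Jacobi identity and trace cyclicity (which also works but is less transparent).
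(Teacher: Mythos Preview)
Your proposal is correct and follows essentially the same route as the paper: part (i) via the conjugation identity $\ad_{\Ad{p}(X)} = \Ad{p}\circ\ad_X\circ\Ad{p}^{-1}$ and trace cyclicity (the paper phrases this for a general automorphism $A$, but the argument is identical), part (ii) by differentiating (i) along $p = \exp(tZ)$, and part (iii) as an immediate consequence of (ii). There is nothing to add.
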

\begin{proof}
We shall show (i) by proving the more general formula
\begin{align*}
    B(AX,AY) = B(X,Y)
\end{align*}
for each Lie algebra automorphism $A$ of $\la{g}$. To see this we first note that 
$$\ad_AX = A \circ \ad_X \circ A^{-1}$$
since,
\begin{align*}
    [AX,Y] = A([X,A^{-1}Y]),
\end{align*}
for all $X, Y \in \la{g}$. Then we have
\begin{align*}
    B(AX,AY) &= \tr(\ad_{AX} \circ \ad_{AY})\\
    &= \tr(A \circ \ad_X \circ \ad_Y \circ A^{-1})\\
    &= \tr(\ad_X \circ \ad_Y)\\
    &= B(X,Y).
\end{align*}
This since the trace is invariant under conjugation by isomorphisms. Since $\Ad{p}$ is an automorphism of $\la{g}$, this proves (i). For the second equation we make use of property (ii) of Proposition \ref{adjoint properties} and properties of the matrix exponential to get
\begin{align*}
    \dtatzero \Ad{\exp{tZ}} = \dtatzero e^{t\ad_Z} = \ad_Z,
\end{align*}
for  $Z \in \la{g}$. Then, using (i),
\begin{align*}
    0 &= \dtatzero B(X,Y)\\
    &= \dtatzero B(\Ad{\exp{tZ}}X,\Ad{\exp{tZ}}Y)\\
    &= B(\ad_Z X,Y) + B(X, \ad_Z Y)
\end{align*}
which proves (ii). For (iii) assume that $X \in \ker(B)$ and $Y \in \la{g}$. Then for all $Z \in \la{g}$ we have
\begin{align*}
    B([X,Y],Z) &= -B(\ad_Y(X),Z)\\
    &= B(X,\ad_Y(Z))\\
    &= 0
\end{align*}
by (ii) and since $X \in \ker(B)$. Thus $[Y,X] \in \ker(B)$ as desired. 
\end{proof}
The Killing form is in general not non-degenerate, however it is for many important Lie groups, and we then have the following definition.
\begin{definition}
Let $\la{g}$ be a Lie algebra. Then $\la{g}$ is said to be \emph{semisimple} if the Killing form of $\la{g}$ is non-degenerate.
\end{definition}

\begin{definition}
Let $\la{g}$ be a Lie algebra. Then we call $\la{g}$ a \emph{compact Lie algebra} if there exists a compact Lie group with Lie algebra $\la{g}$.
\end{definition}

The following powerful result about compact Lie groups and Lie algebras we will state without a full proof.
\begin{theorem}[\cite{Knapp,Ziller's notes}]\label{compact bi invariant}
Let $G$ be a compact Lie group with Lie algebra $\la{g}$. Then 
\begin{enumerate}[label = (\roman*)]
    \item There exists a \emph{bi-invariant} Riemannian metric $g$ on $G$, i.e. a metric such that all left and right translations by elements of $G$ are isometries.
    \item There exists an inner product $\ip{\cdot}{\cdot}$ on $\la{g}$ such that $\ad_X$ is skew-symmetric for each $X \in \la{g}$. 
\end{enumerate}
\end{theorem}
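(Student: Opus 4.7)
The plan is to prove both statements simultaneously by producing an $\Ad$-invariant inner product on $\la{g}$ and then extending it to a bi-invariant metric via left translations, exactly as in Proposition \ref{left invariant metrics}. The translation between the two statements is essentially the observation that a left-invariant metric is right-invariant if and only if the inner product it induces at $e$ is $\Ad$-invariant.

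First, I would start with any inner product $\ip{\cdot}{\cdot}_0$ on $\la{g}$ (this exists because $\la{g}$ is a finite-dimensional real vector space). The key input is the existence of a normalized bi-invariant Haar measure $d\mu$ on the compact Lie group $G$; this is a standard result I would cite rather than prove. Using $d\mu$, I define the averaged bilinear form
\begin{align*}
    \ip{X}{Y} = \int_G \ip{\Ad{g}(X)}{\Ad{g}(Y)}_0 \, d\mu(g).
\end{align*}
Positive-definiteness and symmetry are inherited from $\ip{\cdot}{\cdot}_0$ (positivity uses that each integrand is continuous and non-negative, with $\ip{\Ad{g}X}{\Ad{g}X}_0 > 0$ for $X \neq 0$ since $\Ad{g}$ is an isomorphism). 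For any $h \in G$, translation-invariance of $d\mu$ under $g \mapsto gh$ combined with the chain-rule identity $\Ad{gh} = \Ad{g}\circ\Ad{h}$ gives $\ip{\Ad{h}X}{\Ad{h}Y} = \ip{X}{Y}$, so $\ip{\cdot}{\cdot}$ is $\Ad$-invariant, proving the inner-product part of statement (ii) for suitable skew-symmetry (to be checked next).

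Next, for part (ii) proper, I differentiate $\Ad$-invariance. Fix $Z \in \la{g}$, set $g = \exp(tZ)$, and use $d(\Ad{})_e = \ad$ (property (i) of Proposition \ref{adjoint properties}). Then
\begin{align*}
    0 = \dtatzero \ip{\Ad{\exp(tZ)}X}{\Ad{\exp(tZ)}Y} = \ip{\ad_Z X}{Y} + \ip{X}{\ad_Z Y},
\end{align*}
so $\ad_Z$ is skew-symmetric with respect to $\ip{\cdot}{\cdot}$, as claimed.

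Finally, for part (i), I invoke Proposition \ref{left invariant metrics} to extend $\ip{\cdot}{\cdot}$ to a left-invariant Riemannian metric $g$ on $G$. It remains to check right-invariance: for $p, q \in G$ and $X_p, Y_p \in T_p G$, using left-invariance twice and the definition $\Ad{q^{-1}} = (dL_{q^{-1}})_e \circ (dR_q)_e$ (which follows from $C_{q^{-1}} = L_{q^{-1}} \circ R_q$), I reduce $g_{pq}((dR_q)_p X_p,(dR_q)_p Y_p)$ to $\ip{\Ad{q^{-1}}\tilde X}{\Ad{q^{-1}}\tilde Y}$ for suitable $\tilde X, \tilde Y \in \la{g}$, which equals $\ip{\tilde X}{\tilde Y} = g_p(X_p,Y_p)$ by $\Ad$-invariance. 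The main obstacle is really just the existence of Haar measure on a compact Lie group, which is the standard analytic input and is conventionally imported from integration theory; once that is granted, everything else is formal manipulation with the exponential map and the adjoint representation.
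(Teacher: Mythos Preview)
Your proof is correct and in fact more complete than what the paper presents. The paper does not give a full proof: it defers the existence statement (i) to \cite{Ziller's notes} and only shows that (i) implies (ii). Concretely, the paper establishes the equivalence between a bi-invariant metric on $G$ and an $\Ad{}$-invariant inner product on $\la{g}$, and then differentiates $\Ad{}$-invariance along $t\mapsto\exp(tZ)$ to obtain skew-symmetry of $\ad_Z$---exactly the computation you do for part (ii). Your argument shares both of these ingredients but supplies the missing piece, namely the actual construction of an $\Ad{}$-invariant inner product via Haar averaging, which the paper omits. So the overlap with the paper's partial proof is essentially complete; what you add is the standard existence argument that the paper outsources to the reference.
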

\begin{proof}
For a full proof of the statement in \cite{Ziller's notes}. We shall here show that (i) implies (ii). First we claim that the existence of a bi-invariant metric $g$ is equivalent to the existence of an $\Ad{p}$-invariant inner product $\ip{\cdot}{\cdot} = g_e(\cdot,\cdot)$, for each $p\in G$. To see this first assume $g$ is a bi-invariant metric. Then for $X, Y \in \la{g}$ and $p \in G$ we have
\begin{align*}
    g_e(\Ad{p}(X),\Ad{p}(Y)) &= g_e((dL_p)_{p^{-1}}(dR_{p^{-1}})_e(X),(dL_p)_{p^{-1}}(dR_{p^{-1}})_e(Y))\\
    &= g_{p^{-1}}((dR_{p^{-1}})_e(X),(dR_{p^{-1}})_e(Y))\\
    &= g_e(X,Y),
\end{align*}
so that $\ip{\cdot}{\cdot}$ is $\Ad{p}$-invariant. Now let $\ip{\cdot}{\cdot}$ be $\Ad{p}$-invariant for each $p \in G$. Extend this inner product to a left invariant metric as in Proposition \ref{left invariant metrics}. Then for $X, Y \in T_e G$ and $p \in G$ we get
\begin{align*}
    g_p((dR_p)_e(X),(dR_p)_e(Y)) &= \ip{(dL_{p^{-1}})_p(dR_p)_e(X)}{(dL_{p^{-1}})_p(dR_p)_e(Y)}\\
    &= \ip{\Ad{p^{-1}}(X)}{\Ad{p^{-1}}(Y)}\\
    &= \ip{X}{Y}\\
    &= g_e(X,Y)
\end{align*}
so that $g$ is also right invariant. As $\ad_X = d(Ad)_e(X)$ we may proceed exactly as in Proposition \ref{Killing form properties} with $g_e$ in place of $B$ to show that $\ad_X$ is skew symmetric.
\end{proof}
\begin{corollary}\label{compact negative def}
Let $\la{g}$ be a compact semisimple Lie algebra, then its Killing form is negative definite. 
\end{corollary}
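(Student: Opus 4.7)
The plan is to combine the existence of an $\ad$-invariant inner product (part (ii) of Theorem \ref{compact bi invariant}) with the non-degeneracy assumption built into semisimplicity. The key observation is that skew-symmetry of $\ad_X$ forces $\tr(\ad_X \circ \ad_X)$ to be non-positive, and equality is controlled by the kernel of $\ad$.

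First, I would invoke Theorem \ref{compact bi invariant}(ii) to fix an inner product $\langle \cdot, \cdot \rangle$ on $\mathfrak{g}$ with respect to which every $\ad_X$ is skew-symmetric. Choosing an orthonormal basis $\{E_1, \ldots, E_n\}$ of $\mathfrak{g}$, the operator $\ad_X$ is represented by a skew-symmetric matrix $A = (A_{ij})$, i.e.\ $A^{t} = -A$. Then
\begin{align*}
B(X,X) \;=\; \tr(\ad_X \circ \ad_X) \;=\; \tr(A^2) \;=\; -\tr(A^{t}A) \;=\; -\sum_{i,j} A_{ij}^{\,2} \;\leq\; 0,
\end{align*}
with equality if and only if $A = 0$, that is, if and only if $\ad_X = 0$, which in turn means $X$ lies in the center $\mathfrak{z}(\mathfrak{g}) = \{X \in \mathfrak{g} \mid [X,Y] = 0 \text{ for all } Y \in \mathfrak{g}\}$.

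It remains to upgrade ``negative semi-definite'' to ``negative definite'', and this is the one place where the semisimplicity hypothesis gets used. Suppose $B(X,X) = 0$. By the step above, $\ad_X = 0$, so for every $Y \in \mathfrak{g}$ we have $\ad_X \circ \ad_Y = 0$ and hence $B(X,Y) = \tr(\ad_X \circ \ad_Y) = 0$. Thus $X \in \ker(B)$. By the definition of semisimple (the Killing form is non-degenerate), $\ker(B) = \{0\}$, and so $X = 0$. This forces $B(X,X) < 0$ for every nonzero $X$, proving the corollary.

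I do not anticipate a real obstacle here: the heavy lifting was done by Theorem \ref{compact bi invariant}. The only subtle point is making sure to distinguish the two conditions ``$\ad_X = 0$'' (from the equality case of the trace bound) and ``$X \in \ker B$'' (from semisimplicity), and to chain them together via the elementary observation that $\ad_X = 0$ immediately implies $B(X,\cdot) \equiv 0$.
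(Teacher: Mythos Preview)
Your proof is correct and follows essentially the same approach as the paper's: use Theorem \ref{compact bi invariant}(ii) to represent $\ad_X$ by a skew-symmetric matrix so that $B(X,X) = -\tr([\ad_X][\ad_X]^t) \leq 0$, then invoke semisimplicity to rule out the equality case via $\ad_X = 0 \Rightarrow X \in \ker B$. Your write-up is a bit more explicit about the intermediate steps, but the argument is the same.
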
\label{Killing form compact}
\begin{proof}
Let $X \in \la{g}$. By Theorem \ref{compact bi invariant}, there exists a basis for $\la{g}$ such that $[\ad_X]^t = -[\ad_X]$. Then 
\begin{align*}
    B(X,X) = \tr([\ad_X]^2) = -\tr([\ad_X][\ad_X]^t) \leq 0
\end{align*}
With $B(X,X) = 0$ if and only if $\ad_X = 0$, in which case $B(X,Y) = 0$ for all $Y\in \la{g}$, contradicting semisimplicity.
\end{proof}
From Corollary \ref{compact negative def} it follows that if $G$ is a compact semisimple Lie group and $B$ the killing form of its Lie algebra $\la{g}$, then $-B$ is an inner product on $\la{g}$. Since $B$ is $\Ad{}$-invariant, so is $-B$. Hence an explicit choice of a Bi-invariant metric on $G$ is given by the tensor obtained by left-translating $-B$.
\begin{corollary}\label{inversion isometry}
Let $(G,g)$ be a compact Lie group equipped with a bi-invariant metric. Then the inversion map $\iota: G \to G$ is an isometry.
\end{corollary}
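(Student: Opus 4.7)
The plan is to prove that $\iota$ pulls back the metric $g$ to itself by a direct chain of computations using the explicit formula for $d\iota_p$ obtained earlier, together with the hypothesis that $g$ is invariant under both left and right translations.

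First I would invoke Proposition \ref{diff mul}, which gives the key formula
$$
d\iota_p(Z_p) = -(dR_{p^{-1}})_e (dL_{p^{-1}})_p (Z_p)
$$
for every $p \in G$ and $Z_p \in T_p G$. The strategy is then to show, for arbitrary $X_p, Y_p \in T_p G$, that
$$
g_{p^{-1}}\bigl(d\iota_p(X_p), d\iota_p(Y_p)\bigr) = g_p(X_p, Y_p).
$$
The overall minus signs in the formula for $d\iota_p$ cancel by bilinearity of $g$, so one is left to analyze the composition $(dR_{p^{-1}})_e \circ (dL_{p^{-1}})_p$.

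Next, since by hypothesis $g$ is bi-invariant, both $L_{p^{-1}}$ and $R_{p^{-1}}$ are isometries of $G$. I would apply right-invariance first to the map $R_{p^{-1}}: G \to G$, which sends $e$ to $p^{-1}$. This reduces the inner product at $p^{-1}$ to the corresponding inner product at $e$ of the vectors $(dL_{p^{-1}})_p(X_p)$ and $(dL_{p^{-1}})_p(Y_p)$. Then I would apply left-invariance to the map $L_{p^{-1}}: G \to G$, which sends $p$ to $e$, reducing the inner product at $e$ back to $g_p(X_p,Y_p)$.

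There is no substantial obstacle here: the argument is essentially a two-line chain of equalities once the differential formula from Proposition \ref{diff mul} is in hand. The only mild subtlety is bookkeeping of base points, in particular checking that $L_{p^{-1}}$ sends $p$ to $e$ and $R_{p^{-1}}$ sends $e$ to $p^{-1}$, so that the two isometry equalities chain together correctly. This is a routine verification, and upon carrying it out the statement $g_{p^{-1}}(d\iota_p(X_p), d\iota_p(Y_p)) = g_p(X_p,Y_p)$ follows immediately, establishing that $\iota$ is an isometry.
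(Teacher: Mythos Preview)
Your proposal is correct and follows essentially the same approach as the paper: invoke the formula $d\iota_p = -(dR_{p^{-1}})_e \circ (dL_{p^{-1}})_p$ from Proposition \ref{diff mul}, note that the minus signs cancel by bilinearity, then use right-invariance to pass from $g_{p^{-1}}$ to $g_e$ and left-invariance to pass from $g_e$ back to $g_p$. The paper's proof is exactly this two-line chain of equalities.
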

\begin{proof}
Let $p \in G$ and $X_p,Y_p \in T_p G$. Then by Proposition \ref{diff mul} we have
\begin{align*}
    g_{p^{-1}}(d\iota_p(X_p),d\iota_p(Y_p)) &= g_{p^{-1}}((dR_{p^{-1}})_e(dL_{p^{-1}})_p(X_p),(dR_{p^{-1}})_e(dL_{p^{-1}})_p(Y_p))\\
    &= g_e((dL_{p^{-1}})_p(X_p),(dL_{p^{-1}})_p(Y_p))\\
    &= g_p(X_p,Y_p),
\end{align*}
as desired.
\end{proof}
In the case of a compact Lie group we may also express the Levi-Civita connection entirely in terms of the Lie bracket.
\begin{proposition}\label{Levi-Civita on compact}
Let $G$ be a compact Lie group with Lie algebra $\la{g}$, and let $G$ be equipped with a bi-invariant metric $g$. Let $\nabla$ be the Levi-Civita connection on $(G,g)$. Then for left invariant vector fields $X,Y \in \la{g}$ we have
\begin{align*}
    \nab{}{X}{Y} = \tfrac{1}{2} \cdot[X,Y].
\end{align*}
\end{proposition}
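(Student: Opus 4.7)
The plan is to apply the Koszul formula to left-invariant vector fields and exploit two features of the bi-invariant setting: the inner products of left-invariant vector fields are constant functions on $G$, and the adjoint $\ad_X$ is skew-symmetric with respect to $g_e$ (this is precisely the property $(ii)$ of Theorem \ref{compact bi invariant}). These two observations will collapse the Koszul identity to the desired formula.

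First I would recall the Koszul formula, which characterises the Levi-Civita connection via
\begin{align*}
 2\,g(\nab{}{X}{Y},Z) &= X(g(Y,Z)) + Y(g(X,Z)) - Z(g(X,Y))\\
 &\quad + g([X,Y],Z) - g([X,Z],Y) - g([Y,Z],X),
\end{align*}
valid for all smooth vector fields $X,Y,Z$. I then specialise to $X,Y,Z \in \la{g}$. Because $g$ is left-invariant and the vector fields are left-invariant, the function $g(Y,Z)$ equals the constant $g_e(Y_e,Z_e)$ on $G$, and similarly for $g(X,Z)$ and $g(X,Y)$. Hence the first three directional derivative terms all vanish, and the formula reduces to
\begin{align*}
 2\,g(\nab{}{X}{Y},Z) = g([X,Y],Z) - g([X,Z],Y) - g([Y,Z],X).
\end{align*}

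Next I would use bi-invariance. By Theorem \ref{compact bi invariant}, the inner product $g_e = \ip{\cdot}{\cdot}$ is $\Ad{}$-invariant, which translates at the infinitesimal level to skew-symmetry of $\ad$, i.e.\ $g([U,V],W) = -g(V,[U,W])$ for all $U,V,W\in \la{g}$. Applying this to the last two terms yields
\begin{align*}
 g([X,Z],Y) = -g(Z,[X,Y]), \qquad g([Y,Z],X) = -g(Z,[Y,X]) = g([X,Y],Z).
\end{align*}
Substituting back gives $2\,g(\nab{}{X}{Y},Z) = g([X,Y],Z) + g([X,Y],Z) - g([X,Y],Z) = g([X,Y],Z)$.

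Finally, since left-invariant vector fields span $T_p G$ at every point $p\in G$, the identity $g(\nab{}{X}{Y},Z) = \tfrac{1}{2}\,g([X,Y],Z)$ holding for all $Z\in \la{g}$ forces $\nab{}{X}{Y} = \tfrac{1}{2}[X,Y]$. The main subtle point of the argument is verifying that the directional-derivative terms in the Koszul formula truly vanish, which requires left-invariance of both the metric and the vector fields; the rest is routine once the skew-symmetry of $\ad$ is in hand.
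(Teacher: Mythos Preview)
Your proof is correct and follows essentially the same approach as the paper: apply the Koszul formula to left-invariant vector fields, observe that the directional-derivative terms vanish because the inner products are constant, and then use the skew-symmetry of $\ad$ coming from bi-invariance to reduce the remaining bracket terms to $g([X,Y],Z)$. The only cosmetic difference is that the paper concludes by summing over $Z$ in an orthonormal basis of $\la{g}$, whereas you argue that left-invariant fields span each tangent space; both are equivalent ways of passing from the identity on $g(\cdot,Z)$ to the vector-field identity.
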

\begin{proof}
Let $X, Y, Z \in \la{g}$ be left invariant vector fields. Then the Koszul formula gives
\begin{align}\label{prop 1.50 eq 1}
\begin{split}
    g(\nab{}{X}{Y},Z) &= \tfrac{1}{2}\cdot \{X (g(Y,Z)) + Y (g(Z,X)) - Z (g(X,Y))\\
    &\qquad + g(Z,[X,Y]) + g([Z,X],Y) + g([Z,Y],X)\}\\
    &= \tfrac{1}{2} \cdot \{g(Z,[X,Y]) + g([Z,X],Y) + g([Z,Y],X)\},    
\end{split}
\end{align}
where the first three terms vanish by left invariance. As $g$ is bi-invariant, $\ad_X$ and $\ad_Y$ are skew symmetric with respect to $g$ and equation \eqref{prop 1.50 eq 1} can be rewritten as
\begin{align*}
    g(\nab{}{X}{Y},Z) &= \tfrac{1}{2} \cdot \{g(Z,[X,Y]) + g(Z,[X,Y]) - g(Z,[X,Y])\}\\ &= \tfrac{1}{2}\cdot  g([X,Y],Z).
\end{align*}
We then obtain the desired formula by summing over $Z$ in an orthonormal basis of $\la{g}$.
\end{proof}
\begin{corollary}\label{one param geodesics}
Let $(G,g)$ be a compact Lie group equipped with a bi-invariant metric. Then the one-parameter subgroups
\begin{align*}
    \gamma(t) = \exp(tX), 
\end{align*}
with $X \in \la{g}$ are geodesics through the identity $e$ of $G$. Equivalently, if $\mathrm{Exp_e}$ is the Riemannian exponential map at $e$, then
\begin{align*}
    \exp = \mathrm{Exp_e}.
\end{align*}
\end{corollary}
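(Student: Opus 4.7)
The plan is to combine Proposition \ref{Levi-Civita on compact} with the fact, recorded in Proposition \ref{properties of exp}(ii), that the curve $t \mapsto \exp(tX)$ is an integral curve of the left-invariant vector field $X$ through the identity. The key observation is that for any left-invariant $X \in \la{g}$ we have
$$
\nab{}{X}{X} = \tfrac{1}{2}\cdot [X,X] = 0
$$
by antisymmetry of the Lie bracket, so $X$ is autoparallel with respect to the Levi-Civita connection of the bi-invariant metric.

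First I would fix $X \in \la{g}$ and set $\gamma(t) = \exp(tX)$. By Proposition \ref{properties of exp}(i)--(ii) this is the integral curve of the left-invariant vector field $X$ through $e$, so $\dot{\gamma}(t) = X_{\gamma(t)}$ for every $t$. Pulling back the covariant derivative along $\gamma$ and using the computation above,
$$
\nab{}{\dot{\gamma}}{\dot{\gamma}}(t) \;=\; (\nab{}{X}{X})_{\gamma(t)} \;=\; 0,
$$
which is exactly the geodesic equation. Hence $\gamma$ is a geodesic through $e$ with initial velocity $\dot{\gamma}(0) = X$.

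Finally, to get the identification of exponential maps, I would unfold the definition of the Riemannian exponential: $\mathrm{Exp}_e(X)$ is by definition $\eta(1)$, where $\eta$ is the unique geodesic through $e$ with $\dot{\eta}(0) = X$. Since $\gamma(t) = \exp(tX)$ is such a geodesic, uniqueness of geodesics with prescribed initial data gives $\mathrm{Exp}_e(X) = \gamma(1) = \exp(X)$, as claimed.

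I do not expect a serious obstacle here; the only subtle point is making sure the argument that $\nab{}{\dot{\gamma}}{\dot{\gamma}} = 0$ along the curve $\gamma$ truly follows from $\nab{}{X}{X}=0$ on the whole group. This is immediate because $X$ is a globally defined smooth vector field whose values along $\gamma$ coincide with $\dot{\gamma}$, so the covariant derivative along the curve agrees with the restriction of the ambient $\nab{}{X}{X}$.
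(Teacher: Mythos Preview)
Your proposal is correct and follows essentially the same route as the paper: use Proposition~\ref{properties of exp}(ii) to identify $\dot\gamma$ with the left-invariant field $X$ along $\gamma$, apply Proposition~\ref{Levi-Civita on compact} to get $\nab{}{X}{X}=\tfrac12[X,X]=0$, and then invoke uniqueness of geodesics to conclude $\exp=\mathrm{Exp}_e$. The paper's version is terser but the argument is the same.
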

\begin{proof}
Let $X \in \la{g}$ and put $\gamma(t) = \exp(tX)$ so that $\gamma(0) = e$ and $\dot{\gamma}(t) = X_{\gamma(t)}$. Then by Proposition \ref{Levi-Civita on compact},
\begin{align*}
    (\nab{}{\dot{\gamma}}{\dot{\gamma}})_{\gamma(t)} = \frac{1}{2}[X,X]_{\gamma(t)} = 0
\end{align*}
for all $t \in \rn$. Hence $\gamma$ is a geodesic through the origin with $\dot{\gamma}(0) = X_e$. Since $X \in \la{g}$ was arbitrary we have by the uniqueness of geodesics,
\begin{align*}
    \exp = \mathrm{Exp_e},
\end{align*}
as desired.
\end{proof}
\chapter{The Classical Compact Lie Groups}\label{ch: classical groups}
We will now give some examples of classical Lie groups, with particular focus on the compact ones. First we shall equip the previously discussed Lie groups $\GL{n,\rn}$ and $\GL{n,\cn}$ with their standard left invariant Riemannian metrics.
\begin{example}\label{metric on general linear groups}
We may express the usual inner product on the vector space $\gl{n}{\cn} \cong \cn^{m\times m}$ as
\begin{align*}
    \ip{Z}{W} = \re(\tr(\bar{Z}^t \cdot W)).
\end{align*}
The left invariant metric $g$ on $\GL{n,\cn}$ obtained by left translation of this inner product is then also
\begin{align*}
    g_p(pZ,pW) = \ip{(dL_{p^{-1}})_p(pZ)}{(dL_{p^{-1}})_p(pW)} = \ip{Z}{W} = \re(\tr(\bar{Z}^t \cdot W))
\end{align*}
for each $p \in \GL{n,\cn}$ and $pZ,pW \in T_p \GL{n,\cn}$. Here we have used that $$T_p \GL{n,\cn} = (dL_p)_e(T_e \GL{n,\cn}) = (dL_p)(\gl{n}{\cn}) = p\cdot \gl{n}{\cn},$$
as left translation by $p$ is linear and hence its own differential. This induces a metric $h$ on the subgroup $\GL{n,\rn}$ given by
\begin{align*}
    h_p(pX,pY) = \tr(X^t \cdot Y)
\end{align*}
for each $p\in \GL{n,\rn}$ and each $pX,pY \in T_p \GL{n,\rn}$. One can check that these metrics are \emph{not} bi-invariant. However, we shall see that they induce bi-invariant metrics on certain subgroups.
\end{example}
\begin{definition}
The real and complex \emph{special linear groups}, $\SL{n,\rn}$ and $\SL{n,\cn}$ are the sets of $n\times n$ real and complex matrices $p$ with determinant $\det(p) = 1$, respectively.
\end{definition}
\begin{proposition}
The special linear groups $\SL{n,\rn}$ and $\SL{n,\cn}$ are Lie groups with Lie algebras
\begin{align*}
    \sl{n}{\rn} = \{X \in \gl{n}{\rn} \ | \ \tr(X) = 0 \}
\end{align*}
and
\begin{align*}
    \sl{n}{\cn} = \{X \in \gl{n}{\cn} \ | \ \tr(X) = 0 \}.
\end{align*}
\end{proposition}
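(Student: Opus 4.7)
The plan is to realise $\SL{n,\mathbb{F}}$ (for $\mathbb{F} \in \{\rn,\cn\}$) as a closed subgroup of the ambient Lie group $\GL{n,\mathbb{F}}$ and then invoke the closed subgroup theorem (Theorem \ref{closed subgroup}) to conclude that it is a Lie subgroup. The subgroup property is immediate from multiplicativity of the determinant, and since the determinant map $\det: \GL{n,\mathbb{F}} \to \mathbb{F}$ is continuous (in fact polynomial in the matrix entries), $\SL{n,\mathbb{F}} = \det^{-1}(\{1\})$ is closed as the preimage of a closed set.

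To identify the Lie algebra, the key tool is Proposition \ref{properties of exp}(iv): the Lie algebra of the Lie subgroup $\SL{n,\mathbb{F}}$ consists precisely of those $X \in \gl{n}{\mathbb{F}}$ for which $\exp(tX) \in \SL{n,\mathbb{F}}$ for all $t \in \rn$. By Proposition \ref{GL exp} the exponential map on $\GL{n,\mathbb{F}}$ agrees with the ordinary matrix exponential, so property (iii) of Lemma \ref{matrix exp properties} yields
\begin{align*}
\det(\exp(tX)) = e^{t\cdot \tr(X)}.
\end{align*}
This equals $1$ for every $t \in \rn$ if and only if $\tr(X) = 0$, which gives the claimed description of $\sl{n}{\rn}$ and $\sl{n}{\cn}$.

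I do not anticipate any real obstacle; the proof is essentially the closed subgroup theorem plus the determinant–trace identity, and the argument in the real and complex cases is identical. The only thing worth remarking on for the reader is that the resulting subspace is visibly closed under the matrix commutator, since $\tr([X,Y]) = \tr(XY) - \tr(YX) = 0$, although this is automatic once we know $\sl{n}{\mathbb{F}}$ is the Lie algebra of a Lie subgroup. The Lie bracket inherited from $\gl{n}{\mathbb{F}}$ is then of course the matrix commutator, by Proposition \ref{GL a Lie group}.
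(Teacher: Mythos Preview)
Your proof is correct and follows essentially the same route as the paper: closedness via $\det^{-1}(\{1\})$, the closed subgroup theorem, and identification of the Lie algebra through Proposition~\ref{properties of exp}(iv) combined with the determinant--trace identity $\det(e^{tX}) = e^{t\,\tr(X)}$ from Lemma~\ref{matrix exp properties}. The extra remark on closure under the commutator is a nice addition but not needed for the argument.
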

\begin{proof}
We prove the Proposition for $\SL{n,\cn}$ as the real case is identical. Since all elements of $\SL{n,\cn}$ have determinant $1 \neq 0$, all elements are invertible so $\SL{n,\cn} \subset \GL{n,\cn}$. If $p, q \in \SL{n,\cn}$ we have
\begin{align*}
    \det(pq) = \det(qp) = 1\cdot 1 = 1
\end{align*}
and 
\begin{align*}
    \det(p^{-1}) = \frac{1}{\det(p)} = 1,
\end{align*}
so that $\SL{n,\cn}$ is a group with respect to matrix multiplication.
Since $\SL{n,\cn} = \det^{-1}({1}) \subset \GL{n,\cn}$ and $\det$ is a continuous map, $\SL{n,\cn}$ is a closed subgroup of $\GL{n,\cn}$. By Proposition \ref{closed subgroup}, $\SL{n,\cn}$ is thus a Lie group. To find an expression of its Lie algebra we note that by (iv) of Proposition \ref{properties of exp} and Proposition \ref{GL exp}, $X \in \gl{n}{\cn}$ belongs to $\sl{n}{\cn}$ if and only if
\begin{align*}
    \det\left(e^{tX}\right) = 1
\end{align*}
for all $t$. Using Proposition \ref{matrix exp properties}
\begin{align*}
    \det\left(e^{tX}\right) = e^{\tr(tX)} = 1
\end{align*}
if and only if $\tr(tX) = 0$. This holds for all $t$ if and only if $\tr(X) = 0$ so that $\sl{n}{\cn}$ is the set of traceless matrices with the Lie bracket being the matrix commutator inherited from $\gl{n}{\cn}$. 
\end{proof}
\begin{definition}
The \emph{orthogonal group}  $\O{n}$ is the set of matrices $x \in \GL{n,\rn}$ such that $x^t\cdot x = I$. The \emph{special orthogonal group} is the intersection $$\SO{n} = \O{n} \cap \SL{n,\rn}.$$
\end{definition}
\begin{definition}
The \emph{unitary group} $\U{n}$ is the set of matrices $z \in \GL{n,\cn}$ such that $\bar{z}^t\cdot z = I$. The \emph{special unitary group} is the intersection $$\SU{n} = \U{n} \cap \SL{n,\cn}.$$
\end{definition}
\begin{proposition}
The matrix Lie groups $\O{n}, \SO{n}, \U{n}$ and $\SU{n}$ are compact groups with Lie algebras
\begin{align*}
    \la{o}(n) = \so{n} = \{X \in \gl{n}{\rn} \ | \ X^t = -X\},
\end{align*}
\begin{align*}
    \u{n} = \{ Z \in \gl{n}{\cn} \ | \ \bar{Z}^t = -Z\}
\end{align*}
and 
\begin{align*}
    \su{n} = \{ Z \in \sl{n}{\cn} \ | \ \bar{X}^t = -X\}.
\end{align*}
\end{proposition}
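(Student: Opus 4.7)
The plan is to treat all four groups uniformly. For each I would first verify it is a closed subgroup of the ambient general linear group, then deduce both compactness (via Heine--Borel) and the Lie subgroup property (via Theorem \ref{closed subgroup}), and finally read off the Lie algebra using property (iv) of Proposition \ref{properties of exp} together with Lemma \ref{matrix exp properties}.

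The subgroup axioms are routine: the conditions $x^t x = I$ and $\bar z^t z = I$ are preserved under matrix multiplication and imply $x^{-1} = x^t$ and $z^{-1} = \bar z^t$, so $\O{n}$ and $\U{n}$ really are subgroups of $\GL{n,\rn}$ and $\GL{n,\cn}$; the special versions follow by intersecting with $\SL{n,\rn}$ and $\SL{n,\cn}$. For closedness, the maps $x \mapsto x^t x - I$ and $z \mapsto \bar z^t z - I$ are continuous on the ambient matrix spaces, so each group is a closed subset of that matrix space; combined with Theorem \ref{closed subgroup} this gives the Lie subgroup structure. For compactness, the identity $\bar z^t z = I$ forces each column of $z$ to be a unit vector in $\cn^n$, so every entry satisfies $|z_{ij}| \le 1$; hence $\U{n}$ is a bounded closed subset of $\cn^{n\times n}$ and compact by Heine--Borel. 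The argument for $\O{n}$ is identical, and $\SO{n}, \SU{n}$ are closed subsets of compact spaces.

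To identify the Lie algebras I would apply Proposition \ref{properties of exp}(iv): $X$ lies in the Lie algebra if and only if $\exp(tX)$ lies in the subgroup for every real $t$. For $\O{n}$ this reads $e^{tX^t} e^{tX} = I$ for all $t$, and differentiation at $t = 0$ yields $X^t + X = 0$. Conversely, if $X^t = -X$ then $X^t$ and $X$ commute, so Lemma \ref{matrix exp properties}(iv) gives $e^{tX^t} e^{tX} = e^{t(X^t + X)} = I$. The argument for $\U{n}$ is the same with $X^t$ replaced by $\bar X^t$. For the special versions one further imposes $\det(e^{tX}) = e^{t \tr X} = 1$ for all $t$, which is equivalent to $\tr X = 0$.

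The one bookkeeping subtlety worth highlighting is the interplay between the skew-symmetry and trace-zero conditions. A skew-symmetric real matrix automatically has zero diagonal, so $\tr X = 0$ is automatic and $\la{o}(n) = \so{n}$ as claimed. A skew-Hermitian complex matrix, on the other hand, has purely imaginary diagonal entries whose sum may well be a nonzero imaginary number, so for $\SU{n}$ the condition $\tr X = 0$ is a genuine additional constraint and $\su{n}$ is strictly smaller than $\u{n}$. This is the only delicate point; no serious obstacle arises beyond it.
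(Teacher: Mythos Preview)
Your proposal is correct and follows essentially the same approach as the paper: closedness via the defining map, Heine--Borel for compactness, and Proposition~\ref{properties of exp}(iv) with Lemma~\ref{matrix exp properties} for the Lie algebras. The only cosmetic differences are that the paper bounds the groups via the Frobenius norm $\tr(\bar z^t z)$ rather than entrywise, and that you obtain $X^t + X = 0$ by differentiating at $t=0$ whereas the paper combines the exponentials first; your version is arguably cleaner on this point since it avoids invoking commutativity of $X$ and $X^t$ before it is known.
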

\begin{proof}
Let $x, y \in \O{n}$. Then
\begin{align*}
    (xy)^t = y^t x^t = y^{-1}x^{-1} = (xy)^{-1}
\end{align*}
and 
\begin{align*}
    (x^{-1})^t = (x^t)^{-1} = x.
\end{align*}
Similarly for $z, w \in \U{n}$ we have
\begin{align*}
    \overline{(zw)}^t = (zw)^{-1}
\end{align*}
and 
\begin{align*}
    \overline{(z^{-1})}^t = z
\end{align*}
so that both $\O{n}$ and $\U{n}$ are groups. Let $\varphi: \GL{n,\rn} \to \GL{n,\rn}$ and $\psi: \GL{n,\cn} \to \GL{n,\cn}$ be the continuous maps given by
\begin{align*}
    \varphi: x \mapsto x^t\cdot x
\end{align*}
and
\begin{align*}
    \psi: z \mapsto \bar{z}^t \cdot z,
\end{align*}
respectively. Then $\O{n} = \varphi^{-1}(\{I_n\})$ and $\U{n} = \psi^{-1}(\{I_n\})$ so they are both closed. By Proposition \ref{closed subgroup} they are then both Lie groups. Recall that the usual euclidean norms on $\rn^{n \times n}$ and $\cn^{n \times n}$ are given by 
\begin{align*}
    \Vert x \Vert^{2} = \tr(x^t\cdot x) 
\end{align*}
and
\begin{align*}
    \Vert z \Vert^{2} = \re(\tr(\bar{z}^t \cdot z)),
\end{align*}
respectively. Then for $x \in \O{n}$ and $z \in \U{n}$ we have $$\Vert x \Vert^{2}_{\rn^{n\times n}} = \Vert z \Vert^{2}_{\cn^{n\times n}} = 1$$ so that $\O{n}$ and $\U{n}$ are bounded and hence compact. To find the Lie algebra of $\O{n}$ we again, by (iv) from Proposition \ref{properties of exp} and Proposition \ref{GL exp}, look for $X \in \gl{n}{\rn}$ satisfying
\begin{align*}
    \left(e^{tX}\right)^t \left(e^{tX}\right) = I_n
\end{align*}
for all values of $t$. Assuming $X \in \o{n}$ and using Proposition \ref{matrix exp properties} we compute
\begin{align*}
    I_n = \left(e^{tX}\right)^t \left(e^{tX}\right) = e^{t(X^t + X)}
\end{align*}
and hence we must have $X^t + X = 0$ for $X \in \o{n}$. If $X^t = -X$ we have
\begin{align*}
    \left(e^{tX}\right)^t = e^{tX^t} = e^{-tX} = \left(e^{tX}\right)^{-1}
\end{align*}
so $\o{n}$ is precisely the set
\begin{align*}
    \o{n} = \{ X \in \gl{n}{\rn} \ | \ X^t = -X \}.
\end{align*}
Similar computations show
\begin{align*}
    \u{n} = \{ Z \in \gl{n}{\cn} \ | \ \bar{Z}^t = - Z\}.
\end{align*}
The groups $\SO{n}$ and $\SU{n}$ are clearly also compact Lie groups, being the intersections of a compact Lie group with a closed Lie group. Their Lie algebras are 
\begin{align*}
    \so{n} = (\o{n} \cap \sl{n}{\rn}) = \{ X \in \o{n} \ | \ \tr(X) = 0\}
\end{align*}
and 
\begin{align*}
    \su{n} = (\u{n} \cap \sl{n}{\cn}) = \{ Z \in \u{n} \ | \ \tr(Z) = 0\}.
\end{align*}
The condition $X^t = - X$ is already enough to guarantee $\tr(X) = 0$ and hence $\o{n} = \so{n}$. 
\end{proof}
It takes bearing in mind that although $\u{n}$ and $\su{n}$ are subalgebras of $\gl{n}{\cn}$, they are real and not complex Lie algebras. In fact one can show that their complexifications are $\u{n} \oplus i\u{n} = \gl{n}{\cn}$ and $\su{n} \oplus i\su{n} = \sl{n}{\cn}$.
\begin{definition}\label{symplectic group definition}
Let $\GL{n,\mathbb{H}}$ denote the group of invertible $n\times n$ quaternionic matrices, and for $q \in \GL{n,\mathbb{H}}$ define $q^* = \bar{q}^t$, where $\bar{q}$ is the entry-wise quaternionic conjugate of $q$. Then the symplectic group $\Sp{n}$ is the set
$$
\Sp{n} =\{ q \in \GL{n, \mathbb{H}} \ | \ q^* = q^{-1} \}.
$$
\end{definition}

\begin{proposition}\label{Sp(n), GL(H) Lie groups}
The group $\GL{n,\mathbb{H}}$ is a Lie group, and $\Sp{n}$ a compact Lie subgroup of $\GL{n,\mathbb{H}}$.
\end{proposition}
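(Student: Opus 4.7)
The plan is to use the standard representation of quaternions by complex $2 \times 2$ matrices to realise $\GL{n,\mathbb{H}}$ and $\Sp{n}$ as closed subgroups of the complex general linear group, after which both claims follow from Proposition~\ref{GL a Lie group} and Theorem~\ref{closed subgroup}.

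First I would set up the embedding. Writing any quaternion as $q = z + jw$ with $z,w \in \cn$, left multiplication by $q$ on $\mathbb{H} = \cn \oplus j\cn$ is $\cn$-linear (viewing $\mathbb{H}$ as a right $\cn$-module), and in the basis $\{1,j\}$ its matrix is $\bigl(\begin{smallmatrix} z & -\bar{w} \\ w & \bar{z} \end{smallmatrix}\bigr)$. Applying this componentwise gives an injective ring homomorphism $\rho \colon \mathbb{H}^{n\times n} \hookrightarrow \cn^{2n \times 2n}$ which restricts to a group monomorphism $\rho \colon \GL{n,\mathbb{H}} \to \GL{2n,\cn}$. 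The image of $\rho$ is characterised by the single closed algebraic condition $\bar{M} = J M J^{-1}$ for a fixed real invertible matrix $J \in \cn^{2n\times 2n}$ encoding the quaternionic structure on $\cn^{2n} \cong \mathbb{H}^n$. In particular, $\rho(\GL{n,\mathbb{H}})$ is a closed subgroup of $\GL{2n,\cn}$, and Theorem~\ref{closed subgroup} then makes it a Lie group; transporting this structure back along the bijection $\rho$ endows $\GL{n,\mathbb{H}}$ itself with the structure of a Lie group.

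For $\Sp{n}$ I would verify that $\rho$ intertwines the quaternionic conjugate-transpose $q \mapsto q^{\ast}$ with the complex conjugate-transpose $M \mapsto \bar{M}^t$. Consequently $q^{\ast} = q^{-1}$ translates under $\rho$ into $\overline{\rho(q)}^t \rho(q) = I_{2n}$, so $\rho$ identifies $\Sp{n}$ with the intersection $\rho(\GL{n,\mathbb{H}}) \cap \U{n+n}$, where I mean the subgroup $\U{2n} \subset \GL{2n,\cn}$. This intersection is closed both inside $\rho(\GL{n,\mathbb{H}})$ and inside $\U{2n}$: the former inclusion, together with Theorem~\ref{closed subgroup}, makes $\Sp{n}$ a Lie subgroup of $\GL{n,\mathbb{H}}$, while the latter exhibits it as a closed subset of the compact group $\U{2n}$, so $\Sp{n}$ is itself compact.

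The main obstacle is purely notational: one must fix conventions under which $\rho$ is a genuine ring homomorphism rather than an anti-homomorphism, and check that the quaternionic $\ast$ operation is indeed intertwined with the complex conjugate-transpose. Once these bookkeeping verifications are in place, every substantive statement in the proposition follows from Theorem~\ref{closed subgroup} together with the compactness of $\U{2n}$ already established in the previous chapter.
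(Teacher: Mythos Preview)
Your proposal is correct and follows essentially the same strategy as the paper: both build the injective ring homomorphism from $\mathbb{H}^{n\times n}$ into $\cn^{2n\times 2n}$, characterise its image by the relation $J\bar g = gJ$, check that the quaternionic $*$ is carried to the complex conjugate-transpose, and conclude that $\Sp{n} \cong \mathrm{Im}(\rho)\cap\U{2n}$ is a closed, hence compact, Lie subgroup.

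The one small difference worth noting is how the Lie-group structure on $\GL{n,\mathbb{H}}$ itself is obtained. You invoke Theorem~\ref{closed subgroup} on the closed image $\rho(\GL{n,\mathbb{H}})\subset\GL{2n,\cn}$ and transport the structure back; the paper instead argues directly that $\GL{n,\mathbb{H}}$ is an \emph{open} submanifold of $\mathbb{H}^{n\times n}\cong\rn^{4n^2}$ (invertibility of $q$ being equivalent to invertibility of $\varphi(q)$, which is an open condition), and then reads off smoothness of multiplication and inversion from that of $\cn^{2n\times 2n}$. Both routes are standard and equally short; your version has the advantage of using a single tool (the closed-subgroup theorem) for both $\GL{n,\mathbb{H}}$ and $\Sp{n}$, while the paper's version gives the dimension $4n^2$ for free.
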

\begin{proof}
Since the quaternions are isomorphic to $\rn^4$ as a real vector space, the set $\mathbb{H}^{n \times n}$ of $n$ by $n$ quaternionic matrices is isomorphic to $\rn^{4n^2}$ as a real vector space, and we may endow it with the standard topology on $\rn^{4n^2}$. We may write a quaternionic matrix $q \in \mathbb{H}^{n\times n}$ as a sum $q = z + jw$, where $z$ and $w$ are $n\times n$ complex matricies. Consider then the map $\varphi: \mathbb{H}^{n\times n} \to \cn^{2n\times 2n}$ given by
\begin{align*}
    \varphi: z +jw \mapsto
    \begin{pmatrix}
    z & w\\
    -\bar{w} & \bar{z}
    \end{pmatrix}.
\end{align*}
It is clear that the map $\varphi$ is smooth and $\rn$-linear. The image of $\varphi$ is the set of block matrices
$$
\text{Im}(\varphi) = \left\{
\begin{pmatrix}
z & w\\
-\bar{w} & \bar{z}
\end{pmatrix} \in \cn^{2n \times 2n} \  \middle | \ z, w \in \cn^{n\times n} 
\right\}
$$
It will be important that $\text{Im}(\varphi)$ is closed under taking inverses whenever they exist. To see this, define
$$
J = \varphi(jI_n) = 
\begin{pmatrix}
0 & I_n\\
-I_n & 0
\end{pmatrix}.
$$
Then one checks by direct computation that for $g \in \cn^{2n \times 2n}$, $g \in \text{Im}(\varphi)$ if and only if $g$ satisfies
$$
Jg = \bar{g}J.
$$
We have $J^{-1} = -J$ so that if $g \in \text{Im}(\varphi)$ is invertible then
\begin{align*}
    Jg^{-1} = (-gJ)^{-1} = (-J\bar{g})^{-1} = \bar{g}^{-1}J
\end{align*}
and hence $g^{-1} \in \text{Im}(\varphi)$.
We now claim that for all $p$, $q \in \mathbb{H}^{n\times n}$, $\varphi(pq) = \varphi(p)\varphi(q)$. Indeed, for $p =z_1 + jw_1$, $q = z_2 + jw_2 \in \mathbb{H}^{n\times n}$
\begin{align*}
    pq &= (z_1 + jw_1)(z_2 + jw_2)\\
    &= z_1z_2 + jw_1z_2 +j\bar{z_1}w_2 - \bar{w_1}\bar{w_2}\\
    &= z_1z_2 -\bar{w_1}\bar{w_2} + j(w_1z_2 + \bar{z_2}w_2),
\end{align*}
and 
\begin{align*}
    \varphi(p)\varphi(q) &= \begin{pmatrix}
    z_1 & w_1\\
    -\bar{w}_1 & \bar{z_1}
    \end{pmatrix}
    \begin{pmatrix}
    z_2 & w_2\\
    -\bar{w}_2 & \bar{z_2}
    \end{pmatrix}\\
    &= \begin{pmatrix}
    z_1z_2 -w_1\bar{w}_2 & z_1w_2 +w_1\bar{z}_2\\
    -\overline{z_1w_2 +w_1\bar{z}_2} & \overline{z_1z_2 -w_1\bar{w}_2}
    \end{pmatrix}.
\end{align*}
so that 
$$
\varphi(pq) = \varphi(p)\varphi(q).
$$
Since $\varphi(e)$ is the identity matrix in $\cn^{2n\times 2n}$, $\varphi$ is bijective and $\text{Im}(\varphi)$ is closed under inversion, an element $p \in \mathbb{H}^{n\times n}$ is invertible if and only if $\varphi(p)$ is invertible. The set of such elements is an open subset of $\text{Im}(\varphi)$ and hence, by continuity, $\GL{n,\mathbb{H}}$ is open in $\mathbb{H}^{n\times n}$, and thus an open submanifold of $\mathbb{H}^{n\times n}$. Since multiplication and inversion are smooth in $\cn^{2n \times 2n}$ they are smooth in $\GL{n,\mathbb{H}}$ by smoothness of $\varphi$, making $\GL{n}$ a Lie group.
\par
For $q = z +jw \in \GL{n,\mathbb{H}}$ we have $\bar{q} = \bar{z} - jw$ and $q^* = \bar{z}^t -jw^t$. If $q \in \Sp{n}$ then $q^{-1}=q^* \in \Sp{n}$ since $(q^*)^* = q$. One shows by a simple direct computation that $(pq)^* = q^*p^*$ holds for quaternionic matrices so for $p, q \in \Sp{n}$
$$
(pq)^* = q^*p^* = q^{-1}p^{-1} = (pq)^{-1}.
$$
Hence $\Sp{n}$ is a subgroup of $\GL{n,\mathbb{H}}$. Furthermore, we have
\begin{align*}
    \varphi((z+jw)^*) &= \begin{pmatrix}
    \bar{z}^t& -w^t\\
    \bar{w}^t & z^t
    \end{pmatrix}\\ 
    &= \overline{\begin{pmatrix}
    z & w\\
    -\bar{w} & \bar{z}
    \end{pmatrix}}^t\\
    &= \overline{(\varphi(z + jw))}^t
\end{align*}
so that $\varphi(\Sp{n}) = (\text{Im}(\varphi)\cap \U{2n})$. But this is a closed subgroup of $\varphi(\GL{n,\mathbb{H}})$ and hence by continuity $\Sp{n}$ is closed. By the closed subgroup Theorem \ref{closed subgroup} $\Sp{n}$ is a Lie subgroup of $\GL{n,\mathbb{H}}$. Since $\Sp{n}$ is closed and contained in $\U{2n}$, it is also compact. 
\end{proof}

\begin{remark}\label{Complex repr}
The map $\varphi: \GL{n,\mathbb{H}} \to \GL{2n,\cn}$ defined by
$$
\varphi: q = z + jw \mapsto 
\begin{pmatrix}
z & {w}\\
-\bar{w} & \bar{z}
\end{pmatrix} 
$$
appearing in the proof of Proposition \ref{Sp(n), GL(H) Lie groups} is an injective Lie group homomorphism and hence an isomorphism onto its image. We shall henceforth identify the groups $\GL{n,\mathbb{H}}$ and $\Sp{n}$ with their images under $\varphi$. With this identification in mind, it is clear that we may represent the Lie algebra $\gl{n}{\mathbb{H}}$ using complex matrices as
$$
\gl{n}{\mathbb{H}} = \left\{
\begin{pmatrix}
Z & W\\
-\bar{W} & \bar{Z}
\end{pmatrix} \in \gl{2n}{\cn} \  \middle | \ Z, W \in \gl{n}{\cn} 
\right\}.
$$
\end{remark}
\begin{proposition}
In view of Remark \ref{Complex repr}, $\Sp{n}$ is contained in $\SU{2n}$, with Lie algebra
\begin{align*}
    \sp{n} = \left\{
\begin{pmatrix}
Z & W\\
-\bar{W} & \bar{Z}
\end{pmatrix} \in \gl{2n}{\cn}\ \middle| \ \bar{Z}^t+Z = W^t - W = 0 \right\} 
\end{align*}
\end{proposition}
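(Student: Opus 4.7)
The plan is to first identify the Lie algebra $\sp{n}$ explicitly, and then deduce $\Sp{n}\subset\SU{2n}$ by showing that every element of $\sp{n}$ is traceless.

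To identify $\sp{n}$, I would invoke property (iv) of Proposition~\ref{properties of exp}: $X\in\sp{n}$ if and only if $e^{tX}\in\Sp{n}$ for every $t\in\rn$. In the proof of Proposition~\ref{Sp(n), GL(H) Lie groups} it was shown that $\varphi(\Sp{n}) = \mathrm{Im}(\varphi)\cap\U{2n}$, while Remark~\ref{Complex repr} identifies $\mathrm{Im}(\varphi)$ with $\gl{n}{\mathbb{H}}$ at the Lie algebra level. Combining these gives $\sp{n} = \gl{n}{\mathbb{H}}\cap\u{2n}$. A matrix of the block form
$$X = \begin{pmatrix} Z & W \\ -\bar{W} & \bar{Z} \end{pmatrix}$$
therefore lies in $\sp{n}$ precisely when it additionally satisfies $\bar{X}^t = -X$; expanding this condition block-wise yields $\bar{Z}^t = -Z$ and $W^t = W$, i.e.\ $\bar{Z}^t + Z = 0$ and $W^t - W = 0$, which is the stated characterisation.

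To obtain $\Sp{n}\subset\SU{2n}$, I would next compute $\tr(X)$ for $X\in\sp{n}$. The condition $\bar{Z}^t = -Z$ forces each diagonal entry of $Z$ to be purely imaginary, so $\re(\tr(Z)) = 0$, whence $\tr(X) = \tr(Z) + \tr(\bar{Z}) = 2\,\re(\tr(Z)) = 0$. By part (iii) of Lemma~\ref{matrix exp properties}, $\det(e^{tX}) = e^{t\,\tr(X)} = 1$ for every $t$, so each one-parameter subgroup $t \mapsto \exp(tX)$ of $\Sp{n}$ sits inside $\SU{2n}$.

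The main obstacle I anticipate is upgrading this infinitesimal statement to the group-level inclusion $\Sp{n}\subset\SU{2n}$: one needs every element of $\Sp{n}$ to be expressible as a product of exponentials of elements of $\sp{n}$, and this uses connectedness of $\Sp{n}$, a fact not developed in the excerpt. I would simply invoke it as standard (for instance by induction along the fibration $\Sp{n-1}\hookrightarrow\Sp{n}\to S^{4n-1}$ starting from $\Sp{1}\cong S^3$). An alternative that sidesteps the exponential argument is to apply the characterising identity $J\varphi(q) = \overline{\varphi(q)}J$ from the preceding proof: taking determinants and using $\det J = 1$ forces $\det(\varphi(q))\in\rn$, and combined with $\varphi(q)\in\U{2n}$ this gives $\det(\varphi(q))\in\{\pm 1\}$; continuity together with $\det(\varphi(I))=1$ and connectedness of $\Sp{n}$ then rule out the value $-1$. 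Either route ultimately rests on the connectedness of $\Sp{n}$.
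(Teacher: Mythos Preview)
Your approach is essentially the same as the paper's: both identify $\sp{n} = \gl{n}{\mathbb{H}}\cap\u{2n}$, expand the condition $\bar X^t + X = 0$ block-wise to obtain the description of the Lie algebra, and then observe that $\bar Z^t + Z = 0$ forces $\tr(X) = \tr(Z) + \tr(\bar Z) = 0$, whence $\sp{n}\subset\su{2n}$. The paper in fact glosses over exactly the point you flag: it simply asserts that $\sp{n}\subset\su{2n}$ implies $\Sp{n}\subset\SU{2n}$ without mentioning connectedness, so your discussion of that step is more careful than the original.
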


\begin{proof}
We have already seen in the proof of Proposition \ref{Sp(n), GL(H) Lie groups} that $\Sp{n}$ is contained in $\U{2n}$. To see that $\Sp{n}$ is contained in $\SU{2n}$ we shall show that $\sp{n}$ is contained in $\su{2n}$. But this holds since for 
$$
Q = 
\begin{pmatrix}
Z & W\\
-\bar{W} & \bar{Z}
\end{pmatrix} \in \sp{n} \cong \gl{n}{\mathbb{H}}\cap \u{2n},
$$
we have $Q+Q^* = 0$ so that in particular $\bar{Z}^t + Z = 0$, which gives 
$$
\tr(Q) = \tr(Z) + \tr(\bar{Z}) = \tr(Z + \bar{Z}^t) = 0.
$$
Hence $Q \in \su{2n}$ as desired. The expression for the Lie algebra also follows directly from the condition $Q^* + Q = 0$.
\end{proof}

We now show that the standard left-invariant Riemannian metric on $\GL{n,\cn}$ induces bi-invariant metrics on the compact Lie groups $\O{n},\SO{n}, \U{n}, \SU{n}$ and $\Sp{n}$.
\begin{proposition}\label{metrics on the classical groups}
Denote by $g$ the Riemannian metrics
\begin{align*}
    g_p(pX,pY) = \tr(X^t \cdot Y)
\end{align*}
on $\O{n}$ and $\SO{n}$, and by $h$ the Riemannian metrics
\begin{align*}
    h_p(pX,pY) = \re(\tr(\bar{X}^t \cdot Y))
\end{align*}
on $\U{n}, \SU{n}$ and $\Sp{n}$. Then in each case $g$ and $h$ are bi-invariant metrics.
\end{proposition}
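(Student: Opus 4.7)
The plan is to observe that each metric is constructed by left-translating an inner product on the corresponding Lie algebra, so left-invariance is automatic by the proof of Proposition 1.24 (and the construction in Example 1.53). Thus only right-invariance needs to be established. I will prove this by invoking the equivalence, shown in the proof of Theorem \ref{compact bi invariant}, between bi-invariance of a left-invariant metric $g$ and $\Ad{p}$-invariance of the inner product $\ip{\cdot}{\cdot} = g_e(\cdot,\cdot)$ on $\la{g}$ for every $p \in G$. So the whole proposition reduces to checking a single algebraic identity on each Lie algebra.

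By Proposition \ref{adjoint on matrix groups}, the adjoint action on each of these matrix subgroups is still $\Ad{p}(X) = pXp^{-1}$. The key observation is that for $p$ in any of the groups under consideration, conjugation interacts nicely with the involutions $X \mapsto X^t$ and $X \mapsto \bar X^t$ used to define the inner products. Specifically:
\begin{itemize}
\item If $p \in \O{n}$ or $p\in \SO{n}$, then $p^{-1} = p^t$, so $(pXp^{-1})^t = p X^t p^{-1}$.
\item If $p \in \U{n}, \SU{n}$ or $\Sp{n} \subset \U{2n}$, then $p^{-1} = \bar p^t$, so $\overline{pXp^{-1}}^{\,t} = p \bar X^t p^{-1}$.
\end{itemize}
Combining these identities with the cyclic property of the trace then gives, in the orthogonal case,
\begin{align*}
\tr\bigl((pXp^{-1})^t (pYp^{-1})\bigr) = \tr\bigl(pX^tYp^{-1}\bigr) = \tr(X^t Y),
\end{align*}
and in the unitary/symplectic case,
\begin{align*}
\re\,\tr\bigl(\overline{pXp^{-1}}^{\,t} (pYp^{-1})\bigr) = \re\,\tr\bigl(p\bar X^t Y p^{-1}\bigr) = \re\,\tr(\bar X^t Y).
\end{align*}
This is exactly $\Ad{p}$-invariance of the inner product at the identity in each case.

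Having checked $\Ad{}$-invariance, the argument from the proof of Theorem \ref{compact bi invariant} shows that the left-invariant extension is also right-invariant, so $g$ and $h$ are bi-invariant on the respective groups. There is no serious obstacle; the only things to be careful about are keeping track of the conjugate transpose versus transpose in the symplectic case (where we use the embedding $\Sp{n} \subset \U{2n}$ from Remark \ref{Complex repr} so that $p^{-1} = \bar p^t$ applies) and confirming that the restriction of a smooth positive-definite tensor on $\GL{n,\cn}$ to a Lie subgroup indeed remains a Riemannian metric, which is immediate.
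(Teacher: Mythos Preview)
Your proof is correct and follows essentially the same approach as the paper: both reduce to checking $\Ad{p}$-invariance of the inner product at the identity using $p^{-1}=p^t$ (respectively $p^{-1}=\bar p^t$) together with the cyclic property of the trace. The only minor difference is organizational: the paper observes that all five groups embed as subgroups of some $\U{N}$, so a single computation for $\U{n}$ suffices, whereas you treat the orthogonal and unitary cases separately.
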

\begin{proof}
Since a bi-invariant metric on a Lie group induces bi-invariant metrics on each subgroup, it is sufficient to consider the case $\U{n}$ as it contains the other Lie groups as subgroups. As $h$ is obtained by left translating an inner product on $\GL{n,\cn}$, we need only show that the restriction of this inner product to $\u{n}$ is $\Ad{}$-invariant. Let $X, Y \in \u{n}$ and $p \in \U{n}$. Then
\begin{align*}
    h_e(\Ad{p}(X),\Ad{p}(Y)) &= \re(\tr(\overline{\Ad{p}(X)}^t \cdot \Ad{p}(Y)))\\
    &= \re(\tr(\overline{(pX\bar{p}^t)}^t \cdot pY\bar{p}^t))\\
    &= \re(\tr(p\bar{X}^t \bar{p}^t \cdot pY\bar{p}^t))\\
    &= \re(\tr(p\bar{X}^t \cdot Y \bar{p}^t))\\
    &= \re(\tr(\bar{X}^t \cdot Y))\\
    &= h_e (X,Y).
\end{align*}
Hence the metrics on $\U{n}, \SU{n}$ and $\Sp{n}$ are all bi-invariant.
Since for real matrices $X$ and $Y$ we have
\begin{align*}
    \re(\tr(\bar{X}^t \cdot Y)) = \tr(X^t\cdot Y)
\end{align*}
we have also shown the metrics on $\O{n}$ and $\SO{n}$ are bi-invariant.
\end{proof}
\par
To conclude the chapter, we shall show that the matrix Lie algebras $\sl{n}{\rn},\sl{n}{\cn},$ $ \so{n}, \su{n}$ and $\sp{n}$ are semisimple, while $\gl{n}{\rn}, \gl{n}{\cn}$ and $\u{n}$ are not, by computing their Killing forms.
\begin{theorem}
The Killing forms of the classical Lie groups covered in this chapter are
\begin{enumerate}[label = (\roman*)]
    \item $B_{\gl{n}{\rn}}(X,Y) = 2n \cdot \tr(X\cdot Y) - 2\cdot \tr(X) \cdot \tr(Y)$,
    \item $B_{\gl{n}{\cn}}(Z,W) = 2n \cdot \tr(Z\cdot W) - 2\cdot \tr(Z) \cdot\tr(W)$,
    \item $B_{\sl{n}{\rn}}(X,Y) = 2n \cdot \tr(X \cdot Y)$,
    \item $B_{\sl{n}{\cn}}(Z,W) = 2n \cdot \tr(Z \cdot W)$,
    \item $B_{\so{n}}(X,Y) = (n-2) \cdot \tr(X \cdot Y)$,
    \item $B_{\u{n}}(Z,W) = 2n \cdot \tr(Z\cdot W) - 2 \cdot \tr(Z)\cdot \tr(W)$,
    \item $B_{\su{n}}(Z,W) = 2n \cdot \tr(Z\cdot W)$,
    \item $B_{\sp{n}}(Z,W) = 2n \cdot \tr(Z\cdot W)$.
\end{enumerate}
\end{theorem}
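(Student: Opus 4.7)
The starting point is $\gl{n}{\cn}$. Writing $\ad_X = L_X - R_X$ for $L_X, R_X$ the left and right multiplication operators on $\gl{n}{\cn}$, and using that the $L$'s commute with the $R$'s, one expands
$$
\ad_X \circ \ad_Y = L_{XY} - L_X R_Y - R_X L_Y + R_{YX}.
$$
In the standard basis of matrix units $\{E_{ij}\}$ one checks directly that $\tr(L_{AB}) = \tr(R_{BA}) = n \cdot \tr(AB)$, whereas $\tr(L_A R_B) = \tr(R_A L_B) = \tr(A)\,\tr(B)$, and summing the four contributions yields (ii); the proof of (i) for $\gl{n}{\rn}$ is the literal same computation over $\rn$.

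Identities (iii) and (iv) then follow by observing that $\sl{n}{\cn}$ is an ideal in $\gl{n}{\cn}$ (indeed $\gl{n}{\cn} = \sl{n}{\cn} \oplus \cn\cdot I_n$ with the central summand being $\ad$-trivial), so the Killing form of $\sl{n}{\cn}$ is the restriction of that of $\gl{n}{\cn}$; the term $-2\tr(X)\tr(Y)$ vanishes on the traceless subalgebra, and analogously over $\rn$. For (vi) and (vii) I would invoke the general fact that for a real form $\la{g}_0$ of a complex Lie algebra $\la{g}$, the Killing form of $\la{g}_0$ equals the restriction of the $\cn$-bilinear Killing form of $\la{g}$, since $\ad_X$ on $\la{g}$ is the $\cn$-linear extension of $\ad_X$ on $\la{g}_0$ and has the same matrix, hence the same trace. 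Applied with $(\la{g}_0, \la{g}) = (\u{n}, \gl{n}{\cn})$ and $(\su{n}, \sl{n}{\cn})$, the formulas (vi) and (vii) drop out of (ii) and (iv).

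The genuinely new cases are (v) and (viii), because neither $\so{n}$ nor $\sp{n}$ is an ideal of its ambient matrix algebra. For (v) I would use the decomposition $\gl{n}{\rn} = \so{n} \oplus \sym{n}$ into skew-symmetric and symmetric matrices and note that, for $X \in \so{n}$, the operator $\ad_X$ preserves each summand (the commutator of a skew and a symmetric matrix is symmetric, and analogously for skew). This gives
$$
\tr_{\gl{n}{\rn}}(\ad_X \circ \ad_Y) = \tr_{\so{n}}(\ad_X \circ \ad_Y) + \tr_{\sym{n}}(\ad_X \circ \ad_Y),
$$
so $B_{\so{n}}$ is isolated by computing the symmetric-matrix trace via the same matrix-unit method already used on $\gl{n}$. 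For (viii) the cleanest route is through the complexification $\sp{n}_{\cn} \cong \sp{2n,\cn}$, a simple complex Lie algebra, for which $\mathrm{Ad}$-invariance of both the Killing form and $\tr(XY)$ forces $B_{\sp{n}}(X,Y) = c \cdot \tr(XY)$ for some constant $c$; that constant is then pinned down by evaluating both sides on a single convenient element such as $Q = \mathrm{diag}(i,0,\ldots,0,-i,0,\ldots,0) \in \sp{n} \subset \gl{2n}{\cn}$, where $\tr(Q^2) = -2$. The main obstacle is precisely this last step: one must produce a basis of $\sp{n}$ (or of $\sp{2n,\cn}$) that diagonalizes $\ad_Q$ and then sum the squared eigenvalues carefully to extract $c$, the bookkeeping being noticeably heavier than in the $\so{n}$ case because of the symplectic condition on the off-diagonal blocks.
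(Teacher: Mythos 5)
Your plan is correct for items (i)--(vii), and in several places it takes a genuinely different and more economical route than the paper. The paper's appendix proves (i), (v) and (vi) by brute force: it fixes the explicit orthonormal bases $\{E_{j\alpha}\}$, $\{Y_{rs}\}$ and $\{Y_{rs}, iX_{rs}, iD_t\}$, sums $\tr(\ad_X\circ\ad_Y)$ term by term, and then obtains (iii), (iv) and (vii) from the ideal-restriction lemma exactly as you do. Your $\ad_X=L_X-R_X$ expansion is the same computation as the paper's for $\gl{n}{\cn}$, only better organised. The real divergence is at (vi)--(vii): deducing $B_{\u{n}}$ and $B_{\su{n}}$ from (ii) and (iv) by the real-form argument replaces the paper's second long basis computation with two lines, and the fact you need, $\u{n}\oplus i\u{n}=\gl{n}{\cn}$, is already noted in the chapter on the classical compact Lie groups; this is a clear gain. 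Your route to (v) through $\gl{n}{\rn}=\so{n}\oplus\sym{\rn^{n\times n}}$ is also valid ($\ad_X$ does preserve both summands when $X$ is skew), but it buys little: you still have to establish $\tr(\ad_X\ad_Y)=(n+2)\cdot\tr(X\cdot Y)$ on the symmetric summand by the same matrix-unit bookkeeping, which is no shorter than the paper's direct evaluation on the $Y_{rs}$.

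For (viii) your strategy (invariance plus simplicity of the complexification force $B=c\cdot\tr$, then normalise on a single element) is sound, but be aware that completing it yields $c=2(n+1)$, not $2n$. With $Q=iH$ and $H=E_{11}-E_{n+1,n+1}$, the nonzero eigenvalues of $\ad_H$ on the complexification are $\pm1$ with total multiplicity $4(n-1)$ (the roots $\pm\varepsilon_1\pm\varepsilon_j$, $j\geq 2$) and $\pm2$ with multiplicity one each (the roots $\pm2\varepsilon_1$), so $\tr(\ad_H^2)=4(n-1)+8=(2n+2)\cdot\tr(H^2)$. Your own treatment of (vi)--(vii) gives an instant cross-check: $\sp{1}=\su{2}$ inside $\gl{2}{\cn}$, and (vii) with $n=2$ gives $B=4\cdot\tr(Z\cdot W)$ there, while (viii) with $n=1$ would give $2\cdot\tr(Z\cdot W)$. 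So the constant in item (viii) as stated cannot be correct; it should read $B_{\sp{n}}(Z,W)=2(n+1)\cdot\tr(Z\cdot W)$. Note that the paper never actually proves this case --- the appendix only asserts that the computation is similar but more lengthy --- so your method, carried through, both fills that genuine gap and corrects the constant.
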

The calculations for the Killing forms are rather lengthy and thus they are performed in Appendix A.
\begin{corollary}\label{semisimple list}
The Lie algebras $\so{n},\su{n}$ and $\sp{n}$ are compact and semisimple.
\end{corollary}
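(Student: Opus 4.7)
The plan is to combine the identifications of $\so{n}$, $\su{n}$ and $\sp{n}$ as Lie algebras of compact matrix Lie groups already established in this chapter with the explicit Killing form formulas collected in the preceding theorem.

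Compactness is immediate: each of $\so{n}$, $\su{n}$ and $\sp{n}$ was obtained as the Lie algebra of the compact Lie group $\SO{n}$, $\SU{n}$ and $\Sp{n}$ respectively, so the definition of a compact Lie algebra given earlier in this chapter is satisfied with no further computation.

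For semisimplicity I would aim at the stronger statement that the Killing form is negative definite, and then observe that non-degeneracy is an immediate consequence. The key identity, to be applied to each case in turn, is that for a complex matrix $X$ with $\bar{X}^t = -X$ one has
\[
\tr(X\cdot X) \;=\; -\tr(X\cdot \bar{X}^t) \;=\; -\sum_{i,j}\lvert X_{ij}\rvert^2 \;=\; -\| X\|^2,
\]
where $\|\cdot\|$ denotes the Frobenius norm. Substituting this into formulas (vii) and (viii) of the preceding theorem yields $B_{\su{n}}(X,X) = -2n\,\| X\|^2$ and, using the $2n\times 2n$ complex representation of $\sp{n}$ from Remark \ref{Complex repr} (in which $Q\in\sp{n}$ satisfies $\bar{Q}^t = -Q$), $B_{\sp{n}}(Q,Q) = -2n\,\| Q\|^2$. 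Both are negative definite, hence non-degenerate, so $\su{n}$ and $\sp{n}$ are semisimple.

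For $\so{n}$ the analogous computation using the real skew-symmetry condition $X^t = -X$ gives $\tr(X\cdot X) = -\| X\|^2$ and thus, by formula (v), $B_{\so{n}}(X,X) = -(n-2)\,\| X\|^2$. This is the only step that requires any real care: it is negative definite only when $n \geq 3$, and indeed the assertion of the corollary genuinely fails at $n=2$ because $\so{2}$ is abelian of dimension one and its Killing form is identically zero. The corollary must therefore be read with the implicit hypothesis $n\geq 3$ in the orthogonal case; granted that, everything in the proof is formal bookkeeping of signs.
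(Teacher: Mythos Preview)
Your argument is correct and is precisely the intended route: the paper states this corollary without proof, leaving it as an immediate consequence of the explicit Killing form formulas in the preceding theorem together with the identification of each algebra as the Lie algebra of a compact matrix group. Your filling-in of the details---computing $\tr(X^2) = -\|X\|^2$ from the skew-Hermitian condition and reading off negative definiteness---is exactly what is meant.

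Your caveat about $\so{n}$ is also well taken and worth flagging: the paper is silent on the restriction, but as you observe, formula~(v) gives $B_{\so{n}}(X,X) = -(n-2)\|X\|^2$, which degenerates at $n=2$ (and indeed $\so{2}$ is one-dimensional and abelian). The corollary therefore needs $n\geq 3$ in the orthogonal case, a point the paper leaves implicit.
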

We also note that the inner products on the Lie algebras $\so{n},\su{n}$ and $\sp{n}$ from Proposition \ref{metrics on the classical groups} are all of the form
\begin{align*}
    \ip{X}{Y} = -\alpha\cdot B(X,Y)
\end{align*}
where $\alpha>0$ is a real constant depending on the Lie algebra, and $B$ is the Killing form. Hence, most statements about the Killing form on these Lie algebras may be translated into equivalent statements in terms of their standard bi-invariant metrics.
\chapter{Symmetric Spaces}\label{ch: symmetric spaces}

In this chapter we will introduce the notion of a Riemannian symmetric space, which is a Riemannian manifold such that for every point there is an isometry reversing geodesics through that point. We shall show that every Riemannian symmetric space is also a Riemannian homogeneous space. This will let us describe the symmetric space $M$ as a quotient of two Lie groups $G/K$, with $G$ being the component of the isometry group of $M$ containing the identity, and  $K$ the isotropy group of a fixed point $p\in M$.
\par
With this we will be able to use Lie group and Lie algebraic notions to characterise the symmetric spaces. In particular we will see that there is a one-to-one correspondence between Riemannian symmetric spaces and so-called symmetric triples. These are triples $(G,K,\sigma)$ where $G$ is a Lie group, $\sigma$ an invloutive automorphism of $G$, and $K$ a compact subgroup sitting between the fix-point set $G^{\sigma}$ of $\sigma$ and its identity component $G^{\sigma}_0$. In all the cases treated in this thesis we will be able to take $K = G^{\sigma}$, with $G^{\sigma}$ the fix-point set  of $\sigma$. On the level of Lie algebras this gives a decomposition
\begin{align*}
    \la{g} = \la{k} \oplus \la{p}
\end{align*}
of the Lie algebra $\la{g}$ of $G$ into the eigenspaces of the eigenvalues $\pm 1$ of the differential $d\sigma$ of the automorphism $\sigma$.
\par
We then define what is meant by an irreducible symmetric space. We will then describe how these can be divided into the three types, compact, non-compact and flat, and introduce the important duality between the symmetric spaces of compact and non-compact type. We conclude the chapter by mentioning the famous classification result of irreducible symmetric spaces due to \`Elie Cartan. 
\par
Throughout this chapter and the remainder of the text we shall assume all symmetric spaces to be connected unless otherwise explicitly stated.
\begin{remark}\label{notation for fixed sets and identity components}
Throughout the remainder of this thesis we will use the notation $G^\sigma$ for the fix-point set of an automorphism $\sigma$ of the Lie group $G$. We will also use the notation $G_0$ to mean the identity component of $G$. In particular we write $I(M)$ and $I_0(M)$ for the isometry group of a Riemannian manifold $(M,g)$ and its identity component, respectively.
\end{remark}
\begin{definition}\label{def symmetric space}
A \emph{Riemannian symmetric space} is a Riemannian manifold $(M,g)$ such that for each point $p\in M$ there is a global isometry $s_{p}$ of $M$, called the symmetry at $p$, such that 
\begin{enumerate}[label = (\roman*)]
    \item $s_p$ fixes $p$,
    \item $(ds_{p})_p = -id_{T_p M}$.
\end{enumerate}
We call a Riemannian metric $g$ on a smooth manifold $M$ such that $(M,g)$ is a Riemannian symmetric space a \emph{symmetric metric} on $M$.
\end{definition}
From now on, the \emph{Riemannian} qualifier in \emph{Riemannian symmetric space} will be dropped, and we will simply refer to these objects as \emph{symmetric spaces}. 
We shall show that the symmetry at a point of a symmetric space is unique. For this we need the following well-known result.
\begin{proposition}\label{isometry uniqueness thm}
Let $(M,g)$ be a connected Riemannian manifold and $\varphi, \psi$ be isometries of $M$. If there is a point $p\in M$ such that 
$$
    \varphi(p) = \psi(p) \quad \text{and} \quad d\varphi_p = d\psi_p,
$$
then 
$$
    \varphi = \psi.
$$
\end{proposition}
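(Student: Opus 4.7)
The plan is to consider the set $A$ of points at which $\varphi$ and $\psi$ agree to first order, show that it is non-empty, closed, and open, and then invoke connectedness of $M$ to conclude $A = M$.

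More precisely, I would define
\[
A = \{q \in M : \varphi(q) = \psi(q) \text{ and } d\varphi_q = d\psi_q\}.
\]
By hypothesis $p \in A$, so $A$ is non-empty. Closedness of $A$ is immediate from continuity of $\varphi$, $\psi$, and of their differentials, since these equalities pass to limits of sequences in $A$.

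The heart of the argument is showing that $A$ is open. Fix $q \in A$ and choose a normal neighbourhood $U$ of $q$ so that the Riemannian exponential map $\mathrm{Exp}_q$ restricts to a diffeomorphism from some open ball $B \subset T_q M$ onto $U$. Because $\varphi$ is an isometry, it sends geodesics to geodesics, which yields the intertwining identity
\[
\varphi(\mathrm{Exp}_q(X)) = \mathrm{Exp}_{\varphi(q)}(d\varphi_q(X))
\]
for every $X \in B$, and similarly for $\psi$. Since $\varphi(q) = \psi(q)$ and $d\varphi_q = d\psi_q$, the right-hand sides of these two identities coincide, so $\varphi = \psi$ on $U$. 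Differentiating this pointwise equality on the open set $U$ then gives $d\varphi_r = d\psi_r$ for every $r \in U$, so $U \subset A$ and $A$ is open.

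Since $M$ is connected and $A$ is a non-empty open and closed subset, $A = M$, which gives $\varphi = \psi$ on all of $M$. The only step requiring real care is the exponential-map intertwining identity, but this is a standard consequence of the fact that isometries send geodesics to geodesics combined with the uniqueness of a geodesic with prescribed initial point and initial velocity, so it does not pose a genuine obstacle.
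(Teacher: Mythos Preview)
Your proof is correct and in fact more general than the paper's. The paper explicitly restricts to the case where $M$ is complete, and then argues directly: given any $q\in M$, completeness provides a geodesic $\gamma$ from $p$ to $q$; since isometries send geodesics to geodesics, $\varphi\circ\gamma$ and $\psi\circ\gamma$ are geodesics with the same initial point $\varphi(p)=\psi(p)$ and the same initial velocity $d\varphi_p(\dot\gamma(0))=d\psi_p(\dot\gamma(0))$, hence coincide, giving $\varphi(q)=\psi(q)$.

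Your open--closed--connected argument via normal neighbourhoods is the standard proof that avoids the completeness hypothesis altogether. Both approaches rest on the same core fact---that isometries intertwine the Riemannian exponential map---but the paper leverages global geodesics from a single basepoint (which requires completeness), whereas you propagate agreement locally and use connectedness to globalise. Your route buys generality; the paper's buys brevity under the extra assumption, which suffices for its applications since all the spaces it considers are complete.
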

\begin{proof}
As we will only need this result in cases when $M$ is known to be complete, we will present the proof only under this special assumption. Suppose that $p$ satisfies $\varphi(p) =\psi(p)$ and $d\varphi_p = d\psi_p$ and let $q$ be another point of $M$. Since $M$ is complete there is a geodesic segment $\gamma:[0,1] \to M$ such that $\gamma(0) = p$ and $\gamma(1) = q$. Since $\varphi$ and $\psi$ are isometries mapping $p$ to the same point, the compositions
\begin{align*}
    \alpha = \varphi \circ \gamma,\\
    \beta = \psi \circ \gamma,
\end{align*}
are both geodesic segments starting at $\varphi(p)$. Since $d\varphi_p = d\psi_p$ we have $\dot{\alpha}(0) = \dot{\beta}(0)$. But then by the uniqueness of geodesics we are done, since then $\phi(q) = \alpha(1) = \beta(1) = \psi(q)$ and $q$ was arbitrary.
\end{proof}
This means that isometries of a Riemannian manifold are completely characterised by their value and differential at a single point. As a consequence we get the following.
\begin{corollary}
Let $(M,g)$ be a symmetric space and let $p \in M$. Then there is precisely one isometry $s_{p}$ satisfying (i) and (ii) of Definition \ref{def symmetric space}.
\end{corollary}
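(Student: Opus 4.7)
The plan is to apply Proposition \ref{isometry uniqueness thm} directly, since it is tailor-made for exactly this situation: it says that two isometries of a connected Riemannian manifold that agree at a point and have the same differential there must be equal. Existence of $s_p$ is guaranteed by the definition of a symmetric space, so only uniqueness requires argument.

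First I would invoke the standing assumption (stated just before the definition of symmetric space) that all symmetric spaces considered are connected, so that the hypothesis of Proposition \ref{isometry uniqueness thm} is met. Then I would suppose, toward establishing uniqueness, that $s_p$ and $s'_p$ are two isometries of $M$ both satisfying conditions (i) and (ii) of Definition \ref{def symmetric space}. By (i), both maps fix $p$, so $s_p(p) = s'_p(p) = p$. By (ii), both differentials at $p$ equal $-\mathrm{id}_{T_pM}$, so in particular $(ds_p)_p = (ds'_p)_p$.

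With the two hypotheses of Proposition \ref{isometry uniqueness thm} verified at the point $p$, the proposition yields $s_p = s'_p$ on all of $M$, completing the proof.

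There is no real obstacle here; the only subtlety is that Proposition \ref{isometry uniqueness thm} was proved under the additional assumption of completeness, but every symmetric space is complete (this can be seen from homogeneity, which is established later in the chapter, or from the fact that the symmetries $s_p$ can be used to indefinitely extend geodesics by reflection). Since the proof of the preceding proposition already uses completeness implicitly in the geodesic extension step, invoking it here is consistent with the scope of the present chapter.
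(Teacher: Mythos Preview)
Your proposal is correct and matches the paper's approach: the corollary is stated in the paper without proof, as an immediate consequence of Proposition~\ref{isometry uniqueness thm}, and your argument is exactly the intended one. Your remark about completeness is also apt, since the paper only proved that proposition under the completeness hypothesis and establishes completeness of symmetric spaces immediately afterward.
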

We remark that if $\gamma$ is a geodesic in a symmetric space $M$, with $\gamma(0) = p$, then $\alpha =s_{p}\circ \gamma$ is also a geodesic in $M$ through $p$ and we have $\dot{\alpha}(0) = -\dot{\gamma}(0)$. Another geodesic through $p$ with tangent vector $-\dot{\gamma}(0)$ at $p$ is given by
\begin{align*}
    t\mapsto \gamma(-t).
\end{align*}
By the uniqueness of geodesics these curves must then be the same, so that $\alpha(t) = \gamma(-t)$ for values of $t$ in some interval $(-\epsilon,\epsilon)$.
Hence, in essence, a symmetric space is then a Riemannian manifold such that for each point there is an isometry reversing geodesics through that point. 
\par
This geodesic-reversing property of the symmetries endows symmetric spaces with many important and useful properties, much of which we will encounter and make good use of in this thesis. One such  property is geodesical completeness.
\begin{proposition}
Let $(M,g)$ be a symmetric space. Then $(M,g)$ is a geodesically complete Riemannian manifold.
\end{proposition}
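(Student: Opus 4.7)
The plan is to use the existence of the point-symmetries $s_p$ to extend any geodesic beyond an allegedly maximal endpoint, thereby ruling out incompleteness. The key observation is that an isometry carries geodesics to geodesics, so by applying $s_p$ to a geodesic $\gamma$ we obtain a new geodesic $s_p\circ\gamma$; if we choose $p$ to be a point on $\gamma$ itself then this image will, after an appropriate affine reparametrisation of the argument, coincide with $\gamma$ on a common domain and extend it beyond its original endpoint.

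First I would reduce the problem to showing that a geodesic defined on a maximal interval $(a,b)$ with $b<\infty$ can in fact be extended past $b$; the case $a>-\infty$ is symmetric, and geodesic completeness is equivalent to every maximal geodesic being defined on all of $\mathbb{R}$. So let $\gamma\colon[0,b)\to M$ be a maximal geodesic with $b<\infty$. I would then fix some $T\in(b/2,b)$ and set $q=\gamma(T)$, and consider the curve
\begin{align*}
\tilde\gamma(t)=s_q\bigl(\gamma(2T-t)\bigr), \qquad t\in(2T-b,\,2T].
\end{align*}
Because $s_q$ is an isometry of $M$ and $t\mapsto\gamma(2T-t)$ is a geodesic, $\tilde\gamma$ is a geodesic. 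At the parameter value $t=T$ one computes
\begin{align*}
\tilde\gamma(T)=s_q(\gamma(T))=s_q(q)=q=\gamma(T),
\end{align*}
and, using $(ds_q)_q=-\mathrm{id}_{T_qM}$,
\begin{align*}
\dot{\tilde\gamma}(T)=-\,(ds_q)_q\bigl(\dot\gamma(T)\bigr)=\dot\gamma(T).
\end{align*}

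At this point I would invoke the uniqueness of geodesics with prescribed initial position and velocity to conclude that $\tilde\gamma$ and $\gamma$ agree on the overlap $(2T-b,\,b)$ of their domains. Since $T>b/2$ we have $2T>b$, so $\tilde\gamma$ is actually defined up to parameter $2T>b$, giving a strict extension of $\gamma$ beyond $b$. This contradicts the maximality of the original domain $[0,b)$, so we must have $b=\infty$. The same construction applied to the reversed geodesic handles the other endpoint. The main (very mild) obstacle is verifying that the uniqueness-of-geodesics step applies on the overlap of domains, which is nonempty precisely because of the choice $T>b/2$; once that is in hand the conclusion is immediate.
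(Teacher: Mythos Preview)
Your proof is correct and follows essentially the same approach as the paper: both pick a point $q=\gamma(T)$ on the geodesic (the paper writes $T=a-\epsilon$), form the reflected geodesic $t\mapsto s_q(\gamma(2T-t))$, match value and velocity at $t=T$ using $(ds_q)_q=-\mathrm{id}$, and observe that the condition $T>b/2$ (equivalently, the paper's $\epsilon<a/2$) makes the new domain strictly overshoot $b$. The only cosmetic difference is that the paper explicitly glues the two pieces into a single curve, whereas you invoke uniqueness on the overlap; both are fine.
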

\begin{proof}
Recall that a Riemannian manifold is geodesically complete, or simply complete, if for each point $p \in M$ and tangent vector $X_p \in T_p M$, there is a geodesic $\gamma: \rn \to M$ defined on all of $\rn$ such that $\gamma(0) = p$ and $\dot{\gamma}(0) = X_p$. Assume by way of contradiction that $M$ is not complete. Then there exists some point $p\in M$ and tangent vector $X_p \in T_p M$ such that the geodesic segment $\gamma: [0,a) \to M$, $a < \infty$, with $\gamma(0) = p$ and $\dot{\gamma}(0)= X_p$ cannot be extended to an interval $[0,b)$ with $b > a$, i.e it is maximally extended in the positive direction. Now pick $\epsilon > 0$ and set $q = \gamma(a-\epsilon)$. Define a new curve $\tilde{\gamma}:[0,2a-2\epsilon) \to M$ by
\begin{align*}
    \tilde{\gamma}(t) = 
    \begin{cases}
        \gamma(t) & \text{for } t\in [0,a-\epsilon)\\
        s_q(\gamma(2a-2\epsilon - t)) & \text{for } t \in [a-\epsilon, 2a - 2\epsilon)
    \end{cases}
\end{align*}
Clearly $\tilde{\gamma}$ consists of two geodesic segments. Furthermore, we have $\tilde{\gamma}(a-\epsilon) = \gamma(a-\epsilon)$ and
$$
\dot{\tilde{\gamma}}(a-\epsilon) = (ds_q)_q(-\dot{\gamma}(a-\epsilon)) = \dot{\gamma}(a-\epsilon).
$$
Thus $\tilde{\gamma}$ is an extension of $\gamma$ to the interval $[0,2a-2\epsilon)$. Since $\epsilon$ was arbitrary, we may take $\epsilon$ small enough so that $2a-2\epsilon > a$. But this contradicts maximality and hence $M$ is complete. 
\end{proof}

The next property that we will derive for symmetric spaces concerns their totally geodesic submanifolds. A lose statement of this result would be that a submanifold of a symmetric space $M$ is totally geodesic if and only if it inherits the symmetric structure from $M$. This will be important later when we wish to show that the image of the Cartan embedding is a totally geodesic submanifold. For this we first need the following well known result.
\begin{lemma}\label{2nd fund form inv}
Let $(M,g)$ be a Riemannian manifold and  $N$ be a submanifold of $M$. Let $\varphi: M \to M$ be an isometry such that $\varphi(N) = N$. Then, denoting the restrictions of $\varphi$ and $d\varphi$ to $N$ by the same letter, the second fundamental form $B$ of the submanifold $N$ satisfies
\begin{align*}
    d \varphi(B(X,Y)) = B(d \varphi(X),d \varphi(Y)).
\end{align*}
For all vector fields $X, Y \in \smooth{TN}$. In particular, if $\varphi(p) = p$ for some $p \in N$, we have
\begin{align}\label{point invariance}
    d \varphi_p(B_p(X_p,Y_p)) = B_p(d \varphi_p(X_p), d \varphi_p(Y_p))
\end{align}
for all tangent vectors $X_p, Y_p \in T_p N$.
\end{lemma}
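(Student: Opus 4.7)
The plan is to exploit the standard definition of the second fundamental form as the normal component of the ambient Levi-Civita connection, and then use the two characteristic properties of an isometry: that it preserves the Levi-Civita connection, and that it preserves orthogonal decompositions.

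First I would recall that for vector fields $X, Y \in \Gamma(TN)$, extended arbitrarily to vector fields on a neighborhood in $M$, the second fundamental form is given by
\begin{align*}
B(X,Y) = (\nab{}{X}{Y})^{\perp},
\end{align*}
where $\nabla$ is the Levi-Civita connection of $(M,g)$ and $(\,\cdot\,)^\perp$ denotes the component in the normal bundle of $N$. Since $\varphi$ is an isometry of $(M,g)$, it preserves the Levi-Civita connection, i.e.
\begin{align*}
d\varphi(\nab{}{X}{Y}) = \nab{}{d\varphi(X)}{d\varphi(Y)}.
\end{align*}

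The next step is to verify that $d\varphi$ respects the splitting $T_p M = T_p N \oplus (T_p N)^{\perp}$ at every $p \in N$. Because $\varphi(N) = N$, the restriction $d\varphi_p$ maps $T_p N$ onto $T_{\varphi(p)} N$; because $\varphi$ is an isometry, it preserves $g$-orthogonality, so it must also carry $(T_p N)^{\perp}$ onto $(T_{\varphi(p)} N)^{\perp}$. Consequently $d\varphi$ commutes with the projections onto the tangential and normal parts, which gives
\begin{align*}
d\varphi(B(X,Y)) = d\varphi((\nab{}{X}{Y})^{\perp}) = (d\varphi(\nab{}{X}{Y}))^{\perp} = (\nab{}{d\varphi(X)}{d\varphi(Y)})^{\perp} = B(d\varphi(X),d\varphi(Y)),
\end{align*}
which is precisely the claim.

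The particular case stated in equation \eqref{point invariance} is then immediate: since $B$ is tensorial in both arguments, its value at $p$ depends only on $X_p$ and $Y_p$, so we may extend any given tangent vectors $X_p, Y_p \in T_p N$ to vector fields on $N$ and evaluate the identity at $p$, using $\varphi(p)=p$. I do not anticipate any real obstacle here: the only delicate point is making sure that $d\varphi$ preserves the normal bundle, which is a direct consequence of $\varphi$ being an isometry that stabilises $N$. Everything else follows from the naturality of the Levi-Civita connection under isometries, which is standard.
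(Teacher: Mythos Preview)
Your argument is correct and follows essentially the same route as the paper. The only differences are cosmetic: the paper writes $B(X,Y) = \tnab{\wt X}{\wt Y} - \nab{}{X}{Y}$ and separately verifies naturality of both the ambient and induced connections via the Koszul formula, whereas you use $B(X,Y) = (\nab{}{X}{Y})^{\perp}$ and take the naturality of the Levi-Civita connection under isometries as a standard fact; both formulations are equivalent and rely on the same key observation that $d\varphi$ preserves the tangent--normal splitting.
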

\begin{proof}
Let $\tnabla$ and $\nabla$ be the Levi-Civita connections of $M$ and $N$, respectively. Let $p$ be a point of $N$ and $X,Y \in \smooth{TN}$. Let $U$ be an open neighbourhood in $M$ of $p$ such that the vector fields $X$ and $Y$ admit local extensions $\widetilde{X}$ and $\widetilde{Y}\in \smooth{TU}$ onto $U$. For the existence of such extensions see 6.16. in \cite{Sigmundur's notes}. As $\varphi$ restricts to an isometry of $N$, the vector fields $d\varphi(\widetilde{X}), d\varphi(\widetilde{Y})\in \smooth{T\varphi(U)}$ are local extensions of $d\varphi(X), d\varphi(Y) \in \smooth{T\varphi(U \cap M)}$. We shall show that they locally satisfy the following relations:
\begin{align}\label{Eq: 2nd fund form lemma eq1}
    d \varphi(\tnab{\wt{X}}{\wt{Y}}) = \tnab{d \varphi(\wt{X})}{d \varphi (\wt{Y})}
\end{align}
and
\begin{align}\label{Eq: 2nd fund form lemma eq2}
    d \varphi(\nab{}{X}{Y}) = \nab{}{d \varphi(X)}{d \varphi (Y)}.
\end{align}
This, together with the fact that $d \varphi$ preserves the normal and tangent bundles of $N$ will be enough to establish the desired result as then
\begin{align*}
    d \varphi(B(X,Y)) &= d \varphi(\tnab{\wt{X}}{\wt{Y}} - \nab{}{X}{Y})\\
    &= \tnab{d \varphi(\wt{X})}{d \varphi (\wt{Y})} - \nab{}{d \varphi(X)}{d \varphi (Y)}\\
    &= B(d \varphi(X), d \varphi(Y))
\end{align*}
holds for $X, Y \in \smooth{T(U \cap N)}$ and $N$ can be covered by neighbourhoods of this form. To establish Equation \eqref{Eq: 2nd fund form lemma eq1} let $p$ and $U$ be as above. Since $\varphi$ is an isometry, for vector fields $\wt{X}, \wt{Y}$ and $\wt{Z} \in \smooth{TU}$ we have
\begin{align*}
    g(d \varphi(\tnab{\wt{X}}{\wt{Y}}),d \varphi(\wt{Z}))(\varphi(p)) = g(\tnab{\wt{X}}{\wt{Y}}, \wt{Z})(p). 
\end{align*}
Using the Koszul formula we get
\begin{align}\label{Koszul}
\begin{split}
    &\quad g(\tnab{d \varphi(\wt{X})}{d \varphi(\wt{Y})}, d \varphi(\wt{Z}))(\varphi(p))\\ 
    & = \frac{1}{2}\big\{ d\varphi_p(\wt{X}_p)(g(d\varphi(\wt{Y}),d \varphi(\wt{Z}))) 
     + d\varphi_p(\wt{Y}_p)(g(d\varphi(\wt{Z}),d \varphi(\wt{X})))\\
    &\quad - d\varphi_p(\wt{Z}_p)(g(d\varphi(\wt{X}),d \varphi(\wt{Y})))
     + g_{\varphi(p)}(d \varphi_p(\wt{Z}_p), [d \varphi(\wt{X}),d \varphi(\wt{Y})]_{\varphi(p)})\\
    &\quad + g_{\varphi(p)}(d \varphi_p(\wt{X}_p), [d \varphi(\wt{Y}),d \varphi(\wt{Z})]_{\varphi(p)})\\
     &\quad - g_{\varphi(p)}(d \varphi_p(\wt{Y}_p), [d \varphi(\wt{Z}),d \varphi(\wt{X})]_{\varphi(p)}) \big \}.
\end{split}
\end{align}
For the first three terms of equation \eqref{Koszul} we have 
\begin{align*}
    d \varphi_p(\wt{X}_p)(g(d\varphi(\wt{Y}),d\varphi(\wt{Z}))) &= \wt{X}_p(g(d\varphi(\wt{Y}),d\varphi(\wt{Z})) \circ \varphi)\\
    &= \wt{X}_p(g(\wt{Y},\wt{Z})).
\end{align*}
For the remaining terms we get
\begin{align*}
    g_{\varphi(p)}(d \varphi_p(\wt{Y}_p),[d \varphi(\wt{Z}),d \varphi(\wt{X})]_{\varphi(p)}) &= g_{\varphi(p)}(d\varphi_p(\wt{Y}_p),d\varphi_p([\wt{Z},\wt{X}]_p))\\ &= g_p(\wt{Y}_p,[\wt{Z},\wt{X}]_p).
\end{align*}
Hence, in particular, for all $p \in N$ and all $\wt{Z}_p \in T_p M$,
\begin{align*}
    g(d \varphi(\tnab{\wt{X}}{\wt{Y}}),d \varphi(\wt{Z}))(\varphi(p)) = g(\tnab{d \varphi(\wt{X})}{d \varphi(\wt{Y})}, d \varphi(\wt{Z}))(\varphi(p)).
\end{align*}

Since both $\varphi$ and $d \varphi$ are bijections this is enough to show $\tnabla$ satisfies equation \eqref{Eq: 2nd fund form lemma eq1}, as we may choose $\wt{X}$ and $\wt{Y}$ to be the local extensions of $X$ and $Y$ in $\smooth{T(U\cap N)}$. For equation \eqref{Eq: 2nd fund form lemma eq2} we simply note that $\varphi$ is also an isometry of $N$ so that the preceding arguments apply with $\nabla$ in place of $\tnabla$ and $X, Y$ and $Z$ in $\smooth{T(U\cap N)}$ in place of $\wt{X}, \wt{Y}$ and $\wt{Z}$. This concludes the proof.
\end{proof}
\begin{proposition}[\cite{Kobayashi-Nomizu,Ziller's notes}]\label{symmetry invariance}
Let $(M,g)$ be a symmetric space and $N$ be a submanifold of $M$. If for all points $p \in N$, $N$ is closed under the symmetry $s_p$ of $M$ at $p$, i.e. $s_p(N) = N$, then $N$ is a totally geodesic submanifold of $M$. Conversely, if $N$ is a totally geodesic and complete submanifold of $M$, then $s_p(N) = N$ for all $p \in N$.
\end{proposition}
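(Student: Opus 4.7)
I would split the proof along the biconditional; in both directions the key is that $s_p$ is an isometry of $M$ fixing $p$ with differential $-\mathrm{id}_{T_pM}$.

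\emph{Forward direction.} Assume $s_p(N)=N$ for every $p\in N$. Fix $p\in N$ and tangent vectors $X_p,Y_p\in T_pN$. Because $s_p$ is an isometry of $M$ sending $N$ to $N$ and fixing $p$, Lemma \ref{2nd fund form inv} would give
\begin{align*}
    d(s_p)_p\bigl(B_p(X_p,Y_p)\bigr) = B_p\bigl(d(s_p)_p(X_p), d(s_p)_p(Y_p)\bigr).
\end{align*}
Using $d(s_p)_p = -\mathrm{id}_{T_pM}$, the left-hand side becomes $-B_p(X_p,Y_p)$, while bilinearity of $B_p$ reduces the right-hand side to $B_p(X_p,Y_p)$. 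Hence $B_p(X_p,Y_p)=0$; since $p$ was arbitrary, the second fundamental form of $N$ vanishes identically, so $N$ is totally geodesic.

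\emph{Reverse direction.} Assume $N$ is totally geodesic and complete (passing to a connected component if necessary). Fix $p\in N$ and let $q\in N$. I would invoke the Hopf--Rinow theorem on the complete connected Riemannian manifold $N$ to obtain a geodesic $\gamma:[0,1]\to N$ of $N$ joining $p$ to $q$; completeness of $N$ then extends this to a full geodesic $\gamma:\mathbb{R}\to N$. Since $N$ is totally geodesic, $\gamma$ is simultaneously a geodesic of $M$. Now $s_p\circ\gamma$ and $t\mapsto\gamma(-t)$ are both geodesics of $M$ passing through $p$ at $t=0$ with initial velocity $-\dot\gamma(0)$, so by uniqueness of geodesics $s_p(\gamma(t))=\gamma(-t)$ for every $t\in\mathbb{R}$. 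In particular $s_p(q)=\gamma(-1)\in N$, which proves $s_p(N)\subseteq N$; applying $s_p$ again and using $s_p^2 = \mathrm{id}_M$ gives the reverse inclusion.

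\textbf{Main obstacle.} The essential point is in the reverse direction: I must guarantee that the geodesic of $N$ from $p$ to $q$ can be continued in the negative time direction at least to $t=-1$ \emph{without leaving} $N$, so that $\gamma(-1)$ is genuinely a point of $N$. This is precisely where the completeness hypothesis on $N$ is used, in conjunction with the totally geodesic property which keeps the extension inside $N$. The forward implication is, by comparison, a routine application of Lemma \ref{2nd fund form inv} together with the defining identity $(ds_p)_p = -\mathrm{id}$.
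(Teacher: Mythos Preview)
Your proof is correct and follows essentially the same approach as the paper's. The forward direction is identical; for the converse, the paper argues by contradiction (assuming some $s_p(q)\notin N$ and showing $s_p\circ\gamma_{pq}$ is a geodesic with initial velocity in $T_pN$, hence stays in $N$), whereas you give the equivalent direct argument by extending $\gamma$ to all of $\mathbb{R}$ and using $s_p(\gamma(t))=\gamma(-t)$ --- the underlying idea is the same.
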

\begin{proof}
To show that $N$ is a totally geodesic submanifold of $M$ is equivalent to showing that the second fundamental form $B$ of $N$ vanishes. Let $p$ be a point of $N$. By Lemma \ref{2nd fund form inv} we have
\begin{align}\label{Eq: sym inv eq1}
    d (s_p)_p B_p(X_p,Y_p) = B_p(d(s_p)_p(X_p),d(s_p)_p(Y_p)),
\end{align}
for all $X_p, Y_p \in T_p N$. But since $d(s_p)_p = -id_{T_p M}$, equation \eqref{Eq: sym inv eq1} reads 
\begin{align*}
    -B_p(X_p,Y_p) = B_p(X_p,Y_p),
\end{align*}
so $B$ vanishes at $p$. Since this holds for all points $p$ of $N$, the second fundamental form vanishes identically and thus, $N$ is a totally geodesic submanifold.
\par
For the converse, suppose by way of contradiction that $N$ is a totally geodesic and complete submanifold of $M$, but that $s_p(N) \neq N$ for some $p \in N$. Let $q$ be a point of $N$ such that $s_{p}(q) \notin N$. By completeness there exists $\gamma_{pq}: [0,1] \to M$ with $\gamma_{pq}(0) = p$ and $\gamma_{pq}(1) = q$ which is also a geodesic in $N$. Since $s_p$ is an isometry of $M$ fixing $p$, $\alpha = s_p(\gamma_{pq})$ is a geodesic from $p$ to $s_p(q)$. However, the differential $(ds_p)_p = -id_{T_p M}$ preserves the tangent space $T_p N$, so that $\dot{\alpha}(0) \in T_p N$. Then, by uniqueness of geodesics and since $N$ is totally geodesic, the curve $\alpha$ is contained in $N$. This contradicts $\alpha(1)=s_p(q)\notin N$ and we are done.
\end{proof}
\begin{remark}
It is clear that the symmetry invariant submanifolds $N$ in Proposition \ref{symmetry invariance} are themselves symmetric spaces, with the symmetry at a point $p\in N$ being simply the restriction of the symmetry of $M$ at $p$ to $N$.
\end{remark}
We have in fact already in this thesis encountered several examples of symmetric spaces, as the following result will make clear.
\begin{proposition}\label{Lie group symmetric}

Let $(G,g)$ be a compact Lie group equipped with a  bi-invariant Riemannian metric. Then $G$ is a symmetric space and the symmetry $s_p$ at $p \in G$ is the map
\begin{align*}
    s_p:q \mapsto p\cdot q^{-1}\cdot p.
\end{align*}
\end{proposition}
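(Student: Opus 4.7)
The plan is to verify the three defining properties of a symmetric structure at each point $p \in G$ for the given map $s_p(q) = p \cdot q^{-1} \cdot p$. The key observation is that $s_p$ admits the clean decomposition
\begin{align*}
    s_p = L_p \circ \iota \circ L_{p^{-1}},
\end{align*}
since $L_{p^{-1}}(q) = p^{-1}q$, then $\iota(p^{-1}q) = q^{-1}p$, and finally $L_p(q^{-1}p) = pq^{-1}p$. This factorization does most of the work for us.

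First I would check that $s_p(p) = p \cdot p^{-1} \cdot p = p$, so property (i) in Definition \ref{def symmetric space} is immediate. Second, $s_p$ is smooth (as a composition of smooth maps), and it is an isometry: left translations are isometries because the metric $g$ is left-invariant (in fact bi-invariant by hypothesis), and the inversion map $\iota$ is an isometry by Corollary \ref{inversion isometry}, which applies precisely because the metric is bi-invariant. Hence $s_p$, being a composition of three isometries of $(G,g)$, is itself an isometry of $(G,g)$.

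The remaining step is to verify $(ds_p)_p = -\mathrm{id}_{T_p G}$. By the chain rule applied to the decomposition,
\begin{align*}
    (ds_p)_p = (dL_p)_e \circ (d\iota)_e \circ (dL_{p^{-1}})_p.
\end{align*}
Proposition \ref{diff mul} with $p$ replaced by $e$ gives $(d\iota)_e(Z) = -(dR_e)_e(dL_e)_e(Z) = -Z$ for all $Z \in T_e G$, so $(d\iota)_e = -\mathrm{id}_{T_e G}$. Substituting, and using that the composition $L_p \circ L_{p^{-1}}$ is the identity map on $G$ (whose differential is therefore the identity on every tangent space),
\begin{align*}
    (ds_p)_p = -(dL_p)_e \circ (dL_{p^{-1}})_p = -d(L_p \circ L_{p^{-1}})_p = -\mathrm{id}_{T_p G},
\end{align*}
which is property (ii). This completes the verification that $(G,g)$ is a symmetric space with the claimed symmetries.

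There is no real obstacle here; the only delicate point is making sure one invokes bi-invariance (rather than merely left-invariance) of $g$ so that Corollary \ref{inversion isometry} is available to conclude $\iota$ is an isometry. Everything else is a direct application of results already established in this chapter.
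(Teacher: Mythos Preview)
Your proof is correct and follows essentially the same approach as the paper: both use the decomposition $s_p = L_p \circ \iota \circ L_{p^{-1}}$, invoke Corollary \ref{inversion isometry} for the isometry property, and compute $(ds_p)_p$ via the chain rule using $(d\iota)_e = -\mathrm{id}$. Your version is slightly more explicit in verifying the decomposition and citing Proposition \ref{diff mul}, but the argument is the same.
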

\begin{proof}
Clearly we have $s_p(p) = pp^{-1}p = p$ for each $p \in G$. We can decompose the map $s_p$ as
\begin{align*}
    s_p = L_p \circ \iota \circ L_{p^{-1}}.
\end{align*}
Then for $X_p \in T_p G$ we have
\begin{align*}
    d(s_p)_p(X_p) &= (dL_p)_e(d\iota)_e(dL_{p^{-1}})_p(X_p)\\
    &= -(dL_p)_e(dL_{p^-1})_p(X_p)\\
    &= -X_p
\end{align*}
since $d\iota_e = -id_{T_e M}$ and $L_p$ and $L_{p^{-1}}$ are inverses of each other. Furthermore, the inversion map $\iota$ is an isometry by Corollary \ref{inversion isometry}, since $G$ is compact. 
\end{proof}
We will later see a somewhat different way to conceive of the Lie group $G$ as a symmetric space.
\par
In order to use Lie theory to understand symmetric spaces we will also recall the following definition of a Riemannian homogeneous space.
\begin{definition}\label{def homogeneuos space}
A \emph{Riemannian homogeneous} space is a Riemannian manifold $(M,g)$ such that the isometry group $I(M)$ acts transitively on $M$. In other words, for each pair of points $p,q \in M$ there is an isometry $\varphi$ of $M$ such that
\begin{align*}
    \varphi(p) = q.
\end{align*}
\end{definition}
Another consequence of the geodesic-reversing property of the symmetries is then the following.
\begin{proposition}
Every symmetric space $(M,g)$ is a Riemannian homogeneous space.
\end{proposition}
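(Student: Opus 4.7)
The plan is to show that the symmetries $s_p$ themselves suffice to move any point of $M$ to any other. More precisely, for any $p,q \in M$, I will produce an isometry of $M$ sending $p$ to $q$ by taking the symmetry at the midpoint of a geodesic joining them. Since we have already established that symmetric spaces are geodesically complete, and since the whole discussion assumes $M$ connected, the Hopf--Rinow theorem guarantees that any pair of points of $M$ can be joined by a (minimizing) geodesic.

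Given $p, q \in M$, let $\gamma : [0,1] \to M$ be such a geodesic with $\gamma(0) = p$ and $\gamma(1) = q$, and set $m = \gamma(\tfrac{1}{2})$. I claim $s_m(p) = q$. Indeed, consider the two curves
\begin{align*}
    \alpha(t) &= s_m(\gamma(t)), \\
    \beta(t) &= \gamma(1 - t).
\end{align*}
Both are geodesics: $\alpha$ because $s_m$ is an isometry and $\gamma$ is a geodesic, and $\beta$ by reparametrization. They agree at $t = \tfrac{1}{2}$, where both take the value $m$. Their derivatives at $t = \tfrac{1}{2}$ are $(ds_m)_m(\dot{\gamma}(\tfrac{1}{2})) = -\dot{\gamma}(\tfrac{1}{2})$ and $-\dot{\gamma}(\tfrac{1}{2})$ respectively, using the defining property $(ds_m)_m = -\mathrm{id}_{T_m M}$. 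By uniqueness of geodesics with prescribed initial data we get $\alpha = \beta$ on the interval where both are defined, and in particular $s_m(p) = \alpha(0) = \beta(0) = \gamma(1) = q$.

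The only step requiring any care is invoking the existence of a joining geodesic. This is where completeness, which we proved above from the symmetry-reversing property together with the assumption that $M$ is connected, enters through Hopf--Rinow. Once a joining geodesic is in hand, the midpoint construction is purely formal, and the argument shows in fact that the subgroup of $I(M)$ generated by the symmetries $\{s_p : p \in M\}$ already acts transitively. Thus $I(M)$ acts transitively on $M$, and $(M,g)$ is a Riemannian homogeneous space as required.
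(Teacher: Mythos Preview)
Your proof is correct and follows essentially the same approach as the paper: use completeness (via Hopf--Rinow) to join $p$ and $q$ by a geodesic, then apply the symmetry at the midpoint to send $p$ to $q$. The only difference is cosmetic --- you spell out the uniqueness-of-geodesics argument for $s_m \circ \gamma = \gamma(1-\cdot)$ explicitly, while the paper invokes the previously established geodesic-reversing property of $s_m$ directly.
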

\begin{proof}
Let $p, q \in M$. Since $M$ is complete there is a geodesic $\gamma: \rn \to M$ such that $\gamma(0) = p$ and $\gamma(1) = q$. Set $\varphi = s_{\gamma\left(\tfrac{1}{2}\right)}$. Then, since $s_{\gamma\left(\tfrac{1}{2}\right)}$ reverses geodesics through $\gamma(\tfrac{1}{2})$, in particular it reverses $\gamma$ so that
\begin{align*}
    \varphi(p) = \varphi(\gamma(\tfrac{1}{2}-\tfrac{1}{2})) =\gamma(\tfrac{1}{2}+\tfrac{1}{2}) = q.
\end{align*}
This then concludes the proof.
\end{proof}
As indicated we prefer to work only with connected Lie groups and manifolds in general. However, there is no guarantee that the full isometry group $I(M)$ of a Riemannian homogeneous space $M$ is connected. It will then be most convenient to work instead with the identity component $I_{0}(M)$. The following lemma, which is a special case of Proposition 4.3 of Chapter II in \cite{Helgason}, shows that this will not pose any problems.

\begin{lemma}[\cite{Helgason}]\label{identity componen is transitive}
Let $(M,g)$  be a Riemannian manifold with isometry group $I(M)$. If $I(M)$ acts transitively on $M$, then so does its identity component $I_{0}(M)$.
\end{lemma}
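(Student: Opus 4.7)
The plan is to show that each orbit of $I_0(M)$ in $M$ is open; since the orbits of $I_0(M)$ partition the connected space $M$, this will force the whole manifold to be a single orbit.

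First I would invoke Theorem \ref{isometry lie} (Myers-Steenrod) to guarantee that $I(M)$ is a Lie group and that its natural action on $M$ is smooth. In particular the identity component $I_0(M)$ is an open (and closed) subgroup of $I(M)$. Fix a point $p \in M$ and consider the orbit map
\begin{align*}
\pi_p : I(M) \to M, \qquad \varphi \mapsto \varphi(p).
\end{align*}
By the assumed transitivity of $I(M)$, this map is surjective, and the standard theory of smooth Lie group actions shows that $\pi_p$ is a submersion of constant rank equal to $\dim M$; in particular it is an open map.

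The key step is then immediate: since $I_0(M)$ is open in $I(M)$ and $\pi_p$ is open, the image
\begin{align*}
\pi_p(I_0(M)) \ = \ I_0(M) \cdot p
\end{align*}
is an open subset of $M$. As $p$ was arbitrary, every $I_0(M)$-orbit in $M$ is open.

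Finally, the orbits of $I_0(M)$ in $M$ are pairwise disjoint open subsets whose union is all of $M$. Under the standing assumption of this chapter $M$ is connected, so only one orbit can be nonempty, forcing $I_0(M) \cdot p = M$ for every $p \in M$. This is precisely transitivity of the identity component. The only nontrivial technical input is the openness of the orbit map for a smooth transitive Lie group action; once that is granted, the argument reduces to elementary point-set topology, so I would expect the main obstacle to a reader unfamiliar with Lie transformation groups to be accepting that $\pi_p$ is a submersion, which follows from the fact that $\ker (d\pi_p)_e$ is the Lie algebra of the closed stabiliser $K_p$ of $p$, giving full rank $\dim M = \dim I(M) - \dim K_p$ at the identity and hence everywhere by equivariance.
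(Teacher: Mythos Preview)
Your argument is correct and is in fact the standard route to this result. Note, however, that the paper does not supply its own proof of this lemma: it simply writes ``For a proof see \cite{Helgason}'' and moves on. So there is nothing in the paper to compare your argument against directly.

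A couple of small remarks. First, your appeal to connectedness of $M$ is essential, and strictly speaking the standing assumption in the chapter is that \emph{symmetric spaces} are connected, whereas this lemma is phrased for an arbitrary Riemannian homogeneous space; you should make the connectedness hypothesis explicit rather than rely on a chapter convention that does not literally cover it. Second, the step ``$\pi_p$ is a submersion'' deserves a clean justification: the cleanest is to note that by transitivity $M \cong I(M)/K_p$ as smooth manifolds (this is exactly Theorem~\ref{homogenous coset equiv} in the paper), and the canonical projection $I(M) \to I(M)/K_p$ is a submersion by Theorem~\ref{coset space smooth}. With those two points tightened, your proof is complete and self-contained.
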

For a proof see \cite{Helgason}. We now wish to describe Riemannian homogeneous spaces, and by extension symmetric spaces, in terms of Lie groups. 

We let $(M,g)$ be a Riemannian homogeneous space, and denote by $G$ the connected component of the isometry group of $M$. By Proposition \ref{Lie group symmetric}, $G$ has the structure of a Lie group. We now fix a point $p_0 \in M$ and consider the subgroup
\begin{align*}
    H = G_{p_0} = \{ g \in G \ | \ g \cdot p_0 = p_0\}
\end{align*}
of elements fixing the point $p_0$. In other words $H$ is the isotropy group of $p_0$. The choice of basepoint $p_0$ does not matter as the isotropy groups are all conjugate to each other. One now easily shows the map $\varphi: gH \mapsto g\cdot p_0$ is a bijection between the coset space $G/H$ and $M$. The Lie group $G$ acts on the coset space $G/H$ by left translation $L_p: gH \mapsto (pg)H$ and the bijection $\varphi$ commutes with this action. So far $G/H$ is simply a set with an action of $G$ defined on it. However with the help of the following powerful result we are able to identify $G/H$ with $M$ also as smooth manifolds.
\begin{theorem}[\cite{Helgason,Kobayashi-Nomizu}]\label{coset space smooth}
Let $G$ be a Lie group and $H$ be a closed subgroup of $G$. Then there is a unique smooth structure on the coset space $G/H$ such that all left translations $L_p: G/H \to G/H$, $p\in G$ are diffeomorphisms of $G/H$ and the natural projection $\pi:g \mapsto gH$ is a submersion.
\end{theorem}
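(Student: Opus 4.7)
The plan is to invoke the closed subgroup theorem (Theorem \ref{closed subgroup}) so that $H$ becomes a Lie subgroup with Lie algebra $\la{h} \subset \la{g}$, and then to build a smooth atlas on $G/H$ using local slices for the right $H$-action on $G$. First I would choose a vector space complement so that $\la{g} = \la{h} \oplus \la{m}$, and consider the smooth map $\Phi: \la{m} \times H \to G$ defined by
$$
    \Phi(X,h) = \exp(X)\cdot h.
$$
Using Proposition \ref{diff mul} together with property (i) of Proposition \ref{properties of exp}, a direct computation gives $d\Phi_{(0,e)}(X,Y) = X + Y$, which is an isomorphism $\la{m} \oplus \la{h} \to \la{g}$. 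The inverse function theorem then restricts $\Phi$ to a diffeomorphism between a product neighbourhood $V \times W$ of $(0,e)$ and an open neighbourhood $U$ of $e$ in $G$, realising $U$ as a product slice for the right $H$-action.

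Next I would equip $G/H$ with the quotient topology, so that $\pi$ is continuous and open (because $\pi^{-1}(\pi(O)) = O \cdot H$ is a union of right translates of the open set $O$); closedness of $H$ in $G$ is exactly what guarantees $G/H$ is Hausdorff. Setting $\psi_e(X) = \pi(\exp(X))$ on $V$ yields a homeomorphism onto an open neighbourhood of $eH$, whose inverse serves as a chart around $eH$; left-translating by $g \in G$ yields a chart $\psi_g = L_g \circ \psi_e$ around each coset $gH$. Smooth compatibility of overlapping charts $\psi_{g_1},\psi_{g_2}$ reduces to writing the transition as $X \mapsto \mathrm{pr}_{\la{m}}(\Phi^{-1}(g_2^{-1}g_1\exp(X)))$ on the appropriate domain, which is smooth as a composition of $\Phi^{-1}$, the group operations, and a linear projection.

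With the atlas in place, the left translations $L_p: G/H \to G/H$ are diffeomorphisms, since $L_p \circ \psi_g = \psi_{pg}$ makes $L_p$ appear as the identity in the charts $\psi_g$ and $\psi_{pg}$, with smooth inverse $L_{p^{-1}}$. The projection $\pi$ is smooth by construction, and is a submersion because in the slice charts it factors as the linear projection of $\Phi^{-1}$ onto the $\la{m}$-component, which is surjective; the equivariance $\pi \circ L_g = L_g \circ \pi$ transports the submersion property from $e$ to every point of $G$. Uniqueness follows from the universal property of surjective submersions: if $G/H$ carried two smooth structures satisfying the listed conditions, the identity map and its inverse between them would both have smooth precompositions with $\pi$ (namely $\pi$ itself), hence would themselves be smooth, so the two smooth structures coincide.

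The main obstacle I anticipate is the careful verification of the slice construction and the smooth compatibility of the charts: one must ensure that $V$ is small enough for the exponential slice $\exp(V)$ to be genuinely transverse to the orbits of $H$ throughout $V$, and that the chart overlaps really do produce smooth transition maps. Both issues are resolved by shrinking $V$ and $W$ appropriately and by exploiting the smoothness of $\Phi^{-1}$ granted by the inverse function theorem.
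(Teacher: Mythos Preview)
The paper does not actually prove this theorem; it merely states it and cites \cite{Helgason,Kobayashi-Nomizu} for the proof. So there is no in-paper argument to compare against, and your proposal stands on its own as a proof sketch.

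Your approach is the standard one and is essentially correct: build slice charts from a complement $\la{m}$ via the map $\Phi(X,h)=\exp(X)h$, push them down with $\pi$, and check compatibility, the submersion property, and uniqueness. One point deserves more care than you give it. The inverse function theorem only provides a diffeomorphism $\Phi\colon V\times W\to U$ for some neighbourhood $W$ of $e$ in $H$, but for $\psi_e(X)=\pi(\exp X)$ to be injective on $V$ you need transversality of $\exp(V)$ to \emph{all} of $H$, i.e.\ that $\exp(X_1)^{-1}\exp(X_2)\in H$ forces $X_1=X_2$ for $X_1,X_2\in V$. This does not follow automatically from the local statement and requires a further shrinking of $V$ together with the fact that $H$ is closed (so that the set of $h\in H$ lying in a small compact neighbourhood of $e$ in $G$ is itself compact and, after shrinking, contained in $W$). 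You flag this in your final paragraph, but it is the genuine technical crux of the construction rather than a routine adjustment, so in a full write-up it should be made explicit. The remaining steps---openness of $\pi$, Hausdorffness from closedness of $H$, smoothness of transitions via $\Phi^{-1}$, and uniqueness from the universal property of submersions---are handled correctly.
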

In our particular case when $G$ is the identity component of the isometry group of a Riemannian homogeneous space, and $H$ the isotropy group of a fixed point, we then have.
\begin{theorem}[\cite{Helgason,Kobayashi-Nomizu}]\label{homogenous coset equiv}
Let $(M,g)$ be a Riemannian homogeneous space in which we fix a point $p_0$. Let $G$ be the connected component of the isometry group of $M$ and let $H = G_{p_0}\subset G$ be the isotropy group of $p_0$. Then there is a unique $C^{\infty}$-structure on the coset space $G/H$ such that the natural projection $\pi: G \mapsto G/H$ is a submersion. Furthermore, with respect to this structure, the bijection $gH \mapsto g\cdot p_0$ becomes a diffeomorphism.
\end{theorem}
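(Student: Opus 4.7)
The plan is to combine Theorem \ref{coset space smooth} with basic facts about smooth transitive group actions. The first step is to verify that the isotropy subgroup $H = G_{p_0}$ is closed in $G$; this is immediate since it is the preimage of the singleton $\{p_0\}$ under the continuous evaluation map $\mathrm{ev}_{p_0}: G \to M$ defined by $g \mapsto g \cdot p_0$. With $H$ closed, Theorem \ref{coset space smooth} directly supplies the unique $C^{\infty}$-structure on $G/H$ for which $\pi: G \to G/H$ is a submersion, settling the first assertion.

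For the second assertion, write $\varphi: G/H \to M$ for the bijection $gH \mapsto g\cdot p_0$. Since $\varphi \circ \pi = \mathrm{ev}_{p_0}$, and $\mathrm{ev}_{p_0}$ is smooth (being the restriction of the smooth $G$-action on $M$) while $\pi$ is a surjective submersion, the standard criterion for factoring smooth maps through surjective submersions yields smoothness of $\varphi$.

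The main obstacle is to show that $\varphi^{-1}$ is smooth; equivalently, that $\varphi$ is a local diffeomorphism at each point. I would argue via $G$-equivariance: for each $g \in G$, the map $\mathrm{ev}_{p_0} \circ L_g$ coincides with the action of $g$ on $M$ composed with $\mathrm{ev}_{p_0}$, and since both left translation on $G$ and the $G$-action on $M$ are by diffeomorphisms, the map $\mathrm{ev}_{p_0}$ has constant rank $r$. Its image is all of $M$ by transitivity, which holds by Lemma \ref{identity componen is transitive}, so the constant-rank theorem forces $r = \dim M$. Thus $d(\mathrm{ev}_{p_0})_e$ is surjective with kernel equal to $T_e H = \la{h}$, and passing to the quotient produces a linear isomorphism
\begin{align*}
    d\varphi_{eH}: T_{eH}(G/H) \cong \la{g}/\la{h} \to T_{p_0}M.
\end{align*}
A second application of $G$-equivariance (now of $\varphi$ itself, intertwining left translation on $G/H$ with the action on $M$) propagates this isomorphism to every point of $G/H$, so by the inverse function theorem $\varphi$ is a local diffeomorphism. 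Combined with bijectivity, this makes $\varphi$ a global diffeomorphism, as required.
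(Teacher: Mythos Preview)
The paper does not supply its own proof of this theorem; it simply states the result with references to Helgason and Kobayashi--Nomizu. Your argument is correct and follows the standard route taken in those references: closedness of $H$, application of Theorem~\ref{coset space smooth}, smoothness of $\varphi$ via the universal property of the submersion $\pi$, and then the constant-rank/equivariance argument to upgrade $\varphi$ to a diffeomorphism. One small point worth making explicit in your write-up is why $\ker d(\mathrm{ev}_{p_0})_e$ equals $T_eH$ rather than merely containing it: once you know $\mathrm{ev}_{p_0}$ has constant rank $\dim M$, the fibre $H$ is a submanifold of dimension $\dim G - \dim M$, so the inclusion $T_eH \subset \ker d(\mathrm{ev}_{p_0})_e$ is an equality by dimension count.
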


The isotropy group $H$ of a point $p_0$ of a Riemannian homogeneous space $M = G/H$ can be identified with a subgroup of the orthogonal group of the tangent space at that point in the following way.

\begin{proposition}\label{isotropy rep}
Let $M = G/H$ be a Riemannian homogeneous space with $H = G_{p_0}$. Then the map $\varphi: H \to \O{T_{p_0}M}$
\begin{align*}
    \varphi: h \mapsto dh_{p_0}
\end{align*}
is an injective Lie group homomorphism, generally refered to as the \emph{isotropy representation} of $H$ on $T_{p_0} M$.
\end{proposition}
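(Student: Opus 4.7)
The plan is to verify, in order, the four properties that together constitute the claim: that $\varphi(h)$ actually lands in $\O{T_{p_0}M}$, that $\varphi$ is a group homomorphism, that it is smooth, and finally that it is injective. The first three are essentially formal, while injectivity is the only step that uses a nontrivial input.

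First I would observe that for any $h \in H$, the map $h: M \to M$ is an isometry fixing $p_0$, so $dh_{p_0}$ is a linear endomorphism of $T_{p_0}M$ preserving $g_{p_0}$, i.e. an element of $\O{T_{p_0}M}$. The homomorphism property
\begin{equation*}
\varphi(h_1 h_2) = d(h_1\circ h_2)_{p_0} = (dh_1)_{h_2(p_0)} \circ (dh_2)_{p_0} = (dh_1)_{p_0}\circ (dh_2)_{p_0} = \varphi(h_1)\varphi(h_2)
\end{equation*}
follows from the chain rule together with $h_2(p_0) = p_0$.

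Next I would argue smoothness. Since the action $G \times M \to M$ is smooth, so is its restriction $H \times M \to M$, and differentiating in the second variable at the fixed point $p_0$ gives a smooth map $H \to \mathrm{GL}(T_{p_0}M)$. Its image lies in the closed Lie subgroup $\O{T_{p_0}M}$ by the previous step, hence $\varphi$ is a Lie group homomorphism into $\O{T_{p_0}M}$.

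The key step, and the one that actually uses something nontrivial, is injectivity. Suppose $h \in H$ lies in $\ker\varphi$, so that $dh_{p_0} = \mathrm{id}_{T_{p_0}M}$. Then $h$ is an isometry of the connected Riemannian manifold $M$ satisfying $h(p_0) = p_0$ and $dh_{p_0} = d(\mathrm{id}_M)_{p_0}$, so by the uniqueness result in Proposition \ref{isometry uniqueness thm} we conclude $h = \mathrm{id}_M$. Hence $\ker \varphi$ is trivial and $\varphi$ is injective. The main (mild) obstacle worth noting is that this injectivity argument implicitly relies on $M$ being connected, which is a standing assumption in this chapter.
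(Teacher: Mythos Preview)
Your proof is correct and follows essentially the same approach as the paper: both argue that the image lies in $\O{T_{p_0}M}$ because $H$ acts by isometries, obtain the homomorphism property from the chain rule, deduce smoothness from smoothness of the action, and prove injectivity via Proposition~\ref{isometry uniqueness thm}. Your version is slightly more explicit (writing out the chain-rule computation and noting the role of connectedness), but there is no substantive difference.
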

\begin{proof}
Smoothness is immediate since the action of $G$, and hence $H$ on $M$ is smooth. It then follows from the chain rule that $\varphi$ is a Lie group homomorphism. We note that the image of $\varphi$ must lie in $\O{T_{p_0} M}$, since $H$ acts by isometry on $M$. What remains to be checked is the injectivity. This follows from Proposition \ref{isometry uniqueness thm} since for any two elements $h_1,h_2 \in H$, they by definition both map $p_0$ to itself, so they are the same if and only if they have the same differential at $p_0$. But this is then precisely the statement $\varphi(h_1) = \varphi(h_2)$ if and only if $h_1 = h_2$. Hence, $\varphi$ is injective.
\end{proof}
This has the immediate consequence.
\begin{corollary}
Let $M = G/H$ be a Riemannian homogeneous space. Then $H$ is a compact subgroup of $G$.
\end{corollary}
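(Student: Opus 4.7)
The plan is to combine Proposition \ref{isotropy rep} with the compactness of the orthogonal group established in Chapter \ref{ch: classical groups}. The isotropy representation $\varphi \colon H \to \O{T_{p_0}M}$ is an injective Lie group homomorphism whose target $\O{T_{p_0}M} \cong \O{n}$ is compact, so compactness of $H$ reduces to upgrading $\varphi$ to a topological embedding with closed image.

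First I would observe that $H = \{g \in G : g \cdot p_0 = p_0\}$ is closed in $G$, being the preimage of $\{p_0\}$ under the continuous evaluation map $g \mapsto g \cdot p_0$. Next, since $M$ is homogeneous it is geodesically complete: transitivity of $G$ by isometries allows one to translate any geodesic segment into a uniformly sized normal ball around $p_0$, where the local existence theorem applies. Completeness supplies a globally defined Riemannian exponential $\mathrm{Exp}_{p_0} \colon T_{p_0}M \to M$ satisfying $h \circ \mathrm{Exp}_{p_0} = \mathrm{Exp}_{p_0} \circ dh_{p_0}$ for every $h \in H$, since an isometry fixing $p_0$ sends geodesics through $p_0$ to geodesics through $p_0$.

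The central and most delicate step is to show that if $\{h_n\} \subset H$ satisfies $\varphi(h_n) = dh_{n,p_0} \to A$ in $\O{T_{p_0}M}$, then there exists $h \in H$ with $\varphi(h) = A$ and $h_n \to h$. I would define a candidate limit by $h(\mathrm{Exp}_{p_0}(X)) = \mathrm{Exp}_{p_0}(A(X))$. The main obstacle is verifying that $h$ is a well-defined smooth isometry: well-definedness follows by taking $n \to \infty$ in the identity $\mathrm{Exp}_{p_0}(dh_{n,p_0}(X_1)) = \mathrm{Exp}_{p_0}(dh_{n,p_0}(X_2))$ whenever $\mathrm{Exp}_{p_0}(X_1) = \mathrm{Exp}_{p_0}(X_2)$, while smoothness and the isometric character follow from passing to the limit locally near $p_0$, where $\mathrm{Exp}_{p_0}$ is a diffeomorphism and the action reduces to the continuous dependence of an isometry on its $1$-jet guaranteed by Proposition \ref{isometry uniqueness thm}.

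Finally, operator-norm convergence $dh_{n,p_0} \to A$ together with continuity of $\mathrm{Exp}_{p_0}$ yields $h_n \to h$ uniformly on compact subsets of $M$, that is, in the compact-open topology of $I(M)$. Since $G = I_0(M)$ is both open and closed in the Lie group $I(M)$, the limit $h$ lies in $G$ and therefore in $H$, with $\varphi(h) = A$. This establishes simultaneously that $\varphi(H)$ is closed in the compact group $\O{T_{p_0}M}$ and that $\varphi^{-1}$ is continuous on its image, so $\varphi$ is a homeomorphism onto a compact subgroup. Hence $H$ itself is compact.
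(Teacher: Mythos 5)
Your proof is correct, and it is worth noting that it supplies substantially more than the paper does: the paper states this corollary as an ``immediate consequence'' of Proposition \ref{isotropy rep} and gives no argument at all. That deduction is in fact not immediate --- an injective continuous homomorphism into a compact group need not have closed image (a dense one-parameter winding of a torus is the standard counterexample), so injectivity of the isotropy representation $\varphi: H \to \O{T_{p_0}M}$ alone does not yield compactness of $H$. The two things you add, closedness of $\varphi(H)$ and continuity of $\varphi^{-1}$ on its image, are exactly what is needed to close this gap, and your route to them --- completeness of $M$, the equivariance $h \circ \mathrm{Exp}_{p_0} = \mathrm{Exp}_{p_0} \circ\, dh_{p_0}$, and the reconstruction of a limiting isometry $h$ from a convergent sequence of differentials --- is the standard and correct one.

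One step deserves a slightly firmer footing than you give it. To see that the candidate limit $h(\mathrm{Exp}_{p_0}(X)) = \mathrm{Exp}_{p_0}(A(X))$ is a smooth isometry, the appeal to Proposition \ref{isometry uniqueness thm} is not quite enough: that proposition gives uniqueness of an isometry with prescribed $1$-jet, not the existence or regularity of a limit. The cleaner argument is that $h$ is the pointwise limit of the isometries $h_n$, hence preserves the Riemannian distance; it is bijective because the same construction applied to $A^{-1}$ produces an inverse; and a distance-preserving bijection of a Riemannian manifold is automatically a smooth isometry by the Myers--Steenrod theorem \cite{Myers-Steenrod}, which the paper already invokes in Theorem \ref{isometry lie}. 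With that substitution the argument is complete.
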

As mentioned at the beginning of this chapter, we aim to characterise the symmetric spaces purely in terms of Lie-theoretic notions. The first half of the result which allows for this approach is the following.
\begin{proposition}[\cite{Helgason, Ziller's notes}]\label{symmetric spaces give symmetric pairs}
Let $(M,g)$ be a symmetric space. Fix a point $p_0\in M$ and set $G = I_{0}(M)$ and $K = G_{p_0}$. Then there exists an involutive automorphism $\sigma$ of the Lie algebra $G$ such that $G^{\sigma}$ is compact and $G^\sigma_0 \subset K \subset G^{\sigma}$.
\end{proposition}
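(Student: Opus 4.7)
The plan is to take $\sigma$ to be conjugation by the symmetry at $p_0$, namely $\sigma(g) := s_{p_0} \circ g \circ s_{p_0}$, and then check each of the three claims in turn. Because $s_{p_0}$ is an isometry fixing $p_0$ with $(ds_{p_0})_{p_0} = -\mathrm{id}$, its square $s_{p_0}^2$ fixes $p_0$ with identity differential there, so by Proposition \ref{isometry uniqueness thm} $s_{p_0}^2 = \mathrm{id}$ and hence $\sigma^2 = \mathrm{id}$. Conjugation by $s_{p_0}$ sends isometries to isometries, is a group homomorphism, and is continuous, so it preserves the identity component $G = I_0(M)$. Smoothness is inherited from the Lie group structure on $I(M)$ (Theorem \ref{isometry lie}), so $\sigma$ is an involutive Lie group automorphism of $G$.

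Next I would establish $K \subset G^\sigma$. Given $k \in K$, both $\sigma(k)$ and $k$ fix $p_0$, and at the level of differentials the chain rule gives
\begin{align*}
(d\sigma(k))_{p_0} = (ds_{p_0})_{p_0} \circ (dk)_{p_0} \circ (ds_{p_0})_{p_0} = (-\mathrm{id}) \circ (dk)_{p_0} \circ (-\mathrm{id}) = (dk)_{p_0},
\end{align*}
so Proposition \ref{isometry uniqueness thm} forces $\sigma(k) = k$.

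The reverse containment $G^\sigma_0 \subset K$ is cleanest at the infinitesimal level. If $X \in \la{g}$ satisfies $d\sigma_e(X) = X$, then $\sigma(\exp(tX)) = \exp(tX)$ for every $t$; equivalently, $s_{p_0}$ commutes with $\exp(tX)$. Consequently the orbit curve $\gamma(t) = \exp(tX) \cdot p_0$ is pointwise fixed by $s_{p_0}$. Since $s_{p_0}$ reverses geodesics through $p_0$, any fixed point sufficiently close to $p_0$ is of the form $\mathrm{Exp}_{p_0}(v)$ with $\mathrm{Exp}_{p_0}(-v) = \mathrm{Exp}_{p_0}(v)$, forcing $v = 0$. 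Hence $\gamma(t) = p_0$ in a neighbourhood of $t = 0$, and by the one-parameter group property $\gamma(t) = p_0$ for all $t \in \rn$. Thus $\exp(tX) \in K$, and since $G^\sigma_0$ is generated by such one-parameter subgroups we conclude $G^\sigma_0 \subset K$.

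The compactness of $G^\sigma$ is in my view the most delicate step. From the work above, $G^\sigma_0 \subset K$ and $K$ is compact (as the image of an injective representation into $\O{T_{p_0}M}$ by Proposition \ref{isotropy rep}), so $G^\sigma_0$ is compact. To upgrade to $G^\sigma$ one must control the discrete coset space $G^\sigma / G^\sigma_0$. Here I would study the $G^\sigma$-equivariant map $g \mapsto g \cdot p_0$ from $G^\sigma$ to the fixed-point set $F$ of $s_{p_0}$; its fibre over $p_0$ equals $G^\sigma \cap K = K$, so $G^\sigma / K$ injects into $F$, and the local fixed-point analysis above shows the image is a discrete subset. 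The hard part, on which I would spend the most effort, is to argue that this discrete set is finite in the symmetric-space setting, thereby reducing $G^\sigma$ to a finite union of cosets of the compact group $K$.
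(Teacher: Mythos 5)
Your construction of $\sigma$ as conjugation by $s_{p_0}$ and your verification of the two containments follow exactly the route the paper takes. In fact, on the inclusion $G^\sigma_0 \subset K$ you are more careful than the paper: the paper's displayed computation only shows that $\exp(tX)\cdot p_0$ is a fixed point of $s_{p_0}$ and then writes ``$=p_0$'' without comment, whereas you supply the missing ingredient, namely that $p_0$ is an isolated fixed point of $s_{p_0}$ (because $s_{p_0}(\mathrm{Exp}_{p_0}(v))=\mathrm{Exp}_{p_0}(-v)$ inside the injectivity radius), so that the curve $t\mapsto\exp(tX)\cdot p_0$ is constant near $0$ and hence everywhere. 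Your observation that each orbit point $g\cdot p_0$, $g\in G^\sigma$, is isolated in the fixed-point set $F$ is also correct, since $g$ commutes with $s_{p_0}$ and therefore maps $F$ homeomorphically onto itself.

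The genuine gap is the compactness of $G^\sigma$, which you candidly leave open: you reduce it to the finiteness of the discrete set $G^\sigma/K\hookrightarrow F$, but discreteness alone does not give finiteness when $M$ is non-compact, and no argument is offered. You should be aware that the paper's own proof is silent on this point as well --- it proves only the chain of inclusions and never returns to the compactness claim --- so you have not missed anything the paper supplies; but as a standalone proof of the proposition as stated, yours (like the paper's) is incomplete. Two remarks on closing it: when $M$ (equivalently $G=I_0(M)$) is compact, $G^\sigma$ is a closed subgroup of a compact group and there is nothing to do; in general one must invoke the finiteness of the component group of the fixed-point set of an involution of $G$ (this is where the cited sources do real work), or else restrict attention, as the rest of the thesis effectively does, to the explicit triples where $K=G^\sigma$ is verified directly and compactness of $K$ follows from Proposition \ref{isotropy rep}. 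Everything else in your proposal --- the involutivity of $s_{p_0}$ via Proposition \ref{isometry uniqueness thm}, the preservation of the identity component under conjugation, and the identification of the Lie algebra of $G^\sigma_0$ with the $+1$-eigenspace of $d\sigma_e$ --- is sound.
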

\begin{proof}
We define $\sigma(p) = s_{p_0}\cdot p \cdot s_{p_0}$ for all $p \in G$. Since $s_{p_0}$ is an isomorphism and $G$ the identity component of the isomorphism group of $M$, $G$ is closed under $\sigma$, and $\sigma$ is an automorphism of $G$. Since $s_{p_0}$ is involutive, so is $\sigma$. The inclusion $K \subset G^{\sigma}$ follows directly from Proposition \ref{isometry uniqueness thm} since for $k\in K$ we have $p_0 = k(p_0) = \sigma(k)(p_0)$ and $dk_{p_0} = d(\sigma(k))_{p_0}$. For the other inclusion let $\la{h}\subset \la{g}$ be the Lie algebra of $G_0^{\sigma}$ and let $X \in \la{h}$. Then $\exp(X) \in G_0^{\sigma}$ and hence $\sigma(\exp(X)) = \exp(X)$. By the definition of $\sigma$ we then have
\begin{align*}
    s_{p_0}(\exp(X)\cdot {p_0}) = (s_{p_0} \circ \exp(X) \circ s_{p_0})({p_0}) = \sigma(\exp(X))\cdot {p_0} = {p_0}. 
\end{align*}
Thus, $\exp(\la{h}) \subset K$. As $G_0^{\sigma}$ is compact and connected, it is generated by the set $\exp(\la{h})$ so that $G_0^{\sigma} \subset K$ as desired.
\end{proof}
This then motivates the following definition.
\begin{definition}
Let $G$ be a Lie group and $K$ a compact subgroup of $G$. Then if there exists an involutive automorphism of $G$ such that $G^{\sigma}$ is compact and $G_0^{\sigma}\subset K\subset G^{\sigma}$, we call the pair $(G,K)$ a \emph{symmetric pair}. In this thesis we shall also refer to the triple $(G,K,\sigma)$ as a \emph{symmetric triple}.
\end{definition}
In these terms, what we have shown in Proposition \ref{symmetric spaces give symmetric pairs} is that symmetric spaces give rise to symmetric triples. We shall soon show that each symmetric triple also gives rise to a symmetric space. Before we do that we shall first describe the important Cartan decomposition which can be associated to a symmetric pair.
\begin{proposition}\label{cartan decomposition}
Let $(G,K,\sigma)$ be a symmetric triple. Since $\sigma$ is an involutive automorphism of $G$, the differential $d\sigma = d\sigma_e$ is an involutive Lie algebra automorphism of $\la{g}$. Let $\la{k}$ and $\la{p}$ be the eigenspaces of $d\sigma$ corresponding to the eigenvalues $+1$ and $-1$, respectively. Then we have a decomposition of $\la{g}$ as a direct sum
\begin{align*}
    \la{g} = \la{k} \oplus \la{p}.
\end{align*}
This decomposition is orthogonal with respect to the Killing form $B$ of $\la{g}$ and the subspaces $\la{k}$ and $\la{p}$ further satisfy
\begin{enumerate}[label = (\roman*)]
    \item $[\la{k}, \la{k}] \subset \la{k}$ and $\la{k}$ is the Lie algebra of $K$,
    \item $[\la{p},\la{p}] \subset \la{k}$,
    \item $[\la{k},\la{p}] \subset \la{p}$,
    \item $\la{p}$ is $\Ad{K}$-invariant.
\end{enumerate}

\end{proposition}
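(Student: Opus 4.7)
The plan is to leverage the fact that $d\sigma$ is an involutive Lie algebra automorphism of $\la{g}$, since it is the differential at $e$ of the Lie group automorphism $\sigma$. From this single observation, combined with the identification of the Lie algebra of $K$ with the $+1$ eigenspace of $d\sigma$, nearly all of the listed properties reduce to bookkeeping of eigenvalues under $d\sigma$.

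First I would establish the direct sum decomposition. Because $(d\sigma)^2 = d(\sigma^2)_e = d(\mathrm{id}_G)_e = \mathrm{id}_{\la{g}}$, the endomorphism $d\sigma$ satisfies a polynomial with distinct roots $\pm 1$, so $\la{g}$ decomposes as the direct sum of its $\pm 1$ eigenspaces. Concretely, any $X \in \la{g}$ splits as $X = \tfrac{1}{2}(X + d\sigma(X)) + \tfrac{1}{2}(X - d\sigma(X))$, with the first summand in $\la{k}$ and the second in $\la{p}$, and uniqueness follows from $\la{k} \cap \la{p} = \{0\}$.

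Next I would verify the bracket inclusions (i)--(iii) by exploiting that $d\sigma[X, Y] = [d\sigma(X), d\sigma(Y)]$. For $X, Y \in \la{k}$ this yields $d\sigma[X, Y] = [X, Y]$, so $[\la{k}, \la{k}] \subset \la{k}$. For $X, Y \in \la{p}$ it yields $d\sigma[X, Y] = [-X, -Y] = [X, Y]$, so $[\la{p}, \la{p}] \subset \la{k}$. For $X \in \la{k}$, $Y \in \la{p}$ we get $d\sigma[X, Y] = -[X, Y]$, so $[\la{k}, \la{p}] \subset \la{p}$. To identify $\la{k}$ as the Lie algebra of $K$, I would combine property (iii) of Proposition \ref{properties of exp} (which gives $\sigma \circ \exp = \exp \circ\, d\sigma$) with property (iv) to see that $X$ lies in the Lie algebra of $G^\sigma$ precisely when $\exp(tX)$ is fixed by $\sigma$ for every $t$; differentiating the resulting identity $\exp(t\, d\sigma(X)) = \exp(tX)$ at $t = 0$ forces $d\sigma(X) = X$, so the Lie algebra of $G^\sigma$ is exactly $\la{k}$. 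Since $K$ is sandwiched between $G^\sigma$ and its identity component $G_0^\sigma$, it shares their common Lie algebra.

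For the Killing form orthogonality I would invoke the invariance $B(d\sigma(X), d\sigma(Y)) = B(X, Y)$ under any Lie algebra automorphism, as established in the proof of Proposition \ref{Killing form properties}. For $X \in \la{k}$ and $Y \in \la{p}$ this gives $B(X, Y) = B(X, -Y) = -B(X, Y)$, forcing $B(X, Y) = 0$. For $\Ad{K}$-invariance of $\la{p}$, I would note that for $k \in K \subset G^\sigma$ the identity $\sigma(k q k^{-1}) = k \sigma(q) k^{-1}$ reads $\sigma \circ C_k = C_k \circ \sigma$; differentiating at $e$ yields $d\sigma \circ \Ad{k} = \Ad{k} \circ d\sigma$, so $\Ad{k}$ preserves each eigenspace of $d\sigma$, in particular $\la{p}$. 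The only genuinely non-routine ingredient is the identification of $\la{k}$ with the Lie algebra of $K$; the rest is mechanical manipulation of eigenspaces under an involution.
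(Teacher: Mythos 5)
Your proposal is correct and follows essentially the same route as the paper: diagonalising the involution $d\sigma$ for the splitting, tracking eigenvalues through $d\sigma[X,Y]=[d\sigma(X),d\sigma(Y)]$ for the bracket inclusions, using the compatibility of $\exp$ with $d\sigma$ to identify $\la{k}$ with the Lie algebra of $K$, and using invariance of the Killing form under automorphisms together with $d\sigma\circ\Ad{k}=\Ad{k}\circ d\sigma$ for orthogonality and $\Ad{K}$-invariance. The only cosmetic difference is that you verify (iii) directly from the automorphism property, whereas the paper deduces it from (iv).
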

\begin{proof}
It follows from elementary results in linear algebra that the involutive linear map $d\sigma$ has eigenvalues $\lambda =\pm 1$ and is diagonalisable. Hence, $\la{g} = \la{k}\oplus \la{p}$. Let $X\in \la{k}$ and $Y\in \la{p}$. By Proposition \ref{Killing form properties} we have
\begin{align*}
    B(X,Y) = B(d\sigma(X),d\sigma(Y)) = B(X,-Y) = -B(X,Y).
\end{align*}
Thus, $B(X, Y) = 0$.
For (i), notice that $G_0^{\sigma} \subset K \subset G^{\sigma}$ implies that $K$ and $G^{\sigma}_0$ have the same Lie algebra. The Lie algebra of $G_0^{\sigma}$ consists of all $X \in \la{g}$ satisfying $\sigma(\exp(tX)) = \exp(tX)$ for all $t$. By property (iii) of Proposition \ref{properties of exp}, this is equivalent to $\exp(td\sigma(X)) = \exp(tX)$ for all $t$, which holds if and only if $d\sigma(X) = X$. This is precisely the condition of belonging to the $+1$ eigenspace of $d\sigma$, which proves (i). For (ii), let $X, Y \in \la{p}$. We then have
\begin{align*}
    d\sigma([X,Y]) = [d\sigma(X),d\sigma(Y)] = [-X,-Y] = [X,Y]
\end{align*}
as desired. (iii) is implied by (iv). For (iv) note that we have $d\sigma\circ\Ad{p} = \Ad{\sigma(p)}\circ d\sigma$ for all $p\in G$. In particular, for $k \in K$ and $X \in \la{p}$ we get
\begin{align*}
    d\sigma(\Ad{k}(X)) = \Ad{\sigma(k)}(d\sigma(X)) = -\Ad{\sigma(k)}(X) = -\Ad{k}(X)
\end{align*}
as desired. This then concludes the proof.
\end{proof}
The decomposition in Proposition \ref{cartan decomposition} is known as the \emph{Cartan decomposition}.
\begin{remark}\label{identification}
Let $(G,K,\sigma)$ be a symmetric triple with the associated Cartan decomposition $\la{g} = \la{k} \oplus \la{p}$ and set $M = G/K$ and $p_0 = K \in G/K$. Then with the smooth structure from Theorem \ref{coset space smooth}, the map $d\pi = d\pi_e: \la{g} \cong T_e G \to T_{p_0} M$ is a surjective vector space isomorphism. It is easily verified that $\ker(d\pi) = \la{k}$ and hence we may identify $\la{p}$ with $T_{p_0} M$ via $X \mapsto d\pi(X)$ which we shall denote simply by $X^*$. We then have for smooth functions $f: M \to \rn$
\begin{align*}
    X^*(f) = X(f\circ \pi) = \dtatzero(f \circ \pi)(\exp(tX)) = \dtatzero f(\exp(tX)\cdot p_0).
\end{align*}
Under this identification, for each $k \in K$ and $X\in \la{p}$ we have 
$$\Ad{k}(X)^* = (dL_k)_{p_0}(X^*).$$ 
\end{remark}
\begin{proposition}[\cite{Ziller's notes,Helgason}]\label{G-metrics}
Let $(G,K,\sigma)$ be a symmetric triple with associated Cartan decomposition $\la{g} = \la{k} \oplus \la{p}$. Then each $\Ad{K}$-invariant inner product on $\la{p}$ induces a $G$-invariant metric on $G/K$, and such inner products exist.
\end{proposition}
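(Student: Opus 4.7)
The plan is to first construct the metric at the origin $p_0 = eK \in G/K$ using the given inner product and the identification $T_{p_0}M \cong \la{p}$ established in Remark \ref{identification}, and then transport it throughout $G/K$ by left translation. Concretely, given an $\Ad{K}$-invariant inner product $\ip{\cdot}{\cdot}$ on $\la{p}$, I would define, at each point $p = gK \in M = G/K$, the bilinear form
\begin{align*}
g_p(X_p, Y_p) = \ip{(dL_{g^{-1}})_p(X_p)_{\la{p}}}{(dL_{g^{-1}})_p(Y_p)_{\la{p}}},
\end{align*}
where $(\cdot)_{\la{p}}$ denotes the identification $T_{p_0}M \cong \la{p}$ via $d\pi$. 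Since left translation by $g^{-1}$ is a diffeomorphism sending $p$ to $p_0$, this gives a well-defined inner product on each tangent space, provided the formula does not depend on the chosen representative $g$ of the coset $gK$.

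The key point is precisely this independence of the representative, which is where $\Ad{K}$-invariance enters. If $g' = gk$ with $k \in K$, then $L_{(g')^{-1}} = L_{k^{-1}} \circ L_{g^{-1}}$, and at $p_0$ the differential $(dL_{k^{-1}})_{p_0}$ corresponds, under the identification with $\la{p}$, to $\Ad{k^{-1}}$ by the last remark in Remark \ref{identification}. Thus the two formulas give the same value if and only if $\ip{\Ad{k^{-1}}X}{\Ad{k^{-1}}Y} = \ip{X}{Y}$ for all $k \in K$ and $X, Y \in \la{p}$, which is exactly $\Ad{K}$-invariance. Once well-defined, $G$-invariance is immediate from the construction: for any $h \in G$, left translation by $h$ carries $g_p$ to $g_{hp}$ because the defining formula uses $g^{-1}$ composed on the left with $h^{-1}$, producing $(hg)^{-1}$. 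Smoothness follows from smoothness of the $G$-action and of the projection $\pi$, since locally one can choose smooth sections of $\pi$ to express the metric in coordinates.

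It remains to show the existence of an $\Ad{K}$-invariant inner product on $\la{p}$. Since $K$ is compact by hypothesis and $\la{p}$ is $\Ad{K}$-invariant by Proposition \ref{cartan decomposition}(iv), the representation $\Ad{}\colon K \to \GL{\la{p}}$ takes values in a compact subgroup of $\GL{\la{p}}$. One then starts with any inner product $\ip{\cdot}{\cdot}_0$ on $\la{p}$ and averages over $K$ using the normalized bi-invariant Haar measure $dk$:
\begin{align*}
\ip{X}{Y} = \int_K \ip{\Ad{k}X}{\Ad{k}Y}_0 \, dk.
\end{align*}
Bi-invariance of $dk$ gives $\Ad{K}$-invariance of this new inner product, and positive-definiteness is preserved by the averaging.

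The main conceptual obstacle is the well-definedness check, as it is the only place the hypothesis of $\Ad{K}$-invariance is genuinely used; the remaining steps are essentially formal consequences of the construction or standard compact-group averaging. One minor technical point to verify carefully is that left translation by $g^{-1}$ really carries $T_p M$ onto $T_{p_0}M$ in a way compatible with the identification with $\la{p}$, but this is exactly the content of Remark \ref{identification} together with the fact that $\pi \circ L_{g^{-1}} = L_{g^{-1}} \circ \pi$ as maps of $G$ to $G/K$.
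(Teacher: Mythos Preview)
Your proposal is correct and follows essentially the same approach as the paper: define the metric at $p_0$ via the identification $T_{p_0}M \cong \la{p}$, transport by left translation, and verify well-definedness using $\Ad{K}$-invariance exactly as you describe. The only difference is that you supply the existence argument via Haar-measure averaging, whereas the paper omits this and refers to Helgason.
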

\begin{proof}
We prove only the equivalence statement. For the existence part see the proof of Proposition 3.4 in Chapter IV of \cite{Helgason}. Set $M = G/K$ and $p_0 = K \in G/K$. Suppose that $\ip{\cdot}{\cdot}$ is an $\Ad{K}$-invariant inner product on $\la{p}$. Then $\ip{\cdot}{\cdot}$ induces an inner product $g_0$ on $T_{p_0} M$ via the identification $T_{p_0} M \cong \la{p}$ seen in Remark \ref{identification} such that $g_0(X^*,Y^*) = \ip{X}{Y}$ for $X,Y \in \la{p}$. We now extend $g_0$ to a $G$-invariant metric by defining 
\begin{align*}
    g_p(X_p,Y_p) = g_0((dL_{x^{-1}})_{p}(X_p),(dL_{x^{-1}})_{p}(Y_p))
\end{align*}
for each $p\in M$, where $x \in G$ such that $p = xK$. This is clearly a $G$-invariant metric if it is well defined. To see that it is, suppose $y\in G$ is such that $yK = p = xK$. Then there exists an element $k\in K$ such that $y^{-1} = k \cdot x^{-1}$. We then have
\begin{align*}
    g_{0}((dL_{y^{-1}})_{p}(X_p),(dL_{y^{-1}})_{p}(Y_p)) &= g_{0}((dL_{k})_{p_0}(dL_{x^{-1}})_{p}(X_p),((dL_{k})_{p_0}(dL_{x^{-1}})_{p}(Y_p))\\
    &= g_{0}((dL_{x^{-1}})_{p}(X_p),(dL_{x^{-1}})_{p}(Y_p))
\end{align*}
by Remark \ref{identification} since $\ip{\cdot}{\cdot}$ is $\Ad{K}$-invariant. Hence, $g$ is well defined. Now assume that $g$ is a $G$-invariant metric on $M$. We define an inner product $\ip{\cdot}{\cdot}$ on $\la{p}$ as the inner product induced by the inner product $g_{p_0}$ on $T_{p_0} M$ via the identification in Remark \ref{identification}. Since $g$ is $G$-invariant, the inner product $g_{p_0}$ is $dL_{K}$-invariant, and hence $\ip{\cdot}{\cdot}$ is $\Ad{K}$-invariant. 
\end{proof}
In particular, Proposition \ref{G-metrics} shows that each symmetric triple gives rise to a Riemannian homogeneous space. We now have the tools necessary to show that indeed this homogeneous space has the structure of a symmetric space.
\begin{theorem}[\cite{Helgason, Ziller's notes}]\label{symmetric pairs give symmetric spaces}
Let $(G,K,\sigma)$ be a symmetric triple. Then the coset space $M = G/K$ equipped with any $G$-invariant metric is a symmetric space, and such metrics exist.
\end{theorem}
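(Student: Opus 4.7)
The plan is to construct the symmetry $s_{p_0}$ at the basepoint $p_0 = eK \in M$ directly from the involution $\sigma$, and then exploit $G$-homogeneity to transport it to an arbitrary point of $M$. Since $K \subset G^\sigma$, the map $\sigma$ sends the coset $K$ to itself, so it descends to a well-defined map $\bar\sigma : M \to M$ given by $\bar\sigma(gK) = \sigma(g) K$. This $\bar\sigma$ is involutive because $\sigma$ is, smoothness follows from smoothness of $\sigma$ combined with the fact that $\pi: G \to M$ is a smooth submersion (Theorem \ref{coset space smooth}), and $\bar\sigma(p_0) = p_0$ is immediate.

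To compute $d\bar\sigma_{p_0}$ I use the identification $T_{p_0} M \cong \la{p}$ from Remark \ref{identification}. For any $X \in \la{p}$,
\[
d\bar\sigma_{p_0}(X^*) = d\bar\sigma_{p_0}(d\pi_e(X)) = d(\bar\sigma \circ \pi)_e(X) = d(\pi \circ \sigma)_e(X) = d\pi_e(d\sigma(X)) = (-X)^* = -X^*,
\]
so $d\bar\sigma_{p_0} = -\mathrm{id}_{T_{p_0}M}$. In particular $d\bar\sigma_{p_0}$ trivially preserves the inner product $g_{p_0}$.

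The next step, which I expect to be the conceptual crux, is to promote this pointwise statement to the fact that $\bar\sigma$ is an isometry of $(M,g)$ globally. The key observation is the intertwining relation $\bar\sigma \circ L_h = L_{\sigma(h)} \circ \bar\sigma$ for every $h \in G$, which follows at once from $\sigma$ being a group homomorphism. Differentiating this at a point $q = hK$ yields
\[
d\bar\sigma_q = d(L_{\sigma(h)})_{p_0} \circ d\bar\sigma_{p_0} \circ d(L_{h^{-1}})_q,
\]
a composition of three linear isometries: the outer two by $G$-invariance of $g$, the middle by the preceding paragraph. Hence $\bar\sigma$ is an isometry at every point of $M$, and I set $s_{p_0} := \bar\sigma$.

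Finally, for an arbitrary point $q = hK \in M$ I define $s_q = L_h \circ s_{p_0} \circ L_{h^{-1}}$. It is a composition of isometries, fixes $q$, and has differential $-\mathrm{id}$ at $q$ by the chain rule. The only delicate point is well-definedness: if $h' = hk$ with $k \in K$, then since $K \subset G^\sigma$ one has $\sigma(k) = k$, so the intertwining relation gives $s_{p_0} \circ L_k = L_k \circ s_{p_0}$, whence $L_k \circ s_{p_0} \circ L_{k^{-1}} = s_{p_0}$ and therefore $L_{h'} \circ s_{p_0} \circ L_{(h')^{-1}} = L_h \circ s_{p_0} \circ L_{h^{-1}}$. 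The existence of a $G$-invariant metric on $M$ is already established by Proposition \ref{G-metrics}, so this finishes the proof.
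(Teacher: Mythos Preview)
Your proof is correct and follows essentially the same approach as the paper: define $s_{p_0}(gK) = \sigma(g)K$, verify $d(s_{p_0})_{p_0} = -\mathrm{id}$ via the identification $T_{p_0}M \cong \la{p}$, use the intertwining relation $s_{p_0} \circ L_h = L_{\sigma(h)} \circ s_{p_0}$ to prove it is an isometry, and then conjugate by left translations to obtain $s_q$ at arbitrary points. Your treatment is in fact slightly more thorough in that you explicitly verify the well-definedness of both $\bar\sigma$ and $s_q$, which the paper leaves implicit.
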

\begin{proof}
We shall first construct the symmetry of $M$ at $p_0 = K \in M$ and then use left translations to construct the symmetries at arbitrary points of $M$. Let $g$ be any $G$-invariant metric on $M$ corresponding to some $\Ad{K}$-invariant inner product on $\la{p}$ as in Proposition \ref{G-metrics}.  We define 
\begin{align*}
    s_{p_0}(pK) = \sigma(p)K.
\end{align*}
Clearly $s_{p_0}$ is an involutive diffeomorphism of $M$, fixing $p_0$. Since $\restr{d\sigma}{\la{p}} = -id_{\la{p}}$ and $s_{p_0} \circ \pi = \pi \circ \sigma$ we get $d(s_{p_0})_{p_0} = -id_{T_{p_0} M}$. To see that $s_{p_0}$ is an isometry, let $x,y \in G$ and $p = yK \in M$. Then
\begin{align*}
    (s_{p_0} \circ L_x)(p) &= (s_{p_0} \circ L_x)(yK)\\
    &= s_{p_0}(\pi(xy))\\
    &= (\pi\circ \sigma)(xy)\\
    &= (\pi \circ L_{\sigma(x)} \circ \sigma)(y)\\
    &= L_{\sigma(x)}(s_{p_0}(p)), 
\end{align*}
so that $s_{p_0} \circ L_{x} = L_{\sigma(x)} \circ s_{p_0}$ for all $x \in G$. Now let $p = xK \in M$ and $X_p, Y_p \in T_p M$. Put $X_0 = (dL_{x^{-1}})_p(X_p)$ and $Y_0 = (dL_{x^{-1}})_p(Y_p)$. We then have
\begin{align*}
    g_{s_{p_0}p}((ds_{p_0})_{p}(X_p),(ds_{p_0})_{p}(Y_p)) &= g_{s_{p_0}p}((ds_{p_0})_{p}(dL_{x})_{p_0}(X_0),(ds_{p_0})_{p}(dL_{x})_{p_0}(Y_0))\\
    &=g_{s_{p_0}p}((dL_{\sigma(x)})_{p_0}(ds_{p_0})_{p_0}(X_0),(dL_{\sigma(x)})_{p_0}(ds_{p_0})_{p_0}(Y_0))\\
    &=g_{s_{p_0}p}((dL_{\sigma(x)})_{p_0}(X_0),(dL_{\sigma(x)})_{p_0}(Y_0))\\
    &=g_{p_0}(X_0,Y_0)\\
    &=g_{p}(X_p,Y_p)
\end{align*}
as desired. For arbitrary $p = xK \in M$ we define $s_p$ by $s_p = L_x \circ s_{p_0} \circ L_{x^{-1}}$. It is straightforward to check that $s_p$ is an involutive diffeomorphism with $p$ as an isolated fixed point. Since $s_p$ is a composition of isometries it is also an isometry, and with this the proof is complete.
\end{proof}
This completes the correspondence between symmetric pairs and symmetric\\ spaces, so that questions about symmetric spaces can be translated into questions about symmetric pairs. The full toolkit of Lie theory is then available to us to understand symmetric spaces. We now define what is meant by an irreducible symmetric space.
\begin{definition}\label{irreducible def}
Let $(G,K,\sigma)$ be a symmetric triple and put $M = G/K$ and $p_0 = K \in M$. Then we say that the triple $(G,K,\sigma)$ is \emph{irreducible} if the action of the identity component $K_0$ on $T_{p_0} M$ is irreducible. We say that a symmetric space $M$ is irreducible if the symmetric triple associated to $M$ in Proposition \ref{symmetric spaces give symmetric pairs} is.
\end{definition}
As the following result shows, the irreducible symmetric spaces serve as the basic building blocks in the study of symmetric spaces.
\begin{theorem}[\cite{Kobayashi-Nomizu,Ziller's notes}]\label{1st decomposition theorem}
Let $M$ be a simply connected symmetric space. Then $M$ is isometric to a Riemannian product $M_1 \times \cdots \times M_n$ of irreducible symmetric spaces.
\end{theorem}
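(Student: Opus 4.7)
The plan is to combine the Cartan decomposition associated to the symmetric triple of $M$ with the de Rham decomposition theorem for simply connected complete Riemannian manifolds. Write $M = G/K$ where $G = I_0(M)$ and $K = G_{p_0}$, and let $(G,K,\sigma)$ be the corresponding symmetric triple from Proposition \ref{symmetric spaces give symmetric pairs}, with Cartan decomposition $\la{g} = \la{k} \oplus \la{p}$. Via the identification $T_{p_0} M \cong \la{p}$ of Remark \ref{identification}, the isotropy representation of $K$ coincides with the restriction of $\Ad{K}$ to $\la{p}$, and this action preserves the inner product on $\la{p}$ coming from the $G$-invariant metric by Proposition \ref{G-metrics}.

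The first step is to decompose $\la{p}$ as a $K_0$-module. Since $\Ad{K_0}$ acts by orthogonal transformations on the inner product space $\la{p}$, standard representation-theoretic arguments give a direct sum decomposition
\begin{equation*}
    \la{p} = \la{p}_0 \oplus \la{p}_1 \oplus \cdots \oplus \la{p}_n,
\end{equation*}
orthogonal with respect to $g_{p_0}$, where $\la{p}_0$ is the subspace fixed pointwise by $\Ad{K_0}$ and each $\la{p}_i$ for $i \geq 1$ is $\Ad{K_0}$-irreducible and non-trivial. The next step is to show that each summand $\la{p}_i$ is a \emph{Lie triple system}, i.e. $[\la{p}_i,[\la{p}_i,\la{p}_i]] \subset \la{p}_i$. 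This is where I would exploit the interplay between the bracket relations (ii) and (iii) of Proposition \ref{cartan decomposition} and the fact that for $X \in \la{p}$, $Y \in \la{p}_j$ one has $[[X,X'],Y] = \tfrac{d}{dt}|_{t=0}\Ad{\exp(t[X,X'])}(Y) \in \la{p}_j$, using that $[X,X'] \in \la{k}$.

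Now translate the algebraic splitting into a geometric one. Using $G$-invariance one spreads each $\la{p}_i$ around $M$ via left translations to obtain smooth subbundles $E_i \subset TM$ with $(E_i)_{p_0} = \la{p}_i^*$, and one checks these are mutually orthogonal and sum to $TM$. The key geometric claim is that each $E_i$ is parallel for the Levi-Civita connection; this follows from the well-known identification of the curvature tensor of a symmetric space at $p_0$ with $R_{p_0}(X^*,Y^*)Z^* = -[[X,Y],Z]^*$ and the ensuing description of the holonomy algebra as $[\la{p},\la{p}] \subset \la{k}$ together with the Lie triple system property, which guarantees that each $E_i$ is holonomy invariant. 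By the Ambrose–Singer theorem (or its symmetric-space specialisation) this yields parallelism. With $M$ simply connected and complete (by geodesic completeness of symmetric spaces), the de Rham decomposition theorem now gives an isometric splitting
\begin{equation*}
    M \cong M_0 \times M_1 \times \cdots \times M_n,
\end{equation*}
where $M_0$ is flat Euclidean (corresponding to $\la{p}_0$) and each $M_i$ has tangent space $\la{p}_i$ at the basepoint.

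The final step is to verify that each factor $M_i$ is itself an irreducible symmetric space. For this, restrict the symmetry $s_{p_0}$ to each slice through $p_0$ and use Proposition \ref{symmetry invariance}, then transport the symmetries around by the product structure. Irreducibility of $M_i$ for $i \geq 1$ follows from the $\Ad{K_0}$-irreducibility of $\la{p}_i$, while $M_0$ splits further as a product of lines. The main obstacle I anticipate is the bridge between the algebraic decomposition of $\la{p}$ and the parallelism of the induced distributions: this requires the curvature formula for symmetric spaces and a careful holonomy argument, both of which rest on content that lies slightly outside what has been developed in this chapter. Everything else, including the identification of the factors as symmetric spaces, follows by localising the symmetries via \ref{symmetry invariance} and the fact that totally geodesic complete submanifolds of a symmetric space inherit a symmetric structure.
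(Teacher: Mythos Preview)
The paper does not prove this theorem at all; it is merely stated with citations to \cite{Kobayashi-Nomizu} and \cite{Ziller's notes}. Your outline is essentially the standard argument one finds in those references: decompose $\la{p}$ into irreducible $\Ad{K_0}$-submodules, use the curvature identity $R_{p_0}(X^*,Y^*)Z^* = -[[X,Y],Z]^*$ together with $[\la{p},\la{p}]\subset\la{k}$ and the $\ad_{\la{k}}$-invariance of each $\la{p}_i$ to see that the corresponding distributions are holonomy-invariant and hence parallel, and then invoke the de Rham decomposition. You have correctly flagged the genuine dependencies lying outside this chapter (the curvature formula, $\nabla R=0$, and the identification of the holonomy algebra with $[\la{p},\la{p}]$).

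One small point worth tightening: when you argue irreducibility of each factor $M_i$ for $i\geq 1$, note that the isotropy group of $M_i$ need not literally be $K_0$; what you actually get from de Rham is that the \emph{holonomy} representation on $T_{p_0}M_i\cong\la{p}_i$ is irreducible, and for a symmetric space the restricted holonomy group coincides with the image of the identity component of the isotropy group under the isotropy representation. That closes the loop back to Definition~\ref{irreducible def}. The identification of each factor as a symmetric space via Proposition~\ref{symmetry invariance} is fine, since the de Rham leaves are totally geodesic and complete.
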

The irreducible symmetric spaces come in three different types.
\begin{definition}
Let $(G,K,\sigma)$ be a symmetric triple with Cartan decomposition $\la{g} = \la{k} \oplus \la{p}$ and with $B$ the Killing form of $\la{g}$. We then say that $(G,K,\sigma)$ is of
\emph{Euclidean type} if $\restr{B}{\la{p}}$ vanishes identically, \emph{compact type} if $\restr{B}{\la{p}}$ is negative definite and
\emph{non-compact type} if $\restr{B}{\la{p}}$ is positive definite.
We say that a symmetric space $M$ is of Euclidean, compact or non-compact type, respectively, if the associated symmetric triple is.
\end{definition}
\begin{proposition}[\cite{Ziller's notes}]\label{irreducibles have types}
Let $(G,K,\sigma)$ be an irreducible symmetric triple. Then $(G,K,\sigma)$ is of either Euclidean, compact or non-compact type.
\end{proposition}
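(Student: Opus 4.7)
The goal is to show that the restriction $B|_{\la{p}}$ of the Killing form of $\la{g}$ to $\la{p}$ is definite of a fixed sign or identically zero. The plan is to compare $B|_{\la{p}}$ against a reference $\Ad{K}$-invariant inner product on $\la{p}$ and then invoke irreducibility to conclude that the two must be proportional.

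First, recall from Proposition \ref{G-metrics} that there exists an $\Ad{K}$-invariant inner product $\ip{\cdot}{\cdot}$ on $\la{p}$; fix one such. Next, from Proposition \ref{Killing form properties}(i) applied to the automorphism $\Ad{k}$ of $\la{g}$, together with the $\Ad{K}$-invariance of $\la{p}$ established in Proposition \ref{cartan decomposition}(iv), the restriction $B|_{\la{p}}$ is a symmetric bilinear form on $\la{p}$ that is also $\Ad{K}$-invariant. Define the linear operator $T: \la{p} \to \la{p}$ by
\begin{align*}
    B(X,Y) = \ip{T(X)}{Y}, \qquad X,Y \in \la{p}.
\end{align*}
Symmetry of both forms makes $T$ self-adjoint with respect to $\ip{\cdot}{\cdot}$, and $\Ad{K}$-invariance of both forms together with nondegeneracy of $\ip{\cdot}{\cdot}$ forces $T$ to commute with $\Ad{k}$ for every $k \in K$, in particular for every $k \in K_0$.

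Since $T$ is self-adjoint on a finite-dimensional real inner product space, it is diagonalisable with real eigenvalues, and each eigenspace is invariant under every operator commuting with $T$. Hence each eigenspace of $T$ is an $\Ad{K_0}$-invariant subspace of $\la{p}$. By the irreducibility hypothesis (Definition \ref{irreducible def}) the only such subspaces are $\{0\}$ and $\la{p}$ itself, so $T$ has a single eigenvalue $\lambda \in \rn$ and $T = \lambda \cdot \mathrm{id}_{\la{p}}$. Consequently
\begin{align*}
    B(X,Y) = \lambda \cdot \ip{X}{Y} \quad \text{for all } X,Y \in \la{p}.
\end{align*}
Depending on whether $\lambda = 0$, $\lambda < 0$, or $\lambda > 0$, the form $B|_{\la{p}}$ is identically zero, negative definite, or positive definite, placing $(G,K,\sigma)$ in exactly one of the Euclidean, compact, or non-compact classes. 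The main step to be careful about is the commuting property of $T$ with $\Ad{K_0}$: this is what allows irreducibility, which is stated only for $K_0$ rather than $K$, to be applied to the $T$-eigenspaces.
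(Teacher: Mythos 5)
Your argument is correct and complete: it is the standard Schur-lemma argument (an $\Ad{K_0}$-invariant self-adjoint operator on an irreducible module must be a scalar), which is precisely the proof found in the cited reference; the paper itself states this proposition without proof. You correctly handle the two points that require care — that irreducibility is only assumed for $K_0$, and that self-adjointness of $T$ guarantees a real eigenvalue so the real form of Schur's lemma applies.
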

\begin{proposition}[\cite{Ziller's notes}]\label{geometry of the types}
Let $M = G/K$ be a symmetric space.
\begin{enumerate}[label = (\roman*)]
    \item If $M$ is of compact type then $G$ is semisimple and $M$ and $G$ are both compact.
    \item If $M$ is of non-compact type then $G$ is semisimple and both $M$ and $G$ are non-compact.
    \item If $M$ is of Euclidean type and simply connected then $M \cong \rn^{m}$ for some $m$.
\end{enumerate}
\end{proposition}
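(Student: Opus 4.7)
My plan is to handle parts (i) and (ii) via a common semisimplicity argument and to treat (iii) separately through a flatness argument. Throughout I assume the action of $G$ on $M$ is effective (replacing $G$ by an effective quotient if necessary, as in Proposition \ref{symmetric spaces give symmetric pairs}).

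\textbf{Semisimplicity in (i) and (ii).} By Proposition \ref{cartan decomposition} the decomposition $\la{g} = \la{k} \oplus \la{p}$ is $B$-orthogonal, so the Killing form is non-degenerate on $\la{g}$ if and only if it is non-degenerate on each summand. On $\la{p}$ this holds by hypothesis in both cases. For $\la{k}$ I would use compactness of $K$ to pick an $\Ad{K}$-invariant inner product $Q$ on $\la{g}$ with $\la{k} \perp_Q \la{p}$; then $\ad_Y$ is $Q$-skew for each $Y \in \la{k}$, so $B(Y,Y) = \tr(\ad_Y^2) \leq 0$ with equality only if $\ad_Y = 0$, i.e.\ $Y$ lies in the centre of $\la{g}$. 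For such a central $Y \in \la{k}$, the one-parameter group $\exp(tY)$ fixes $p_0$, and through Remark \ref{identification} acts as the identity on $T_{p_0} M \cong \la{p}$, since $\Ad{\exp(tY)}|_{\la{p}} = \exp(t\,\ad_Y)|_{\la{p}} = \mathrm{id}$. Proposition \ref{isometry uniqueness thm} together with effectiveness then forces $\exp(tY) = e$ for all $t$, whence $Y = 0$. Hence $B|_{\la{k}}$ is negative definite and $\la{g}$ is semisimple.

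\textbf{Compactness statements in (i) and (ii).} In compact type, $B$ is then negative definite on all of $\la{g}$, so $-B$ is a positive definite $\Ad{G}$-invariant inner product, embedding $\Ad{G}$ into the compact orthogonal group of $(\la{g}, -B)$. Combined with discreteness of the centre (from semisimplicity), the classical theorem on compact real forms, see e.g.\ \cite{Helgason} or \cite{Ziller's notes}, yields $G$ compact, and then $M = G/K$ is compact as a continuous image of $G$. In non-compact type, $B|_{\la{p}}$ is positive definite while $B|_{\la{k}}$ is negative definite by the same argument, so $B$ is indefinite and $\la{g}$ fails to be a compact Lie algebra by Corollary \ref{compact negative def}; hence $G$ is non-compact. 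The projection $\pi\colon G \to M$ is a principal $K$-bundle with compact fibre $K$, so $M$ compact would force $G$ compact, a contradiction.

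\textbf{Part (iii).} The strategy is to show that the Riemann curvature of $M$ vanishes identically, and then to invoke the Killing--Hopf theorem that every complete simply connected flat Riemannian manifold is isometric to $\rn^m$. On a Riemannian symmetric space the curvature at $p_0$ has the form
\begin{equation*}
R_{p_0}(X^*, Y^*) Z^* = -[[X, Y], Z]^*, \qquad X, Y, Z \in \la{p},
\end{equation*}
so it suffices to prove $[\la{p}, \la{p}] = 0$. The key algebraic step is to use the vanishing of $B|_{\la{p}}$, together with the irreducibility of the isotropy representation of $K_0$ on $\la{p}$ from Definition \ref{irreducible def} and the effectiveness argument of the first paragraph, to show that the image of the bracket $\la{p} \times \la{p} \to \la{k}$ lies in the kernel of the isotropy representation and hence is zero. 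I expect this final step to be the main obstacle; the remainder is either routine use of the Cartan decomposition or a direct appeal to Killing--Hopf.
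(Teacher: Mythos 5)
The paper offers no proof of this proposition at all -- it is quoted from \cite{Ziller's notes} -- so your argument must stand on its own. Parts (i) and (ii) essentially do: the $B$-orthogonality of $\la{k}\oplus\la{p}$, the $Q$-skewness of $\ad_Y$ for $Y\in\la{k}$ giving $B|_{\la{k}}\le 0$, and the effectiveness argument killing the null vectors are all sound, and the appeal to what is really Weyl's theorem (a connected Lie group with negative definite Killing form has finite centre, hence is compact) is a legitimate citation at the level of rigour the paper itself adopts here. Two small caveats: passing to an effective quotient changes the group whose semisimplicity you are asserting, so you should either take $G=I_0(M)$ as in Proposition \ref{symmetric spaces give symmetric pairs}, where effectiveness is automatic, or note that the ineffective kernel is discrete so the Lie algebra is unaffected; and in (ii) the claim that $B$ is indefinite tacitly assumes $\la{k}\neq 0$, which you should justify (if $\la{k}=0$ then $d\sigma=-\mathrm{id}$ is an automorphism of $\la{g}$, forcing $\la{g}$ abelian and $B=0$, contradicting positive definiteness on $\la{p}$).

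The genuine gap is part (iii), and you have flagged it yourself: the identity $[\la{p},\la{p}]=0$ is the entire content of the flatness claim and your proposal leaves it unproved. Moreover the route you sketch cannot work as stated: irreducibility of the isotropy representation is not a hypothesis of (iii), and showing that $[\la{p},\la{p}]$ lies in the kernel of the isotropy representation amounts to showing $[[\la{p},\la{p}],\la{p}]=0$, which is precisely the curvature vanishing you are trying to prove, so nothing is gained. The step is in fact short and needs neither irreducibility nor the isotropy representation. For $X,Y\in\la{p}$, the $\ad$-invariance of $B$ from Proposition \ref{Killing form properties} gives
\begin{align*}
B([X,Y],[X,Y]) = B(\ad_X Y,[X,Y]) = -B(Y,[X,[X,Y]]),
\end{align*}
and since $[X,Y]\in\la{k}$ we have $[X,[X,Y]]\in[\la{p},\la{k}]\subset\la{p}$, so the right-hand side is a value of $B|_{\la{p}}$ and vanishes by the Euclidean-type hypothesis; as $[X,Y]\in\la{k}$ and your own first paragraph shows $B|_{\la{k}}$ is negative definite, $[X,Y]=0$. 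Then $R\equiv 0$ by your curvature formula (which, note, the paper nowhere establishes and would itself need justification) and homogeneity, and completeness plus simple connectedness lets Killing--Hopf conclude $M\cong\rn^m$.
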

\begin{remark}
It is not hard to verify that the Riemannian product of any two symmetric spaces yields another symmetric space. It is further true and easy to verify that the product of any two Riemannian symmetric spaces of the same type will yield a symmetric space of that type.
\end{remark}
We thus get the following refinement of Theorem \ref{1st decomposition theorem}.
\begin{theorem}[\cite{Kobayashi-Nomizu,Ziller's notes}]\label{2nd decomposition theorem}
Let $M$ be a simply connected symmetric space. Then $M$ is isometric to a Riemannian product $M\cong M_0 \times M_1 \times M_2$, with $M_0$ of Euclidean type, $M_1$ of compact type and $M_2$ of non-compact type.
\end{theorem}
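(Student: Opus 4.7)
The plan is to combine Theorem \ref{1st decomposition theorem} with the trichotomy established in Proposition \ref{irreducibles have types}, and then to group the resulting irreducible factors according to type. Since $M$ is simply connected, Theorem \ref{1st decomposition theorem} yields an isometry
$$
M \cong N_1 \times \cdots \times N_n,
$$
where each $N_i$ is an irreducible symmetric space. By Proposition \ref{irreducibles have types}, each such $N_i$ is of either Euclidean, compact, or non-compact type.

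I would then partition the index set $\{1,\ldots,n\}$ into three disjoint subsets $I_0, I_1, I_2$ collecting the indices of the factors of Euclidean, compact, and non-compact type respectively, and set
$$
M_0 = \prod_{i \in I_0} N_i, \qquad M_1 = \prod_{i \in I_1} N_i, \qquad M_2 = \prod_{i \in I_2} N_i,
$$
interpreting an empty product as a single point. After a permutation of factors, the Riemannian product $M_0 \times M_1 \times M_2$ is isometric to $N_1 \times \cdots \times N_n$, and hence to $M$.

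It remains to verify that each $M_j$ is of the claimed type; this is precisely the content of the remark preceding the statement. For completeness I would spell it out as follows: if $(G_i,K_i,\sigma_i)$ denotes the symmetric triple associated with $N_i$, then $\bigl(\prod_i G_i,\, \prod_i K_i,\, \prod_i \sigma_i\bigr)$ is a symmetric triple whose Cartan decomposition splits as $\la{g} = \bigoplus_i (\la{k}_i \oplus \la{p}_i)$. The Killing form of a direct sum of Lie algebras is the orthogonal direct sum of the factors' Killing forms, so the restriction of $B$ to $\la{p} = \bigoplus_i \la{p}_i$ is negative definite (respectively positive definite, respectively identically zero) exactly when the restriction to each summand $\la{p}_i$ has that same property. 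Hence a Riemannian product of symmetric spaces of a single fixed type inherits that type.

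I do not expect a genuine obstacle here: the statement is essentially a bookkeeping consequence of the earlier irreducible decomposition together with the classification of irreducible symmetric spaces into three types. The only substantive ingredient is the behaviour of the Killing form under direct sums of Lie algebras, which is standard.
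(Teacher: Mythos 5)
Your proposal is correct and follows exactly the route the paper intends: the paper gives no explicit proof but derives the statement as a "refinement" of Theorem \ref{1st decomposition theorem} via Proposition \ref{irreducibles have types} and the preceding remark on products preserving type, which is precisely your grouping argument. Your added justification of the remark via the Killing form of a direct sum (each factor being an ideal, cf.\ Lemma \ref{restriction of Killing form}) is a sensible way to fill in the detail the paper leaves to the reader.
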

In this work we will focus on symmetric spaces of compact type. However, there is a remarkable duality between the symmetric spaces of compact type and those of non-compact type. This allows for some results on symmetric spaces of compact type to be translated directly into results on their non-compact duals. We shall now describe this duality. A more detailed and thorough account can be found in \cite{Helgason} and \cite{Kobayashi-Nomizu}.

Let $(G,K,\sigma)$ be a symmetric triple with $M = G/K$ simply connected. We let $\la{g} = \la{k}\oplus \la{p}$ be the Cartan decomposition of $\la{g}$. We write $\la{g}^{\cn} = \la{g} \otimes \cn$ for the complexification of $\la{g}$, which we turn into a complex Lie algebra by extending the bracket on $\la{g}$ so that it is complex bilinear. We then define a subalgebra $\la{g}^*$ of $\la{g}$ by
\begin{align*}
    \la{g}^* = \la{k} \oplus i\la{p}.
\end{align*}
We define the automorphism $d\sigma^*$ of $\la{g}^*$ as the restriction to $\la{g}^*$ of the complex linear extension of $d\sigma$ to $\la{g}^{\cn}$. By the computation
\begin{align*}
    d\sigma^*(X + iY) = d\sigma(X) + id\sigma(Y) = X - iY
\end{align*}
for $X \in \la{k}$ and $Y\in \la{p}$, we see that we may equivalently define $d\sigma^*$ to be the restriction of the conjugation map in $\la{g}^{\cn}$ to $\la{g}^*$.
Then $\la{k}$ and $i\la{p}$ are the eigenspaces of $d\sigma^*$ corresponding to the eigenvalues $+1$ and $-1$ respectively.

We let $G^*$ be the simply connected Lie group with Lie algebra $\la{g}^*$ and $K^*$ be the Lie subgroup of $G^*$ with Lie algebra $\la{k}$. There then exists a unique automorphism $\sigma^*$ of $G^*$ with $(d\sigma^*)_e = d\sigma^*$. The triple $(G^*,K^*,\sigma^*)$ is then a symmetric triple and hence, $M^* = G^*/K^*$ a simply connected symmetric space. The triple $(G^*,K^*,\sigma^*)$ is called the \emph{dual} symmetric triple to $(G,K,\sigma)$, and $M^* = G^*/K^*$ the dual symmetric space of $M= G/K$. The following shows duality to be reflexive.
\begin{proposition}[\cite{Ziller's notes}]
Let $(G,K,\sigma)$ be a symmetric triple with $M = G/K$ simply connected. Then $(G,K,\sigma)$ is the dual to the triple $(G^*,K^*,\sigma^*)$.
\end{proposition}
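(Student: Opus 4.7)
The plan is to unwind the dual construction applied a second time and verify that each ingredient of the symmetric triple returns to the original.

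First I would identify the Cartan decomposition of $(G^*, K^*, \sigma^*)$. By definition, $\la{g}^* = \la{k} \oplus i\la{p}$, and I would check that $d\sigma^*$ has $\la{k}$ as its $+1$-eigenspace and $i\la{p}$ as its $-1$-eigenspace, so this is exactly the Cartan decomposition of the dual triple. Next, forming the complexification $(\la{g}^*)^\cn$, I would note that
\begin{align*}
(\la{g}^*)^\cn = (\la{k} \oplus i\la{k}) \oplus (i\la{p} \oplus \la{p}) = \la{g}^\cn,
\end{align*}
so the two complexifications are canonically identified as complex Lie algebras.

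Under this identification the dual-of-the-dual Lie algebra becomes
\begin{align*}
\la{g}^{**} = \la{k} \oplus i(i\la{p}) = \la{k} \oplus \la{p} = \la{g}.
\end{align*}
I would then check that $d\sigma^{**} = d\sigma$: both are $\cn$-linear extensions to $\la{g}^\cn$ of the map that is $+1$ on $\la{k}$ and $-1$ on $\la{p}$, and restricting back to $\la{g}$ recovers $d\sigma$ itself.

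Passing from Lie algebras to Lie groups is the substantive step, and it is where the hypothesis that $M = G/K$ is simply connected enters. By construction $G^{**}$ is the simply connected Lie group with Lie algebra $\la{g}^{**} = \la{g}$; the simple connectedness assumption on $M$ (together with the standard convention that the ambient Lie group is chosen simply connected in this situation) identifies $G^{**}$ with $G$. The subgroup $K^{**}$ has Lie algebra $\la{k}$ and hence agrees with $K$, while Proposition \ref{uniquenes lemma} shows that the involutive automorphism $\sigma^{**}$ is uniquely determined by its differential $d\sigma^{**} = d\sigma$, forcing $\sigma^{**} = \sigma$. The main obstacle is precisely this Lie-group step: the equalities at the level of Lie algebras are formal, but to conclude the triples are literally equal one must invoke both the uniqueness of simply connected Lie groups with prescribed Lie algebra and the uniqueness of Lie group homomorphisms with prescribed differential out of a simply connected Lie group.
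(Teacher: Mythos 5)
Your argument is correct, and there is nothing in the paper to compare it against: the proposition is stated with a reference to Ziller's notes and no proof is given in the text. Your unwinding of the construction --- identifying $(\la{g}^*)^{\cn}$ with $\la{g}^{\cn}$, computing $\la{g}^{**}=\la{k}\oplus i(i\la{p})=\la{g}$ and $d\sigma^{**}=d\sigma$, and then lifting to the group level via the uniqueness statement of Proposition \ref{uniquenes lemma} --- is the standard proof, and you correctly isolate the only genuinely delicate point, namely that $G^{**}$ is by definition the \emph{simply connected} group with Lie algebra $\la{g}$, so that the identification $G^{**}=G$ rests on the convention about which group in the local isomorphism class one takes (the paper itself is no more careful about this, e.g.\ for $G=\SO{m+n}$).
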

\begin{theorem}[\cite{Ziller's notes}]
Let $(G,K,\sigma)$ be a symmetric triple with $M = G/K$. Then if $(G,K,\sigma)$ is of compact type, the dual $(G^*,K^*,\sigma^*)$ is of non-compact type and vice versa. Further, if $(G,K,\sigma)$ is irreducible, so is its dual.
\end{theorem}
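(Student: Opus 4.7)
The plan is to relate the Killing forms of $\mathfrak{g}$ and $\mathfrak{g}^*$ through their common complexification $\mathfrak{g}^{\mathbb{C}}$, and then to observe that the two isotropy representations are equivalent at the Lie algebra level. Denote by $B$, $B^*$ and $B^{\mathbb{C}}$ the Killing forms of $\mathfrak{g}$, $\mathfrak{g}^*$ and $\mathfrak{g}^{\mathbb{C}}$ respectively. First I would establish the clean identification $B^*(X,Y)=B^{\mathbb{C}}(X,Y)$ for $X,Y\in\mathfrak{g}^*$, and likewise $B(X,Y)=B^{\mathbb{C}}(X,Y)$ for $X,Y\in\mathfrak{g}$. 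Both follow from the same linear-algebra observation: fixing a real basis of the real form, the real matrix of $\mathrm{ad}_X$ on the real form coincides entry-for-entry with the complex matrix of $\mathrm{ad}_X$ on the complexification in the same basis, so the traces agree. In particular $B^{\mathbb{C}}$ is the complex-bilinear extension of $B$.

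With this in hand, the key computation is that for $X,Y\in\mathfrak{p}$
\begin{align*}
    B^*(iX,iY)\;=\;B^{\mathbb{C}}(iX,iY)\;=\;i^2\,B^{\mathbb{C}}(X,Y)\;=\;-B(X,Y),
\end{align*}
using complex-bilinearity of $B^{\mathbb{C}}$. Since the Cartan decomposition of $\mathfrak{g}^*$ is $\mathfrak{g}^*=\mathfrak{k}\oplus i\mathfrak{p}$, this is precisely the relation between the ``$\mathfrak{p}$-parts'' of the two Killing forms. Hence $\restr{B}{\mathfrak{p}}$ is negative definite if and only if $\restr{B^*}{i\mathfrak{p}}$ is positive definite, and vice versa, which gives the swap between compact and non-compact type (and incidentally preserves the Euclidean case).

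For the second assertion, both $K$ and $K^*$ have $\mathfrak{k}$ as Lie algebra, and because $K_0$ is generated by $\exp(\mathfrak{k})$ (and similarly for $K^*_0$), irreducibility of the isotropy action reduces to irreducibility of the $\mathfrak{k}$-module structure on $\mathfrak{p}$ (respectively $i\mathfrak{p}$). The $\mathbb{R}$-linear map
\begin{align*}
    \Phi:\mathfrak{p}\longrightarrow i\mathfrak{p},\qquad X\longmapsto iX
\end{align*}
is an isomorphism of real vector spaces, and since $[\,\cdot\,,\,\cdot\,]$ on $\mathfrak{g}^{\mathbb{C}}$ is complex-bilinear we have $[A,iX]=i[A,X]$ for every $A\in\mathfrak{k}$ and $X\in\mathfrak{p}$, so $\Phi$ intertwines the two $\mathfrak{k}$-actions. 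Thus $\mathfrak{p}$ and $i\mathfrak{p}$ are equivalent $\mathfrak{k}$-modules and irreducibility transfers.

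The main point requiring care is the convention for $B^*$: it must be computed as the real trace of $\mathrm{ad}$ acting on the real Lie algebra $\mathfrak{g}^*$, not as a real trace on $\mathfrak{g}^{\mathbb{C}}$ (which would double the value). The basis argument in the first paragraph is exactly what pins this down, so once that identification is secured, the rest of the argument is essentially a short computation.
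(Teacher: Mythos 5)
The paper does not actually prove this theorem --- it is stated with a citation to Ziller's notes and no argument is given --- so there is nothing to compare against line by line; I can only assess your proof on its own merits, and it is correct and complete. Your route is the standard one: since $\mathfrak{g}$ and $\mathfrak{g}^*$ are both real forms of $\mathfrak{g}^{\mathbb{C}}$, a real basis of either is a complex basis of $\mathfrak{g}^{\mathbb{C}}$ and the matrix of $\mathrm{ad}_X\circ\mathrm{ad}_Y$ is the same in both readings, so each Killing form is the restriction of the complex-bilinear form $B^{\mathbb{C}}$; the identity $B^*(iX,iY)=-B(X,Y)$ for $X,Y\in\mathfrak{p}$ then swaps negative and positive definiteness on the $\mathfrak{p}$-parts, which is exactly the type swap in the paper's Definition of compact/non-compact/Euclidean type. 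Your closing remark about computing $B^*$ as the trace on the real algebra $\mathfrak{g}^*$ rather than as a real trace on $\mathfrak{g}^{\mathbb{C}}$ (which would double the value) is precisely the point where a careless version of this argument goes wrong, and your basis argument disposes of it. For irreducibility, the reduction from $K_0$-invariant subspaces to $\mathfrak{k}$-invariant subspaces is legitimate because $K_0$ (and $K^*$, which is connected in the paper's construction since $G^*$ is taken simply connected) is generated by $\exp(\mathfrak{k})$ and $\mathrm{Ad}_{\exp A}=e^{\mathrm{ad}_A}$; the map $X\mapsto iX$ then intertwines the two $\mathfrak{k}$-modules $\mathfrak{p}$ and $i\mathfrak{p}$ by complex bilinearity of the bracket on $\mathfrak{g}^{\mathbb{C}}$, so irreducibility transfers. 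No gaps.
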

Hence, from a classification point of view, it is sufficient (if Euclidean factors are disregarded) to classify the irreducible symmetric spaces of compact type. This classification was achieved by the French mathematician \'Elie Cartan in his 1926 and 1927 papers {\it Sur une classe remarquable d'espaces de Riemann} \cite{Cartan 1} and {\it Sur une classe remarquable d'espaces de Riemann. II} \cite{Cartan 2}, respectively.

For the compact irreducible symmetric spaces, Cartan found that they belong either to one of $10$ infinite families, or one of $12$ exceptional cases. The symmetric spaces in the first case can all be realised as quotients of the classical compact matrix groups described in Chapter \ref{ch: classical groups} and are refered to as the \emph{classical compact irreducible symmetric spaces}. We list them here and shall return to cover them in greater detail in Chapter \ref{ch: results}.
\begin{example}[\cite{Cartan 1,Cartan 2}]\label{the classical compact spaces}
The $7$ families of classical compact irreducible symmetric spaces which are not themselves compact semisimple Lie groups can be realised as the following homogenous spaces. 
\begin{enumerate}[label = (\roman*)]
    \item $\U{m+n}/\U{m}\times \U{n}$, $m,n \geq 1$.
    \item $\SO{m+n}/\SO{m}\times\SO{n}$, $m,n \geq 1$.
    \item $\Sp{m+n}/\Sp{m}\times\Sp{n}$, $m,n \geq 1$.
    \item $\SU{n}/\SO{n}$, $n \geq 1$.
    \item $\SO{2n}/\U{n}$, $n \geq 1$.
    \item $\Sp{n}/\U{n}$, $n \geq 1$.
    \item $\SU{2n}/\Sp{n}$, $n \geq 1$.
\end{enumerate}
\end{example}
\begin{remark}\label{rem: natural metrics}
   Let $(G,K,\sigma)$ be an irreducible symmetric triple of compact type, with $G$ semisimple, and let $(G^*,K^*,\sigma^*)$ be its non-compact dual. From Theorem \ref{symmetric pairs give symmetric spaces} we know that each $\Ad{K}$-invariant inner product on $\la{g}$ or $\la{g}^*$ gives rise to a symmetric metric on $G/K$ or $G^*/K^*$, respectively. We shall now describe natural choices of such inner products. Let $\alpha > 0$ be a positive real scalar. On $\la{g}$ we then define an inner product $\ip{\cdot}{\cdot}$ by
   \begin{align*}
       \ip{X}{Y} = -\alpha\cdot B(X,Y)
   \end{align*}
   where $B$ is the killing form on $\la{g}$. We have previously shown this to be an inner product which is $\Ad{G}$ - and hence - $\Ad{K}$-invariant. For the non-compact dual, we instead define the inner product $\ip{\cdot}{\cdot}^*$ by
   \begin{align*}
       \ip{X}{Y}^* = -\alpha\cdot B(d\sigma^*(X),Y).
   \end{align*}
    To see that this is an inner product on $\la{g}^*$, note that for $X,Y \in \la{k}$ we have
    \begin{align*}
        \ip{X}{Y}^* = \ip{X}{Y}
    \end{align*}
    and that for $X,Y \in \la{p}$ we get
    \begin{align*}
        \ip{iX}{iY}^* = -\alpha\cdot B(-iX,iY) = -\alpha\cdot B(X,Y) = \ip{X}{Y}.
    \end{align*}
    Hence $\ip{\cdot}{\cdot}^*$ is an inner product on each of the subspaces $\la{k}$ and $i\la{p}$. As these are further orthogonal complements to each other with respect to $\ip{\cdot}{\cdot}^*$, it is an inner product on their direct sum $\la{g}^*$. $\Ad{K}$-invariance follows from $\Ad{K}$-invariance of the killing form $B$ and the fact that $\Ad{k}$ commutes with $d\sigma^*$ for each $k\in K$. 
    \par
    We let $\{X_1,\dots, X_k\}$ and $\{Y_1,\dots,Y_l\}$ be orthonormal bases for $\la{k}$ and $\la{p}$, respectively, with respect to $\ip{\cdot}{\cdot}$. Then it is easy to see that the set $\{X_1,\dots,X_k,Y_1,\dots Y_l\}$ is an orthonormal basis for $\la{g}$ and $\{X_1,\dots,X_k,iY_1,\dots iY_l\}$ is an orthonormal basis for $\la{g}^*$.
\end{remark}
\chapter{The Cartan Embedding}\label{ch: cartan embedding}
In this section we introduce the important Cartan embedding. We let $(G,K,\sigma)$ be a symmetric triple of compact type with $K = G^{\sigma}$. The Cartan embedding is a map $$\hat{\Phi}: M = G/K \to G$$
embedding the symmetric space $M = G/K$ as a totally geodesic submanifold of the compact Lie group $G$. Over the course of this chapter we shall prove, although by different methods, the following result equivalent to Proposition 3.42 in \cite{Cheeger-Ebin}.
\begin{theorem}[\cite{Cheeger-Ebin}]
    Let $(G,K,\sigma)$ be a symmetric triple of compact type, with $G^{\sigma} = K$, and let $\la{g} = \la{k} \oplus \la{p}$ be the corresponding Cartan decomposition of the Lie algebra $\la{g}$ of $G$. Then the Cartan embedding $\hat{\Phi}: G/K \to G$ satisfies
    \begin{enumerate}[label = (\roman*)]
        \item $\hat{\Phi}$ is an embedding,
        \item the image of $\hat{\Phi}$ is totally geodesic in $G$,
        \item $\hat{\Phi}(G/H) = \exp(\la{p})$,
        \item $\hat{\Phi}$ is a conformal map onto its image, with constant conformal factor $2$.
    \end{enumerate}
\end{theorem}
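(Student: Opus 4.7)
Well-definedness and injectivity of $\hat{\Phi}$ are the immediate starting point. For $k \in K = G^{\sigma}$ one has $\sigma(k^{-1}) = k^{-1}$, so
\[
\Phi(gk) = gk\,\sigma(k^{-1})\,\sigma(g^{-1}) = g\sigma(g^{-1}) = \Phi(g),
\]
showing that $\Phi$ descends to $\hat{\Phi}$. Conversely, $\Phi(g)=\Phi(h)$ forces $h^{-1}g = \sigma(h^{-1}g)$, so $h^{-1}g \in G^{\sigma}=K$ and $gK = hK$.

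Next I would compute the differential at the basepoint $p_0 = eK$. For $X \in \la{p}$ we have $d\sigma(X) = -X$, and Proposition \ref{properties of exp}(iii) gives $\sigma(\exp(tX)) = \exp(-tX)$, so
\[
\Phi(\exp(tX)) = \exp(tX)\exp(tX) = \exp(2tX).
\]
Differentiating at $t=0$ and using the identification $T_{p_0}(G/K) \cong \la{p}$ from Remark \ref{identification} yields $d\hat{\Phi}_{p_0}(X^{*}) = 2X$. In particular this is an injective linear map, and because the symmetric metric on $G/K$ is induced by the restriction to $\la{p}$ of the bi-invariant inner product on $\la{g}$, one reads off $|d\hat{\Phi}_{p_0}(X^{*})| = 2|X^{*}|$ at $p_0$.

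To promote this local information to arbitrary points I would use the equivariance of $\Phi$ under the twisted action
\[
\rho_g(x) = g\,x\,\sigma(g^{-1}) = (L_g \circ R_{\sigma(g^{-1})})(x).
\]
A direct computation gives $\Phi(gh) = \rho_g(\Phi(h))$, so $\hat{\Phi} \circ L_g = \rho_g \circ \hat{\Phi}$ as maps from $G/K$ to $G$. By Theorem \ref{compact bi invariant} the group $G$ carries a bi-invariant metric; with respect to it, both $L_g$ and $R_{\sigma(g^{-1})}$ are isometries, hence so is $\rho_g$, while $L_g$ is an isometry of $G/K$ by $G$-invariance of the symmetric metric. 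Conjugating $d\hat{\Phi}_{p_0}$ by these isometries shows that at every point $gK$ the differential $d\hat{\Phi}_{gK}$ is injective and scales every tangent vector by exactly $2$. Thus $\hat{\Phi}$ is a conformal immersion with constant factor $2$, and as an injective immersion from the compact manifold $G/K$ into the Hausdorff group $G$, it is automatically a smooth embedding. This settles (i) and (iv). Statement (iii) then follows: since $G/K$ is complete and compact and its geodesics through $p_0$ are well known to be $t \mapsto \exp(tX)K$ with $X\in\la{p}$, the Riemannian exponential at $p_0$ is surjective, so every point of $G/K$ has the form $\exp(X)K$; the Cartan map sends this to $\exp(2X)$, giving $\hat{\Phi}(G/K) = \exp(2\la{p}) = \exp(\la{p})$.

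For (ii) I would invoke Proposition \ref{symmetry invariance}, checking that $N := \hat{\Phi}(G/K)$ is closed under the symmetry $s_p$ of $G$ at each $p \in N$. At $e \in N$ the symmetry is inversion $s_e(x) = x^{-1}$, which preserves $\exp(\la{p})$ since $\exp(X)^{-1} = \exp(-X)$. For an arbitrary $p = \Phi(g) \in N$, the isometry $\rho_g$ sends $e$ to $p$ and stabilises $N$, since $\rho_g(\Phi(h)) = \Phi(gh)$. Therefore $\rho_g \circ s_e \circ \rho_g^{-1}$ is an isometry of $G$ fixing $p$ whose differential at $p$ is $-\mathrm{id}$; uniqueness from Proposition \ref{isometry uniqueness thm} identifies it with $s_p$, and as a composition of maps preserving $N$ it preserves $N$. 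Proposition \ref{symmetry invariance} then yields total geodicity. The main obstacle in the whole argument is isolating the single observation that twisted conjugation $\rho_g$ is simultaneously an isometry of $G$ and stabilises $N$: it is this equivariance that underlies both the globalisation of the conformality calculation from $p_0$ and the construction of the symmetries at every point of $N$.
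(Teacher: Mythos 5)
Your proof is correct, and while the individual ingredients overlap with the paper's, the organization is genuinely different. The paper computes $d\Phi_p$ at an \emph{arbitrary} point by writing $\Phi = \mu\circ\phi$ and using the formula for $d\mu$, obtaining $d\Phi_p(X_p) = 2(dR_{\sigma(p^{-1})})_p(X_p)$ on horizontal vectors; it proves total geodesy of $N=\Phi(G)$ purely algebraically, from the identities $\Phi(p)^{-1}=\Phi(\sigma(p))$ and $\Phi(p)\Phi(q)\Phi(p)=\Phi(\Phi(p)q)$, which give $s_p(q)=\Phi(a)\Phi(b)^{-1}\Phi(a)=\Phi(\Phi(a)\sigma(b))\in N$ directly; and it deduces $N=\exp(\la{p})$ \emph{from} total geodesy together with completeness. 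You instead isolate the equivariance $\hat{\Phi}\circ L_g=\rho_g\circ\hat{\Phi}$ under twisted conjugation $\rho_g = L_g\circ R_{\sigma(g^{-1})}$ as the single organizing principle: it reduces the immersion/conformality computation to the one identity $\Phi(\exp(tX))=\exp(2tX)$ at the basepoint, and it produces the symmetries at every point of $N$ by conjugating $s_e$ (indeed $\rho_g\circ s_e\circ\rho_g^{-1}$ is literally $q\mapsto pq^{-1}p$, so your appeal to the uniqueness of isometries is sound but could even be replaced by a two-line computation recovering the paper's algebraic identity). You also prove (iii) directly from surjectivity of $\mathrm{Exp}_{p_0}$ on the compact complete space $G/K$ and $\hat{\Phi}(\exp(X)K)=\exp(2X)$, which removes the logical dependence of (iii) on (ii). What the paper's route buys is the explicit pointwise formula $d\Phi_p=2\,dR_{\sigma(p^{-1})}$, which it reuses later (Remarks \ref{basis cartan embedding} and \ref{expression for basis}) to build orthonormal bases of $T_{\Phi(p)}N$ for the eigenfunction computations; your route buys economy and a cleaner conceptual picture. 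One cosmetic point: you report that lengths scale by $2$, i.e.\ $g(d\hat{\Phi}X,d\hat{\Phi}Y)=4\,g(X,Y)$; this agrees with the theorem's ``conformal factor $2$'' (dilation convention) and with Corollary \ref{Cartan conformal}'s ``conformal factor $4$'' (square convention), so there is no discrepancy, but it is worth stating which convention you mean.
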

Later in Chapter \ref{ch: harmonic morphisms} we shall see that the Cartan embedding and related Cartan map are also harmonic, which we shall then use to construct eigenfunctions and eigenfamilies on the classical irreducible symmetric spaces of compact type. 
\begin{definition}\label{Cartan map}
Let $(G,K,\sigma)$ be a symmetric triple of compact type and $G^{\sigma} = K$. Then the \emph{Cartan map} $\Phi: G \to G$ is defined by
\begin{align*}
    \Phi: p \mapsto p\cdot\sigma(p^{-1}).
\end{align*}
Since for each $k \in K$, we have $\Phi(pk) = \Phi(p)$, the map $\Phi$ induces $\hat{\Phi}: G/K \to G$ with
\begin{align*}
    \hat{\Phi}: pK \mapsto \Phi(p) = p
    \cdot \sigma(p^{-1}),
\end{align*}
which we shall call the \emph{Cartan embedding}.
\end{definition}

In the following result we establish some rudimentary algebraic properties of the Cartan map.

\begin{proposition}\label{Cartan map properties}
Let $(G,K,\sigma)$ be a symmetric triple of compact type, then the Cartan map $\Phi$ satisfies
\begin{enumerate}[label = (\roman*)]
    \item $\sigma(\Phi(p)) = \Phi(\sigma(p)) = \Phi(p)^{-1}$,
    \item $\Phi(p)\Phi(q)\Phi(p) = \Phi(\Phi(p)q)$,
\end{enumerate}
for all $p,q \in G$.
In particular, this shows that the image $\Phi(G) = \hat{\Phi}(G/K)$ is closed under both inversion and squaring.
\end{proposition}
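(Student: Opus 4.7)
The plan is to verify the two identities by direct algebraic manipulation, using only that $\sigma$ is an involutive group automorphism of $G$, and then to deduce the two closure statements as immediate corollaries.

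For part (i), I would first expand $\sigma(\Phi(p))$ and use that $\sigma$ is a homomorphism with $\sigma \circ \sigma = \mathrm{id}_G$:
\begin{align*}
\sigma(\Phi(p)) = \sigma\bigl(p\,\sigma(p^{-1})\bigr) = \sigma(p)\cdot \sigma(\sigma(p^{-1})) = \sigma(p)\cdot p^{-1}.
\end{align*}
Next, compute the inverse directly from the definition:
\begin{align*}
\Phi(p)^{-1} = \bigl(p\,\sigma(p^{-1})\bigr)^{-1} = \sigma(p^{-1})^{-1}\cdot p^{-1} = \sigma(p)\cdot p^{-1},
\end{align*}
which gives the first equality. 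For the second, apply the definition of $\Phi$ to $\sigma(p)$ and again use involutivity of $\sigma$ to get $\Phi(\sigma(p)) = \sigma(p)\cdot p^{-1}$, agreeing with the previous expressions.

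For part (ii), I would reduce to part (i). Starting from the definition,
\begin{align*}
\Phi(\Phi(p)\,q) = \Phi(p)\,q \cdot \sigma\bigl((\Phi(p)\,q)^{-1}\bigr) = \Phi(p)\,q\,\sigma(q^{-1})\,\sigma(\Phi(p)^{-1}).
\end{align*}
By (i) applied twice we have $\sigma(\Phi(p)^{-1}) = \sigma(\Phi(p))^{-1} = (\Phi(p)^{-1})^{-1} = \Phi(p)$, so the right-hand side becomes $\Phi(p)\,q\,\sigma(q^{-1})\,\Phi(p) = \Phi(p)\Phi(q)\Phi(p)$, as required.

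For the final remark, note that (i) gives $\Phi(p)^{-1} = \Phi(\sigma(p)) \in \Phi(G)$, establishing closure under inversion. For closure under squaring, observe that $\Phi(e) = e\cdot \sigma(e^{-1}) = e$, so setting $q = e$ in (ii) yields $\Phi(p)^{2} = \Phi(p)\Phi(e)\Phi(p) = \Phi(\Phi(p))$, which lies in $\Phi(G)$. There is no substantive obstacle in the proof; the only thing to be careful about is tracking the use of the involution identity $\sigma \circ \sigma = \mathrm{id}_G$ and the homomorphism property at each step.
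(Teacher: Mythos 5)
Your proof is correct and follows essentially the same direct algebraic computation as the paper, the only cosmetic difference being that in part (ii) you invoke part (i) to evaluate $\sigma(\Phi(p)^{-1})$ where the paper expands $\sigma(q^{-1}\sigma(p)p^{-1})$ term by term. You also explicitly verify the closure claims (via $\Phi(p)^{-1}=\Phi(\sigma(p))$ and $q=e$ in (ii)), which the paper leaves implicit; this is a nice touch but not a different method.
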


\begin{proof}
For the first equation we use the involutive propery of $\sigma$ to compute
\begin{align*}
    \sigma(\Phi(p)) = \sigma(p\sigma(p^{-1})) = \sigma(p)p^{-1} = (p\sigma(p^{-1}))^{-1} =\Phi(p)^{-1}
\end{align*}
and 
\begin{align*}
    \Phi(\sigma(p)) = \sigma(p)\sigma^{2}(p^{-1}) = \sigma(p\sigma(p^{-1})) = \sigma(\Phi(p)).
\end{align*}
For the second equation, when expanding the right hand side we get
\begin{align*}
    \Phi(\Phi(p)q) &= p\sigma(p^{-1})q\sigma(q^{-1}\sigma(p)p^{-1})\\
    &= p\sigma(p^{-1})q\sigma(q^{-1})p\sigma(p^{-1})\\
    &= \Phi(p)\Phi(q)\Phi(p),
\end{align*}
as desired.
\end{proof}

\begin{proposition}
Let $(G,K,\sigma)$ be a symmetric triple of compact type, then the Cartan embedding $\hat{\Phi}$ is injective.
\end{proposition}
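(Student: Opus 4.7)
The plan is to reduce injectivity of $\hat\Phi$ to the hypothesis $K = G^\sigma$ by a short algebraic manipulation, using only the definition of the Cartan map and the fact that $\sigma$ is a group automorphism. The strategy is direct: suppose two cosets $pK$ and $qK$ have the same image, and show that $q^{-1}p$ must be $\sigma$-fixed, so that it lies in $K$.

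First I would unwind the assumption $\hat\Phi(pK) = \hat\Phi(qK)$ to the equation
\begin{equation*}
    p\,\sigma(p^{-1}) \;=\; q\,\sigma(q^{-1})
\end{equation*}
in $G$. Then I would multiply on the left by $q^{-1}$ and on the right by $\sigma(p)$ and use that $\sigma$ is a homomorphism, so $\sigma(q^{-1})\sigma(p) = \sigma(q^{-1}p)$, to obtain
\begin{equation*}
    q^{-1}p \;=\; \sigma(q^{-1}p).
\end{equation*}
This exhibits $q^{-1}p$ as an element of the fixed-point set $G^\sigma$.

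Finally I would invoke the standing hypothesis $K = G^\sigma$ of the theorem to conclude $q^{-1}p \in K$, i.e.\ $pK = qK$. Since the only nontrivial step is the single rearrangement above, there is no real obstacle; the content of the argument lies entirely in exploiting the assumption $K = G^\sigma$ (this is exactly where injectivity could fail if one merely required $G_0^\sigma \subset K \subset G^\sigma$, since then $q^{-1}p \in G^\sigma$ would not force $q^{-1}p \in K$).
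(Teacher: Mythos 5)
Your argument is correct and is essentially identical to the paper's own proof: both unwind $\hat\Phi(pK)=\hat\Phi(qK)$ to $p\sigma(p^{-1})=q\sigma(q^{-1})$, rearrange to $q^{-1}p=\sigma(q^{-1}p)$, and invoke the chapter's standing hypothesis $G^\sigma=K$ to conclude $pK=qK$. Your closing remark correctly identifies that this is precisely the point where the weaker assumption $G_0^\sigma\subset K\subset G^\sigma$ would not suffice.
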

\begin{proof}
Assume $p, q \in G$ such that $\hat{\Phi}(pK) = \Phi(p) = \Phi(q) = \hat{\Phi}(qK)$. Then
\begin{align*}
    p\sigma(p^{-1}) = q\sigma(q^{-1})
\end{align*}
implies
\begin{align*}
    q^{-1}p = \sigma(q^{-1}p)
\end{align*}
so that $q^{-1}p \in G^{\sigma} = K$. But then $pK = qK$, and hence $\hat{\Phi}$ is injective.
\end{proof}

\begin{theorem}\label{Embedding}
Let $(G,K,\sigma)$ be a symmetric triple of compact type. Then the Cartan embedding $\hat{\Phi}: G/K \to G$ is an embedding of the symmetric space $G/K$ as a submanifold of $G$. 
\end{theorem}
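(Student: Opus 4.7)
The plan is to verify three things: that $\hat{\Phi}$ is smooth, that it is an immersion, and that it is a homeomorphism onto its image. Injectivity is already established in the preceding proposition, so together with these three items we obtain a smooth embedding.

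First I would handle smoothness. The Cartan map $\Phi: G \to G$ is smooth since it is built out of the multiplication map $\mu$, the inversion map $\iota$ and the automorphism $\sigma$, all of which are smooth. Since $\Phi$ is $K$-invariant and $\pi: G \to G/K$ is a smooth submersion, the universal property of the quotient (which follows from Theorem \ref{coset space smooth}) ensures that the induced map $\hat{\Phi}: G/K \to G$ is smooth and satisfies $\hat{\Phi}\circ \pi = \Phi$.

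Next I would show that $\hat{\Phi}$ is an immersion. The key calculation is at the basepoint $p_0 = K \in G/K$. Using the identification $T_{p_0}(G/K) \cong \la{p}$ from Remark \ref{identification} and the fact that $d\sigma$ acts as $-\mathrm{id}$ on $\la{p}$, I compute, for any $X \in \la{p}$,
\begin{align*}
d\hat{\Phi}_{p_0}(X^*)
&= \dtatzero \hat{\Phi}(\exp(tX)K)\\
&= \dtatzero \bigl[\exp(tX)\cdot \sigma(\exp(-tX))\bigr]\\
&= \dtatzero \bigl[\exp(tX)\cdot \exp(tX)\bigr]\\
&= 2X \in T_e G.
\end{align*}
Since $\la{p}\to T_e G$, $X\mapsto 2X$, is injective, $d\hat{\Phi}_{p_0}$ is injective. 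To promote this to every point, observe the equivariance relation
\begin{align*}
\Phi(gp) = gp\sigma(p^{-1})\sigma(g^{-1}) = g\,\Phi(p)\,\sigma(g^{-1}),
\end{align*}
which in terms of $\hat{\Phi}$ reads $\hat{\Phi}\circ L_g = L_g\circ R_{\sigma(g^{-1})}\circ \hat{\Phi}$. Differentiating at $p_0$ and taking $g=p$ gives
\begin{align*}
d\hat{\Phi}_{pK}\circ (dL_p)_{p_0} \;=\; d\bigl(L_p\circ R_{\sigma(p^{-1})}\bigr)_{\!e}\circ d\hat{\Phi}_{p_0}.
\end{align*}
The maps $(dL_p)_{p_0}$ and $d(L_p\circ R_{\sigma(p^{-1})})_e$ are linear isomorphisms since left and right translations are diffeomorphisms, so injectivity of $d\hat{\Phi}_{pK}$ follows from injectivity of $d\hat{\Phi}_{p_0}$.

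Finally I would upgrade the injective immersion to a topological embedding. By Proposition \ref{geometry of the types}, the Lie group $G$ is compact, so $G/K = \pi(G)$ is compact as the continuous image of a compact set. The map $\hat{\Phi}$ is therefore a continuous injection from the compact space $G/K$ into the Hausdorff space $G$; any such map is a homeomorphism onto its image. Combined with smoothness and the immersion property just established, this shows $\hat{\Phi}$ is a smooth embedding. I expect the computation of $d\hat{\Phi}_{p_0}$, together with the bookkeeping needed to push it to arbitrary points via the $G$-equivariance, to be the only real step; the rest is formal.
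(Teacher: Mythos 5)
Your proof is correct and follows essentially the same route as the paper's: both reduce to showing that the differential acts on the horizontal space $\la{p}$ as twice a (composition of) translation(s), hence injectively, and both use compactness of $G/K$ together with injectivity to upgrade the injective immersion to an embedding. The only difference is cosmetic — the paper computes $d\Phi_p$ directly at a general point $p$ via the differential of the multiplication map, whereas you compute only at the basepoint and transport injectivity using the equivariance $\hat{\Phi}\circ L_g = L_g\circ R_{\sigma(g^{-1})}\circ\hat{\Phi}$, which is a slightly cleaner way to do the same bookkeeping.
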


\begin{proof}
Set $N = \hat{\Phi}(G/K) = \Phi(G)$. Since $\hat{\Phi}$ is injective and the domain of $\hat{\Phi}$ is compact, we need only show that $\hat{\Phi}$ is an immersion, as an injective immersion of a compact domain it is always an embedding. The canonical projection $\pi: G \to G/K$ is a Riemannian submersion and the horizontal subspace at a point $p \in G$ is given by
\begin{align*}
    \hor_p = (dL_p)_e(\la{p}). 
\end{align*}
Here 
\begin{align*}
    \la{g} = \la{k} \oplus \la{p}
\end{align*}
is the Cartan decomposition of the Lie algebra $\la{g}$ of $G$ associated with $(G,K,\sigma)$. A proof of this claim can be found on page 466 of \cite{The fundamental equations of a submersion} . It will thus suffice to show that at each point $p \in G$, the differential
\begin{align}\label{injectivity condition}
\restr{d\Phi_p}{\hor_p}: \hor_p = (dL_p)_e(\la{p}) \to T_{\Phi(p)}N     
\end{align}
of $\Phi$ restricted to the horizontal subspace at $p$ is a vector space isomorphism. To calculate the differential of the Cartan map at a point $p \in G$ we rewrite $\Phi$ as a composition
\begin{align*}
    \Phi = \mu \circ \phi
\end{align*}
where $\phi: G \to G\times G$ is the map
\begin{align*}
    \phi: p \mapsto (p,\sigma(p^{-1}))
\end{align*}
and $\mu: G\times G \to G$ is the multiplication map. Let $X_p \in T_p G$ and $X \in \la{g}$ be such that $X_p = (dL_p)_e(X)$, then we get
\begin{align*}
    d\phi_p(X_p) &= \dtatzero\phi(p\cdot \exp(tX))\\ 
    &= \dtatzero(p\cdot \exp(tX),\sigma(\exp(-tX)\sigma(p^{-1})))\\
    &= ((dL_p)_e(X),-(dR_{\sigma(p^{-1})})_e(d\sigma(X))).
\end{align*}
In the final step we have used property (iii) from Proposition \ref{properties of exp} and the linearity of $(dR_{\sigma(p^{-1})})_e$. For the differential of $\Phi$ at $p$ the chain rule together with Proposition \ref{diff mul} then gives
\begin{align*}
    d\Phi_p(X_p) &= d\mu_{(p,\sigma(p^{-1}))}((dL_p)_e(X),-(dR_{\sigma(p^{-1})})_e(d\sigma(X)))\\
    &= (dR_{\sigma(p^{-1})})_p(dL_p)_e(X) + (dL_p)_{\sigma(p^{-1})}(dR_{\sigma(p^{-1})})_e(-d\sigma(X))\\
    &= d(R_{\sigma(p^{-1})} \circ L_p)_e (X) + d(L_p \circ R_{\sigma(p^{-1})})_e(-d\sigma(X))\\
    &= d(R_{\sigma(p^{-1})} \circ L_p)_e(X - d\sigma(X)).
\end{align*}
In the last step we have used that right and left translations commute with each other. For $X \in \la{k}$, we have $d\sigma(X) = X$ and the inner term vanishes as expected. On the other hand if $X \in \la{p}$, we have $d\sigma(X) = -X$ and we get
\begin{align*}
    d\Phi_p(X_p) &= d(R_{\sigma(p^{-1})} \circ L_p)_e(X - d\sigma(X))\\
    &= d(R_{\sigma(p^{-1})} \circ L_p)_e(2X)\\
    &= 2(dR_{\sigma(p^{-1})})_p(X_p).
\end{align*}
The map $2(dR_{\sigma(p^{-1})})_p$ is obviously a vector space isomorphism and hence $\Phi$ satisfies the condition \eqref{injectivity condition} for all points $p\in G$. Thus $\hat{\Phi}$ is an embedding of $G/K$ into $G$ as desired.
\end{proof}

\begin{corollary}\label{Cartan conformal}
The Cartan embedding is a conformal map with a constant conformal factor $4$. 
\end{corollary}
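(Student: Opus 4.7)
The plan is to reduce the claim directly to the differential formula already established inside the proof of Theorem \ref{Embedding}, which showed that for every $p \in G$ and every horizontal vector $X_p = (dL_p)_e(X) \in \hor_p$ with $X \in \la{p}$,
\begin{equation*}
d\Phi_p(X_p) \;=\; 2\,(dR_{\sigma(p^{-1})})_p(X_p).
\end{equation*}
By Remark \ref{rem: natural metrics} I may take the metric on $G$ to be $-\alpha \cdot B$ for some $\alpha>0$, which is bi-invariant by Theorem \ref{compact bi invariant}, and the symmetric metric on $G/K$ to be the corresponding $G$-invariant metric from Proposition \ref{G-metrics}; with these choices $\pi\colon G \to G/K$ is a Riemannian submersion with horizontal distribution $\hor$, as was noted already in the proof of Theorem \ref{Embedding}.

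The first step is then to pull the metric back via $\Phi$ on horizontal vectors. Since right translations are isometries of $(G,g)$ by bi-invariance, the displayed formula gives at once
\begin{equation*}
g(d\Phi_p(X_p),d\Phi_p(Y_p)) \;=\; 4\,g((dR_{\sigma(p^{-1})})_p X_p,(dR_{\sigma(p^{-1})})_p Y_p) \;=\; 4\,g(X_p,Y_p),
\end{equation*}
for all $X_p, Y_p \in \hor_p$.

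The second step is to transfer this identity from $\Phi$ to $\hat{\Phi}$ using the factorisation $\Phi = \hat{\Phi}\circ \pi$. For $v,w \in T_{pK}(G/K)$, take their horizontal lifts $X_p, Y_p \in \hor_p$, so that $d\pi_p(X_p) = v$ and $d\pi_p(Y_p) = w$. Because $\pi$ is a Riemannian submersion, $d\pi_p$ restricts to an isometry $\hor_p \to T_{pK}(G/K)$, and since $d\Phi_p = d\hat{\Phi}_{pK}\circ d\pi_p$ the previous computation immediately yields
\begin{equation*}
g(d\hat{\Phi}_{pK}(v),d\hat{\Phi}_{pK}(w)) \;=\; g(d\Phi_p(X_p),d\Phi_p(Y_p)) \;=\; 4\,g(X_p,Y_p) \;=\; 4\,g(v,w),
\end{equation*}
which is precisely the desired conformality statement with constant conformal factor $4$.

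I do not anticipate any real obstacle here: essentially everything has been packaged into the explicit differential formula already derived in Theorem \ref{Embedding}, and from that point the proof amounts to exploiting bi-invariance and the Riemannian submersion property of $\pi$. The one issue calling for any care is simply bookkeeping: one must keep track of which metrics are being used on $G$ and on $G/K$ and check that they are the compatible pair for which $\pi$ is a Riemannian submersion, as laid out in Proposition \ref{G-metrics} and Remark \ref{rem: natural metrics}.
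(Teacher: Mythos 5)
Your proposal is correct and follows essentially the same route as the paper: both invoke the formula $d\Phi_p(X_p) = 2\,(dR_{\sigma(p^{-1})})_p(X_p)$ from the proof of Theorem \ref{Embedding}, use right-invariance of the bi-invariant metric to obtain the factor $4$ on horizontal vectors, and pass to $\hat{\Phi}$ via the Riemannian submersion $\pi$. Your version merely spells out the transfer step $d\Phi_p = d\hat{\Phi}_{pK}\circ d\pi_p$ a little more explicitly than the paper does.
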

\begin{proof}
Since $\pi: G \to G/K$ is a Riemannian submersion and the Cartan map $\Phi$ satisfies $\Phi = \hat{\Phi} \circ \pi$, we need only show that the Cartan map is conformal when restricted to the horizontal distribution $\hor$ of the submersion. If $p \in G$ is a point of $G$, $X_p, Y_p \in \hor_p$ are horizontal vectors in $T_p G$ and $g$ is the bi-invariant metric on $G$ we have from the proof of Theorem \ref{Embedding} that
\begin{align*}
    g_{\Phi(p)}(d\Phi_p(X_p),d\Phi_p(Y_p)) &= g_{\Phi(p)}(2dR_{\sigma(p^{-1})}(X_p),2dR_{\sigma(p^{-1})}(Y_p))\\
    &= 4 \cdot g_p(X_p,Y_p),
\end{align*}
where the last equality follows from right-invariance of the metric.
\end{proof}

\begin{theorem}\label{Cartan totally geodesic}
Let $(G,K,\sigma)$ be a symmetric triple of compact type. Then the image, $N = \hat{\Phi}(G/K) = \Phi(G)$, of the Cartan embedding is a totally geodesic submanifold of $G$.
\end{theorem}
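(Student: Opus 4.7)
The plan is straightforward: I will apply Proposition \ref{symmetry invariance} to the submanifold $N = \Phi(G) \subset G$, where the ambient symmetric-space structure on $G$ comes from its bi-invariant Riemannian metric via Proposition \ref{Lie group symmetric}. The image $N$ is already a smooth embedded submanifold of $G$ by Theorem \ref{Embedding}, so the only thing to verify is that $N$ is invariant under the symmetry $s_n$ of $G$ at every point $n \in N$, where $s_n(q) = n\cdot q^{-1}\cdot n$.

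To this end, I would take arbitrary $n = \Phi(p)\in N$ and $m = \Phi(q)\in N$ and compute
\begin{align*}
    s_n(m) = n\cdot m^{-1}\cdot n = \Phi(p)\cdot \Phi(q)^{-1}\cdot \Phi(p).
\end{align*}
Part (i) of Proposition \ref{Cartan map properties} immediately rewrites $\Phi(q)^{-1}$ as $\Phi(\sigma(q))$, and then part (ii) of the same proposition collapses the resulting triple product, giving
\begin{align*}
    s_n(m) = \Phi(p)\cdot \Phi(\sigma(q))\cdot \Phi(p) = \Phi(\Phi(p)\cdot \sigma(q)) \in \Phi(G) = N.
\end{align*}
Hence $s_n(N) \subseteq N$, and since $s_n$ is an involution on $G$ this inclusion forces the equality $s_n(N) = N$. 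Applying Proposition \ref{symmetry invariance} then yields that $N$ is a totally geodesic submanifold of $G$.

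I expect no serious obstacle in this argument: the two algebraic identities of Proposition \ref{Cartan map properties} were established for precisely this kind of manipulation, and the geometric content is captured entirely by Proposition \ref{symmetry invariance} together with the description of the symmetries of a bi-invariant Lie group from Proposition \ref{Lie group symmetric}. The conceptual point is simply that the Cartan map is built so that its image is visibly closed under the operation $q\mapsto n q^{-1} n$ for any $n$ already in the image, which in the language of symmetric spaces is exactly the invariance condition that characterises totally geodesic submanifolds.
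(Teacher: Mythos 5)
Your proposal is correct and follows essentially the same route as the paper: view $G$ as a symmetric space via its bi-invariant metric, use parts (i) and (ii) of Proposition \ref{Cartan map properties} to show $s_n(m)=\Phi(\Phi(p)\sigma(q))\in N$, and conclude by Proposition \ref{symmetry invariance}. Your extra remark that the inclusion $s_n(N)\subseteq N$ upgrades to equality because $s_n$ is an involution is a small point the paper leaves implicit, and is a welcome bit of care since the invariance hypothesis is stated as an equality.
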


\begin{proof}
We shall show $N$ to be totally geodesic by considering $G$ as a symmetric space as in Proposition \ref{Lie group symmetric} and show that for each $p \in N$, the submanifold $N$ is invariant under $s_p$ where $s_p$ is the symmetry of $G$ at $p$. The result will then follow by Proposition \ref{point invariance}. Let $p = \Phi(a) \in N$ and $q = \Phi(b) \in N$ for $a,b \in G$. Then
\begin{align*}
    s_p(q) = pq^{-1}p = \Phi(a)\Phi(b)^{-1}\Phi(a).
\end{align*}
Using $(i)$ and $(ii)$ from Proposition \ref{Cartan map properties} we obtain
\begin{align*}
    \Phi(a)\Phi(b)^{-1}\Phi(a) &= \Phi(a)\Phi(\sigma(b))\Phi(a)\\
    &= \Phi(\Phi(a)\sigma(b)) \in N.
\end{align*}
This then concludes the proof.
\end{proof}
\begin{theorem}\label{exponential form}
Let $(G,K,\sigma)$ be a symmetric triple of compact type with $G$ connected, and let $N$ be the image of the Cartan map $\Phi$. Let $\la{g} = \la{k} \oplus \la{p}$ be the Cartan decomposition of $\la{g}$ associated with $(G,K,\sigma)$, then
\begin{align*}
    N = \exp(\la{p}).
\end{align*}
\end{theorem}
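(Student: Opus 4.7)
The plan is to prove the equality $N = \exp(\la{p})$ by establishing both inclusions. The key observation is that $N$ is a connected, compact (hence complete) totally geodesic submanifold of $G$ containing the identity $e = \Phi(e)$, so its structure as a Riemannian submanifold through $e$ is entirely determined by its tangent space at $e$.

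For the inclusion $\exp(\la{p}) \subset N$, I would argue directly: given $X \in \la{p}$, set $p = \exp(X/2) \in G$. Since $d\sigma(X) = -X$ for $X \in \la{p}$, property (iii) of Proposition \ref{properties of exp} yields
\begin{align*}
    \Phi(p) = \exp(X/2)\cdot \sigma(\exp(-X/2)) = \exp(X/2)\cdot \exp(-d\sigma(X/2)) = \exp(X/2)\cdot \exp(X/2) = \exp(X).
\end{align*}
Hence $\exp(X) \in \Phi(G) = N$, which gives $\exp(\la{p}) \subset N$.

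For the converse inclusion $N \subset \exp(\la{p})$, I would use the geometry of $N$ as a totally geodesic submanifold. First I would identify $T_e N$. From the formula derived in the proof of Theorem \ref{Embedding}, for $X \in \la{g}$ with $X_e = X$,
\begin{align*}
    d\Phi_e(X) = X - d\sigma(X),
\end{align*}
which vanishes on $\la{k}$ and equals $2X$ on $\la{p}$. Therefore the image of $d\Phi_e$ is exactly $\la{p}$, and since $\dim N = \dim (G/K) = \dim \la{p}$, we conclude $T_e N = \la{p}$.

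Now I would apply Hopf--Rinow. Since $G$ is compact and $N = \Phi(G)$ is the image of the compact connected set $G$ under the continuous map $\Phi$, $N$ is a compact connected submanifold of $G$, and in particular it is geodesically complete in its induced metric. By Hopf--Rinow, every point $q \in N$ can be joined to $e$ by a geodesic $\gamma:[0,1] \to N$ with $\gamma(0) = e$ and $\gamma(1) = q$. Because $N$ is totally geodesic in $G$ (Theorem \ref{Cartan totally geodesic}), $\gamma$ is also a geodesic in $G$ through $e$, so by Corollary \ref{one param geodesics} it has the form $\gamma(t) = \exp(tX)$ for some $X = \dot{\gamma}(0) \in T_e N = \la{p}$. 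Thus $q = \exp(X) \in \exp(\la{p})$, proving $N \subset \exp(\la{p})$. The main obstacle here is really just ensuring the tangent space identification and invoking completeness correctly; once $T_e N = \la{p}$ is in hand, the totally geodesic property does all the work.
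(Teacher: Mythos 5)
Your proof is correct, and for one of the two inclusions it takes a genuinely different route from the paper. For $N \subset \exp(\la{p})$ you argue exactly as the paper does: identify $T_e N = \la{p}$ via $d\Phi_e$, use compactness and connectedness of $N$ to get completeness, join $e$ to an arbitrary $q \in N$ by a geodesic of $N$, and use the totally geodesic property together with Corollary \ref{one param geodesics} to conclude $q \in \exp(\la{p})$. For the reverse inclusion $\exp(\la{p}) \subset N$, however, the paper again runs the geodesic argument in the opposite direction: it takes $q = \exp(X)$ with $X \in \la{p}$, observes that the one-parameter subgroup $t \mapsto \exp(tX)$ is a geodesic of $G$ tangent to $N$ at $e$, and invokes the totally geodesic property to conclude that this geodesic stays in $N$. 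Your argument instead is purely algebraic: setting $p = \exp(X/2)$ and using $\sigma(\exp(Y)) = \exp(d\sigma(Y))$ together with $d\sigma(X) = -X$ on $\la{p}$, you compute $\Phi(p) = \exp(X/2)\exp(X/2) = \exp(X)$ directly. This is cleaner and more robust: it needs no Riemannian input at all for that inclusion (no metric, no completeness, no uniqueness of geodesics), whereas the paper's version implicitly relies on the fact that a $G$-geodesic tangent to a complete totally geodesic submanifold remains inside it. The trade-off is only aesthetic symmetry — the paper treats both inclusions by one uniform geodesic mechanism, while you split the work between algebra and geometry — but your version of the first inclusion is arguably the better proof.
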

\begin{proof}
As $G$ is a compact Lie group, the geodesics through the identity element $e\in G$ with respect to the bi-invariant metric $g$ on $G$ correspond to the one-parameter subgroups of $G$, and hence
\begin{align*}
    \exp(\la{p}) = \text{Exp}_e(\la{p})
\end{align*}
where $\text{Exp}_e: T_e G \to G$ is
the Riemannian exponential map at the point $e$. As $G$ is complete $\exp_e$ is defined on the whole of $T_e G$ and therefore certainly on the whole of $\la{p}$. We first note that 
$$T_e N = \la{p} = T_e \exp(\la{p}),$$
as the differential of $\Phi$ at $e$ restricted to $\la{p}$ is simply $\restr{d\Phi_e}{\la{p}}(X_e) = 2\cdot X_e$. Since $N$ is totally geodesic and compact, it is also complete. Hence, if $p \in N$ there is a geodesic $\gamma_p:[0,1] \to N$ such that $\gamma_p(0) = e$ and $\gamma_p(1) = p$. As $\gamma_p$ is also a geodesic of $G$ with $\dot{\gamma}_p(0) \in \la{p}$ it is contained in $\exp(\la{p})$ and hence $N \subset \exp(\la{p})$. 

Conversely, if $q$ is a point of $\exp(\la{p})$, the geodesic $\alpha_q:[0,1] \to G$ with $\alpha_q(0) = e$ and $\alpha_q(1) = q$ is contained in $\exp(\la{p})$, with $\dot{\alpha}_q(0) \in \la{p}$. But then since $N$ is totally geodesic, $\alpha_q$ is a geodesic in $N$ as well, so that $q$ is also a point of $N$. Hence $\exp(\la{p}) \subset N$, which concludes the proof.
\end{proof}

\chapter{Harmonic Morphisms and p-Harmonic Functions}\label{ch: harmonic morphisms}
As stated in the introduction, the aim of this thesis is to find harmonic morphisms and $p$-harmonic functions from the classical compact irreducible symmetric spaces to the complex plane. We will achieve this goal using the powerful tools that are eigenfamilies and eigenfunctions, which will be introduced in Chapter \ref{ch: eigen}.  Here we give a brief general overview of harmonic morphisms between Riemannian manifolds and $p$-harmonic functions. The standard reference on the theory of harmonic morphisms is the exellent book {\it Harmonic Morphisms Between Riemannian Manifolds} \cite{Baird and Wood} by P. Baird and J.C. Wood. We further recommend the interested reader to consult the excellent and frequently updated online bibliographies \cite{biblio harm-morph} and \cite{biblio p-harm} of harmonic morphisms and $p$-harmonic functions, respectively.

First we will introduce the notion of the gradient of a real or complex-valued function on a Riemannian manifold. 
\begin{definition}
Let $(M,g)$ be a Riemannian manifold and $f:M \to \rn$ a smooth function on $M$. Then we define the \emph{gradient} $\nabla f$ of $f$ as the unique smooth vector field on $M$ satisfying
\begin{align*}
    g(\nabla f,X) = df(X) = X(f)
\end{align*}
for all smooth vector fields $X \in \smooth{TM}$.
\end{definition}
The notion of a gradient of a real valued function is then readily extended to complex-valued ones. Given a Riemannian manifold $(M,g)$ we shall by $T^{\cn}M$ denote the complexification of the tangent bundle of $M$. We may decompose a complex-valued function $f:M \to \cn$ into real and imaginary components $f = u + iv$ and define the gradient $\nabla f$ as the smooth complex vector field
\begin{align*}
    \nabla f = \nabla u + i\cdot \nabla v
\end{align*}
on $M$. Further, for a smooth vector field $Y\in \smooth{T^{\cn} M}$, we define the divergence $\div(Y)$ by
\begin{align*}
    \div(Y) = \tr(\nabla Y),
\end{align*}
where by $\nabla Y$ we mean the $(1,1)$-tensor field $\nabla Y(X) =\nab{}{X}{Y}$, $X \in \smooth{T^{\cn}M}$. It is important to note that we extend the metric tensor on $\smooth{TM}$ to $\smooth{T^{\cn}M}$ as a complex-bilinear symmetric $(0,2)$-tensor. It thus restricts to a complex-bilinear symmetric form on each complexified tangent space.

We are now ready to define the \emph{tension field} on a Riemannian manifold $(M,g)$. This will be a second order elliptic differential operator on $M$ which generalises the Laplacian on $\rn^n$.
\begin{definition}
Let $(M,g)$ be a Riemannian manifold and $f:M \to \cn$ a complex-valued $C^2$-function on $M$. We then define the tension field $\tau$ acting on $f$ by
\begin{align*}
    \tau(f) = \div(\nabla f).
\end{align*}
We say that $f$ is \emph{harmonic} if it satisfies
\begin{align*}
    \tau(f) = 0.
\end{align*}
\end{definition}
\begin{definition}[\cite{Baird and Wood}]
A map $\phi: (M,g) \to (N,h)$ between Riemannian manifolds is called a \emph{harmonic morphism} if for each open subset $U$ of $M$ with non-empty image under $\phi$ and each harmonic function $f:M \to \cn$, the function $f\circ \phi: \phi^{-1}(U) \to \cn$ is harmonic as well.
\end{definition}
We remark that on a smooth manifold $(M,g)$ we may for any given point $p \in M$ find a neighbourhood $U$ of $p$ on which we can define a local orthonormal frame $\basis = \{Z_1, \dots, Z_n\}$ for $T^{\cn}M$. In this setting we get the very useful local formula
\begin{align}\label{eq:local tension}
    \tau(f) = \sum_{i = 1}^n Z_k^2(f) - (\nab{}{Z_k}{Z_k})(f)
\end{align}
for the tension field $\tau(f)$ of a complex-valued function $f$ defined on $U$. To see this we compute
\begin{align*}
    \tau(\phi) &= \tr(\nabla(\nabla f))\\
    &= \sum_{k=1}^n g(Z_k, \nab{}{Z_k}{\nabla f})\\
    &= \sum_{k=1}^n Z_k(g(Z_k,\nabla f)) - g(\nab{}{Z_k}{Z_k},\nabla f)\\
    &= \sum_{k=1}^n Z_k(Z_k(f)) - (\nab{}{Z_k}{Z_k})(f),
\end{align*}
in $U$.
\par

There is a more general notion of harmonicity for smooth maps $\phi:(M,g) \to (N,h)$ between Riemannian manifolds which will be of theoretical importance for a few of the results in this thesis. In order to cover this more general setting we must first generalise the tension field so that it can act on such maps.

For this we first make some constructions. If $\varphi: (M,g) \to (N,h)$ is a smooth map between Riemannian manifolds, we can use it to construct a vector bundle $\varphi^{-1}TN$ on $M$ called the \emph{pull-back bundle} \cite{Baird and Wood}. This is given by
\begin{align*}
    \varphi^{-1}TN = \{ (p,X_{\varphi(p)} \ | \ p\in M, X_{\varphi(p)} \in T_{\varphi(p)}N), \quad \pi:(p,X_{\varphi(p)}) \mapsto p,
\end{align*}
so that the fibre at $p \in M$ of the pull-back bundle is the fibre of $TN$ at the image $\varphi(p) \in N$. A vector field $Y \in \smooth{TN}$ then naturally corresponds to a smooth section $Y^*\in \smooth{\varphi^{-1}TN}$ of the pull-back bundle via $Y^*(p) = Y(\varphi(p))$. We then define the \emph{pull-back connection} $\nabla^{\varphi}$ on $\varphi^{-1}TN$ by requiring that
\begin{align*}
    \nab{\varphi}{X}{Y^*} = \tnab{d\varphi(X)}{Y}
\end{align*}
hold for all $X \in \smooth{TM}$ and all $Y \in \smooth{TN}$, where $\tnabla$ is the Levi-Civita connection on $(N,h)$.

We can then view the differential $d\varphi$ of $\varphi$ as a tensorial map $d\varphi: \smooth{TM} \to \smooth{\varphi^{-1}TN}$, or equivalently a section of $T^*M \otimes \varphi^{-1}TN$, by setting 
$$d\varphi(X)(p) = (p,d\varphi_p(X_p))$$
for $X \in \smooth{TM}$. It then makes sense to take the covariant derivative $\nabla d\varphi$ of $\varphi$, for which we obtain the expression \cite{Baird and Wood}
\begin{align*}
    \nabla d\varphi (X,Y) = \nab{\varphi}{X}{d\varphi(Y)}-d\varphi\left(\nab{}{X}{Y}\right)
\end{align*}
for $X, Y \in \smooth{TM}$, where $\nabla$ is the Levi-Civita connection of $(M,g)$.
\begin{definition}[\cite{Baird and Wood}]
Let $(M,g)$ and $(N,h)$ be Riemannian manifolds, and $\varphi: M \to N$ a smooth map. Then we define the tension field $\tau(\varphi)$ by
\begin{align*}
    \tau(\varphi) = \tr(\nabla d\varphi).
\end{align*}
We say $\varphi$ is \emph{harmonic} if $\tau(\varphi) = 0$.
\end{definition}
The following result on the tension field of a composition of maps is known as the \emph{composition law}.
\begin{proposition}[\cite{Baird and Wood}]\label{composition law}
Let $\phi:(M,g) \to (N,h)$ and $\psi: (N,h) \to (P,q)$ be smooth maps between Riemannian manifolds. Then the tension field of their composition satisfies
\begin{align*}
    \tau(\psi \circ \phi) = d\psi(\tau(\phi)) + \tr(\nabla d\psi(d\phi,d\phi)).
\end{align*}
\end{proposition}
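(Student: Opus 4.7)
The plan is to establish, at the level of sections of $T^*M \otimes T^*M \otimes (\psi\circ\phi)^{-1}TP$, the pointwise identity
\begin{align*}
    \nabla d(\psi \circ \phi)(X,Y) = d\psi\bigl(\nabla d\phi(X,Y)\bigr) + \nabla d\psi\bigl(d\phi(X), d\phi(Y)\bigr)
\end{align*}
for all $X, Y \in \smooth{TM}$, and then trace with respect to $g$. Taking traces commutes with evaluating $d\psi$ fibrewise (which is linear), so this immediately yields the stated composition law.

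To produce this identity, I would first unfold the definition on the left using the ordinary chain rule $d(\psi\circ\phi) = d\psi \circ d\phi$:
\begin{align*}
    \nabla d(\psi \circ \phi)(X,Y) = \nab{\psi\circ\phi}{X}{d\psi(d\phi(Y))} - d\psi\bigl(d\phi(\nab{}{X}{Y})\bigr).
\end{align*}
The heart of the argument is then a Leibniz-type rule for pullback connections: for any section $V$ of $\phi^{-1}TN$, regarding $d\psi$ as a bundle homomorphism $\phi^{-1}TN \to (\psi\circ\phi)^{-1}TP$,
\begin{align*}
    \nab{\psi\circ\phi}{X}{d\psi(V)} = \nabla d\psi\bigl(d\phi(X), V\bigr) + d\psi\bigl(\nab{\phi}{X}{V}\bigr).
\end{align*}
I would verify this first on sections of the form $V = Z \circ \phi$ for $Z \in \smooth{TN}$, where both sides reduce to expressions involving $\tnabla$ (the Levi-Civita connection on $N$) and the Levi-Civita connection on $P$, by direct computation from the definitions of $\nabla^{\phi}$ and $\nabla^{\psi\circ\phi}$ combined with $\nabla d\psi(W,Z) = \nab{\psi}{W}{d\psi(Z)} - d\psi(\tnab{}{W}{Z})$; then extend to arbitrary sections by $C^\infty(M)$-linearity in $V$. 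Applying this identity with $V = d\phi(Y)$ and substituting, the pullback term $d\phi(\nab{}{X}{Y})$ combines with $\nab{\phi}{X}{d\phi(Y)}$ to reproduce $\nabla d\phi(X,Y)$, giving the claimed pointwise identity.

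The final step is to take the trace. Choosing a local orthonormal frame $\{e_k\}$ on $M$ and summing the identity at $(X,Y) = (e_k, e_k)$, the first term yields $d\psi(\tau(\phi))$ by linearity of the bundle map $d\psi$, while the second term is exactly $\tr(\nabla d\psi(d\phi, d\phi))$ as in the statement.

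The main obstacle is the bookkeeping in the Leibniz rule for the pullback connections in the middle step: one must distinguish $\nabla^{\phi}$, $\nabla^{\psi}$, and $\nabla^{\psi\circ\phi}$, check tensoriality carefully so that the verification on the dense class of sections $Z \circ \phi$ extends, and recognise that $\nab{\psi\circ\phi}{X}{Z\circ(\psi\circ\phi)}$ may be rewritten using $\tnab{}{d\phi(X)}{Z}\big|_{\phi}$ precisely because of how the pullback connection was defined. Once this chain rule is in hand, the remainder of the proof is a direct unfolding of definitions.
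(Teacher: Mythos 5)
Your proposal is correct: the paper itself states this proposition without proof, citing Baird and Wood, and your argument is essentially the standard one given there — establish the pointwise identity $\nabla d(\psi\circ\phi)(X,Y) = d\psi(\nabla d\phi(X,Y)) + \nabla d\psi(d\phi(X),d\phi(Y))$ via the Leibniz rule for the pullback connection, then trace. The one point to phrase carefully is the extension from sections of the form $Z\circ\phi$ to general $V$: neither side of your Leibniz identity is $C^\infty(M)$-linear in $V$ on its own, but their difference is, which is exactly what makes the verification on the generating class suffice.
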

The following characterisation of harmonic morphisms is due to Fuglede and Ishihara.
\begin{theorem}[\cite{Fuglede, Ishihara}]\label{harm-morph iff hwc}
A map $\varphi: (M,g) \to (N,h)$ between Riemannian manifolds is a harmonic morphism if and only if it is a horizontally (weakly) conformal harmonic map.
\end{theorem}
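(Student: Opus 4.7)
The plan is to prove both implications via the composition law (Proposition \ref{composition law}), combined with the local existence of harmonic functions on $N$ with prescribed one-jet and traceless Hessian at a chosen point. The reverse direction is the easy one: assume $\varphi$ is harmonic and horizontally weakly conformal with dilation $\lambda$, and let $f:V\subset N\to\cn$ be harmonic on some open $V\subset N$. Fix $p\in\varphi^{-1}(V)$. If $\lambda(p)=0$ then $d\varphi_p=0$ and the composition law immediately yields $\tau(f\circ\varphi)_p=0$. Otherwise, choose a local orthonormal frame $X_1,\dots,X_m$ on $M$ adapted to the decomposition $TM=\hor\oplus\vert$, with $X_1,\dots,X_n\in\hor$ and $X_{n+1},\dots,X_m\in\vert$. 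Then $d\varphi(X_j)=0$ for $j>n$, while horizontal conformality makes $\lambda^{-1}d\varphi(X_1),\dots,\lambda^{-1}d\varphi(X_n)$ an orthonormal frame of $TN$ along $\varphi$. The trace term in the composition law therefore collapses to $\lambda^2\cdot(\tau(f)\circ\varphi)=0$, and together with $df(\tau(\varphi))=0$ this gives $\tau(f\circ\varphi)_p=0$.

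For the forward direction, assume $\varphi$ is a harmonic morphism and argue pointwise at an arbitrary $p\in M$, writing $q=\varphi(p)$. The essential local ingredient is: for any covector $\alpha\in T_q^*N$ and any symmetric bilinear form $H$ on $T_qN$ with $\tr_h H=0$, there exists a complex-valued function $f$ harmonic in a neighbourhood of $q$ with $df_q=\alpha$ and $(\nabla df)_q=H$. Such an $f$ can be built by starting with the quadratic polynomial in normal coordinates centred at $q$ that realises $\alpha$ and $H$ as its one- and two-jets — this polynomial is harmonic at $q$ precisely because $\tr_h H=0$ — and then correcting it via a solution of a local Dirichlet problem for the Laplacian, perturbing only the third and higher order terms so as to preserve the two-jet at $q$. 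Granted this, first choose $\alpha$ arbitrary with $H=0$: the composition law together with $\tau(f\circ\varphi)=0$ forces $\alpha(\tau(\varphi)_p)=0$ for every $\alpha$, so $\tau(\varphi)_p=0$ and $\varphi$ is harmonic. Second, choose $\alpha=0$ with $H$ an arbitrary traceless symmetric form; the composition law now reduces to
$$\sum_{k=1}^m H(d\varphi_p(X_k),d\varphi_p(X_k))=0$$
for every such $H$ and every orthonormal frame $\{X_k\}$ of $T_pM$. Interpreting via the metric, this says the symmetric endomorphism $T$ of $T_qN$ defined by $Tv=\sum_k h(d\varphi_p(X_k),v)\,d\varphi_p(X_k)$ pairs trivially with every traceless symmetric form, whence $T=\lambda^2\cdot\mathrm{id}_{T_qN}$ for some $\lambda^2\geq 0$. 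In matrix form, if $A$ denotes the matrix of $d\varphi_p$ in orthonormal bases, this is $AA^{T}=\lambda^2 I$; equivalently, either $d\varphi_p=0$ or $d\varphi_p$ maps the orthogonal complement of its kernel conformally with factor $\lambda$ onto $T_qN$ — exactly horizontal weak conformality at $p$.

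The main obstacle is the local construction of harmonic functions with prescribed traceless two-jet; the rest is bookkeeping with the composition law. This construction rests on solvability of the Dirichlet problem for the Laplacian on small geodesic balls in $N$, which is classical elliptic theory; a full account tailored to the harmonic morphism setting can be found in Chapter 4 of Baird and Wood \cite{Baird and Wood}.
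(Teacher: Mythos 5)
The paper does not prove this theorem at all: it is quoted from Fuglede and Ishihara with a citation only, so there is no in-paper argument to compare against. Your proposal is the classical proof of the Fuglede--Ishihara characterisation, and both halves are structurally sound. The reverse direction is exactly right: with $\varphi$ harmonic the first term of the composition law (Proposition \ref{composition law}) dies, and in an adapted frame horizontal weak conformality collapses the trace term to $\lambda^2\cdot(\tau(f)\circ\varphi)$, which vanishes for harmonic $f$; the degenerate case $d\varphi_p=0$ is handled separately, as it must be. The forward direction correctly isolates the one nontrivial ingredient -- local harmonic functions with prescribed $1$-jet and prescribed trace-free Hessian at a point -- and the two test choices ($H=0$ with $\alpha$ arbitrary, then $\alpha=0$ with $H$ trace-free) do force $\tau(\varphi)_p=0$ and $AA^{T}=\lambda^2 I$ respectively; your identification of the latter with horizontal weak conformality at $p$, including the sign $\lambda^2\geq 0$ and the dichotomy $d\varphi_p=0$ versus conformal surjectivity on $\hor_p$, is correct.

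The one thin spot is the jet-prescription lemma itself. As literally described -- take the quadratic polynomial in normal coordinates and then ``correct it via a solution of a local Dirichlet problem\dots{} so as to preserve the two-jet'' -- the step does not quite work: a solution of the Dirichlet problem with prescribed boundary data has no reason to have vanishing $2$-jet at the centre of the ball, so the correction could destroy exactly the data you are trying to preserve. The standard repair is to first kill the Taylor expansion of $\Delta P$ at $q$ order by order with higher-degree polynomial corrections (possible precisely because $\tr_h H=0$ makes $\Delta P$ vanish at $q$), and only then solve an elliptic problem for a remainder that is $O(|x|^{3})$; alternatively one quotes directly the statement that the $2$-jets of germs of harmonic functions at $q$ are exactly those with trace-free Hessian. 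Since you cite Baird and Wood for the full account and the lemma is genuinely classical, this is a presentational gap rather than a mathematical one, but the mechanism you name is not the one that actually closes it.
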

By horizontally (weakly) conformal we mean the following.
\begin{definition}
Let $\varphi:(M,g) \to (N,h)$ be a map between Riemannian manifolds and define for each $p\in M$ the vertical space at $p$ by $\vert_{p} = \ker d\varphi_p$ and the horizontal space at $p$ by $\hor_p = \vert_p^{\perp}$. We say that $\varphi$ is \emph{horizontally (weakly) conformal} at $p$ if
\begin{enumerate}[label = (\roman*)]
    \item $d\varphi_p = 0$, or
    \item $d\varphi_p$ is surjective and there exists a number $\lambda(p)$ such that
    \begin{align*}
        g_{p}(X_p,Y_p) = h_{\varphi(p)}(d\varphi_p(X_p),d\varphi_p(Y_p))
    \end{align*}
    holds for each $X_p,Y_p \in \hor_p$.
\end{enumerate}
We call $\varphi$ horiznotally (weakly) conformal if this holds at each point $p \in M$.
\end{definition}
We remark that the above definition reduces to the condition
\begin{align*}
g(\nabla \phi, \nabla \phi) = 0
\end{align*}
for a complex-valued function $\phi:(M,g) \to \cn$. This motivates the following definition.
\begin{definition}\label{def of kappa}
Let $(M,g)$ be a Riemannian manifold. We then define the conformality operator $\kappa$ of $M$ to be the bilinear differential operator
\begin{align*}
    \kappa(\phi,\psi) = g(\nabla \phi,\nabla \psi)
\end{align*}
for smooth maps $\phi,\psi: M \to \cn$.
\end{definition}
Hence, a complex-valued map $\phi:(M,g) \to \cn$ is horizontally conformal if and only if $\kappa(\phi,\phi) =0$.
The conformality operator $\kappa$ and tension field $\tau$ of a Riemannian manifold $(M,g)$ also satisfy the well known product identities
\begin{align*}
    \tau(\phi\cdot \psi) = \phi\cdot\tau(\psi) + 2\cdot \kappa(\phi,\psi) + \tau(\phi)\cdot\psi
\end{align*}
and
\begin{align*}
    \kappa(\phi\cdot \psi,f\cdot g) &= \phi\cdot f\cdot \kappa(\psi,g) + \phi\cdot g\cdot \kappa(\psi, f) + \psi\cdot f\cdot \kappa(\phi,g) + \psi\cdot g\cdot \kappa(\phi, f)
\end{align*}
for smooth functions $\phi,\psi,f,g: M \to \cn$.
As for the tension field, given a local orthonormal frame $\basis = \{Z_1,\cdots, Z_n\}$ of $\smooth{TM}$ we may compute $\kappa(\phi,\psi)$ explicitly as
\begin{align}\label{eq: local formula kappa}
    \kappa(\phi,\psi) = \sum_{i = 1}^n Z_k(\phi)\cdot Z_k(\psi).
\end{align}
To see this we first note that we have the expression
\begin{align*}
    \nabla \phi = \sum_{k=1}^n g(Z_k,\nabla \phi) \cdot Z_k = \sum_{k=1}^n Z_k(\phi)\cdot Z_k
\end{align*}
for $\nabla \phi$ in terms of the basis fields $Z_k$ everywhere on $U$. Inserting this into the definition of $\kappa$ yields
\begin{align*}
    \kappa(\phi,\psi) = \sum_{k=1}^n g(Z_k(\phi)\cdot Z_k,\nabla \psi) = \sum_{k=1}^n Z_k(\phi)\cdot g(Z_k,\nabla \psi) =\sum_{i = 1}^n Z_k(\phi)\cdot Z_k(\psi),
\end{align*}
as desired. 
With this we can now present some important results giving complex-valued harmonic morphisms a very geometric character.

\begin{theorem}[\cite{Baird and Eells,sg sym sps rank 1}]\label{submersive harm morph}
Let $\varphi:(M,g) \to (N^n,h)$ be a horizontally conformal submersion between Riemannian manifolds. If
\begin{enumerate}
    \item $n = 2$ then $\varphi$ is harmonic if and only if $\varphi$ has minimal fibres
    \item $n \geq 3$ then any two of the following conditions imply the other:
    \begin{enumerate}[label = (\roman*)]
        \item $\varphi$ is a harmonic map,
        \item $\varphi$ has minimal fibres
        \item $\varphi$ is horizontally homothetic.
    \end{enumerate}
\end{enumerate}
\end{theorem}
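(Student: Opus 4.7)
My strategy is to reduce both parts of the theorem to a single identity for the tension field of a horizontally conformal submersion $\varphi:(M^m,g)\to(N^n,h)$ with dilation $\lambda$, namely
\begin{equation}\label{eq: proposal hwc tension}
\tau(\varphi)=-(m-n)\,d\varphi(H)-(n-2)\,d\varphi\bigl((\nabla\ln\lambda)^{\hor}\bigr),
\end{equation}
where $H$ denotes the horizontal mean-curvature vector field of the fibres of $\varphi$ and $(\nabla\ln\lambda)^{\hor}$ is the horizontal projection of $\nabla\ln\lambda$. Both statements (1) and (2) will fall out of \eqref{eq: proposal hwc tension} by linear algebra once it is in place.

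To prove \eqref{eq: proposal hwc tension} I would fix a point $p\in M$, choose an orthonormal frame $\{Y_1,\ldots,Y_n\}$ of $TN$ which is normal at $\varphi(p)$, and work with the adapted orthonormal frame $\{\tilde{X}_1,\ldots,\tilde{X}_n,V_{n+1},\ldots,V_m\}$ of $TM$, where $\tilde{X}_i=\lambda(Y_i)^{\hor}$ is the rescaled horizontal lift of $Y_i$ (so $|\tilde{X}_i|_g=1$ and $d\varphi(\tilde{X}_i)=\lambda Y_i\circ\varphi$) and $\{V_{n+1},\ldots,V_m\}$ is any orthonormal vertical frame. Writing
\[
\tau(\varphi)=\sum_{i=1}^{n}(\nabla d\varphi)(\tilde{X}_i,\tilde{X}_i)+\sum_{\alpha=n+1}^{m}(\nabla d\varphi)(V_\alpha,V_\alpha),
\]
with $(\nabla d\varphi)(X,X)=\nab{\varphi}{X}{d\varphi(X)}-d\varphi(\nabla_{X}X)$, the vertical block yields $-\sum_\alpha d\varphi(\nabla_{V_\alpha}V_\alpha)$, whose horizontal component is by definition $(m-n)\,d\varphi(H)$. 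In the horizontal block, expanding $\nab{\varphi}{\tilde{X}_i}{\lambda Y_i\circ\varphi}$ by the Leibniz rule and using the normality of $\{Y_i\}$ at $\varphi(p)$ to kill the $\tnab{Y_i}{Y_i}$-terms, the derivatives of $\lambda$ contribute $d\varphi((\nabla\ln\lambda)^{\hor})$. An analogous expansion of $\nabla_{\tilde{X}_i}\tilde{X}_i$ with $\tilde{X}_i=\lambda(Y_i)^{\hor}$, followed by projection onto $\hor$ and summation over $i$, produces an additional $(n-1)\,d\varphi((\nabla\ln\lambda)^{\hor})$; these two contributions combine into the coefficient $-(n-2)$ appearing in \eqref{eq: proposal hwc tension}.

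Once \eqref{eq: proposal hwc tension} is established, both parts of the theorem are essentially linear algebra. By horizontal conformality and the submersion hypothesis, $d\varphi|_{\hor}:\hor\to TN$ is a bundle isomorphism (up to the nonvanishing factor $\lambda$), so pushing forward by $d\varphi$ is injective on horizontal vector fields; moreover both $H$ and $(\nabla\ln\lambda)^{\hor}$ are horizontal. For $n=2$ the $(n-2)$-term vanishes and \eqref{eq: proposal hwc tension} reduces to $\tau(\varphi)=-(m-2)\,d\varphi(H)$; hence $\tau(\varphi)=0$ if and only if $H=0$, proving (1). For $n\geq 3$ both coefficients in \eqref{eq: proposal hwc tension} are nonzero, and horizontal homotheticity is by definition $(\nabla\ln\lambda)^{\hor}=0$; identifying (a), (b), (c) with $\tau(\varphi)=0$, $H=0$, $(\nabla\ln\lambda)^{\hor}=0$ respectively, injectivity of $d\varphi|_{\hor}$ forces any two of these conditions to imply the third, which is (2). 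The main obstacle is the combinatorial bookkeeping that produces the factor $(n-2)$ cleanly in the horizontal block; apart from this one technical step, the deduction is formal.
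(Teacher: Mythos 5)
The paper does not prove this theorem at all: it is quoted as a known result with references to Baird--Eells and \cite{sg sym sps rank 1}, so there is no in-paper argument to compare against. Your proposal correctly reconstructs the standard proof from those references, namely the fundamental equation $\tau(\varphi)=-(m-n)\,d\varphi(H)-(n-2)\,d\varphi\bigl((\nabla\ln\lambda)^{\hor}\bigr)$ for a horizontally conformal submersion (your coefficients $+1$ from the pull-back-connection term and $-(n-1)$ from the horizontal trace of $\nabla_{\tilde X_i}\tilde X_i$ do combine to $-(n-2)$, as a Koszul-formula computation with the rescaled lifts confirms), and the case analysis that follows is sound since $\lambda>0$ everywhere for a submersion, so $d\varphi$ is injective on $\hor$; the only cosmetic caveats are that the implication (b)$\wedge$(c)$\Rightarrow$(a) needs no injectivity, and that when $m=n$ the fibre condition is vacuous.
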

We note that by horizontal homothety, we mean that the dilation $\lambda$ of $\varphi$ is constant along any horizontal curve in $M$. As a special case when the domain is the complex plane $\cn$ we thus have the following.
\begin{corollary}\label{minimal fibres}
Let $\varphi:(M,g) \to \cn$ be a smooth complex-valued function on a Riemannian manifold. Then $\varphi$ is a harmonic morphism if and only if $\varphi$ is horizontally (weakly) conformal with minimal fibres at regular points of $\varphi$.
\end{corollary}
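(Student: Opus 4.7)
The plan is to read this off from the Fuglede--Ishihara characterisation (Theorem \ref{harm-morph iff hwc}) and Theorem \ref{submersive harm morph}, case $n=2$, since the target $\cn$ is $2$-dimensional.

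First I would handle the forward direction. Assume $\varphi:(M,g)\to\cn$ is a harmonic morphism. By Fuglede--Ishihara $\varphi$ is harmonic and horizontally (weakly) conformal on all of $M$. Let $R\subset M$ be the open set of regular points. On $R$, the map $\varphi\restriction_R : R\to\cn$ is a horizontally conformal submersion onto a $2$-dimensional manifold, and it is still harmonic. Part (1) of Theorem \ref{submersive harm morph} then yields directly that $\varphi$ has minimal fibres at every point of $R$, as required.

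For the backward direction, assume $\varphi$ is horizontally weakly conformal and has minimal fibres at its regular points. By Fuglede--Ishihara again, it is enough to show that $\varphi$ is harmonic on all of $M$, since horizontal weak conformality is preserved at critical points (where $d\varphi_p=0$ is case (i) of the definition). I would split $M$ into the regular locus $R$, the interior $\mathrm{int}(C)$ of the critical locus $C=M\setminus R$, and the boundary $\partial C$. On $R$ the map is a horizontally conformal submersion to $\cn$ with minimal fibres, so Theorem \ref{submersive harm morph}(1) gives $\tau(\varphi)=0$ on $R$. On $\mathrm{int}(C)$ we have $d\varphi\equiv 0$ on an open set, so $\varphi$ is locally constant and hence $\tau(\varphi)=0$ there as well.

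It remains to handle $\partial C$. The key observation is that $R\cup\mathrm{int}(C)$ is open and dense in $M$: its complement is precisely the topological boundary of $C$, and any point $p\in\partial C$ has every neighbourhood meeting $M\setminus C=R$, so every neighbourhood of $p$ meets the open set $R\cup\mathrm{int}(C)$. Since $\tau(\varphi)$ is a continuous (indeed smooth) function and vanishes on this dense set, it vanishes everywhere on $M$. Thus $\varphi$ is harmonic and horizontally weakly conformal, so by Theorem \ref{harm-morph iff hwc} it is a harmonic morphism. The only real subtlety is this continuity/density argument at the boundary of the critical set; everything else is a direct invocation of the two cited theorems.
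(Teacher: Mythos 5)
Your proof is correct and follows the route the paper intends: the corollary is stated there without proof as an immediate specialisation of Theorem \ref{submersive harm morph} (case $n=2$) combined with the Fuglede--Ishihara characterisation. Your added care at the critical set --- noting that horizontal weak conformality forces $d\varphi\equiv 0$ on the interior of the critical locus and then extending $\tau(\varphi)=0$ across its boundary by continuity and density --- is exactly the detail the paper leaves implicit, and it is handled correctly.
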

Locally, since the property of being a harmonic morphism is invariant under conformal changes of the codomain, we may always assume in the $n = 2$ case that the codomain is $\cn$.
\begin{corollary}
Let $M = G/K$ be a symmetric space and $\pi: G \to M$ the standard projection $\pi: p \mapsto pK$ and assume $G$ and $M$ are equipped with the standard metrics such that this projection is a Riemannian submersion. Then $\pi$ is a harmonic morphism.
\end{corollary}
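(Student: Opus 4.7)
The plan is to combine Theorem \ref{submersive harm morph} with the Fuglede--Ishihara characterisation (Theorem \ref{harm-morph iff hwc}). Since $\pi$ is a Riemannian submersion by hypothesis, the restriction of its differential to the horizontal distribution is an isometry at every point, so $\pi$ is horizontally conformal with constant dilation $\lambda\equiv 1$ and therefore horizontally homothetic. Given this, Theorem \ref{submersive harm morph} reduces the problem to verifying that the fibres of $\pi$ are minimal: combined with horizontal homothety when $\dim M\geq 3$, or alone when $\dim M = 2$, minimality forces $\pi$ to be harmonic, and Theorem \ref{harm-morph iff hwc} then upgrades harmonicity to being a harmonic morphism.

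The main work thus lies in showing that the fibres are minimal. The fibre over $p_0 = eK$ is exactly $K$, and every other fibre has the form $pK = L_p(K)$. Since the metric on $G$ is left-invariant, each $L_p$ is an isometry carrying $K$ isometrically onto $pK$; it therefore suffices to establish that $K$ is a totally geodesic submanifold of $G$, as totally geodesic fibres are automatically minimal.

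For this, in the compact setting that is the focus of the thesis, I would view $G$ as a symmetric space via Proposition \ref{Lie group symmetric}, so that the symmetry of $G$ at a point $k\in K$ is the map $s_k: q\mapsto kq^{-1}k$. Since $K$ is a subgroup, $kq^{-1}k\in K$ for every $q\in K$, and so $s_k(K) = K$ for each $k\in K$. Proposition \ref{symmetry invariance} then yields immediately that $K$ is totally geodesic in $G$. An equally short alternative uses Proposition \ref{Levi-Civita on compact}: for $X,Y\in\la{k}$, $\nab{}{X}{Y} = \tfrac{1}{2}[X,Y]\in\la{k}$ since $\la{k}$ is a subalgebra, so the second fundamental form of $K$ in $G$ vanishes at $e$, and by left-invariance of the metric it vanishes throughout $K$.

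The main obstacle is really just this minimality of the fibres, but both routes above reduce it to the basic algebraic closure of $K$ under the operation $q\mapsto kq^{-1}k$, or equivalently of $\la{k}$ under the Lie bracket. Once fibre minimality is in hand, the conclusion follows directly from the two theorems cited, with no further computation needed.
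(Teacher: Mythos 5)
Your proposal is correct and follows essentially the same route as the paper: both invoke horizontal homothety of the Riemannian submersion together with Theorem \ref{submersive harm morph}, and both establish minimality of the fibres by proving the stronger fact that they are totally geodesic via the symmetries $s_p: q \mapsto pq^{-1}p$ of $G$ and Proposition \ref{symmetry invariance}. The only cosmetic difference is that you reduce to the fibre $K$ over the identity by left translation, whereas the paper runs the symmetry computation directly on a general fibre $pK$.
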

\begin{proof}
The submersion $\pi$ already satisfies condition (iii) of Theorem \ref{minimal fibres}. Hence it is sufficient to verify that condition (ii) is also satisfied. We shall instead prove the stronger statement that the fibers are in fact totally geodesic, using Proposition \ref{Lie group symmetric}.
\par
Let $pk_1$ and $pk_{2}$ belong to the fibre over $pK$. Then
\begin{align*}
    s_{pk_{1}}(q) = pk_{1}\cdot k_2^{-1} p^{-1}\cdot pk_{1} =pk_{1}k_2^{-1}k_{1} \in pK,
\end{align*}
so that the fibre $pK$ is totally geodesic in $G$. This concludes the proof.
\end{proof}
The following very useful result can be thought of as a dual to Theorem \ref{minimal fibres} and can be found as Theorem 1.2.8 in \cite{Sigmundur's doctoral thesis}. 
\begin{theorem}[\cite{Sigmundur's doctoral thesis}]\label{harmonic immersion}
Let $m > n$ and $i: (N^n,h) \to (M^m,g)$ be a non-constant weakly conformal immersion with conformal factor $\lambda: N \to \rn_{0}^+$. If 
\begin{enumerate}
    \item $n=2$ then $i$ is a harmonic map if and only if $i(N)$ is minimal in $(M,g)$,
    \item $n\geq 3$, then two of the following imply the other,
    \begin{enumerate}[label = (\roman*)]
        \item $i$ is a harmonic map,
        \item $i(N)$ is minimal in $(M,g)$,
        \item $i$ is homothetic, that is, $\lambda$ is constant.
    \end{enumerate}
\end{enumerate}
\end{theorem}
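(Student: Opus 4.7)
The plan is to derive a single identity that decomposes $\tau(i)$ into a piece normal to $i(N)$ and a piece tangent to $i(N)$, and then read off all the claimed implications by inspection. Concretely, I will establish
\begin{align*}
\tau(i) = n\lambda^2\, H + (2-n)\, di(\nabla \log \lambda),
\end{align*}
where $H$ is the mean curvature vector of $i(N)$ in $(M,g)$ and $\nabla$ is the gradient on $(N,h)$. Since $i$ is an immersion into a positive definite Riemannian metric, $\lambda$ is strictly positive everywhere, so $\log \lambda$ is globally defined and smooth.

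To obtain the identity I would fix a local orthonormal frame $\{e_1,\dots,e_n\}$ on $(N,h)$ and use the local formula
\begin{align*}
\tau(i) = \sum_{k=1}^{n}\Bigl(\nab{i}{e_k}{di(e_k)} - di\bigl(\nab{}{e_k}{e_k}\bigr)\Bigr)
\end{align*}
coming from the pull-back connection $\nabla^{i}$ on $i^{-1}TM$. Each summand is then orthogonally decomposed along the subbundles $Ti(N)$ and $(Ti(N))^{\perp}$ of $i^{-1}TM$ by the Gauss formula applied to the isometric immersion $i\colon(N,\lambda^{2}h)\hookrightarrow(M,g)$, which gives
\begin{align*}
\nab{i}{e_k}{di(e_k)} = di\bigl(\bar\nabla_{e_k} e_k\bigr) + B(di(e_k), di(e_k)),
\end{align*}
where $\bar\nabla$ is the Levi-Civita connection of $(N,\lambda^{2}h)$ and $B$ is the second fundamental form of $i(N)$ in $M$.

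For the normal component I would use that $\{di(e_k)/\lambda\}_{k=1}^{n}$ is a $g$-orthonormal frame for $Ti(N)$, so that
\begin{align*}
\sum_{k=1}^{n} B(di(e_k), di(e_k)) = n\lambda^{2}\, H.
\end{align*}
For the tangential component I would substitute the standard conformal-change-of-metric formula
\begin{align*}
\bar\nabla_{X} Y = \nab{}{X}{Y} + X(\log\lambda)\, Y + Y(\log\lambda)\, X - h(X,Y)\,\nabla\log\lambda
\end{align*}
with $X=Y=e_k$ and sum over $k$. The resulting $\nab{}{e_k}{e_k}$ contributions cancel against $di(\nab{}{e_k}{e_k})$ in the expression for $\tau(i)$, while the remaining terms aggregate to $(2-n)\,\nabla\log\lambda$, yielding the master identity above.

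The conclusion is then immediate. Since $n\lambda^{2} H$ lies in $(Ti(N))^{\perp}$ while $(2-n)\, di(\nabla\log\lambda)$ lies in $Ti(N)$, the two components of the master identity vanish independently. When $n=2$ the tangential term disappears identically, so $\tau(i)=0$ if and only if $H=0$, proving (1). When $n\geq 3$ the factor $(2-n)$ is nonzero, and injectivity of $di$ makes $di(\nabla\log\lambda)=0$ equivalent to $\lambda$ being constant; hence the vanishing of any two of $\tau(i)$, $H$, and $\nabla\log\lambda$ forces the third, giving (2). I expect the most delicate step to be the tangential computation, specifically the bookkeeping that ensures the $\nab{}{e_k}{e_k}$ terms from the conformal-change formula cancel cleanly and leave exactly the coefficient $(2-n)$ in front of $\nabla\log\lambda$.
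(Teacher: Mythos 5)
The paper never proves this statement --- it is quoted directly from Theorem 1.2.8 of \cite{Sigmundur's doctoral thesis} --- and your argument is precisely the standard proof found there and in \cite{Baird and Wood}: the master identity $\tau(i) = n\lambda^2 H + (2-n)\,di(\nabla\log\lambda)$ is the key decomposition, your derivation via the Gauss formula for $i:(N,\lambda^2 h)\hookrightarrow(M,g)$ together with the conformal change of the Levi--Civita connection is correct (the coefficient $(2-n)$ does come out as claimed), and your observation that $\lambda>0$ everywhere because an immersion into a positive definite metric cannot have vanishing conformal factor legitimises $\log\lambda$. The case analysis from the orthogonal normal/tangential splitting is then immediate, so the proposal is correct and takes the intended route.
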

With this result we are able to show that both the Cartan embedding and the Cartan map are harmonic maps. This is both of independent interest and will make the Cartan map a useful tool in finding eigenfunctions and families on the symmetric spaces.
\begin{theorem}
Let $(G,K,\sigma)$ be a symmetric triple of compact type with $K = G^{\sigma}$ and $\dim(G/K) \geq 2$. Then the Cartan map $\Phi: G \mapsto G$ is harmonic. In other words it satisfies
\begin{align*}
    \tau(\Phi) = 0.
\end{align*}
\end{theorem}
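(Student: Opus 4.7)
The plan is to factor the Cartan map as $\Phi = \hat{\Phi} \circ \pi$, where $\pi: G \to G/K$ is the canonical projection, and then to apply the composition law of Proposition \ref{composition law} after independently verifying that each factor is harmonic. The projection $\pi$ has already been shown in this chapter to be a harmonic morphism (hence in particular a harmonic map), so the entire argument rests on establishing harmonicity of the Cartan embedding $\hat{\Phi}$ and on organising the resulting composition cleanly.

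To show that $\hat{\Phi}: G/K \to G$ is harmonic, I would invoke Theorem \ref{harmonic immersion}. By Corollary \ref{Cartan conformal}, $\hat{\Phi}$ is a weakly conformal immersion with the constant conformal factor $4$, so it is homothetic in the sense required by the theorem. By Theorem \ref{Cartan totally geodesic}, its image $\hat{\Phi}(G/K) = \exp(\la{p})$ is totally geodesic in $G$, and therefore in particular minimal. Writing $n = \dim(G/K)$, the hypothesis $n \geq 2$ splits into two cases: for $n = 2$, minimality of $\hat{\Phi}(G/K)$ alone yields harmonicity of $\hat{\Phi}$; for $n \geq 3$, the combination of minimality and homotheticity yields harmonicity as two of the three equivalent conditions in part (2) of Theorem \ref{harmonic immersion}. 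Either way, $\tau(\hat{\Phi}) = 0$.

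With that in hand, the composition law gives
\begin{align*}
\tau(\Phi) = d\hat{\Phi}(\tau(\pi)) + \tr\bigl(\nabla d\hat{\Phi}(d\pi, d\pi)\bigr).
\end{align*}
The first term vanishes because $\tau(\pi) = 0$. To handle the trace, I would pick at each point $p \in G$ a local orthonormal frame of $TG$ adapted to the orthogonal splitting $TG = \hor \oplus \vert$ induced by $\pi$. Vertical basis vectors are annihilated by $d\pi$, while horizontal basis vectors project under $d\pi$ to a local orthonormal frame of $G/K$ since $\pi$ is a Riemannian submersion. Consequently the trace collapses to $(\tau(\hat{\Phi})) \circ \pi$, which vanishes by the preceding step, giving $\tau(\Phi) = 0$ pointwise.

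The main obstacle is the harmonicity of the Cartan embedding itself; this is where the substantive geometry enters, through Theorem \ref{harmonic immersion} and the three previously established facts that $\hat{\Phi}$ is an embedding, is homothetic, and has totally geodesic image. Once $\hat{\Phi}$ is known to be harmonic, the reduction to $\Phi$ via the composition law and the Riemannian submersion structure of $\pi$ is essentially mechanical.
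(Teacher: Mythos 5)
Your proposal is correct and follows essentially the same route as the paper: harmonicity of $\hat{\Phi}$ via Theorem \ref{harmonic immersion} using Corollary \ref{Cartan conformal} and Theorem \ref{Cartan totally geodesic}, then the factorisation $\Phi = \hat{\Phi}\circ\pi$ with $\pi$ harmonic. Your explicit treatment of the composition law and the $n=2$ versus $n\geq 3$ cases is slightly more detailed than the paper's one-line appeal to $\pi$ being a harmonic morphism, but the substance is identical.
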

\begin{proof}
By Corollary \ref{Cartan conformal} and Theorem \ref{Cartan totally geodesic}, the Cartan embedding $\hat{\Phi}: G/K \to G$ satisfies (ii) and (iii) of Theorem \ref{harmonic immersion}. Hence $\hat{\Phi}$ is harmonic. Since we have
$$
\Phi = \hat{\Phi}\circ \pi
$$
and the natural projection $\pi: G \to G/K$ is a harmonic morphism, the Cartan map $\Phi$ is harmonic as well.
\end{proof}

This has the following consequence when combined with the composition law. We let $\varphi:G \to \cn$ be a smooth complex-valued function, and denote also by $\varphi$ the restriction of $\varphi$ to the image $\Phi(G)$. Applying the composition law to the function $\varphi\circ \Phi: G \to \cn$ then gives
\begin{align*}
    \tau(\varphi\circ \Phi) = d\varphi(\tau(\Phi))\circ \Phi + \tr \nabla d\varphi(d\Phi,d\Phi) \circ \Phi = \tr \nabla d\varphi(d\Phi,d\Phi) \circ \Phi,
\end{align*}
since $\Phi$ is harmonic. For the second term, when expanded it gives
\begin{align*}
    \tr \nabla d\varphi(d\Phi,d\Phi) \circ \Phi = \left\{\sum_{X \in \mathcal{B}_{\la{g}}} (d\Phi(X))^2(\varphi) - \left(\nab{}{d\Phi(X)}{d\Phi(X)}\right)(\varphi)\right\}\circ \Phi.
\end{align*}
As we have remarked before, the second term in the sum vanishes and for the first term, only the vector fields from $\mathcal{B}_{\la{p}}$ contribute. This then finally gives
\begin{align}\label{eq: tau f o Phi}
    \tau(\varphi\circ \Phi)  = \left\{\sum_{X \in \mathcal{B_{\la{p}}}} (d\Phi(X))^2(\varphi)\right\} \circ \Phi.
\end{align}
We get the similar formula for the conformality operator. Let $\varphi, \psi: G \mapsto \cn$ be smooth. Then
\begin{align}\label{eq: kappa f o Phi}
\kappa(\varphi\circ \Phi, \psi \circ \Phi) = \left\{\sum_{X\in\mathcal{B}_{\la{p}}}d\Phi(X)(\varphi)\cdot d\Phi(X)(\psi)\right\}\circ \Phi.
\end{align}

\begin{remark}\label{basis cartan embedding}
At a point $p \in G$ and for $X \in \la{p}$ we have the expression
\begin{align*}
    d\Phi_p(X_p) = 2(dR_{\sigma(p^{-1})})_p(X_p).
\end{align*}
Since we assume $G$ to be compact and equipped with a bi-invariant metric, the right translations are isometries. Hence, if $\basis_{p}$ is an orthonormal basis for $\la{p}_{p}$, the set
$$
\{ (dR_{\sigma(p^{-1})})_p(X_p) \ | \ X_p \in \basis_{p} \}
$$
is an orthonormal basis for $T_{\Phi(p)} N$ where $N = \textrm{Im}(\Phi)$.
\end{remark}
The next result then follows by this remark.
\begin{theorem}\label{composition relations}
Let $(G,K,\sigma)$ be a symmetric triple of compact type and let $\Phi: G \to G$ be the associated Cartan map. Let $N$ be the image $\Phi(G)$ of $\Phi$ , $\tau$ and $\kappa$ denote the tension field and conformality operator on the Lie group $G$ and let $\tau_{N}$ and $\kappa_{N}$ denote those of $N$. Then if 
$$
\varphi, \psi: G \to \cn$$
are $C^2$ complex-valued functions on $G$, the compositions 
$$
\varphi \circ \Phi,\psi \circ \Phi: G \to \cn
$$
are $C^2$ complex-valued and $K$-invariant functions on $G$. The tension field $\tau$ and conformality operator $\kappa$ then satisfy the relations
\begin{align*}
    \tau(\varphi\circ \Phi) = 4\cdot \tau_{N}(\varphi)\circ \Phi
\end{align*}
and
\begin{align*}
    \kappa(\varphi\circ \Phi,\psi \circ \Phi) = 4\cdot \kappa_{N}(\varphi,\psi)\circ \Phi.
\end{align*}
\end{theorem}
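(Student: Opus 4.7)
The plan is to combine the identities \eqref{eq: tau f o Phi} and \eqref{eq: kappa f o Phi} derived immediately above with Remark \ref{basis cartan embedding}. The $K$-invariance of $\varphi\circ\Phi$ and $\psi\circ\Phi$ is immediate, since $\Phi(pk)=p\sigma(k^{-1})\sigma(p^{-1})=\Phi(p)$ for every $k\in K = G^{\sigma}$, so these functions are constant on each right coset $pK$.

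Fix $p\in G$ and set $q=\Phi(p)$. Let $\basis_{\la{p}}$ be an orthonormal basis of $\la{p}$; by left-invariance of the metric, $\basis_p=\{X_p:X\in\basis_{\la{p}}\}$ is an orthonormal basis of the horizontal subspace $\la{p}_p\subset T_pG$, and by Remark \ref{basis cartan embedding} the family $\{e_X:=(dR_{\sigma(p^{-1})})_p(X_p):X\in\basis_{\la{p}}\}$ is an orthonormal basis of $T_qN$ with $d\Phi_p(X_p)=2e_X$. Thus in both sums appearing in \eqref{eq: tau f o Phi} and \eqref{eq: kappa f o Phi}, each occurrence of $d\Phi_p(X_p)$ contributes a factor of $2$, and a factor of $4$ will appear after the square.

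For the conformality identity, substituting $d\Phi_p(X_p)=2e_X$ into \eqref{eq: kappa f o Phi} at $p$ gives
\begin{align*}
\kappa(\varphi\circ\Phi,\psi\circ\Phi)(p) \;=\; 4\sum_{X\in\basis_{\la{p}}} e_X(\varphi)\,e_X(\psi)\Big|_{q}.
\end{align*}
Applying the local formula \eqref{eq: local formula kappa} to $\kappa_N$ in the orthonormal frame $\{e_X\}$ of $T_qN$, the right-hand side equals $4\,\kappa_N(\varphi|_N,\psi|_N)(q)$, which gives the desired identity.

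For the tension field, the same substitution in \eqref{eq: tau f o Phi}, with $(d\Phi(X))^2(\varphi)\big|_p$ interpreted as the Hessian $\nabla d\varphi\bigl(d\Phi_p(X_p),d\Phi_p(X_p)\bigr)$ evaluated at $q$ (as is clear from its derivation via the composition law and the harmonicity of $\Phi$), produces a factor $4$ together with the sum $\sum_{X}\nabla d\varphi(e_X,e_X)\big|_{q}$. The only remaining step, and the most delicate one, is to identify this trace with $\tau_N(\varphi|_N)(q)$. This is where Theorem \ref{Cartan totally geodesic} enters: since $N$ is totally geodesic in $G$, its second fundamental form vanishes, so the Levi-Civita connections $\nabla^G$ and $\nabla^N$ agree on pairs of vector fields tangent to $N$. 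Extending $\{e_X\}$ to a local orthonormal frame of $N$ near $q$, this yields $\nabla^G d\varphi(e_X,e_X)\big|_q = \nabla^N d(\varphi|_N)(e_X,e_X)\big|_q$, and summing over $X\in\basis_{\la{p}}$ reproduces $\tau_N(\varphi|_N)(q)$, completing the proof.
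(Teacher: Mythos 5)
Your argument is correct and follows essentially the route the paper intends: the identities \eqref{eq: tau f o Phi} and \eqref{eq: kappa f o Phi} combined with Remark \ref{basis cartan embedding} produce the factor $4$, and you rightly make explicit the appeal to Theorem \ref{Cartan totally geodesic} to identify the trace of the ambient Hessian over $T_{\Phi(p)}N$ with $\tau_N(\varphi|_N)$, a step the paper leaves implicit when it says the theorem ``follows by this remark.'' (One minor slip: in your $K$-invariance computation the intermediate expression should read $pk\,\sigma(k^{-1})\,\sigma(p^{-1})$; the dropped factor $k$ is what cancels against $\sigma(k^{-1})=k^{-1}$.)
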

The following will help us in explicitly calculating the tension field and conformality operator on the image of the Cartan embedding.

\begin{remark}\label{expression for basis}
Let $(G,K,\sigma)$ be a symmetric triple of compact type and $\Phi:G \to G$ be the associated Cartan map. Let $N$ be the image $\Phi(G)$ of $\Phi$. By Remark \ref{basis cartan embedding}, given a point $p\in G$ and an orthonormal basis $\basis_p$ for $\la{p}_p$, an orthonormal basis for $T_{\Phi(p)} N$ is given by
$$
\{ (dR_{\sigma(p^{-1})})_p(X_p) \ | \ X_p \in \basis_{p} \}.
$$
If we now fix $X_p \in \basis_p$ and take $X \in \la{p}$ such that $(dL_p)_e(X) = X_p$ we have
\begin{align*}
    (dR_{\sigma(p^{-1})})_p(X_p) &= (dR_{\sigma(p^{-1})})_p(dL_p)_e(dL_{\sigma(p^{-1})})_{\sigma(p)}(dL_{\sigma(p)})_{e}(X)\\
    &= (dL_{\Phi(p)})_e \Ad{\sigma(p)}(X).
\end{align*}
Hence given an orthonormal basis $\basis_{\la{p}}$ for $\la{p}$ we can construct an orthonormal basis for $T_{\Phi(p)}N$ as
\begin{align*}
    \basis_{\Phi(p)} = \{\Ad{\sigma(p)}(X)_{\Phi(p)} \ | \ X \in \basis_{\la{p}} \}.
\end{align*}
\end{remark}
 We conclude the chapter by introducing the iterated tension field $\tau^p$ and $p$-harmonic functions on Riemannian manifolds.
\begin{definition}
Let $(M,g)$ be a Riemannian manifold. We define the iterated Laplace-Beltrami operator (or tension field) $\tau^p$ for an integer $p\geq 0$ recursively by
\begin{enumerate}[label = (\roman*)]
    \item $\tau^{0}(\phi) = \phi$,
    \item $\tau^{k+1}(\phi) = \tau(\tau^{k}(\phi))$,
\end{enumerate}
where $\phi:M \to \cn$ is assumed smooth.
We say that a smooth function $\phi:M \to \cn$ is \emph{$p$-harmonic} if $\tau^p(\phi) = 0$, and \emph{proper $p$-harmonic} if in addition $\tau^{p-1}(\phi)$ is not the zero function.
\end{definition}
In chapter \ref{ch: eigen} we shall see how eigenfamilies and eigenfunctions can be used to generate both explicit harmonic morphisms and explicit $p$-harmonic functions.

\chapter{The Methods of Eigenfunctions and Eigenfamilies}\label{ch: eigen}
In this chapter we describe two closely related and recently developed methods for finding complex-valued harmonic morphisms and $p$-harmonic functions, respectively. These methods both rely on finding joint eigenfunctions of the tension field and the conformality operator.
\par
We will describe how eigenfunctions defined on a symmetric space of compact type induce eigenfunctions on its non-compact dual, and vice versa. We also give a new method for constructing eigenfunctions and eigenfamilies on a Riemannian product $M = M_1\times M_2$ given eigenfunctions and eigenfamilies on the factors $M_1$ and $M_2$. These two constructions, together with the explicit eigenfunctions described in Chapter \ref{ch: results} enable us to construct eigenfunctions on a large class of symmetric spaces.
\par
We shall for the remainder of this section make the assumption that our functions are all real analytic. We make this assumption since it will allow us to use the duality principle described in Chapter \ref{ch: symmetric spaces} to construct eigenfunctions and families also on the non-compact duals. This assumption poses no real restriction for us as  all eigenfunctions found in this thesis are indeed analytic.
\begin{definition}
Let $\phi:(M,g) \to \cn$ be a smooth complex-valued function from a Riemannian manifold. We call $\phi$ an \emph{eigenfunction} if there exist $\lambda,\mu \in \cn$ such that
\begin{align*}
    \tau(\phi) = \lambda\cdot \phi \quad \textrm{and} \quad \kappa(\phi,\phi) =\mu\cdot \phi^2.
\end{align*}
\end{definition}
In other words, we call a function an eigenfunction if it is eigen to both of the operators $\tau$ and $\kappa$.
\begin{definition}
Let $(M,g)$ be a Riemannian manifold. We call a collection 
$$\mathcal{E} = \{\phi_{i}: M \to \cn \ | \ i \in I\}$$ 
of smooth complex-valued functions on $M$ an \emph{eigenfamily} if there exist $\lambda,\mu \in \cn$ such that 
\begin{align*}
    \tau(\phi) = \lambda\cdot \phi \quad \textrm{and} \quad \kappa(\phi,\psi) = \mu \cdot \phi\psi
\end{align*}
hold for all $\phi,\psi \in \mathcal{E}$.
\end{definition}
Given a finite eigenfamily on $(M,g)$ we can construct a collection of eigenfamilies on $M$ by considering homogeneous polynomials.
\begin{theorem}[\cite{p-harm on semi-riem}]
Let $(M,g)$ be a Riemannian manifold and $\mathcal{E} = \{\phi_1,\dots \phi_n\}$ be a finite eigenfamily on $M$ with eigenvalues $\lambda,\mu \in \cn$ for $\tau$ and $\kappa$, respectively. Then the set of complex homogeneous polynomials of degree $d$
\begin{align*}
    \mathcal{H}_\mathcal{E}^d = \{ P: M \to \cn \ | \ P \in [\phi_1\dots,\phi_n], P(\alpha \cdot \phi) = \alpha^p\cdot P(\phi), \alpha \in \cn\}
\end{align*}
is an eigenfamily on $M$ with
\begin{align*}
    \tau(P) = (d\lambda + d(d-1)\mu)\cdot P \quad \textrm{and}\quad \kappa(P,Q) = d^2\mu\cdot PQ
\end{align*}
for $P,Q \in \mathcal{H}_{\mathcal{E}}^d$.
\end{theorem}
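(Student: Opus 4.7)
The plan is to reduce the claim to monomials by linearity and then compute directly using the product rules for the gradient, the tension field $\tau$, and the conformality operator $\kappa$ already established in the text. Every element of $\mathcal{H}^d_{\mathcal{E}}$ is a $\cn$-linear combination of monomials $P = \phi_{i_1}\cdots \phi_{i_d}$ with exactly $d$ factors from $\mathcal{E}$, and since $\tau$ is linear while $\kappa$ is bilinear, it suffices to verify both eigenvalue identities on such monomials.

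For an arbitrary monomial $P = \phi_{i_1}\cdots\phi_{i_d}$, iterating the Leibniz rule for the gradient yields
$$\nabla P = \sum_{k=1}^{d} \Bigl(\prod_{l \neq k}\phi_{i_l}\Bigr)\,\nabla \phi_{i_k}.$$
Taking the $g$-inner product of this with the analogous expression for $Q = \phi_{j_1}\cdots\phi_{j_d}$ and invoking the eigenfamily hypothesis $\kappa(\phi_i,\phi_j) = \mu\,\phi_i\phi_j$ collapses every term of the resulting double sum to $\mu\,PQ$. Since there are exactly $d^2$ such terms, this gives $\kappa(P,Q) = d^2\mu\,PQ$.

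For the tension field I would begin with $\tau(P) = \div(\nabla P)$ and apply the identity $\div(fX) = f\,\div(X) + g(\nabla f, X)$ to each summand of the expression for $\nabla P$ displayed above. The $f\,\div(X)$ contributions collect to $\sum_{k=1}^{d}\bigl(\prod_{l\neq k}\phi_{i_l}\bigr)\tau(\phi_{i_k}) = d\lambda\, P$, using $\tau(\phi_{i_k}) = \lambda\phi_{i_k}$. Each $g(\nabla f, X)$ term has the form $\kappa\bigl(\prod_{l\neq k}\phi_{i_l},\,\phi_{i_k}\bigr)$, which by exactly the same double-sum expansion just performed---now applied to one monomial of degree $d-1$ and one of degree $1$---equals $(d-1)\mu\,P$. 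Summing over $k$ gives $d(d-1)\mu\,P$, whence $\tau(P) = \bigl(d\lambda + d(d-1)\mu\bigr)P$, as required.

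The main obstacle is purely bookkeeping. Two points deserve care: first, the tension-field computation quietly uses the mixed-degree version of the conformality identity, $\kappa(R,S) = pq\,\mu\,RS$ for homogeneous polynomials $R, S$ of degrees $p, q$, which however falls out of exactly the same double-sum expansion and so needs no separate induction; second, one must confirm the combinatorial counting of $d$ and $d^2$ terms in the two expansions. With these bookkeeping points handled, both identities follow directly, and no deeper ingredient is required.
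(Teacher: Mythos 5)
Your argument is correct and complete: the reduction to monomials via (bi)linearity is legitimate because the claimed eigenvalue constants $d\lambda+d(d-1)\mu$ and $d^2\mu$ are uniform over all degree-$d$ monomials, and the two Leibniz expansions (including the mixed-degree identity $\kappa(R,S)=pq\,\mu\,RS$ that you correctly flag as needed for the divergence step) are exactly the standard computation. The paper itself states this result without proof, citing the reference, so there is no in-text argument to compare against; your proposal supplies the expected one.
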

We shall first describe how eigenfamilies can be used to generate complex-valued harmonic morphisms.
\begin{theorem}[\cite{Gud-Sak}]\label{thm: method of eigenfams}
Let $(M,g)$ be a Riemannian manifold and
$\mathcal{E} = \{\phi_1, \dots,\phi_n\}$
be an eigenfamily of complex-valued functions on $M$. If $P,Q: \cn^n \to \cn$ are linearly independent homogeneous polynomials of the same positive degree then the quotient
\begin{align*}
    \varphi = \frac{P(\phi_1, \dots, \phi_n)}{Q(\phi_1, \dots, \phi_n)}
\end{align*}
is a non-constant complex-valued harmonic morphism on the open and dense subset
\begin{align*}
    \{p \in M \ | \ Q(\phi_1(p),\dots,\phi_{n}(p)) \neq 0\}.
\end{align*}
\end{theorem}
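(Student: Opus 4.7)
The plan is to invoke the Fuglede--Ishihara characterisation (Theorem \ref{harm-morph iff hwc}), which reduces the claim to verifying $\tau(\varphi)=0$ and $\kappa(\varphi,\varphi)=0$ for $\varphi = P(\phi)/Q(\phi)$ on the open set where $Q(\phi)\neq 0$. The central ingredient is an auxiliary lemma: for any homogeneous polynomial $R$ of degree $d\geq 1$ in $n$ variables, the composition $R(\phi):=R(\phi_1,\dots,\phi_n)$ satisfies $\tau(R(\phi)) = (d\lambda + d(d-1)\mu)\,R(\phi)$, and for any second homogeneous polynomial $S$ of the same degree, $\kappa(R(\phi),S(\phi)) = d^2\mu\,R(\phi)\,S(\phi)$. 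To prove the lemma I would write $\nabla R(\phi) = \sum_i (\partial_i R)(\phi)\,\nabla\phi_i$, expand $\kappa$ bilinearly and combine the eigenfamily condition $\kappa(\phi_i,\phi_j)=\mu\,\phi_i\phi_j$ with Euler's identity $\sum_i \phi_i(\partial_i R)(\phi) = d\,R(\phi)$ applied twice. For $\tau$, the divergence product rule $\div(fX) = f\,\div X + g(\nabla f,X)$, applied with $f=(\partial_i R)(\phi)$ and $X=\nabla\phi_i$, splits $\tau(R(\phi))$ into $\sum_i (\partial_i R)(\phi)\,\tau(\phi_i) + \sum_i \kappa((\partial_i R)(\phi),\phi_i)$, which equals $d\lambda\,R(\phi) + d(d-1)\mu\,R(\phi)$ by Euler and the degree-$(d-1)$ case of the $\kappa$-formula.

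With the lemma in hand, the verification is algebraic. From $\nabla\varphi = (Q\nabla P - P\nabla Q)/Q^2$ one obtains
$$\kappa(\varphi,\varphi) = \frac{Q^2 \kappa(P,P) - 2PQ\,\kappa(P,Q) + P^2 \kappa(Q,Q)}{Q^4} = \frac{d^2\mu\,(P^2Q^2 - 2P^2Q^2 + P^2Q^2)}{Q^4} = 0.$$
For the tension field I would write $\varphi = P\cdot(1/Q)$ and apply the product rule $\tau(fg)=f\tau(g)+2\kappa(f,g)+g\tau(f)$, together with the auxiliary identities $\tau(1/Q) = -\tau(Q)/Q^2 + 2\kappa(Q,Q)/Q^3$ and $\kappa(P,1/Q) = -\kappa(P,Q)/Q^2$, both of which follow from the same divergence product rule applied to $\nabla(1/Q) = -\nabla Q/Q^2$. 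Substituting $\tau(P)=cP$, $\tau(Q)=cQ$, $\kappa(P,Q)=d^2\mu PQ$ and $\kappa(Q,Q)=d^2\mu Q^2$ with $c=d\lambda + d(d-1)\mu$, the four resulting terms cancel exactly in pairs, giving $\tau(\varphi)=0$.

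To close the argument I would observe that $\{p\in M : Q(\phi)(p)\neq 0\}$ is open by continuity of $Q(\phi)$, while density and the non-constancy of $\varphi$ follow from the standing real-analyticity of the $\phi_i$ together with the linear independence of $P$ and $Q$: were $Q(\phi)$ or $P(\phi) - c\,Q(\phi)$ (for some $c\in\cn$) to vanish on an open set, it would vanish identically by analyticity, forcing a non-trivial algebraic relation among $\phi_1,\dots,\phi_n$ of the type excluded in the applications pursued in the thesis. The main obstacle in the argument is the homogeneous-polynomial lemma, since it requires balancing Euler's identity against the eigenfamily condition at each degree; once it is in place the theorem follows by a direct algebraic cancellation and an invocation of Fuglede--Ishihara.
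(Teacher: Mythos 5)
The paper does not prove this theorem; it is quoted from \cite{Gud-Sak} without proof, so there is no in-text argument to compare against. Your reconstruction is nevertheless correct and is essentially the standard one. Note that your auxiliary lemma is not something you need to prove from scratch here: it is exactly the theorem on the homogeneous-polynomial families $\mathcal{H}^d_{\mathcal{E}}$ stated immediately before Theorem \ref{thm: method of eigenfams} in the text (cited from \cite{p-harm on semi-riem}), and your derivation of it via the chain rule, the divergence product rule and a double application of Euler's identity is the right one. The two cancellations you carry out --- $Q^2\kappa(P,P)-2PQ\,\kappa(P,Q)+P^2\kappa(Q,Q)=d^2\mu(P^2Q^2-2P^2Q^2+P^2Q^2)=0$ for the conformality operator, and the four-term cancellation for $\tau(P\cdot Q^{-1})$ using $\tau(fg)=f\tau(g)+2\kappa(f,g)+g\tau(f)$ --- both check out, and Theorem \ref{harm-morph iff hwc} then gives the conclusion on the set where $Q(\phi)\neq 0$.

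The one soft spot, which you flag honestly, is the final claim that the set $\{Q(\phi)\neq 0\}$ is dense and that $\varphi$ is non-constant. Linear independence of $P$ and $Q$ as polynomials on $\cn^n$ does not by itself rule out that $Q(\phi)$, or $P(\phi)-c\,Q(\phi)$, vanishes identically on $M$: that would only follow if the functions $\phi_1,\dots,\phi_n$ satisfy no non-trivial homogeneous algebraic relation of degree $d$. This hypothesis is implicit in the statement (and in the original reference), and real-analyticity, which is a standing assumption in this chapter, only upgrades ``vanishes on an open set'' to ``vanishes identically''; it does not supply the missing non-degeneracy. So your proof establishes the analytic content of the theorem in full, while the density and non-constancy assertions should be read as holding under the tacit assumption that no such relation exists --- which is how the result is applied throughout the thesis.
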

Next we describe how eigenfunctions can be used to generate $p$-harmonic functions.
\begin{theorem}[\cite{Gud-Sob}]
Let $(M,g)$ be a Riemannian manifold and $\phi:M \to \cn$ be a complex-valued eigenfunction on $M$ such that the eigenvalues $\lambda,\mu$ for $\tau$ and $\kappa$, respectively, are not both zero. Then for any integer $p > 0$, the non-vanishing function
\begin{align*}
    \Phi_{p}: W = \{x \in M \ | \ \phi(x) \notin (-\infty,0]\} \to \cn
\end{align*}
with
\begin{align*}
    \Phi_p(x) = 
    \begin{cases}
    c_{1}\cdot \log(\phi(x))^{p-1}, & \textrm{if } \mu = 0, \lambda \neq 0\\
    c_{1}\cdot\log(\phi(x))^{2p-1}+c_{2}\cdot \log(\phi(x))^{2p-2}, & \textrm{if } \mu \neq 0, \lambda = \mu\\
    c_{1}\phi(x)^{1-\frac{\lambda}{\mu}}\log(\phi(x))^{p-1} + c_{2}\cdot \log(\phi(x))^{p-1}, & \textrm{if } \mu \neq 0, \lambda \neq 0 
    \end{cases}
\end{align*}
is a proper $p$-harmonic function. Here $c_1$ and $c_2$ are arbitrary complex coefficients not both equal to zero.
\end{theorem}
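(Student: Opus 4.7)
The plan is to reduce the problem to a single chain-rule identity for the tension field applied to $F \circ \phi$, where $F\colon\cn\to\cn$ is holomorphic and $\phi$ is our eigenfunction. From $\nabla(F\circ\phi) = F'(\phi)\cdot\nabla\phi$ and the product rule $\div(f\cdot X) = f\cdot\div(X) + g(\nabla f,X)$, together with the eigenfunction relations $\tau(\phi) = \lambda\cdot\phi$ and $\kappa(\phi,\phi) = \mu\cdot\phi^2$, one obtains
\begin{align*}
\tau(F\circ\phi) = F'(\phi)\cdot\tau(\phi) + F''(\phi)\cdot\kappa(\phi,\phi) = \lambda\cdot\phi\cdot F'(\phi) + \mu\cdot\phi^2\cdot F''(\phi).
\end{align*}
Specialising to $F(z) = z^\alpha\cdot(\log z)^k$ then yields the reduction formula
\begin{align*}
\tau(\phi^\alpha\log(\phi)^k) = \phi^\alpha\bigl\{\alpha[\lambda+\mu(\alpha-1)]\log(\phi)^k + k[\lambda+\mu(2\alpha-1)]\log(\phi)^{k-1} + \mu\cdot k(k-1)\log(\phi)^{k-2}\bigr\}.
\end{align*}
The key algebraic observation is that $\alpha = 0$ and $\alpha = 1-\lambda/\mu$ are precisely the roots of $\alpha[\lambda+\mu(\alpha-1)] = 0$, and these are exactly the exponents of $\phi$ appearing in the three cases of the theorem. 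For such $\alpha$, the operator $\tau$ strictly lowers the degree in $\log(\phi)$ while preserving the prefactor $\phi^\alpha$, so each finite-dimensional subspace $\mathrm{span}_\cn\{\phi^\alpha\log(\phi)^j\}_{j=0}^{N}$ is $\tau$-stable and upper-triangular in the basis of decreasing powers of $\log(\phi)$.

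With this reduction established, I would prove $\tau^p(\Phi_p) = 0$ case by case. In Case 1 ($\mu = 0$, $\lambda\neq 0$) the formula collapses to $\tau(\log(\phi)^k) = \lambda\cdot k\cdot\log(\phi)^{k-1}$, and $p$ iterations annihilate $\log(\phi)^{p-1}$. In Case 2 ($\lambda = \mu\neq 0$) the linear-in-$\log$ contribution drops out, leaving $\tau(\log(\phi)^k) = \mu\cdot k(k-1)\cdot\log(\phi)^{k-2}$ for $k\geq 2$ together with $\tau(\log(\phi)) = 0$; each application of $\tau$ now strips off two factors of $\log(\phi)$, so $\tau^p$ annihilates both $\log(\phi)^{2p-1}$ and $\log(\phi)^{2p-2}$. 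In Case 3 ($\mu\neq 0$, $\lambda\neq\mu$) one applies the reduction separately for $\alpha = 0$ and $\alpha = 1-\lambda/\mu$; since the two families do not mix under $\tau$ and each satisfies a strict degree-lowering reduction, a straightforward induction on $p$ gives $\tau^p(\Phi_p) = 0$.

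Properness is verified by tracking the leading coefficient. Under the upper-triangular action of $\tau$ the coefficient of $\phi^\alpha\log(\phi)^{k-1}$ produced from $\phi^\alpha\log(\phi)^k$ is a nonzero scalar multiple of $k$ (equal to $\lambda\cdot k$, $\mu\cdot k(k-1)$, or $(\lambda-\mu)\cdot k$, depending on the case). Iterating produces explicit nonzero scalars at the bottom of the reduction, so $\tau^{p-1}(\Phi_p)$ is a nontrivial $\cn$-linear combination of the permissible functions $\phi^\alpha$ (and possibly $\log(\phi)$ in Case 2) with coefficients proportional to $c_1$ and $c_2$. The main obstacle I expect is Case 3, where one must verify that the constant function $1$ and the function $\phi^{1-\lambda/\mu}$ are $\cn$-linearly independent on $W$. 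This follows from $\phi$ being non-constant: indeed, if $\phi$ were constant on a connected component of $W$ then $\tau(\phi) = \kappa(\phi,\phi) = 0$ together with $\phi\neq 0$ on $W$ would force $\lambda = \mu = 0$, contradicting the hypothesis that the eigenvalues are not both zero. Hence $\tau^{p-1}(\Phi_p)\not\equiv 0$ whenever $(c_1,c_2)\neq(0,0)$, and $\Phi_p$ is properly $p$-harmonic on $W$.
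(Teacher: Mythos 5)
Your proposal is correct, and it reconstructs essentially the argument of the cited source: the paper itself states this theorem with a reference to \cite{Gud-Sob} and gives no proof, and the standard proof there rests on exactly your chain-rule identity $\tau(F\circ\phi)=\lambda\cdot\phi\cdot F'(\phi)+\mu\cdot\phi^{2}\cdot F''(\phi)$ specialised to $F(z)=z^{\alpha}(\log z)^{k}$ with $\alpha$ a root of $\alpha[\lambda+\mu(\alpha-1)]=0$, followed by the same case-by-case degree count in $\log(\phi)$. Your treatment of properness is also complete, including the point most often glossed over, namely that $\phi$ (hence $\log\phi$ and $\phi^{1-\lambda/\mu}$) cannot be constant on any open subset of $W$ because $\lambda$ and $\mu$ are not both zero.
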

Motivated by these remarkable results we shall therefore seek eigenfamilies on the classical compact symmetric spaces. We may transform this eigenvalue problem on the symmetric space $G/K$ into an eigenvalue problem with a constraint on the Lie group $G$.

\begin{lemma}[\cite{Gud-Sif-Sob}]\label{lem: induced functions}
    Let $M = G/K$ be a symmetric space, $U \subset G/K$ an open subset of $M$ and $\hat{\phi},\hat{\psi}: U \to \cn$ be locally defined real analytic functions. Let $\phi,\psi:G \to \cn$ be the $K$-invariant compositions of $\hat{\phi}$ and $\hat{\psi}$ with the natural projection $\pi: G \to G/K$. We denote by $\hat{\tau}$, $\tau$, $\hat{\kappa}$ and $\kappa$ the tension field and conformality operators on $G/K$ and $G$ respectively. Then these operators satisfy
    \begin{align*}
        \hat{\tau}(\hat{\phi}) = \tau(\phi)\circ \pi
    \end{align*}
    and 
    \begin{align*}
        \hat{\kappa}(\hat{\phi},\hat{\psi}) = \kappa(\phi,\psi) \circ \pi.
    \end{align*}
\end{lemma}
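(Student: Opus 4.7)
The key observation is that the projection $\pi: G \to G/K$ has been established earlier as a harmonic morphism whose fibres are totally geodesic, and which is moreover a Riemannian submersion. In particular $\pi$ is itself a harmonic map, so $\tau(\pi)=0$, and at each point $p\in G$ the differential $d\pi_p$ vanishes on the vertical space $\vert_p$ and restricts to a linear isometry from $\hor_p$ onto $T_{\pi(p)}(G/K)$.

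For the tension field identity, I would apply the composition law (Proposition \ref{composition law}) to $\phi=\hat\phi\circ\pi$, giving
$$
\tau(\phi) = d\hat\phi(\tau(\pi)) + \tr \nabla d\hat\phi(d\pi,d\pi) = \tr \nabla d\hat\phi(d\pi,d\pi),
$$
since the first term vanishes. Fix $p\in G$ and choose an orthonormal basis of $T_pG$ adapted to $T_pG=\vert_p\oplus\hor_p$. The vertical basis vectors lie in $\ker d\pi_p$ and thus contribute zero to the trace, while $d\pi_p$ maps the horizontal basis vectors isometrically onto an orthonormal basis of $T_{\pi(p)}(G/K)$. The remaining trace therefore coincides with the trace defining $\hat\tau(\hat\phi)$ at $\pi(p)$, yielding the first identity.

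For the conformality identity, I would show that $\nabla\phi$ at $p$ is the horizontal lift of $\nabla\hat\phi$ at $\pi(p)$. Horizontality is immediate, since any vertical vector $V_p\in\vert_p$ satisfies $V_p(\phi)=d\hat\phi_{\pi(p)}(d\pi_p(V_p))=0$, so $\nabla\phi\in\hor_p$. For any horizontal $X_p\in\hor_p$ one computes
$$
g_p(\nabla\phi,X_p) = X_p(\phi) = d\pi_p(X_p)(\hat\phi) = h_{\pi(p)}\bigl(\nabla\hat\phi, d\pi_p(X_p)\bigr),
$$
and the Riemannian submersion property identifies the right-hand side with $g_p$ of $X_p$ against the horizontal lift of $\nabla\hat\phi$; non-degeneracy of $g_p$ on $\hor_p$ forces equality. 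Evaluating $g$ on two such horizontal lifts then gives $\kappa(\phi,\psi)=\hat\kappa(\hat\phi,\hat\psi)\circ\pi$ at once. The passage from real- to complex-valued functions is routine by decomposing into real and imaginary parts and using linearity of $\tau$ and bilinearity of $\kappa$.

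The only real subtlety is verifying that the second fundamental form $\nabla d\hat\phi$ appearing in the composition law equals the ordinary Hessian of $\hat\phi$ on $(G/K,h)$, so that the horizontal portion of the trace on $G$ reproduces the full Laplace--Beltrami trace on the base. This holds because the codomain $\cn$ is flat, so the pullback connection on $\hat\phi^{-1}T\cn$ is trivial and the horizontal splitting of the trace on $G$ produces precisely the trace computing $\hat\tau(\hat\phi)$.
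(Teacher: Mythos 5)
Your argument is correct and follows exactly the route the paper indicates for this lemma (which it leaves as a sketch): use that $\pi$ is a harmonic Riemannian submersion, apply the composition law so that only $\tr\nabla d\hat\phi(d\pi,d\pi)$ survives, and observe that the vertical directions drop out of the trace while the horizontal ones map isometrically onto an orthonormal basis of $T_{\pi(p)}(G/K)$; the identification of $\nabla\phi$ with the horizontal lift of $\nabla\hat\phi$ for the conformality operator is likewise the intended argument. Your closing remark that $\nabla d\hat\phi$ reduces to the ordinary Hessian because $\cn$ is flat correctly handles the one point where the composition law for maps must be reconciled with the paper's definition of $\tau$ on functions.
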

\begin{proof}
To prove this result one uses that $\pi$ is a harmonic morphism and a Riemannian submersion. The proof is very similar as that of Theorem \ref{composition relations} and thus will not be presented here.
\end{proof}
We let $M = G/K$ be a symmetric space, and
\begin{align*}
    \hat{\mathcal{E}}_{\lambda,\mu} = \{ \hat{\phi}: U\subset M \to \cn \}
\end{align*}
be a locally defined eigenfamily on $M$ with eigenvalues $\lambda$ and $\mu$ for the tension field $\hat{\tau}$ and the conformality operator $\hat{\kappa}$, respectively. By Lemma \ref{lem: induced functions}, the family
\begin{align*}
    \mathcal{E}_{\lambda,\mu} = \{ \phi = \hat{\phi}\circ \pi: \pi^{-1}(U) \to \cn \ | \ \hat{\phi} \in \hat{\mathcal{E}}_{\lambda,\mu} \} 
\end{align*}
is a locally defined eigenfamily on $G$ with the same eigenvalues $\lambda$ and $\mu$, consisting of $K$-invariant functions.
\par
Conversely, given a locally defined eigenfamily $\mathcal{E}$ of $K$-invariant functions, we define for each $\phi \in \mathcal{E}$ a function $\hat{\phi}: G/K \to \cn$ by 
$$
\hat{\phi}(pK) = \phi(p).
$$
Then $\phi = \hat{\phi}\circ \pi$ and hence, by Lemma \ref{lem: induced functions}, the set $\hat{\mathcal{E}}$ of such induced functions is a locally defined eigenfamily on the symmetric space $G/K$.
Thus, there is a one-to-one correspondence between $K$-invariant eigenfunctions and eigenfamilies on $G$, and eigenfunctions and eigenfamilies on $G/K$.
\par
In Theorem \ref{thm: eigenfunctions on products} and Theorem \ref{thm: eigenfamilies on products} we give a new method for constructing eigenfunctions and eigenfamilies on the Riemannian product $(M_1 \times M_2,g)$ from eigenfunctions and families on the factors $(M_1,g_1)$ and $(M_2,g_2)$.
\begin{theorem}\label{thm: eigenfunctions on products}
Let $(M_1,g_1)$ and $(M_2,g_2)$ be Riemannian manifolds and let $N = M_1 \times M_2$ be their Riemannian product, with product metric $g$. Let $\tau_1$, $\tau_2$ and $\tau$ and $\kappa_1$, $\kappa_2$ and $\kappa$ be the tension fields and conformality operators on $M_1$, $M_2$ and $N$, respectively. Suppose that $\phi_1$ and $\phi_2$ are eigenfunctions on $M_1$ and $M_2$ satisfying
$$
\tau_1(\phi_1) = \lambda_1 \cdot \phi_1, \quad \kappa_1(\phi_1,\phi_1) = \mu_1 \cdot \phi_1^2
$$
and
$$
\tau_2(\phi_2) = \lambda_2 \cdot \phi_2, \quad \kappa_2(\phi_2,\phi_2) = \mu_2 \cdot \phi_2^2,
$$
respectively. Define $\psi: N \to \cn$ by
\begin{align*}
    \psi(p_1,p_2) = \phi_1(p_1)\cdot \phi_2(p_2).
\end{align*}
Then  $\psi$ is an eigenfunction on $N$ with
\begin{align*}
    \tau(\psi) = (\lambda_1 + \lambda_2)\cdot \psi \quad \mathrm{and} \quad \kappa(\psi,\psi) = (\mu_1 + \mu_2)\cdot \psi^2.
\end{align*}
\end{theorem}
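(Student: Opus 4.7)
The plan is to compute $\tau(\psi)$ and $\kappa(\psi,\psi)$ directly using the local coordinate formulas \eqref{eq:local tension} and \eqref{eq: local formula kappa} together with the well-known fact that a Riemannian product splits the Levi-Civita connection. Concretely, I would work on a neighbourhood of an arbitrary point $(p_1,p_2) \in N$, pick local orthonormal frames $\{X_1,\dots,X_m\}$ on $M_1$ near $p_1$ and $\{Y_1,\dots,Y_n\}$ on $M_2$ near $p_2$, and lift them to vector fields on $N$ via the projections; the collection $\{X_i,Y_j\}$ is then a local orthonormal frame for $(N,g)$.

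The key structural facts I would invoke are: (a) if $X$ is tangent to the $M_1$-factor and $f$ depends only on the second variable then $X(f)=0$, and symmetrically for $Y$ tangent to $M_2$; and (b) the product structure of the Levi-Civita connection, namely $\nabla^N_{X_i}X_i = \nabla^{M_1}_{X_i}X_i$ (interpreted as a vector field on $N$ via pullback) and $\nabla^N_{Y_j}Y_j = \nabla^{M_2}_{Y_j}Y_j$. Fact (b) is standard but is the only non-trivial input; I would either cite it or verify it in one line from the Koszul formula, noting that all bracket and inner-product cross-terms between horizontal $M_1$ and $M_2$ lifts vanish.

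With these in hand the computation is a short bookkeeping exercise. For the tension field,
\begin{align*}
X_i(\psi) &= X_i(\phi_1)\cdot \phi_2, \qquad Y_j(\psi) = \phi_1\cdot Y_j(\phi_2), \\
X_i^2(\psi) - (\nab{N}{X_i}{X_i})(\psi) &= \bigl(X_i^2(\phi_1) - (\nab{M_1}{X_i}{X_i})(\phi_1)\bigr)\cdot \phi_2,
\end{align*}
and similarly for the $Y_j$ terms. Summing over $i$ and $j$ and invoking \eqref{eq:local tension} on each factor yields
\[
\tau(\psi) = \tau_1(\phi_1)\cdot \phi_2 + \phi_1\cdot \tau_2(\phi_2) = (\lambda_1+\lambda_2)\cdot \psi.
\]
For the conformality operator, formula \eqref{eq: local formula kappa} gives
\[
\kappa(\psi,\psi) = \sum_i X_i(\psi)^2 + \sum_j Y_j(\psi)^2 = \phi_2^2\cdot \kappa_1(\phi_1,\phi_1) + \phi_1^2\cdot \kappa_2(\phi_2,\phi_2) = (\mu_1+\mu_2)\cdot \psi^2.
\]

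There is no real obstacle here; the only subtle point is ensuring that the product decomposition of $\nabla^N$ is handled correctly so that the mixed terms $\nab{N}{X_i}{Y_j}$ do not contribute, and that lifted frames actually annihilate functions pulled back from the other factor. Once these product-geometry facts are set out, the proof is essentially a one-line expansion via the Leibniz rule.
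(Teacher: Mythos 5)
Your proposal is correct and follows essentially the same route as the paper's proof: an adapted local orthonormal frame on the product, the local formulas \eqref{eq:local tension} and \eqref{eq: local formula kappa}, and the splitting of the Levi-Civita connection (which the paper uses implicitly and you rightly flag as the one structural input to justify). No gaps.
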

\begin{proof}
Recall that at each point $(p_1,p_2)$ of $N$ we have the orthogonal decomposition
$$
T_{(p_1,p_2)}N \cong T_{p_1} M_1 \oplus T_{p_2}M_2.
$$
Take $p = (p_1,p_2) \in N$ and let $\{X_1,\dots, X_{m_1}, Y_1, \dots, Y_{m_2}\}$ be a local orthonormal frame for $TN$ on an open subset $U = U_1 \times U_2$ containing $(p_1,p_2)$ such that $\{X_1,\dots, X_{m_1}\}$ and $\{Y_1,\dots, Y_{m_2}\}$ are local orthonormal frames for $TM_1$ and $TM_2$ respectively. Then $\psi$ satisfies
\begin{align*}
    X_{k}(\psi)(p) &= X_{k}(\phi_1)(p_1)\cdot \phi_2(p_2),\\
    X_{k}^2(\psi)(p) &= X_{k}^2(\phi_1)(p_1)\cdot \phi_2(p_2),\\
    \left(\nab{}{X_k}{X_k}\right)(\psi)(p) &= \left(\nab{}{X_k}{X_k}\right)(\phi_1)(p_1)\cdot \phi_2(p_2),\\
    Y_{l}(\psi)(p) &= \phi_1(p_1)\cdot Y_{l}(\phi_2)(p_2),\\
    Y_{l}^2(\psi)(p) &= \phi_1(p_1)\cdot Y_{l}^2(\phi_2)(p_2)
\end{align*}
and
\begin{align*}
    \left(\nab{}{Y_{l}}{Y_{l}}\right)(\psi)(p) &= \phi_1(p_1)\cdot \left(\nab{}{Y_{l}}{Y_{l}}\right)(\phi_2)(p_2).
\end{align*}
Using the formula \eqref{eq:local tension} for the tension field $\tau$ we obtain
\begin{align*}
    &\tau(\psi)(p)\\
    &= \sum_{k=1}^{m_1} \big( X_{k}^2(\psi)(p) - \left(\nab{}{X_k}{X_k}\right)(\psi)(p)\big) + \sum_{l=1}^{m_2} \big(Y_{l}^2(\psi)(p) - \left(\nab{}{Y_{l}}{Y_{l}}\right)(\psi)(p)\big)\\
    &= \sum_{k=1}^{m_1}\big(X_{k}^2(\phi_1)(p_1)\cdot \phi_2(p_2) - \left(\nab{}{X_k}{X_k}\right)(\phi_1)(p_1)\cdot \phi_2(p_2)\big)\\
    &\sed + \sum_{l=1}^{m_2} \big(\phi_1(p_1)\cdot Y_{l}^2(\phi_2)(p_2) - \phi_1(p_1)\cdot \left(\nab{}{Y_{l}}{Y_{l}}\right)(\phi_2)(p_2)\big)\\
    &= \tau_1(\phi_1)(p_1) \cdot \phi_2(p_2) + \phi_1(p_1) \cdot \tau_2(\phi_2)(p_2)\\
    &= \lambda_1 \cdot \phi_1(p_1)\phi_2(p_2) + \lambda_2 \cdot \phi_1(p_1)\phi_2(p_2)\\
    &= (\lambda_1 + \lambda_2)\cdot \psi(p).
\end{align*}
For $\kappa$, applying the formula \eqref{eq: local formula kappa} yields
\begin{align*}
    &\kappa(\psi,\psi)(p)\\
    &= \sum_{k=1}^{m_1}\big((X_k(\psi)(p))^2\big) + \sum_{l=1}^{m_2} \big((Y_l(\psi)(p))^2\big)\\
    &= \sum_{k=1}^{m_1}\big((X_{k}(\phi_1)(p_1))^2\cdot \phi_2(p_2)^2 \big) + \sum_{l=1}^{m_2}\big( \phi_1(p_1)^2\cdot (Y_{l}(\phi_2)(p_2))^2\big)\\
    &= \kappa_1(\phi_1,\phi_1)(p_1) \cdot \phi_2(p_2)^2 + \phi_1(p_1)^2\cdot \kappa_{2}(\phi_2,\phi_2)(p_2)\\
    &= (\mu_1 + \mu_2)\cdot \psi(p)^2.
\end{align*}
Since the choice of $p$ was arbitrary we are done.
\end{proof}
This construction extends readily to eigenfamilies. 
\begin{theorem}\label{thm: eigenfamilies on products}
    Let $(M_1,g_1)$ and $(M_2,g_2)$ be Riemannian manifolds and let $N = M_1 \times M_2$ be their Riemannian product, with product metric $g$. Let $\tau_1$, $\tau_2$ and $\tau$ and $\kappa_1$, $\kappa_2$ and $\kappa$ be the tension fields and conformality operators on $M_1$, $M_2$ and $N$, respectively. Suppose that $\mathcal{E}_1$ and $\mathcal{E}_2$ are eigenfamilies on $M_1$ and $M_2$ with eigenvalues $\lambda_1$, and $\mu_1$ and $\lambda_2$ and $\mu_2$ respectively. Then the set
    \begin{align*}
        \mathcal{E} = \{ \psi: N \to \cn, \ \psi: (p_1,p_2) \mapsto \phi_1(p_1)\cdot \phi_2(p_2) \ | \ \phi_1 \in \mathcal{E}_1, \phi_2 \in \mathcal{E}_2\}
    \end{align*}
    is an eigenfamily on the product space $N$, with
    \begin{align*}
        \tau(\phi) = (\lambda_1 + \lambda_2)\cdot \phi \quad \mathrm{and} \quad \kappa(\phi,\psi) = (\mu_1 + \mu_2) \cdot \phi\psi
    \end{align*}
for all $\phi,\psi \in \mathcal{E}$.
\end{theorem}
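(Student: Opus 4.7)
The plan is to reduce everything to a direct computation in a split local orthonormal frame, exactly mirroring the argument of Theorem \ref{thm: eigenfunctions on products}, but now keeping track of two possibly distinct factors on each side of $\kappa$.

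First, the tension-field eigenvalue is immediate: any $\psi \in \mathcal{E}$ has the form $\psi(p_1,p_2) = \phi_1(p_1)\phi_2(p_2)$ with $\phi_1 \in \mathcal{E}_1$ and $\phi_2 \in \mathcal{E}_2$. Since $\phi_1$ and $\phi_2$ are in particular eigenfunctions with the prescribed eigenvalues, Theorem \ref{thm: eigenfunctions on products} applies verbatim and gives $\tau(\psi) = (\lambda_1 + \lambda_2)\cdot \psi$. No new work is needed here.

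For the conformality operator, I would fix $\psi = \phi_1\phi_2$ and $\tilde\psi = \tilde\phi_1 \tilde\phi_2$ in $\mathcal{E}$ and a point $p = (p_1,p_2) \in N$, and then choose on a product neighbourhood $U_1 \times U_2$ a local orthonormal frame $\{X_1,\dots,X_{m_1},Y_1,\dots,Y_{m_2}\}$ for $TN$ split exactly as in the proof of Theorem \ref{thm: eigenfunctions on products}. The key observation is that on such a frame the $X_k$-derivatives act only on the first factor and the $Y_l$-derivatives only on the second, so
\begin{align*}
X_k(\psi)\cdot X_k(\tilde\psi) &= X_k(\phi_1)\,X_k(\tilde\phi_1)\cdot \phi_2\,\tilde\phi_2,\\
Y_l(\psi)\cdot Y_l(\tilde\psi) &= \phi_1\,\tilde\phi_1\cdot Y_l(\phi_2)\,Y_l(\tilde\phi_2).
\end{align*}
Summing these over $k$ and $l$, using the local formula \eqref{eq: local formula kappa} for $\kappa$, $\kappa_1$ and $\kappa_2$, yields
\begin{align*}
\kappa(\psi,\tilde\psi)(p) = \kappa_1(\phi_1,\tilde\phi_1)(p_1)\cdot \phi_2(p_2)\tilde\phi_2(p_2) + \phi_1(p_1)\tilde\phi_1(p_1)\cdot \kappa_2(\phi_2,\tilde\phi_2)(p_2).
\end{align*}

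The final step is where the \emph{eigenfamily} hypothesis (as opposed to merely the eigenfunction hypothesis) becomes essential: since $\mathcal{E}_1$ and $\mathcal{E}_2$ are eigenfamilies, the cross-terms satisfy $\kappa_1(\phi_1,\tilde\phi_1) = \mu_1\cdot \phi_1\tilde\phi_1$ and $\kappa_2(\phi_2,\tilde\phi_2) = \mu_2\cdot \phi_2\tilde\phi_2$. Substituting these identities collapses the expression to $(\mu_1 + \mu_2)\cdot \psi(p)\tilde\psi(p)$, as required. I do not anticipate a real obstacle here: the work is essentially bookkeeping once the split frame is fixed, and the only conceptual point beyond the single-function case is the appeal to the bilinear cross-identity satisfied by an eigenfamily.
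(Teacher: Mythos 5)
Your proposal is correct and follows essentially the same route as the paper, which simply remarks that the proof is nearly identical to that of Theorem \ref{thm: eigenfunctions on products}; you have supplied exactly the details that remark leaves implicit, namely the split-frame computation of $\kappa(\psi,\tilde\psi)$ with two distinct products and the appeal to the bilinear cross-identities of the eigenfamilies.
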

\begin{proof}
The proof is nearly identical to that of Theorem \ref{thm: eigenfunctions on products}.
\end{proof}

\begin{corollary}\label{cor: harmonic morphisms on products}
    Let $(M_1,g_1)$ and $(M_2,g_2)$ be Riemannian manifolds and let $N = M_1 \times M_2$ be their Riemannian product, with product metric $g$. Suppose that $\phi_1: M_1 \to \cn$ and $\phi_2: M_2 \to \cn$ are harmonic morphisms on $M_1$ and $M_2$ respectively. Then the function $\phi: N \to \cn$ given by
    \begin{align*}
        \phi(p_1,p_2) = \phi_1 (p_1)\cdot \phi_2(p_2)
    \end{align*}
    is a harmonic morphism on the product space $N$.
\end{corollary}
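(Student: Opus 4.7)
The plan is to reduce the statement directly to the eigenfunction construction in Theorem \ref{thm: eigenfunctions on products} (or equivalently the eigenfamily version Theorem \ref{thm: eigenfamilies on products} applied to singleton families). The key observation is that, by the Fuglede--Ishihara characterisation in Theorem \ref{harm-morph iff hwc} specialised to complex-valued maps, a smooth function $\phi : (M,g)\to\cn$ is a harmonic morphism if and only if it is both harmonic and horizontally (weakly) conformal, which in the notation of Definition \ref{def of kappa} amounts to the two scalar equations
\begin{align*}
    \tau(\phi) = 0 \quad \text{and} \quad \kappa(\phi,\phi) = 0.
\end{align*}
In other words, a complex-valued harmonic morphism is nothing but an eigenfunction in the sense of Chapter \ref{ch: eigen} whose eigenvalues $\lambda$ and $\mu$ for $\tau$ and $\kappa$ are both zero.

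First I would apply this characterisation to the given maps $\phi_1 : M_1 \to \cn$ and $\phi_2: M_2 \to \cn$, concluding that each $\phi_i$ is an eigenfunction with eigenvalues $\lambda_i = 0$ and $\mu_i = 0$. Next, I would feed this directly into Theorem \ref{thm: eigenfunctions on products}, which produces the eigenfunction
\begin{align*}
    \psi(p_1,p_2) = \phi_1(p_1) \cdot \phi_2(p_2)
\end{align*}
on $N = M_1 \times M_2$ with eigenvalues $\lambda_1 + \lambda_2 = 0$ and $\mu_1 + \mu_2 = 0$ for the tension field $\tau$ and the conformality operator $\kappa$ of the product metric $g$. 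Finally, invoking the Fuglede--Ishihara characterisation once more, the vanishing of $\tau(\psi)$ and $\kappa(\psi,\psi)$ is precisely the condition for $\psi = \phi$ to be a harmonic morphism from $(N,g)$ to $\cn$.

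Since every nontrivial step is already packaged in Theorem \ref{thm: eigenfunctions on products} and Theorem \ref{harm-morph iff hwc}, there is no real obstacle to this argument; the only thing one needs to be slightly careful about is to verify that the zero-eigenvalue case genuinely fits the hypotheses of the theorem, which it does since the statement imposes no nonvanishing condition on $\lambda_i$ or $\mu_i$. As a stylistic alternative, one could equally present the proof through Theorem \ref{thm: eigenfamilies on products} by regarding $\{\phi_1\}$ and $\{\phi_2\}$ as singleton eigenfamilies, but this adds no content. The result is therefore essentially a one-line corollary of the product construction combined with the Fuglede--Ishihara characterisation of complex-valued harmonic morphisms.
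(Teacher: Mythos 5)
Your proposal is correct and matches the paper's own argument: the paper proves this corollary in one line as "a special case of Theorem \ref{thm: eigenfunctions on products}", implicitly using exactly the reduction you make explicit, namely that by the Fuglede--Ishihara characterisation a complex-valued harmonic morphism is precisely an eigenfunction with $\lambda=\mu=0$. Your write-up simply spells out the details the paper leaves tacit.
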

\begin{proof}
    This is a special case of Theorem \ref{thm: eigenfunctions on products}.
\end{proof}
\par
We conclude this chapter by describing how eigenfamilies on symmetric spaces of compact type induce eigenfamilies on their non-compact duals, and vice versa. These constructions were first introduced in \cite{Gud-Sve} and further developed for our setting in \cite{Gud-Mon-Rat}.
\par
Let $(G,K)$ be a symmetric pair with $M = G/K$ simply connected and let $\la{g} = \la{k} \oplus \la{p}$ be the associated Cartan decomposition of the Lie algerbra $\la{g}$ of $G$. Let $G^{\cn}$ be the unique simply connected complex Lie group with Lie algebra $\la{g}^{\cn}$. Since $\la{g}$ is a subalgebra of $\la{g}^{\cn}$ there exists a Lie subgroup $G' \subset G^{\cn}$ with Lie algebra $\la{g}$ and similarly a subgroup $K' \subset G' \subset G^{\cn}$ with Lie algebra $\la{k}$. As we are ultimately interested in the symmetric space $M = G/K$ we may assume $G = G'$ and $K = K'$. We let as before $M^* = G^*/K^*$ be the dual space of $M$. Here we take $G^*$ to be the Lie subgroup of $G^{\cn}$ with Lie algebra
$$
\la{g}^* = \la{k} \oplus i\la{p} 
$$
and take $K^* = K' = K$.
\par
Let 
$$
\phi: U/K \subset M \to \cn
$$
be a locally defined real analytic function on the open subset $U/K$ of $M$ and denote by $\hat{\phi}$ the corresponding real analytic $K$-invariant function on the open subset $U$ of $G$. We may without loss of generality assume that the identity element $e$ of $G$ is contained in $U$ as we may always take a composition of $\hat{\phi}$ with the left translation by some element of $G$ to obtain a real analytic function on a neighbourhood of the identity. The function $\hat{\phi}$ can then be extended uniquely as a holomorphic $K$-invariant function
$$
\hat{\phi}^{\cn}: V \to \cn
$$
on some open neighbourhood $V$ of the identity in $G^{\cn}$. We set $W = G^*\cap V$ and denote by $\hat{\phi}^*$ the real analytic and $K$-invariant restriction of $\hat{\phi}^{\cn}$ to $W$. The corresponding real analytic function
$$
\phi^*: W/K \subset M^* \to \cn
$$
is what we shall call the \emph{dual function} of $\phi$. We shall also refer to the function $\hat{\phi}^*$ as the dual function of $\hat{\phi}$.

\begin{theorem}[\cite{Gud-Sve, Gud-Mon-Rat}]\label{thm: dual eigenfunctions}
    Let $(G,K)$ be an irreducible symmetric pair of compact type with $G$ semisimple, and let $(G^*,K)$ be its non-compact dual. We equip $G$ and $G^*$, respectively, with the left-invariant metrics $g$ and $g^*$ induced by the $\Ad{K}$-invariant inner products described in Remark \ref{rem: natural metrics}. We denote by $\tau$ and $\tau^*$ the tension fields on $M = G/K$ and $M^* = G^*/K$ and by $\kappa$ and $\kappa^*$ their respective conformality operators. Suppose that $\phi: U/K \to \cn$ is a locally defined real-analytic eigenfunction on the compact symmetric space $M$, satisfying
    $$
    \tau(\phi) = \lambda \cdot \phi \quad \textrm{and} \quad \kappa(\phi,\phi) = \mu \cdot \phi^2.
    $$
    Then the dual function $\phi^*: W/K \subset M^* \to \cn$ is a locally defined real-analytic eigenfunction on $M^*$ such that
    $$
    \tau^*(\phi^*) = -\lambda \cdot \phi^* \quad \textrm{and} \quad \kappa^*(\phi^*,\phi^*) = -\mu \cdot (\phi^*)^2.
    $$
\end{theorem}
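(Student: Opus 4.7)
The plan is to verify the two eigenvalue identities first at the base points $p_0=eK\in M$ and $p_0^*=eK\in M^*$ by exploiting the holomorphy of $\hat\phi^{\cn}$, and then extend them to arbitrary points of $W/K$ by means of analytic continuation together with $G^*$-invariance of $\tau^*$ and $\kappa^*$.

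For the computation at the origin I would fix an orthonormal basis $\{Y_1,\dots,Y_n\}$ of $(\la p,\ip{\cdot}{\cdot})$; by the construction in Remark \ref{rem: natural metrics} the set $\{iY_1,\dots,iY_n\}$ is then an orthonormal basis of $i\la p$ with respect to $\ip{\cdot}{\cdot}^*$. Since $M$ and $M^*$ are symmetric spaces, the curves $s\mapsto\exp(sY_l)\cdot p_0$ and $t\mapsto\exp^*(t\cdot iY_l)\cdot p_0^*$ are geodesics through the respective origins, so the two bases yield Riemannian normal coordinates in which the tension field is the flat Laplacian and the conformality operator is the sum of squared partial derivatives. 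This produces the expressions
\begin{align*}
\tau(\phi)(p_0)&=\sum_{l=1}^{n}\frac{d^2}{ds^2}\Big|_{s=0}\hat\phi(\exp(sY_l)),\\
\kappa(\phi,\phi)(p_0)&=\sum_{l=1}^{n}\!\left(\frac{d}{ds}\Big|_{s=0}\hat\phi(\exp(sY_l))\right)^{\!2},
\end{align*}
together with the analogous formulae for $\tau^*(\phi^*)(p_0^*)$ and $\kappa^*(\phi^*,\phi^*)(p_0^*)$ with $\hat\phi^*$, $\exp^*$, and $t\cdot iY_l$ in place of $\hat\phi$, $\exp$, and $sY_l$. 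Since $z\mapsto\hat\phi^{\cn}(\exp^{\cn}(zY_l))$ is holomorphic near $0\in\cn$ and agrees with $s\mapsto\hat\phi(\exp(sY_l))$ for real $s$, substituting $z=it$ converts a first $s$-derivative into $-i$ times the corresponding $t$-derivative (hence introducing a factor $-1$ after squaring) and a second $s$-derivative into minus the corresponding second $t$-derivative. Combined with the identity $\hat\phi^*(\exp^*(t\cdot iY_l))=\hat\phi^{\cn}(\exp^{\cn}(t\cdot iY_l))$ and $\phi^*(p_0^*)=\phi(p_0)$, this immediately delivers $\tau^*(\phi^*)(p_0^*)=-\lambda\cdot\phi^*(p_0^*)$ and $\kappa^*(\phi^*,\phi^*)(p_0^*)=-\mu\cdot(\phi^*(p_0^*))^2$.

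To treat an arbitrary point $qK\in W/K$, I would first observe that Lemma \ref{lem: induced functions} together with $K$-invariance of $\hat\phi$ and bi-invariance of $g$ lifts the identity $\tau(\phi)=\lambda\cdot\phi$ to the real-analytic identity $\sum_l\wt{Y}_l^2\hat\phi=\lambda\cdot\hat\phi$ on $U\subset G$, where $\wt{Y}_l$ denotes the left-invariant holomorphic vector field on $G^{\cn}$ generated by $Y_l\in\la g^{\cn}$. Both sides extend holomorphically to $V$, and since $U\cap V$ is a totally real uniqueness set inside the connected open set $V$, the extended identity $\sum_l\wt{Y}_l^2\hat\phi^{\cn}=\lambda\cdot\hat\phi^{\cn}$ holds on all of $V$. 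The isometry $L_q$ of $(M^*,g^*)$ yields $\tau^*(\phi^*)(qK)=\tau^*(\phi^*\circ L_q)(p_0^*)$; applying the origin formula to $\phi^*\circ L_q$ together with the holomorphy substitution above gives
\begin{align*}
\tau^*(\phi^*)(qK)=-\sum_{l=1}^{n}\wt{Y}_l^2\hat\phi^{\cn}(q)=-\lambda\cdot\hat\phi^{\cn}(q)=-\lambda\cdot\phi^*(qK),
\end{align*}
and the analogous computation handles $\kappa^*$.

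The main technical obstacle lies in the bookkeeping: one has to keep straight the real left-invariant vector fields on $G$, the real left-invariant vector fields on $G^*$, and their common complex-linear extensions as holomorphic left-invariant vector fields on $G^{\cn}$, so that every factor of $i$ produced by Cauchy--Riemann differentiation is correctly accounted for in the sign of the resulting eigenvalue. Once these identifications are set up coherently, each individual step reduces either to a standard symmetric-space fact (normal coordinates, $G^*$-invariance) or to a one-variable holomorphic computation along the curves $z\mapsto\exp^{\cn}(zY_l)$.
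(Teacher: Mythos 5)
Your proposal is correct and follows essentially the same route as the paper: both reduce $\tau^*$ and $\kappa^*$ acting on $K$-invariant functions to sums of derivatives along the directions $iY_l$, with $\{Y_l\}$ an orthonormal basis of $\la{p}$, and both extract the sign $-1$ from the factor $i^2$ via uniqueness of holomorphic extension from the real form. The only difference is organizational: the paper encodes the substitution in the function-level identity $(iY)(\psi^*)=i\cdot(Y(\psi))^*$ valid on all of $W$, whereas you perform the one-variable holomorphic computation at the base point and propagate it by left translation together with analytic continuation of the eigenvalue identity itself.
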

\begin{proof}
    We denote by $\hat{\phi}$ and $\hat{\phi}^*$ the $K$-invariant real analytic functions on $M$ and $M^*$ corresponding to $\phi$ and $\phi^*$, respectively. We shall show that $\hat{\phi}^*$ is an eigenfunction on $W \subset G^*$ with eigenvalue $-\lambda$ for the tension field $\hat{\tau}^*$ and eigenvalue $-\mu$  for the conformality operator $\hat{\kappa}^*$. Let $\{X_1,\dots, X_k\}$ and $\{Y_1,\dots, Y_l\}$ be bases for $\la{k}$ and $\la{p}$ respectively, such that $\{X_1,\dots, X_k,Y_1, \dots, Y_l\}$ is an orthonormal basis for $\la{g}$ and $\{X_1,\dots,X_k,iY_1,\dots, Y_l\}$ is an orthormal basis for $\la{g}^*$. We then have
    \begin{align}\label{eq: tau on dual func 1}
        \hat{\tau}^*(\hat{\phi}^*)
        = \sum_{r=1}^k X_r^2(\hat{\phi}^*) - (\nab{}{X_r}{X_r})(\hat{\phi}^*) + \sum_{r=1}^l (iY_r)^2(\hat{\phi}^*) - (\nab{}{iY_r}{iY_r})(\hat{\phi}^*)
    \end{align}
    for the tension field acting on $\hat{\phi}^*$. The terms involving $X_r^2$ vanish by $K$-invariance and one shows, in a similar way as in the proof of Proposition \ref{Levi-Civita on compact}, that 
    $$\nab{}{X}{X} = \nab{}{iY}{iY} = 0$$
    for all vector fields $X \in \la{k}$ and $iY \in i\la{p}$. Hence, Equation \eqref{eq: tau on dual func 1} reduces to the formula
    \begin{align}\label{eq: tau on dual func 2}
        \hat{\tau}^*(\hat{\phi}^*) = \sum_{r=1}^l (iY_r)^2(\hat{\phi}^*).
    \end{align}
    Now if $\psi: U \to \cn$ is any real-analytic function on $U$, denoting the operation of taking the dual by $*$, we claim that $(iY)(\psi^*) = i\cdot (Y(\psi))^*$. To see this, consider the locally defined holomorphic functions $(iY)(\psi^{\cn})$ and $i\cdot (Y(\psi))^{\cn}$. The restrictions of these functions to $U \subset G$ clearly agree and are both equal to $i \cdot Y(\psi)$. Then by the definition of the dual function, and by the uniqueness of holomorphic extensions, we must have $(iY)(\psi^*) = i\cdot (Y(\psi))^*$ as desired. Inserting into the formula \eqref{eq: tau on dual func 2} yields
    \begin{align*}
        \hat{\tau}^*(\hat{\phi}^*) &= \sum_{r=1}^l (iY_r)^2(\hat{\phi}^*)\\
        & = \sum_{r=1}^l (iY_r)(i \cdot  (Y_r(\hat{\phi}))^*)\\
        & = \sum_{r=1}^l i^2 \cdot (Y_r^2 (\hat{\phi}))^*\\
        & = - \left( \sum_{r=1}^l Y_r^2(\hat{\phi})  \right)^*\\
        & = - (\hat{\tau}(\hat{\phi}))^*\\
        & = -\lambda \cdot \hat{\phi}^*,
    \end{align*}
    where $\hat{\tau}$ is the tension field on $G$. For the conformality operator we similarily have
    \begin{align*}
        \hat{\kappa}^*(\hat{\phi}^*,\hat{\phi}^*) &= \sum_{r = 1}^l (iY_r)(\hat{\phi}^*)^2\\
        & = \sum_{r = 1}^l i^2 ((Y_r(\hat{\phi})^*)^2\\
        & = - \left(\sum_{r = 1}^l (Y_r(\hat{\phi}))^2\right)^*\\
        & = - \hat{\kappa}(\hat{\phi},\hat{\phi})^*\\
        & = -\mu \cdot (\hat(\phi)^*)^2. 
    \end{align*}
    This then concludes the proof.
\end{proof}
From the construction of eigenfunctions on product spaces, the following result is then immediate.
\begin{corollary}\label{cor: harmonic morphism on M times M*}
    Let $M$ be an irreducible simply connected symmetric space of compact type, let $M^*$ be its non-compact dual and let both $M$ and $M^*$ be equipped with the natural metrics induced by the inner products in Remark \ref{rem: natural metrics}. Let $\phi: U \subset M \to \cn$ be a locally defined real-analytic eigenfunction on $M$ and $\phi^*: W \subset M^* \to \cn$ its dual eigenfunction on $M^*$. Then the function $\psi: U\times W \subset N = M\times M^* \to \cn$ defined by
    \begin{align*}
        \psi(p_1,p_2) = \phi(p_1)\cdot \phi^*(p_2)
    \end{align*}
    is a complex-valued harmonic morphism on $N = M\times M^*$.
\end{corollary}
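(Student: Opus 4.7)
The plan is to assemble the result directly from the two main ingredients already available in the text, namely Theorem \ref{thm: dual eigenfunctions} on the duality of eigenfunctions and Theorem \ref{thm: eigenfunctions on products} on products of eigenfunctions, and then close the argument with the Fuglede--Ishihara characterisation of harmonic morphisms (Theorem \ref{harm-morph iff hwc}).

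First I would collect the eigenvalues. By hypothesis $\phi:U\subset M\to\cn$ satisfies
\begin{align*}
\tau(\phi)=\lambda\cdot\phi,\qquad \kappa(\phi,\phi)=\mu\cdot\phi^{2}
\end{align*}
for some $\lambda,\mu\in\cn$. Theorem \ref{thm: dual eigenfunctions} then gives that the dual function $\phi^{*}:W\subset M^{*}\to\cn$ is a locally defined real-analytic eigenfunction with
\begin{align*}
\tau^{*}(\phi^{*})=-\lambda\cdot\phi^{*},\qquad \kappa^{*}(\phi^{*},\phi^{*})=-\mu\cdot(\phi^{*})^{2}.
\end{align*}

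Next I would apply Theorem \ref{thm: eigenfunctions on products} to the Riemannian product $N=M\times M^{*}$. Denoting by $\tau_{N}$ and $\kappa_{N}$ the tension field and conformality operator on $N$, the product function $\psi(p_{1},p_{2})=\phi(p_{1})\cdot\phi^{*}(p_{2})$ is itself an eigenfunction on $U\times W\subset N$ with eigenvalues obtained by summing those on the two factors. The key observation, and the whole point of the duality construction, is that these sums collapse:
\begin{align*}
\tau_{N}(\psi)=(\lambda+(-\lambda))\cdot\psi=0,\qquad \kappa_{N}(\psi,\psi)=(\mu+(-\mu))\cdot\psi^{2}=0.
\end{align*}
Hence $\psi$ is simultaneously harmonic and horizontally weakly conformal on $U\times W$.

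Finally, by the Fuglede--Ishihara characterisation (Theorem \ref{harm-morph iff hwc}), any complex-valued map that is both harmonic and horizontally weakly conformal is a harmonic morphism; equivalently one may invoke the criterion $\tau(\psi)=\kappa(\psi,\psi)=0$ directly. Thus $\psi:U\times W\subset N\to\cn$ is a complex-valued harmonic morphism, which is the claim. No step here is a real obstacle — the work has all been done in the two preceding theorems, and the corollary is essentially the observation that the sign flip $(\lambda,\mu)\mapsto(-\lambda,-\mu)$ produced by passing to the non-compact dual is precisely what is needed to cancel against the additive behaviour of $(\tau,\kappa)$ on a Riemannian product.
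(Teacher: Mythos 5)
Your proof is correct and is exactly the argument the paper intends: the paper declares the corollary ``immediate'' from the product construction, and you have simply written out the eigenvalue cancellation $(\lambda,\mu)+(-\lambda,-\mu)=(0,0)$ coming from Theorem \ref{thm: dual eigenfunctions} and Theorem \ref{thm: eigenfunctions on products}, then closed with the Fuglede--Ishihara characterisation. Nothing is missing.
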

\chapter{The Classical Symmetric Spaces and Their Involutions}\label{ch: results}
In this chapter we will describe all the classical irreducible symmetric spaces of compact type. We will in each case give the explicit Cartan involution $\sigma$ and hence get an explicit formula for the Cartan embedding $\hat{\Phi}$. We then use the techniques developed in Chapters \ref{ch: harmonic morphisms} and \ref{ch: eigen} to construct eigenfunctions and, in some cases, eigenfamilies on these symmetric spaces $G/K$. This is achieved by first using the Cartan map to construct $K$-invariant eigenfunctions and families on $G$, which then induce eigenfunctions and eigenfamilies on the quotient.

First we establish some notation. We will use $J_m$ and $I_{m,n}$ to denote the matrices
$$
J_{n} =\begin{pmatrix}
0 & I_{n}\\
-I_{n} & 0
\end{pmatrix} \quad \text{and} \quad I_{m,n} = \begin{pmatrix}
I_{m} & 0\\
0 & -I_{n}
\end{pmatrix},
$$
respectively. The subscript on $J_{n}$ will usually be supressed. We will use the notation $E_{j\alpha}$, $1 \leq j,\alpha \leq n$ for the $n \times n$ matrix satisfying $(E_{j\alpha})_{k\beta} = \delta_{jk}\delta_{\alpha\beta}$. It is straightforward to verify that the matrices $E_{j\alpha}$, $1 \leq j,\alpha \leq n$ form an orthonormal basis for $\gl{n}{\cn}$ over $\cn$ and for $\gl{n}{\rn}$ over $\rn$. We shall also denote by $\{e_{1}, \dots, e_{n}\}$ the standard basis of $\cn^n$, which we view as a collection of row vectors. We then have $e_{j}^t e_{\alpha} = E_{j\alpha}$ and $e_j A e_{\alpha}^t = A_{j\alpha}$ for any $n\times n$ complex matrix $A$.

Furthermore, we define 
$$
Y_{rs} = \tfrac{1}{\sqrt{2}}\cdot(E_{rs}-E_{sr}), \quad X_{rs} = \tfrac{1}{\sqrt{2}}\cdot(E_{rs}+E_{sr}) \quad \text{and} \ D_{t} = E_{tt},
$$
for $1 \leq r < s \leq n$ and $1 \leq t \leq n$. These matrices can be used to construct explicit orthonormal bases to the Lie algebras of the classical compact Lie groups.
\begin{proposition}\label{basis so}
Let $n >1$ be an integer and set
\begin{align*}
    \basis = \{ Y_{rs} \ | \ 1\leq r<s\leq n\}.
\end{align*}
Then $\basis$ is an orthonormal basis for the Lie algebra $\so{n}$ equipped with its standard bi-invariant metric.
\end{proposition}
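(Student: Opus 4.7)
The plan is straightforward since this is a routine verification with three essentially independent checks. I would use that $\so{n}$ consists of the skew-symmetric real $n\times n$ matrices and that, by Proposition \ref{metrics on the classical groups}, the standard bi-invariant metric on $\SO{n}$ corresponds at the identity to the Euclidean inner product $\ip{X}{Y} = \tr(X^t\cdot Y)$ on $\gl{n}{\rn}$.

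First I would check that each $Y_{rs}$ lies in $\so{n}$, which is immediate since $(E_{rs})^t = E_{sr}$ gives $Y_{rs}^t = \tfrac{1}{\sqrt{2}}(E_{sr}-E_{rs}) = -Y_{rs}$. Next I would establish that $\basis$ is a basis. An arbitrary $X\in\so{n}$ satisfies $X_{rr}=0$ and $X_{sr}=-X_{rs}$, so
\[
X \;=\; \sum_{1\leq r<s\leq n} X_{rs}(E_{rs}-E_{sr}) \;=\; \sum_{1\leq r<s\leq n} \sqrt{2}\,X_{rs}\,Y_{rs},
\]
which gives spanning. Combined with the dimension count $|\basis|=\binom{n}{2}=\dim\so{n}$, this forces $\basis$ to be a basis.

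Finally, orthonormality is a direct computation using $E_{ij}E_{kl}=\delta_{jk}E_{il}$ and $\tr(E_{il})=\delta_{il}$. Expanding
\[
\ip{Y_{rs}}{Y_{pq}} \;=\; \tfrac{1}{2}\,\tr\bigl((E_{sr}-E_{rs})(E_{pq}-E_{qp})\bigr) \;=\; \delta_{rp}\delta_{sq} - \delta_{rq}\delta_{sp},
\]
and the constraints $r<s$, $p<q$ kill the second term since $\delta_{rq}\delta_{sp}=1$ would force $r=q>p=s$, contradicting $r<s$. Hence $\ip{Y_{rs}}{Y_{pq}}=\delta_{rp}\delta_{sq}$, as required.

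There is no real obstacle in this argument; the only subtlety worth flagging is that the cross term $\delta_{rq}\delta_{sp}$ is eliminated specifically because of the index restriction $r<s$ and $p<q$ used to index $\basis$, and without that restriction one would pick up each element twice with a sign.
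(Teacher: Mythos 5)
Your proof is correct, and the three checks you carry out (skew-symmetry of each $Y_{rs}$, spanning of $\so{n}$ by writing an arbitrary skew-symmetric $X$ as $\sum_{r<s}\sqrt{2}\,X_{rs}Y_{rs}$, and orthonormality via $\tr(E_{ij}E_{kl})$) are exactly the standard verification; the observation that the index restriction $r<s$, $p<q$ kills the cross term $\delta_{rq}\delta_{sp}$ is the one point worth making explicit, and you do. The paper states this proposition without proof, so your argument simply supplies the routine computation it takes for granted.
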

\begin{proposition}\label{basis u}
Let $n>1$ be an integer and set
\begin{align*}
    \basis_1 = \{ Y_{rs} \ | \ 1 \leq r<s \leq n \}, \quad \basis_2 = \{ iX_{rs} \ | \ 1 \leq r<s \leq n \}
\end{align*}
and
\begin{align*}
    \basis_3 = \{ iD_{t} \ | \ 1 \leq t \leq n \}.
\end{align*}
Then $\basis = \basis_1 \cup \basis_2 \cup \basis_3$ is an orthonormal basis for the Lie algebra $\u{n}$ equipped with its standard bi-invariant metric.
\end{proposition}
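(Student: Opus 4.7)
The plan is to verify that $\basis$ is an orthonormal set of the correct cardinality in $\u{n}$, and then conclude it must be a basis by a dimension count, using that $\dim_\rn \u{n} = n^2$.

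First, I would check that every element of $\basis$ actually lies in $\u{n}$, i.e.\ satisfies $\bar{Z}^t = -Z$. This is immediate: the matrices $Y_{rs} = \tfrac{1}{\sqrt{2}}(E_{rs}-E_{sr})$ are real and antisymmetric, hence belong to $\so{n}\subset \u{n}$; the matrices $iX_{rs} = \tfrac{i}{\sqrt{2}}(E_{rs}+E_{sr})$ satisfy $\overline{iX_{rs}}^{\,t} = -i X_{rs}$ since $X_{rs}$ is real symmetric; and similarly $\overline{iD_t}^{\,t} = -iD_t$ since $D_t$ is real and diagonal.

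Next I would count: $|\basis_1| = |\basis_2| = \binom{n}{2}$ and $|\basis_3|=n$, so the total is
\begin{align*}
    2\binom{n}{2} + n = n(n-1) + n = n^2 = \dim_{\rn}\u{n}.
\end{align*}
Hence, once orthonormality is established, the set is automatically a basis by linear independence and a dimension count.

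The main work is therefore the orthonormality computation. The standard bi-invariant metric is given by $h(Z,W) = \re(\tr(\bar{Z}^t W))$, and the key computational identity is the elementary fact $\tr(E_{ij}E_{kl}) = \delta_{il}\delta_{jk}$. I would organise the check into six cases (three diagonal blocks and three off-diagonal blocks $\basis_i \times \basis_j$). For $Y_{rs}$ and $iX_{rs}$ the Hermitian transpose introduces a sign, so for instance $h(Y_{rs},Y_{ab}) = -\re\tr(Y_{rs}Y_{ab})$ while $h(iX_{rs},iX_{ab}) = \re\tr(X_{rs}X_{ab})$; expanding these products with the trace identity gives $\delta_{ra}\delta_{sb}$ (possibly up to sign) which, under the constraint $r<s$ and $a<b$, isolates the diagonal case as the only non-vanishing contribution and yields $1$. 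For the three cross-term cases $(\basis_1,\basis_2)$, $(\basis_1,\basis_3)$, $(\basis_2,\basis_3)$, the traces $\tr(Y_{rs}X_{ab})$, $\tr(Y_{rs}D_t)$ and $\tr(X_{rs}D_t)$ all vanish (the first two by antisymmetry in $r,s$; the third because $r<s$ forbids $r=t=s$), which kills the cross terms regardless of the extra factors of $i$ that arise.

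The main potential obstacle is purely bookkeeping: making sure the factors of $\tfrac{1}{\sqrt{2}}$, the Hermitian conjugates $\bar{Z}^t = -Z$, and the real-part operator $\re$ are tracked correctly, particularly in the cross terms where one computes $\re(-i\cdot(\text{real number}))$ or $\re(\text{real number})$. There is no conceptual difficulty beyond this—the result is essentially a direct computation leveraging the orthonormality of $\{E_{j\alpha}\}$ under the Frobenius inner product on $\gl{n}{\cn}$.
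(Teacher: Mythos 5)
Your proof is correct; the paper states this proposition without proof, treating it as a routine verification, and your argument (membership in $\u{n}$, the dimension count $2\binom{n}{2}+n=n^2=\dim_{\rn}\u{n}$, and the orthonormality check via $\tr(E_{ij}E_{kl})=\delta_{il}\delta_{jk}$) is exactly the standard computation one would supply. No gaps.
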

\begin{remark}[\cite{Gud-Sob}]\label{basis for su}
We will not write down an explicit orthonormal basis for the Lie algebra $\su{n}$. Instead we note that the matrix
$$
X = \tfrac{i}{\sqrt{n}}I_{n} \in \u{n}
$$
is of unit length and generates the orthogonal complement of $\su{n}$ in $\u{n}$.
This can also be found in \cite{Gud-Sob}. 
\end{remark}
\begin{proposition}\label{basis sp}
Let $n > 1$ be an integer and set
\begin{align*}
    \basis = &\bigg\{\frac{1}{\sqrt{2}}
    \begin{pmatrix}
    Y_{rs} & 0\\
    0 & Y_{rs}
    \end{pmatrix},
    \frac{1}{\sqrt{2}}
    \begin{pmatrix}
    iX_{rs} & 0\\
    0 & -iX_{rs}
    \end{pmatrix},
    \frac{1}{\sqrt{2}}
    \begin{pmatrix}
    iD_{t} & 0\\
    0 & -iD_{t}
    \end{pmatrix},\\
    &\quad
    \frac{1}{\sqrt{2}}
    \begin{pmatrix}
    0 & X_{rs}\\
    -X_{rs} & 0
    \end{pmatrix},
    \frac{1}{\sqrt{2}}
    \begin{pmatrix}
    0 & iX_{rs}\\
    iX_{rs} & 0
    \end{pmatrix},
    \frac{1}{\sqrt{2}}
    \begin{pmatrix}
    0 & D_t\\
    -D_t & 0
    \end{pmatrix},\\
    &\quad
    \frac{1}{\sqrt{2}}
    \begin{pmatrix}
    0 & iD_t\\
    iD_t & 0
    \end{pmatrix}\ \bigg| \ 1 \leq r<s\leq n, 1\leq t \leq n\bigg\}.
\end{align*}
Then $\basis$ is a basis for the Lie algebra $\sp{n}$.
\end{proposition}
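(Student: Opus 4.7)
The plan is to verify that every element of $\basis$ belongs to $\sp{n}$, check that the cardinality of $\basis$ matches $\dim_{\rn}\sp{n}$, and then conclude by a linear independence (equivalently spanning) argument. Recall from the discussion following Proposition \ref{Sp(n), GL(H) Lie groups} that
$$
\sp{n} = \left\{ \begin{pmatrix} Z & W \\ -\bar{W} & \bar{Z} \end{pmatrix} \in \gl{2n}{\cn} \ \middle| \ \bar{Z}^t + Z = 0, \ W^t - W = 0 \right\},
$$
so an element of $\sp{n}$ is parametrised by an anti-Hermitian matrix $Z \in \u{n}$ (contributing $n^2$ real degrees of freedom) and a complex symmetric matrix $W$ (contributing $n(n+1)$ real degrees of freedom), for a total real dimension of $n(2n+1)$.

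First I would verify membership. The three block-diagonal families of $\basis$ have upper-left block $Z$ proportional to one of $Y_{rs}$, $iX_{rs}$, or $iD_t$, all of which are anti-Hermitian, so $Z \in \u{n}$. Complex conjugation sends $Y_{rs} \mapsto Y_{rs}$, $iX_{rs} \mapsto -iX_{rs}$ and $iD_t \mapsto -iD_t$, matching the lower-right blocks in each case. The four block-off-diagonal families have upper-right block $W$ proportional to one of $X_{rs}$, $iX_{rs}$, $D_t$, or $iD_t$, all of which are symmetric, and a short check shows $-\bar W$ agrees with the lower-left block. Hence $\basis \subset \sp{n}$.

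Next I would count: the three diagonal families contribute $\binom{n}{2} + \binom{n}{2} + n = n^2$ matrices, while the four off-diagonal families contribute $\binom{n}{2} + \binom{n}{2} + n + n = n(n+1)$ matrices, summing to $n(2n+1) = \dim_\rn \sp{n}$. Since the block-diagonal and block-off-diagonal subspaces of $\sp{n}$ are linearly independent of each other, linear independence (equivalently spanning) reduces to two separate checks. The upper-left blocks of the first three families are, up to the common factor $1/\sqrt{2}$, the basis $\{Y_{rs}, iX_{rs}, iD_t\}$ of $\u{n}$ from Proposition \ref{basis u}. The upper-right blocks of the remaining four families are $\{X_{rs}, iX_{rs}, D_t, iD_t\}$, which form a real basis for the space of complex symmetric $n \times n$ matrices: expanding each complex entry on or above the diagonal as $a + ib$ gives the required real expansion coefficients. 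Combined with the cardinality count, this yields the basis property. I do not anticipate any genuine obstacle beyond routine bookkeeping.
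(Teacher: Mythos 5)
Your proof is correct and complete: the membership checks, the dimension count $n^2 + n(n+1) = n(2n+1) = \dim_{\rn}\sp{n}$, and the reduction of linear independence to the block-diagonal part (where the upper-left blocks form the basis of $\u{n}$ from Proposition \ref{basis u}) and the block-off-diagonal part (where the upper-right blocks form a real basis of the complex symmetric matrices) are all accurate. The paper states this proposition without proof, and your argument is exactly the routine verification one would supply.
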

The following easily verified identities will also be useful.
\begin{lemma}\label{square sum relation}
Let $n > 1$ be an integer. Then we have
\begin{align*}
    \sum_{\mathclap{1\leq r < s \leq n}} Y_{rs}^2 = -\tfrac{n-1}{2}\cdot I_{n}, \quad \sum_{\mathclap{1\leq r < s \leq n}} X_{rs}^2 = \tfrac{n-1}{2}\cdot I_{n}, \quad \sum_{t=1}^{n}D_t^2 = I_{n}.
\end{align*}
\end{lemma}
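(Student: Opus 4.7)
The plan is to reduce all three identities to elementary matrix computations using the multiplication rule for the elementary matrices $E_{j\alpha}$ defined in the notation paragraph. The key observation, which I would record first, is the basic product formula $E_{j\alpha}E_{k\beta} = \delta_{\alpha k}\, E_{j\beta}$, which follows immediately from $e_\alpha e_k^t = \delta_{\alpha k}$. Everything else is bookkeeping.

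I would dispatch the diagonal sum first, since it requires almost no work: the product rule gives $D_t^2 = E_{tt}^2 = E_{tt}$, so $\sum_{t=1}^n D_t^2 = \sum_{t=1}^n E_{tt} = I_n$.

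For the off-diagonal sums I would expand $Y_{rs}^2$ and $X_{rs}^2$ for $r<s$ and apply the product rule term by term. Since $r\neq s$ in every summand, the rule yields $E_{rs}E_{rs} = E_{sr}E_{sr} = 0$ while $E_{rs}E_{sr} = E_{rr}$ and $E_{sr}E_{rs} = E_{ss}$. Putting these together gives the pointwise identities
\begin{align*}
Y_{rs}^2 \;=\; -\tfrac{1}{2}(E_{rr} + E_{ss}), \qquad X_{rs}^2 \;=\; \tfrac{1}{2}(E_{rr} + E_{ss}).
\end{align*}

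The only step with any substance is the combinatorial count when summing over $r<s$. For each fixed index $t$, the diagonal matrix $E_{tt}$ appears in $\sum_{r<s}(E_{rr} + E_{ss})$ exactly $(n-t) + (t-1) = n-1$ times, namely as $E_{rr}$ for each of the $n-t$ choices of $s>t$ and as $E_{ss}$ for each of the $t-1$ choices of $r<t$. Hence $\sum_{r<s}(E_{rr} + E_{ss}) = (n-1)\,I_n$, which yields $\sum_{r<s} Y_{rs}^2 = -\tfrac{n-1}{2}\,I_n$ and $\sum_{r<s} X_{rs}^2 = \tfrac{n-1}{2}\,I_n$ as claimed. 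No step presents a real obstacle; the only minor pitfall is getting this last counting right, which I would sanity-check on $n=3$ before writing it up.
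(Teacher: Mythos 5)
Your proof is correct. The paper states this lemma without proof (merely calling the identities ``easily verified''), and your argument---the product rule $E_{j\alpha}E_{k\beta} = \delta_{\alpha k}E_{j\beta}$, the pointwise identities $Y_{rs}^2 = -\tfrac{1}{2}(E_{rr}+E_{ss})$ and $X_{rs}^2 = \tfrac{1}{2}(E_{rr}+E_{ss})$, and the count that each $E_{tt}$ occurs $n-1$ times in the sum over $r<s$---is exactly the intended verification, with the counting step done correctly.
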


\section[The Symmetric Spaces \textbf{SU}(n), \textbf{SO}(n) and \textbf{Sp}(n)]{The Symmetric Spaces $\SU{n}$, $\SO{n}$ and $\Sp{n}$}.
When viewed as symmetric spaces we will identify the Lie groups $\SO{n}, \SU{n}$ and $\Sp{n}$ with $${\SU{n}\times\SU{n}}/{\Delta}, \ \SO{n}\times\SO{n}/{\Delta} \ \ \text{and} \ \ \ {\Sp{n}\times\Sp{n}}/{\Delta},$$ respectively. Here $\Delta$ denotes the diagonal subgroups consisting of elements of the form $(k,k)$.
Let $G$ be one of the Lie groups $\SO{n}$, $\SU{n}$ or $\Sp{n}$, and take the involution $\sigma$ of $G\times G$ to be the transposition of the two factors
\begin{align*}
    \sigma: (p,q) \mapsto (q,p).
\end{align*}
It is straightforward in this case to verify that $\sigma$ is an involutive automorphism of $G\times G$. Furthermore we have $\sigma(p,q) = (p,q)$ if and only if $p = q$, and hence the fixed point set of $\sigma$ is $\Delta$. The Cartan embedding corresponding to $\sigma$ is then
\begin{align*}
    \Phi: (p,q) \mapsto (p,q)(q^{-1}, p^{^-1}) = (pq^{-1}, (pq^{-1})^{-1})
\end{align*}
From this we see that the image of the map $\Phi$ is
\begin{align*}
    \text{Im}(\Phi) = \{(a,a^{-1}) \ | \ a \in G \}.
\end{align*}

\section[The Symmetric Space \textbf{U}(m+n)/\textbf{U}(m) x \textbf{U}(n)]{The Symmetric Space $\U{m+n}/\U{m}\times \U{n}$}
In this section we describe the complex Grassmannians $M = \U{m+n}/\U{m}\times \U{n}$ as symmetric spaces and give an explicit formula for the Cartan map. We then apply the tools and techniques from Chapters \ref{ch: harmonic morphisms} and \ref{ch: eigen} to construct eigenfamilies on these spaces, which can then be used to yield a great variety of complex-valued $p$-harmonic functions, as well as complex-valued harmonic morphisms.
\begin{example}\label{ex: SU(n+m)/S(U(n)*U(m))}
Let $m,n$ be positive integers and set 
$$M = \U{m+n}/\U{m}\times \U{n}.$$
Here the subgroup $\U{m}\times \U{n}$ consists of the block matrices
\begin{align*}
    \left\{ 
    \begin{pmatrix}
    z^{(11)} & 0\\
    0 & z^{(22)}
    \end{pmatrix} \in \U{m+n} \ \middle| \  z^{(11)} \in \U{m}, z^{(22)} \in \U{n} 
    \right\}.
\end{align*}
We then set $\sigma(z) = I_{m,n} z I_{m,n}$ for $z \in \U{m+n}$. Since $I_{m,n}\in \U{m+n}$ the map $\sigma$ is an automorphism of $\U{m+n}$ and it is clearly involutive. Further, upon writing $z \in \U{m+n}$ as the $(m+n)\times (m+n)$ block matrix
\begin{align*}
    z = \begin{pmatrix}
    z^{(11)} & z^{(12)}\\
    z^{(21)} & z^{(22)}
    \end{pmatrix}
\end{align*}
we see that
\begin{align*}
    \sigma(z) = \begin{pmatrix}
    z^{(11)} & -z^{(12)}\\
    -z^{(21)} & z^{(22)}
    \end{pmatrix}
\end{align*}
and hence the elements fixed by $\sigma$ are precisely those belonging to $\U{m}\times \U{n}$. Thus we have found a symmetric triple for the symmetric space $M = \U{m+n}/\U{m}\times \U{n}$.
The corresponding Cartan embedding is
\begin{align*}
    \Phi: z \mapsto zI_{m,n}\bar{z}^t I_{m,n} .
\end{align*}
One can easily show that we may also write $M$ as $M =\SU{m+n}/\mathrm{\bf S}(\U{m}\times \U{n})$ which is then easily shown to be of compact type. This symmetric space is called a \emph{complex Grassmannian} and can be thought of as the space of complex $m$-dimensional subspaces of $\cn^{m+n}$. An important special case is that of $n=1$ in where we obtain the complex projective space $M = \U{m+1}/\U{m}\times \U{1} \cong \cp^n$.
\end{example}
\begin{lemma}\label{basis U(m+n)/U(m)*U(n)}
Let $\u{m+n} = \la{k} \oplus \la{p}$ be the Cartan decomposition corresponding to the symmetric triple $(\U{m+n},\U{m}\times \U{n},\sigma)$ of Example \ref{ex: SU(n+m)/S(U(n)*U(m))}. Then an orthonormal basis for $\la{p}$ is given by
\begin{align*}
    \basis_{\la{p}} = \{ Y_{rs}, iX_{rs} \ | \ 1\leq r \leq m, m+1 \leq s \leq m+n\}
\end{align*}
\end{lemma}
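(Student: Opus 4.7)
The plan is to identify $\la{p}$ explicitly as a space of block-antidiagonal skew-Hermitian matrices, verify that the proposed vectors lie in it, and then conclude via a dimension count together with the orthonormality already recorded in Proposition \ref{basis u}.

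First I would compute the differential $d\sigma$ on $\u{m+n}$. Since $\sigma$ is conjugation by the constant matrix $I_{m,n}$, differentiating at the identity gives $d\sigma(Z) = I_{m,n} Z I_{m,n}$ for every $Z \in \u{m+n}$. Writing $Z$ as a $2\times 2$ block matrix with blocks of sizes $m$ and $n$, this conjugation leaves the diagonal blocks fixed and multiplies the off-diagonal blocks by $-1$. Hence the $(-1)$-eigenspace $\la{p}$ consists exactly of those skew-Hermitian matrices of the form
\begin{align*}
\begin{pmatrix} 0 & W \\ -\bar W^{t} & 0 \end{pmatrix}, \qquad W \in \cn^{m\times n}.
\end{align*}
In particular $\dim_{\rn} \la{p} = 2mn$, which also follows from $\dim \U{m+n} - \dim(\U{m}\times\U{n}) = (m+n)^2 - m^2 - n^2$.

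Next I would check that each element of $\basis_{\la{p}}$ belongs to $\la{p}$. For indices $1 \leq r \leq m < s \leq m+n$ the matrix units $E_{rs}$ and $E_{sr}$ lie in the upper-right and lower-left blocks respectively, so both $Y_{rs} = \tfrac{1}{\sqrt{2}}(E_{rs}-E_{sr})$ and $iX_{rs} = \tfrac{i}{\sqrt{2}}(E_{rs}+E_{sr})$ are block-antidiagonal; a direct check of $\bar Z^{t} = -Z$ confirms they are skew-Hermitian, so they sit in $\la{p}$.

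The orthonormality of $\basis_{\la{p}}$ is inherited for free: these same vectors already appear among the orthonormal basis of $\u{m+n}$ given in Proposition \ref{basis u}, so in particular any subset is orthonormal with respect to the standard bi-invariant inner product $\ip{Z}{W} = \re\tr(\bar Z^{t} W)$. Since $\basis_{\la{p}}$ contains exactly $2mn$ linearly independent vectors inside a real vector space of dimension $2mn$, it must be a basis. The only minor subtlety is bookkeeping the block structure and the signs when verifying membership in $\la{p}$, but this reduces to the elementary observation that writing $W_{r,s-m} = a_{rs} + ib_{rs}$ yields
\begin{align*}
W_{r,s-m}E_{rs} - \overline{W_{r,s-m}}E_{sr} = \sqrt{2}\,a_{rs}\,Y_{rs} + \sqrt{2}\,b_{rs}\,(iX_{rs}),
\end{align*}
so any element of $\la{p}$ is manifestly an $\rn$-linear combination of the proposed basis vectors.
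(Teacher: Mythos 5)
Your proof is correct and follows essentially the same route as the paper: both verify that the listed vectors are $(-1)$-eigenvectors of $d\sigma(Z)=I_{m,n}ZI_{m,n}$ and use the orthonormality already established in Proposition \ref{basis u}. The only difference is how spanning is concluded --- the paper observes that the complementary basis vectors are $(+1)$-eigenvectors, whereas you count $\dim_{\rn}\la{p}=2mn$ and exhibit the explicit decomposition; both are fine.
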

\begin{proof}
The set $\basis_{\la{p}}$ is a strict subset of the orthonormal basis $\basis$ for $\u{m+n}$ given in Proposition \ref{basis u}. It is then easily verified that $d\sigma(Z) = -Z$ for $Z \in \basis_{\la{p}}$ and $d\sigma(Z) = Z$ for $Z \in \basis \setminus \basis_{\la{p}}$. This concludes the proof
\end{proof}
We will now make use of Theorem \ref{composition relations} to construct eigenfamilies on the symmetric space $M$ of the previous example \ref{ex: SU(n+m)/S(U(n)*U(m))}. To this end, for $1 \leq j,\alpha \leq m+n$ we define the maps $\psi_{j\alpha}: \U{m+n} \to \cn$ by
\begin{align*}
    \psi_{j\alpha}:z \mapsto \tfrac{1}{2}\cdot (zI_{m,n}\bar{z}^t + I_{m+n})_{j\alpha}.
\end{align*}
Using that $z \in \U{m+n}$ one can then with simple computations verify that
\begin{align*}
    \psi_{j\alpha}(z) = \sum_{r=1}^{m}z_{jr}\bar{z}_{\alpha r}.
\end{align*}
It is also straightforward to verify that we may write $\psi_{j\alpha}$ on the form $\psi_{j\alpha} = f_{j\alpha} \circ \Phi$ where $f: \U{m+n} \to \cn$ is given by
\begin{align}\label{eq: comp rel U(m+n)}
    f_{j\alpha}: z \mapsto \tfrac{1}{2}\cdot(z\cdot I_{m,n} + I_{m+n})_{j\alpha} 
\end{align}
and $\Phi: \U{m+n} \to \U{m+n}$ is the Cartan map. Hence, the functions $\psi_{j\alpha}$ are $\U{m}\times\U{n}$-invariant and we can use Theorem \ref{composition relations} to prove the following Lemma.
\begin{lemma}[\cite{Gud-Gha 2}]\label{tau kappa c grass}
The tension field $\tau$ and conformality operator $\kappa$ on the unitary group $\U{m+n}$ satisfy the relations
\begin{align*}
    \tau(\psi_{j\alpha}) = -2(m+n)\cdot \psi_{j\alpha} + 2m \cdot \delta_{j\alpha}
\end{align*}
and
\begin{align*}
    \kappa(\psi_{j\alpha},\psi_{k\beta}) = -2\cdot \psi_{j\beta}\psi_{k\alpha} + \psi_{j\beta} \delta_{k\alpha} + \psi_{k\alpha}\delta_{j\beta}.
\end{align*}
\end{lemma}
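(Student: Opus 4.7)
The plan is to obtain both identities by direct computation on $\U{m+n}$, exploiting the $K$-invariance of $\psi_{j\alpha}$ together with the bi-invariant geometry of $\U{m+n}$, rather than working on the image of the Cartan map. I would begin by noting that $\psi_{j\alpha}=f_{j\alpha}\circ\Phi$ is automatically invariant under right multiplication by $K=\U{m}\times\U{n}$, so $Z(\psi_{j\alpha})\equiv 0$ for every left-invariant vector field $Z\in\la{k}$. Combined with Proposition \ref{Levi-Civita on compact}, which gives $\nab{}{Z}{Z}=\tfrac{1}{2}[Z,Z]=0$ for left-invariant $Z$ under the bi-invariant metric, the local formulae for $\tau$ and $\kappa$ collapse to
\begin{align*}
\tau(\psi_{j\alpha})=\sum_{Z\in\basis_{\la{p}}} Z^{2}(\psi_{j\alpha}), \sed \kappa(\psi_{j\alpha},\psi_{k\beta})=\sum_{Z\in\basis_{\la{p}}} Z(\psi_{j\alpha})\cdot Z(\psi_{k\beta}),
\end{align*}
where $\basis_{\la{p}}=\{Y_{ab},iX_{ab}\mid 1\le a\le m,\ m+1\le b\le m+n\}$ is the basis identified in Lemma~\ref{basis U(m+n)/U(m)*U(n)}.

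Next, I would rewrite the functions explicitly as $\psi_{j\alpha}(z)=\sum_{r=1}^{m}z_{jr}\bar z_{\alpha r}$ and compute $Z(\psi_{j\alpha})(z)=\tfrac{d}{dt}\big|_{t=0}\psi_{j\alpha}(z\exp(tZ))$ for the two types of basis elements. Since $a\le m<b$, only the term $-\tfrac{1}{\sqrt 2}z_{jb}\delta_{ar}$ (respectively $\tfrac{i}{\sqrt 2}z_{jb}\delta_{ar}$) survives the truncation $r\le m$ in $(zY_{ab})_{jr}$ and $(z\,iX_{ab})_{jr}$, which gives
\begin{align*}
Y_{ab}(\psi_{j\alpha})(z)&=-\tfrac{1}{\sqrt 2}(z_{jb}\bar z_{\alpha a}+z_{ja}\bar z_{\alpha b}),\\
iX_{ab}(\psi_{j\alpha})(z)&=\tfrac{i}{\sqrt 2}(z_{jb}\bar z_{\alpha a}-z_{ja}\bar z_{\alpha b}).
\end{align*}
Applying each $Z$ a second time (or differentiating twice) and summing the two contributions produces the remarkably simple identity $Y_{ab}^{2}(\psi_{j\alpha})+(iX_{ab})^{2}(\psi_{j\alpha})=2(-z_{ja}\bar z_{\alpha a}+z_{jb}\bar z_{\alpha b})$. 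Summing over $a\in\{1,\dots,m\}$ and $b\in\{m+1,\dots,m+n\}$ and invoking the unitarity relation $\sum_{r=1}^{m+n}z_{jr}\bar z_{\alpha r}=\delta_{j\alpha}$ to rewrite $\sum_{b>m}z_{jb}\bar z_{\alpha b}=\delta_{j\alpha}-\psi_{j\alpha}(z)$ collapses the sum to $-2(m+n)\psi_{j\alpha}+2m\,\delta_{j\alpha}$, proving the first formula.

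For the conformality operator I would use the algebraic identity $(A+B)(C+D)-(A-B)(C-D)=2(AD+BC)$ with $A=z_{jb}\bar z_{\alpha a}$, $B=z_{ja}\bar z_{\alpha b}$, $C=z_{kb}\bar z_{\beta a}$, $D=z_{ka}\bar z_{\beta b}$, which causes the factor $\tfrac{1}{2}$ to combine correctly when adding $Y_{ab}(\psi_{j\alpha})Y_{ab}(\psi_{k\beta})$ and $iX_{ab}(\psi_{j\alpha})\cdot iX_{ab}(\psi_{k\beta})$. This yields a single bilinear contribution $AD+BC=z_{jb}\bar z_{\beta b}\cdot z_{ka}\bar z_{\alpha a}+z_{ja}\bar z_{\beta a}\cdot z_{kb}\bar z_{\alpha b}$, so the summation over $a,b$ factors as a product of two inner sums, each of which equals $(\delta-\psi)$ after applying unitarity. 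A brief expansion then gives $\kappa(\psi_{j\alpha},\psi_{k\beta})=-2\psi_{j\beta}\psi_{k\alpha}+\psi_{j\beta}\delta_{k\alpha}+\psi_{k\alpha}\delta_{j\beta}$.

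The conceptual difficulty here is minimal once the reductions $Z(\psi_{j\alpha})=0$ for $Z\in\la{k}$ and $\nab{}{Z}{Z}=0$ have been made; the main obstacle is purely bookkeeping, namely tracking the index ranges $a\le m<b$ correctly when truncating the sum $\sum_{r=1}^{m}$ against the Kronecker deltas $\delta_{ar}$ and $\delta_{br}$ coming from the basis matrices $E_{ab}\pm E_{ba}$, and ensuring that the cross-terms of the imaginary basis $iX_{ab}$ combine with those of $Y_{ab}$ in the precise way that allows the final appeal to unitarity.
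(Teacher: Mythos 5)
Your proof is correct, and it takes a genuinely different route from the one in the text. The paper proves this lemma by invoking Theorem \ref{composition relations}: it computes $\tau_{N}(f_{j\alpha})$ and $\kappa_{N}(f_{j\alpha},f_{k\beta})$ on the totally geodesic image $N=\Phi(\U{m+n})$ of the Cartan map, using the adapted orthonormal frame $\{\Ad{\sigma(z)}(Y_{rs})_{\Phi(z)}, i\Ad{\sigma(z)}(X_{rs})_{\Phi(z)}\}$ from Remark \ref{expression for basis}, and then multiplies by the conformal factor $4$. You instead work entirely upstairs on the group: you use the left-invariant orthonormal frame $\basis_{\la{k}}\cup\basis_{\la{p}}$ of $\u{m+n}$, kill the $\la{k}$-contributions by right $K$-invariance of $\psi_{j\alpha}$ and the $\nab{}{Z}{Z}=0$ terms by Proposition \ref{Levi-Civita on compact}, and compute $Z^2(\psi_{j\alpha})$ and $Z(\psi_{j\alpha})Z(\psi_{k\beta})$ directly. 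I verified the key intermediate identities: $Y_{ab}(\psi_{j\alpha})=-\tfrac{1}{\sqrt 2}(z_{jb}\bar z_{\alpha a}+z_{ja}\bar z_{\alpha b})$, the sum $Y_{ab}^2(\psi_{j\alpha})+(iX_{ab})^2(\psi_{j\alpha})=2(-z_{ja}\bar z_{\alpha a}+z_{jb}\bar z_{\alpha b})$ (using $Y_{ab}^2=(iX_{ab})^2=-\tfrac12(E_{aa}+E_{bb})$), and the polarisation identity $(A+B)(C+D)-(A-B)(C-D)=2(AD+BC)$; all are right, and the final appeals to $\sum_{b>m}z_{jb}\bar z_{\alpha b}=\delta_{j\alpha}-\psi_{j\alpha}$ give exactly the stated eigen-relations. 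What your approach buys is economy of prerequisites: it never uses the harmonicity of the Cartan map, the totally geodesic property of its image, or the conformal factor, only Lemma \ref{basis U(m+n)/U(m)*U(n)} and the standard Lie-group formula for $\tau$. What the paper's approach buys is uniformity: the same Theorem \ref{composition relations} template is reused for every classical symmetric space in Chapter \ref{ch: results}, which is the point the thesis is making. Either proof is acceptable.
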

\begin{proof}
By Theorem \ref{composition relations} to compute $\tau(\psi_{j\alpha})(z)$ and $\kappa(\psi_{j\alpha},\psi_{k\beta})(z)$ we need only to compute $\tau_{N}(f_{j\alpha})(\Phi(z))$ and $\kappa_{N}(f_{j\alpha},f_{k\beta})(\Phi(z))$ where $\tau_{N}$ and $\kappa_{N}$ are the tension field and conformality operators on $N = \Phi(\U{m+n})$, respectively. By Lemma \ref{basis U(m+n)/U(m)*U(n)} and Remark \ref{expression for basis} a basis for $T_{\Phi(z)} N$ is given by
\begin{align*}
    \basis = \{  \Ad{\sigma(z)}(Y_{rs})_{\Phi(z)}, i\Ad{\sigma(z)}(X_{rs})_{\Phi(z)} \ | \ 1\leq r \leq m, m+1 \leq s \leq m+n\}.
\end{align*}
We have
\begin{align*}
    \Ad{\sigma(z)}(Y_{rs})(f_{j\alpha})(\Phi(z)) &= \dtatzero f_{j\alpha}(\Phi(z)\cdot \exp(t\Ad{\sigma(z)}Y_{rs}))\\
    &= \frac{1}{2}\dtatzero e_{j}\cdot(z\exp(tY_{rs})I_{m,n}\bar{z}^t + I_{m+n})\cdot e_{\alpha}^t\\
    &= \frac{1}{2}\cdot e_{j}\cdot zY_{rs}I_{m,n}\bar{z}^t\cdot e_{\alpha}^t,
\end{align*}
\begin{align*}
    \Ad{\sigma(z)}(Y_{rs})^{2}(f_{j\alpha})(\Phi(z))&= \restr{\frac{d^2}{dt^2}}{t=0} f_{j\alpha}(\Phi(z)\cdot \exp(t\Ad{\sigma(z)}Y_{rs}))\\
    &= \frac{1}{2}\cdot e_{j}\cdot zY_{rs}^2I_{m,n}\bar{z}^t\cdot e_{\alpha}^t,
\end{align*}
and with similar computations,
\begin{align*}
    i\Ad{\sigma(z)}(X_{rs})(f_{j\alpha})(\Phi(z)) = \frac{i}{2}\cdot e_{j}\cdot zX_{rs}I_{m,n}\bar{z}^t\cdot e_{\alpha}^t
\end{align*}
and
\begin{align*}
   (i\Ad{\sigma(z)}(X_{rs}))^2(f_{j\alpha})(\Phi(z)) = -\frac{1}{2}\cdot e_{j}\cdot zX_{rs}^2I_{m,n}\bar{z}^t\cdot e_{\alpha}^t,
\end{align*}
for $z \in \U{m+n}$ and $1\leq r < s \leq m+n$.
From these relations we find
\begin{align}\label{eq: tau for c grass}
\begin{split}
    \tau_{N}(f_{j\alpha})(\Phi(z)) &= \sum_{r=1}^{m}\sum_{s= m+1}^{m+n} \tfrac{1}{2}\cdot (e_{j}\cdot zY_{rs}^2I_{m,n}\bar{z}^t\cdot e_{\alpha}^t - e_{j}\cdot zX_{rs}^2I_{m,n}\bar{z}^t\cdot e_{\alpha}^t)\\
    &= \tfrac{1}{2}\cdot e_{j}\cdot z\cdot\left(\sum_{r=1}^{m}\sum_{s= m+1}^{m+n}Y_{rs}^2-X_{rs}^2 \right)\cdot I_{m,n} \bar{z}^t\cdot e_{\alpha}^t.    
\end{split}
\end{align}
It is furthermore easy to show that $Y_{rs}^2 = -X_{rs}^2$ for $1 \leq r < s \leq m+n$. Applying Lemma \ref{square sum relation} then gives
\begin{align*}
   \sum_{r=1}^{m}\sum_{s= m+1}^{m+n}Y_{rs}^2-X_{rs}^2 &= 2 \sum_{r=1}^{m}\sum_{s= m+1}^{m+n}Y_{rs}^2\\
   &= \sum_{1\leq r< s \leq m+n} Y_{rs}^2 - \sum_{1\leq r < s \leq m} Y_{rs}^2 - \sum_{m+1\leq r < s \leq m+n} Y_{rs}^2\\
   &= -(m+n-1)\cdot\begin{pmatrix}
   I_{m} & 0\\
   0 & I_{n}
   \end{pmatrix} + (m-1)\cdot\begin{pmatrix}
   I_{m} & 0\\
   0 & 0
   \end{pmatrix}\\
   &\sed+ (n-1)\cdot \begin{pmatrix}
   0 & 0\\
   0 & I_{n}
   \end{pmatrix}\\
   &= -\tfrac{m+n}{2}\cdot I_{m+n} +\tfrac{m-n}{2}\cdot I_{m,n}.
\end{align*}
Inserting this into equation \ref{eq: tau for c grass} we obtain
\begin{align*}
    \tau_{N}(f_{j\alpha})(\Phi(z)) &= -\tfrac{m+n}{4}\cdot e_{j}\cdot z I_{m,n}\bar{z}^t\cdot e_{\alpha}^t +\tfrac{m-n}{4}\cdot\delta_{j\alpha}\\
    &= -\tfrac{m+n}{2}\cdot\tfrac{1}{2}\cdot e_j\cdot(zI_{m,n}\bar{z}^t + I_{m+n})\cdot e_{\alpha}^t +\tfrac{n}{2}\cdot \delta_{j\alpha}\\
    &= -\tfrac{m+n}{2}\cdot \psi_{j\alpha}(z) + \tfrac{n}{2}\cdot \delta_{j\alpha}, 
\end{align*}
from which the desired formula for $\tau(\psi_{j\alpha})$ is deduced via Theorem \ref{composition relations}. For the conformality operator we have
\begin{align}\label{eq: kappa for c grass}
    \begin{split}
        \kappa_{N}(f_{j\alpha},f_{k\beta})(\Phi(z)) &= \tfrac{1}{4}\bigg\{\sum_{r=1}^{m}\sum_{s=m+1}^{m+n} e_{j}\cdot zY_{rs}I_{m,n}\bar{z}^t \cdot e_{\alpha}^t\cdot e_{k} \cdot zY_{rs}I_{m,n}\bar{z}^t \cdot e_{\beta}^t \\
        &\sed - e_{j}\cdot zX_{rs}I_{m,n}\bar{z}^t\cdot e_{\alpha}^t \cdot e_{k}\cdot zX_{rs}I_{m,n}\bar{z}^t \cdot e_{\beta}^t\bigg\}\\
        &= \tfrac{1}{4}\bigg\{\sum_{r=1}^{m}\sum_{s=m+1}^{m+n} e_{j}\cdot zX_{rs}\bar{z}^t \cdot e_{\alpha}^t\cdot e_{k} \cdot zX_{rs}\bar{z}^t \cdot e_{\beta}^t \\
        &\sed - e_{j}\cdot zY_{rs}\bar{z}^t\cdot e_{\alpha}^t\cdot e_{k}\cdot zY_{rs}\bar{z}^t \cdot e_{\beta}^t\bigg\}.
    \end{split}
\end{align}
We can expand the first sum involving the $X_{rs}$ terms as
\begin{align}\label{eq: Xrs terms c grass}
    \begin{split}
        &\sum_{r=1}^{m}\sum_{s=m+1}^{m+n} e_{j}\cdot zX_{rs}\bar{z}^t\cdot e_{\alpha}^t \cdot e_{k}\cdot zX_{rs}\bar{z}^t\cdot e_{\beta}^t\\ &= \tfrac{1}{2}\sum_{r=1}^{m}\sum_{s=m+1}^{m+n}\big\{ e_{j}z e_{r}^t \cdot e_{s}\bar{z}^t e_{\alpha}^t\cdot e_k z e_{r}^t\cdot e_{s} \bar{z} e_{\beta}^t + e_{j}z e_{s}^t \cdot e_{r}\bar{z}^t e_{\alpha}^t\cdot e_k z e_{r}^t\cdot e_{s} \bar{z} e_{\beta}^t\\
        &\sed + e_{j}z e_{r}^t \cdot e_{s}\bar{z}^t e_{\alpha}^t\cdot e_k z e_{s}^t\cdot e_{r} \bar{z} e_{\beta}^t + e_{j}z e_{s}^t \cdot e_{r}\bar{z}^t e_{\alpha}^t\cdot e_k z e_{s}^t\cdot e_{r} \bar{z} e_{\beta}^t\big\}\\
        &= \tfrac{1}{2}\sum_{r=1}^{m}\sum_{s=m+1}^{m+n}\big\{ z_{jr}\bar{z}_{\alpha s}z_{kr}\bar{z}_{\beta s} + z_{js}\bar{z}_{\alpha r}z_{kr}\bar{z}_{\beta s} + z_{jr}\bar{z}_{\alpha s}z_{ks}\bar{z}_{\beta r} + z_{js}\bar{z}_{\alpha r}z_{ks}\bar{z}_{\beta r}\big\}.
    \end{split}
\end{align}
For the terms involving $Y_{rs}$ we similarly obtain
\begin{align}\label{eq: Yrs terms c grass}
    \begin{split}
        &\sum_{r=1}^{m}\sum_{s=m+1}^{m+n} e_{j}\cdot zY_{rs}\bar{z}^t \cdot e_{\alpha}^t\cdot e_{k}\cdot zY_{rs}\bar{z}^t\cdot  e_{\beta}^t\\
        &= \tfrac{1}{2}\sum_{r=1}^{m}\sum_{s=m+1}^{m+n}\big\{ z_{jr}\bar{z}_{\alpha s}z_{kr}\bar{z}_{\beta s} - z_{js}\bar{z}_{\alpha r}z_{kr}\bar{z}_{\beta s}- z_{jr}\bar{z}_{\alpha s}z_{ks}\bar{z}_{\beta r} + z_{js}\bar{z}_{\alpha r}z_{ks}\bar{z}_{\beta r}\big\}.
    \end{split}
\end{align}
Inserting equations \eqref{eq: Xrs terms c grass} and \eqref{eq: Yrs terms c grass} into equation \eqref{eq: kappa for c grass} we obtain
\begin{align*}
    &\kappa_{N}(f_{j\alpha},f_{k\beta})(\Phi(z))\\
    &= \tfrac{1}{4}\sum_{r=1}^{m}\sum_{s=m+1}^{m+n} z_{js}\bar{z}_{\alpha r}z_{kr}\bar{z}_{\beta s} + z_{jr}\bar{z}_{\alpha s}z_{ks}\bar{z}_{\beta r}\\
    &= \tfrac{1}{4}\left(\sum_{r=1}^{m}z_{kr}\bar{z}_{\alpha r}\right)\cdot \left(\sum_{s=m+1}^{m+n}z_{js}\bar{z}_{\beta z}\right) + \tfrac{1}{4}\left(\sum_{r=1}^{m}z_{jr}\bar{z}_{\beta r}\right)\cdot \left(\sum_{s=m+1}^{m+n}z_{ks}\bar{z}_{\alpha s}\right)\\
    &= \tfrac{1}{4}\{\psi_{k\alpha}\cdot(\delta_{j\beta}-\psi_{j\beta}) + \psi_{j\beta}\cdot(\delta_{k\alpha}-\psi_{k\alpha})\},
\end{align*}
where in the last line we have used the easily verified identity
\begin{align*}
    \psi_{j\alpha} = \sum_{r= 1}^{m} z_{jr}\bar{z}_{\alpha r} = \delta_{j\alpha} - \sum_{s= m+1}^{m+n} z_{js}\bar{z}_{\alpha s}.
\end{align*}
The desired formula for $\kappa(\psi_{j\alpha},\psi_{k\beta})$ then follow by Theorem \ref{composition relations}.
\end{proof}
The $\U{m}\times \U{n}$-invariant functions $\psi_{j\alpha}:\U{m+n} \to \cn$ induce functions $\hat{\psi}_{j\alpha}:M = \U{m+n}/\U{m}\times \U{n} \to \cn$. As a direct consequence of Lemma \ref{tau kappa c grass}, the induced functions then satisfy the following. 
\begin{theorem}[\cite{Gud-Gha 2}]\label{eigenfams comp grass}
Let $M = \U{m+n}/\U{m}\times \U{n}$. For a fixed natural number $1\leq \alpha\leq m+n$ the set
\begin{align*}
    \mathcal{E}_{\alpha} = \{ \hat{\psi}_{j\alpha}: M \to \cn \ | \ 1 \leq j \leq m+n, j \neq \alpha \} 
\end{align*}
is an eigenfamily on the complex Grassmannian $M$ such that the tension field $\tau$ and conformality operator $\kappa$ on $M$ satisfy
\begin{align*}
    \tau({\hat{\phi}}) = -2(m+n)\cdot \hat{\phi} \quad \textrm{and} \quad \kappa(\hat{\phi},\hat{\psi}) = -2\cdot \hat{\phi}\hat{\psi}
\end{align*}
for all $\hat{\phi},\hat{\psi} \in \mathcal{E}_{\alpha}$.
\end{theorem}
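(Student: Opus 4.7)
The plan is to deduce this theorem almost directly from Lemma \ref{tau kappa c grass}, which gives the action of $\tau$ and $\kappa$ on the $\U{m}\times \U{n}$-invariant functions $\psi_{j\alpha}: \U{m+n} \to \cn$, together with Lemma \ref{lem: induced functions}, which lets us transfer the computation from the Lie group $G = \U{m+n}$ down to the quotient $M = G/K$. Concretely, by construction each $\psi_{j\alpha}$ factors as $\hat{\psi}_{j\alpha}\circ \pi$, so by Lemma \ref{lem: induced functions} we have $\hat{\tau}(\hat{\psi}_{j\alpha})\circ \pi = \tau(\psi_{j\alpha})$ and $\hat{\kappa}(\hat{\psi}_{j\alpha},\hat{\psi}_{k\beta}) \circ \pi = \kappa(\psi_{j\alpha},\psi_{k\beta})$, and since $\pi$ is a surjective submersion these identities descend to relations on $M$.

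The key observation is that the $\delta$-terms in the formulas of Lemma \ref{tau kappa c grass} are exactly what is killed by restricting to the index set defining $\mathcal{E}_\alpha$. For the tension field, pick any $\hat{\psi}_{j\alpha} \in \mathcal{E}_\alpha$; since $j\neq \alpha$ we have $\delta_{j\alpha}=0$, so
\begin{align*}
\tau(\psi_{j\alpha}) = -2(m+n)\cdot \psi_{j\alpha},
\end{align*}
and pushing this identity down to $M$ gives $\hat{\tau}(\hat{\psi}_{j\alpha}) = -2(m+n)\cdot \hat{\psi}_{j\alpha}$. For the conformality operator, take $\hat{\psi}_{j\alpha},\hat{\psi}_{k\alpha} \in \mathcal{E}_\alpha$ (so both second indices equal $\alpha$, and both $j,k\neq \alpha$). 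Specialising the second formula of Lemma \ref{tau kappa c grass} to $\beta = \alpha$ yields
\begin{align*}
\kappa(\psi_{j\alpha},\psi_{k\alpha}) = -2\cdot \psi_{j\alpha}\psi_{k\alpha} + \psi_{j\alpha}\delta_{k\alpha} + \psi_{k\alpha}\delta_{j\alpha} = -2\cdot \psi_{j\alpha}\psi_{k\alpha},
\end{align*}
since $\delta_{j\alpha}=\delta_{k\alpha}=0$. Again Lemma \ref{lem: induced functions} transports this identity to $\hat{\kappa}(\hat{\psi}_{j\alpha},\hat{\psi}_{k\alpha}) = -2\cdot \hat{\psi}_{j\alpha}\hat{\psi}_{k\alpha}$, which is the required eigenfamily identity.

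There is essentially no obstacle here: the real work has already been done in the somewhat lengthy calculation proving Lemma \ref{tau kappa c grass}. The only thing to verify carefully is that the family $\mathcal{E}_\alpha$ is indeed chosen so that both Kronecker deltas in the $\kappa$ formula vanish simultaneously for every pair $\hat{\phi},\hat{\psi}\in \mathcal{E}_\alpha$, which is the reason for \emph{fixing} the second index $\alpha$ and excluding $j=\alpha$ (without fixing $\alpha$ one would pick up stray $\delta_{k\alpha}$ or $\delta_{j\beta}$ terms and $\mathcal{E}$ would fail to be an eigenfamily in the strict sense of Chapter \ref{ch: eigen}).
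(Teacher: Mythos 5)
Your proposal is correct and is exactly the argument the paper intends: the theorem is stated as a direct consequence of Lemma \ref{tau kappa c grass}, with the fixed second index $\alpha$ and the exclusion $j\neq\alpha$ killing the Kronecker-delta terms, and the descent to $M$ handled by the correspondence of Lemma \ref{lem: induced functions}. Nothing is missing.
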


\begin{remark}
These eigenfamilies coincide with those found by S. Gudmundsson and E. Ghandour in the paper \cite{Gud-Gha 2}.
\end{remark}

\section[The Symmetric Space \textbf{SO}(m+n)/\textbf{SO}(m) x \textbf{SO}(n)]{The Symmetric Space $\SO{m+n}/\SO{m}\times \SO{n}$}
In this section we describe the real Grassmannians $M = \SO{m+n}/\SO{m}\times \SO{n}$ as symmetric spaces and give an explicit formula for the corresponding Cartan map. We then use the tools and techmiques from Chapters \ref{ch: harmonic morphisms} and \ref{ch: eigen} to construct complex-valued eigenfunctions on these spaces. These can the be utilised to give a varienty of complex-valued $p$-harmonic maps.

\begin{example}\label{SO(n+m)/(SO(n)*SO(m))}
Let $m,n$ be positive integers and set $$M = \SO{m+n}/\SO{m}\times \SO{n}.$$ Here the subgroup $\SO{m} \times \SO{n}$ consists of the block matrices
\begin{align*}
    \left\{ 
    \begin{pmatrix}
    x^{(11)} & 0\\
    0 & x^{(22)}
    \end{pmatrix} \in \SO{m+n} \ \middle| \  x^{(11)}\in \SO{m}, x^{(22)} \in \SO{n} 
    \right\}.
\end{align*}
As in the complex case, we set $\sigma(x) = I_{m,n}xI_{m,n}$ and by similar computations as in Example \ref{ex: SU(n+m)/S(U(n)*U(m))} we see that $\sigma$ is an involutive automorphism of $\SO{m+n}$ with fixed group $\SO{m}\times \SO{n}$. Hence $M$ is indeed a symmetric space. Since $\SO{m+n}$ is compact and semisimple, $M$ is of compact type. The Cartan embedding corresponding to $\sigma$ is given by
\begin{align*}
    \Phi: x \mapsto xI_{m,n}x^t I_{m,n}.
\end{align*}
These symmetric spaces are known as \emph{real oriented Grassmannians} and can be identified with the set of $m$-dimensional oriented subspaces of $\rn^{m+n}$. An important special case is the case with $n = 1$ in which case we get thath $M = \SO{m+1}/(\SO{m}\times \SO{1}$ is simply the $m$-dimensional sphere $\mathbf{S}^{m}$.
\end{example}
\begin{lemma}\label{basis SO(m+n)/SO(m)*SO(n)}
Let $\so{m+n} = \la{k}\oplus\la{p}$ be the Cartan decomposition corresponding to the symmetric triple $(\SO{m+n},\SO{m}\times \SO{n},\sigma)$ of Example \ref{SO(n+m)/(SO(n)*SO(m))}. Then an orthonormal basis for $\la{p}$ is given by
\begin{align*}
    \basis_{\la{p}} = \{ Y_{rs} \ | \ 1 \leq r \leq m, m+1 \leq s \leq m+n\} 
\end{align*}
\end{lemma}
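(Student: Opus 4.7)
The plan is to proceed in direct analogy with the proof of Lemma~\ref{basis U(m+n)/U(m)*U(n)}. I would start from the fact, established in Proposition~\ref{basis so}, that
$$\basis = \{Y_{rs} \mid 1 \leq r < s \leq m+n\}$$
is an orthonormal basis for $\so{m+n}$, and then simply classify each $Y_{rs}$ as lying in either $\la{k}$ or $\la{p}$ by computing the action of $d\sigma$ on it.

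Since $\sigma$ is the restriction to $\SO{m+n}$ of the linear map $x\mapsto I_{m,n}\,x\,I_{m,n}$ on $\gl{m+n}{\rn}$, its differential at the identity is $d\sigma(X) = I_{m,n}\,X\,I_{m,n}$. Writing $I_{m,n} = \mathrm{diag}(\varepsilon_1,\dots,\varepsilon_{m+n})$ with $\varepsilon_i = +1$ for $i\leq m$ and $\varepsilon_i = -1$ for $i>m$, a one-line computation gives
$$d\sigma(E_{rs}) = \varepsilon_r \varepsilon_s\, E_{rs},$$
and hence $d\sigma(Y_{rs}) = \varepsilon_r \varepsilon_s\, Y_{rs}$. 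The eigenvalue is $+1$ precisely when $r,s$ lie on the same side of the split (both $\leq m$ or both $>m$), and $-1$ precisely when one index lies in $\{1,\dots,m\}$ and the other in $\{m+1,\dots,m+n\}$.

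Thus the $-1$ eigenspace $\la{p}$ has as basis exactly
$$\basis_{\la{p}} = \{Y_{rs}\mid 1\leq r\leq m,\ m+1\leq s\leq m+n\},$$
while the remaining $Y_{rs}$'s (those with both indices in $\{1,\dots,m\}$ or both in $\{m+1,\dots,m+n\}$) form an orthonormal basis for $\la{k} = \so{m}\oplus\so{n}$, consistent with Proposition~\ref{cartan decomposition}(i). Since $\basis_{\la{p}}$ is a subset of the orthonormal set $\basis$, it is automatically orthonormal. There is no real obstacle here; the only thing worth double-checking is the sign computation $d\sigma(E_{rs}) = \varepsilon_r\varepsilon_s E_{rs}$, which follows immediately because $I_{m,n}$ is diagonal so left multiplication rescales the $r$-th row by $\varepsilon_r$ and right multiplication rescales the $s$-th column by $\varepsilon_s$.
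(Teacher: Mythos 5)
Your proof is correct. The sign computation $d\sigma(E_{rs}) = \varepsilon_r\varepsilon_s E_{rs}$, hence $d\sigma(Y_{rs}) = \varepsilon_r\varepsilon_s Y_{rs}$, is right (it uses $\Ad{I_{m,n}}(X) = I_{m,n}XI_{m,n}^{-1} = I_{m,n}XI_{m,n}$, as in Proposition \ref{adjoint on matrix groups}), and since the full orthonormal basis $\basis$ of Proposition \ref{basis so} then consists of eigenvectors of $d\sigma$, the $-1$-eigenvectors span exactly $\la{p}$ and orthonormality is inherited. Your route differs slightly from the paper's proof of this particular lemma: the paper does not act with $d\sigma$ on the basis at all, but instead observes that $\basis\setminus\basis_{\la{p}}$ is an orthonormal basis for $\la{k}\cong\so{m}\oplus\so{n}$ and then invokes the orthogonality of $\la{k}$ and $\la{p}$ with respect to the Killing form (Proposition \ref{cartan decomposition}), so that $\basis_{\la{p}}$ must span the orthogonal complement $\la{p}$ by a dimension count. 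Your direct eigenvalue computation is the approach the paper itself uses for the complex case in Lemma \ref{basis U(m+n)/U(m)*U(n)}; it is more self-contained and explicitly exhibits the eigenspace decomposition, whereas the paper's indirect argument avoids any computation with $\sigma$ at the cost of relying on the general structure theory of the Cartan decomposition. Both are complete proofs.
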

\begin{proof}
The set $\basis_{\la{p}}$ is a strict subset of the orthonormal basis $\basis$ for $\so{m+n}$ given in Proposition \ref{basis so}. The complement $\basis \setminus \basis_{\la{p}}$ is clearly an orthonormal basis for the subalgebra $\la{k}$. The inner product on $\so{m+n}$ is a scalar multiple of the killing form. Hence, since the eigenspaces of $d\sigma$ are orthogonal with respect to the killing form, $\basis_{\la{p}}$ must be an orthonormal basis for $\la{p}$.
\end{proof}
We will now make use of Theorem \ref{composition relations} to construct eigenfamilies on the real Grassmannian $M = \SO{m+n}/\SO{m}\times \SO{n}$. In a similar fashion to the complex case, we define the maps $\psi_{j\alpha}: \SO{m+n} \to \rn$ by
\begin{align*}
    \psi_{j\alpha}: x \mapsto \tfrac{1}{2}\cdot(xI_{m,n}x^t + I_{m+n})_{j\alpha}.
\end{align*}
Using that $x \in \SO{m+n}$ one may with simple computations verify that
\begin{align*}
    \psi_{j\alpha}(x) = \sum_{r=1}^{m}x_{jr}x_{\alpha r}.
\end{align*}
Like in the complex case, we may also write $\psi_{j\alpha}$ as a composition $\psi_{j\alpha} = f_{j\alpha} \circ \Phi$ of the function
\begin{align*}
    f_{j\alpha}:x \mapsto \tfrac{1}{2}\cdot(x\cdot I_{m,n}+I_{m+n})_{j\alpha}
\end{align*}
with the Cartan map $\Phi$. Hence, $\psi_{j\alpha}$ is $\SO{m}\times \SO{n}$-invariant, and we can use Theorem \ref{composition relations} to prove the following Lemma.
\begin{lemma}[\cite{Gud-Gha 1}]\label{lem: tau kappa on real grass}
The tension field and conformality operator on the special orthogonal group $\SO{m+n}$ satisfy the relations
\begin{align*}
    \tau(\psi_{j\alpha}) = -(m+n)\cdot \psi_{j\alpha} - n\cdot\delta_{j\alpha}
\end{align*}
and
\begin{align*}
    \kappa(\psi_{j\alpha},\psi_{k\beta}) &= -(\psi_{j\beta}\psi_{k\alpha} +\psi_{jk}\psi_{\alpha\beta})\\
    &\sed + \tfrac{1}{2}\cdot(\delta_{\alpha\beta}\cdot\psi_{jk} +\delta_{k\alpha}\cdot\psi_{j\beta} + \delta_{j\beta}\cdot\psi_{k\alpha} +\delta_{jk}\cdot\psi_{\alpha\beta}).
\end{align*}
\end{lemma}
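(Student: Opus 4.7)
The plan is to follow the same blueprint as the proof of Lemma \ref{tau kappa c grass} in the complex case. The $\SO{m}\times \SO{n}$-invariance of $\psi_{j\alpha}$ lets us factor it as $\psi_{j\alpha} = f_{j\alpha}\circ \Phi$, where $f_{j\alpha}(x) = \tfrac{1}{2}\cdot(x\cdot I_{m,n} + I_{m+n})_{j\alpha}$. By Theorem \ref{composition relations} the problem reduces to computing $\tau_N(f_{j\alpha})\circ \Phi$ and $\kappa_N(f_{j\alpha},f_{k\beta})\circ \Phi$ on the image $N = \Phi(\SO{m+n})$, followed by multiplication by the conformal factor $4$. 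To carry out the computations on $N$, I would use Remark \ref{expression for basis} with the basis from Lemma \ref{basis SO(m+n)/SO(m)*SO(n)} to obtain the orthonormal basis $\{\Ad{\sigma(x)}(Y_{rs})_{\Phi(x)} \mid 1\le r\le m,\ m+1\le s\le m+n\}$ of $T_{\Phi(x)}N$. The real case is structurally simpler than the complex one since there are no $iX_{rs}$-companions.

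For the tension field, I would compute
\begin{align*}
\Ad{\sigma(x)}(Y_{rs})(f_{j\alpha})(\Phi(x)) &= \tfrac{1}{2}\cdot e_j\cdot x Y_{rs} I_{m,n} x^t\cdot e_\alpha^t,\\
\Ad{\sigma(x)}(Y_{rs})^2(f_{j\alpha})(\Phi(x)) &= \tfrac{1}{2}\cdot e_j\cdot x Y_{rs}^2 I_{m,n} x^t\cdot e_\alpha^t.
\end{align*}
Summing reduces the job to evaluating $\sum_{r=1}^{m}\sum_{s=m+1}^{m+n}Y_{rs}^2$, which I would obtain by the telescoping identity
\begin{align*}
\sum_{r=1}^{m}\sum_{s=m+1}^{m+n}Y_{rs}^2 = \sum_{1\le r<s\le m+n}Y_{rs}^2 - \sum_{1\le r<s\le m}Y_{rs}^2 - \sum_{m+1\le r<s\le m+n}Y_{rs}^2
\end{align*}
and three applications of Lemma \ref{square sum relation}. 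This produces a linear combination of $I_{m+n}$ and $I_{m,n}$; substituting back and using $xx^t = I_{m+n}$ together with $(xI_{m,n}x^t)_{j\alpha} = 2\psi_{j\alpha}(x) - \delta_{j\alpha}$ yields an expression linear in $\psi_{j\alpha}$ and $\delta_{j\alpha}$, which after the factor of $4$ gives the stated formula.

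For the conformality operator, each basis vector contributes a term
\begin{align*}
\tfrac{1}{4}\cdot\bigl(e_j\cdot xY_{rs}I_{m,n}x^t\cdot e_\alpha^t\bigr)\cdot \bigl(e_k\cdot xY_{rs}I_{m,n}x^t\cdot e_\beta^t\bigr).
\end{align*}
Expanding $Y_{rs}=\tfrac{1}{\sqrt{2}}(E_{rs}-E_{sr})$ turns this into a sum of four quartic monomials in the matrix entries of $x$, indexed over $1\le r\le m$ and $m+1\le s\le m+n$. I would then factorise each such double sum into a product of partial sums of the form $\sum_{r=1}^{m}x_{jr}x_{kr}$, and use the orthogonality identity $\sum_{r=1}^{m}x_{jr}x_{kr} = \psi_{jk}(x)$ together with its column-complement $\sum_{s=m+1}^{m+n}x_{js}x_{ks} = \delta_{jk} - \psi_{jk}(x)$ repeatedly to re-express everything in terms of $\psi$'s and Kronecker deltas.

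The main obstacle is the bookkeeping in the $\kappa$ computation. Unlike the complex case, where the $iX_{rs}$ terms cancel half of the cross products, every quartic monomial survives here; this is why the final expression acquires the extra symmetric term $\psi_{jk}\psi_{\alpha\beta}$ (and the matching $\delta_{\alpha\beta}\psi_{jk}$ and $\delta_{jk}\psi_{\alpha\beta}$ correction terms) that have no analogue in the complex formula. The challenge is to organise the expansion symmetrically in the four indices $(j,k,\alpha,\beta)$ so that the Kronecker-delta terms produced by the orthogonality identity assemble into exactly the symmetrised combination appearing in the statement of the lemma.
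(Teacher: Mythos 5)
Your proposal follows the paper's proof essentially verbatim: the same factorisation $\psi_{j\alpha} = f_{j\alpha}\circ\Phi$, reduction via Theorem \ref{composition relations}, the basis $\{\Ad{\sigma(x)}(Y_{rs})\}$ from Lemma \ref{basis SO(m+n)/SO(m)*SO(n)} and Remark \ref{expression for basis}, the telescoping evaluation of $\sum_{r,s}Y_{rs}^2$ via Lemma \ref{square sum relation} for the tension field, and the quartic-monomial expansion factorised through $\sum_{r=1}^m x_{jr}x_{kr}=\psi_{jk}$ and its complement for the conformality operator. The only cosmetic difference is that the paper first rewrites $Y_{rs}I_{m,n}$ as $\pm X_{rs}$ before expanding, whereas you expand $Y_{rs}$ directly; the bookkeeping is the same.
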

\begin{proof}
By Theorem \ref{composition relations} to compute $\tau(\psi_{j\alpha})(x)$ and $\kappa(\psi_{j\alpha},\psi_{k\beta})(x)$ we need only compute $\tau_{N}(f_{j\alpha})(\Phi(x))$ and $\kappa_{N}(f_{j\alpha},f_{k\beta})(\Phi(x))$, where $\tau_{N}$ and $\kappa_{N}$ are the tension field and conformality operators on $N = \Phi(\SO{m+n})$, respectively. By Lemma \ref{basis SO(m+n)/SO(m)*SO(n)} and Remark \ref{expression for basis}, a basis for $T_{\Phi(x)}N$ is given by
\begin{align*}
    \basis = \{\Ad{\sigma(x)}(Y_{rs})_{\Phi(x)} \ | \ 1\leq r \leq m, m+1 \leq s \leq m+n \}.
\end{align*}
We have, by similiar computations to those in the proof of Lemma \ref{tau kappa c grass}, 
\begin{align*}
    \Ad{\sigma(x)}(Y_{rs})(f_{j\alpha})(\Phi(x)) =\frac{1}{2}\cdot e_{j}\cdot xY_{rs}I_{m,n}x^t\cdot e_{\alpha}^t
\end{align*}
and
\begin{align*}
    \Ad{\sigma(x)}(Y_{rs})^2(f_{j\alpha})(\Phi(x)) = \frac{1}{2}\cdot e_{j}\cdot xY_{rs}^2I_{m,n}x^t\cdot e_{\alpha}^t.
\end{align*}
for $x \in \SO{m+n}$ and $1 \leq r < s \leq m+n$. From these relations we find
\begin{align}\label{eq: tau r grass}
\begin{split}
    \tau_{N}(f_{j\alpha})(\Phi(x)) &= \tfrac{1}{2}\cdot \sum_{r=1}^{m}\sum_{s= m+1}^{m+n} e_{j}\cdot xY_{rs}^2I_{m,n}x^t\cdot e_{\alpha}^t\\
    &= \tfrac{1}{2}\cdot e_{j}\cdot x \cdot \left(\sum_{r=1}^{m}\sum_{s= m+1}^{m+n}Y_{rs}^2\right)\cdot x^t\cdot e_{\alpha}^t.     
\end{split}
\end{align}
By similar arguments and computations as in the proof of Lemma \ref{tau kappa c grass} we find
\begin{align*}
    \sum_{r=1}^{m}\sum_{s= m+1}^{m+n}Y_{rs}^2 = -\tfrac{m+n}{4}\cdot I_{m+n} + \tfrac{m-n}{4}\cdot I_{m,n}.
\end{align*}
Inserting this into equation \ref{eq: tau r grass} we obtain
\begin{align*}
    \tau_{N}(f_{j\alpha})(\Phi(x)) &= -\tfrac{m+n}{8}\cdot e_{j}\cdot xI_{m,n}x^{t}\cdot e_{\alpha}^t + \tfrac{m-n}{8}\cdot \delta_{j\alpha}\\
    &= -\tfrac{m+n}{4}\cdot \tfrac{1}{2}\cdot e_{j}\cdot (xI_{m,n}x^{t} +I_{m+n})\cdot e_{\alpha}^t + \tfrac{n}{4}\cdot \delta_{j\alpha}\\
    &= -\tfrac{m+n}{4}\cdot \psi_{j\alpha} + \tfrac{n}{4}\cdot \delta_{j\alpha}
\end{align*}
from which the desired formula is deduced via Theorem \ref{composition relations}. For the conformality operator we have, again with similar computations to those in the proof of Lemma \ref{tau kappa c grass}
\begin{align*}
    &\kappa_{N}(f_{j\alpha},f_{k\beta})(\Phi(x))\\
    &= \tfrac{1}{4}\sum_{r=1}^{m}\sum_{s=m+1}^{m+n} e_{j}\cdot x Y_{rs}I_{m,n} x^t\cdot e_{\alpha}^t \cdot  e_{k}\cdot x Y_{rs}I_{m,n} x^t\cdot e_{\beta}^t\\
    &= \tfrac{1}{4}\sum_{r=1}^{m}\sum_{s=m+1}^{m+n} e_{j}\cdot x X_{rs} x^t\cdot e_{\alpha}^t\cdot  e_{k}\cdot x\cdot X_{rs}\cdot x^t\cdot e_{\beta}^t\\
    &= \tfrac{1}{8}\sum_{r=1}^{m}\sum_{s=m+1}^{m+n} \big\{ x_{jr}x_{\alpha s} x_{kr}x_{\beta s} + x_{js}x_{\alpha r} x_{kr}x_{\beta s}  + x_{jr}x_{\alpha s} x_{ks}x_{\beta r} + x_{js}x_{\alpha r} x_{ks}x_{\beta r}\\
    &= \tfrac{1}{8}\left(\sum_{r=1}^{m} x_{jr}x_{kr} \right)\cdot \left(\sum_{s=m+1}^{m+n} x_{\alpha s}x_{\beta s} \right) + \tfrac{1}{8}\left(\sum_{r=1}^{m} x_{kr}x_{\alpha r} \right)\cdot \left(\sum_{s=m+1}^{m+n} x_{j s}x_{\beta s} \right)\\
    &\qquad + \tfrac{1}{8}\left(\sum_{r=1}^{m} x_{jr}x_{\beta r} \right)\cdot \left(\sum_{s=m+1}^{m+n} x_{k s}x_{\alpha s} \right) + \tfrac{1}{8}\left(\sum_{r=1}^{m} x_{\alpha r}x_{\beta r} \right)\cdot \left(\sum_{s=m+1}^{m+n} x_{j s}x_{k s} \right)\\
    &= \tfrac{1}{8}\cdot \{\psi_{jk}\cdot(\delta_{\alpha\beta} - \psi_{\alpha\beta}) + \psi_{k\alpha}\cdot(\delta_{j\beta} - \psi_{j\beta})\\
    &\sed + \psi_{j\beta}\cdot(\delta_{k\alpha} - \psi_{k\alpha}) + \psi_{\alpha\beta}\cdot(\delta_{jk} - \psi_{jk}) \}.
\end{align*}
From this the desired formula for $\kappa(\psi_{j\alpha},\psi_{k\beta})$ can be deduced upon simplifying and applying Theorem \ref{composition relations}.
\end{proof}
\begin{theorem}[\cite{Gud-Gha 1}]\label{thm: eigenfunctions on real grass}
    Let $A$ be a complex symmetric $(m+n)\times (m+n)$-matrix of rank $1$ such that $A^2 =0$ and $\tr(A) = 0$. Then the $\SO{m} \times \SO{n}$-invariant function $\psi_A: \SO{m+n} \to \cn$, given by
    \begin{align*}
        \psi_A(x) = \sum_{j,\alpha =1}^{m+n} a_{j\alpha}\cdot\psi_{j\alpha}(x)
    \end{align*}
    induces an eigenfunction $\hat{\psi}_A:\SO{m+n}/ \SO{m} \times \SO{n} \to \cn$ on the real Grassmannian with
    \begin{align*}
        \tau(\hat{\psi}_A) = -(m+n)\cdot \hat{\psi}_A \quad \text{and} \quad \kappa(\hat{\psi}_A,\hat{\psi}_A) = -2 \cdot\hat{\psi}_A^2.
    \end{align*}
\end{theorem}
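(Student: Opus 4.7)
The plan is to first compute $\tau(\psi_A)$ and $\kappa(\psi_A,\psi_A)$ on the Lie group $\SO{m+n}$ by exploiting the linearity of these operators together with the formulas from Lemma~\ref{lem: tau kappa on real grass}. Since $\psi_A = \sum_{j,\alpha} a_{j\alpha}\psi_{j\alpha}$ is a linear combination of the $\SO{m}\times\SO{n}$-invariant functions $\psi_{j\alpha}$, it too is $\SO{m}\times\SO{n}$-invariant and descends to a function $\hat{\psi}_A$ on $M$. Lemma~\ref{lem: induced functions} then guarantees that the eigenvalues are preserved on the quotient.

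For the tension field, linearity of $\tau$ combined with Lemma~\ref{lem: tau kappa on real grass} gives
\[
\tau(\psi_A) = \sum_{j,\alpha} a_{j\alpha}\bigl(-(m+n)\,\psi_{j\alpha} - n\,\delta_{j\alpha}\bigr) = -(m+n)\,\psi_A - n\cdot\tr(A),
\]
and the second term vanishes by the hypothesis $\tr(A) = 0$, yielding the desired eigenvalue relation immediately.

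For the conformality operator the computation is more intricate. The idea is to package the sums using the symmetric matrix-valued function $M(x) = \tfrac{1}{2}(xI_{m,n}x^t + I_{m+n})$, so that $\psi_{j\alpha} = M_{j\alpha}$ and $\psi_A = \tr(AM)$. Since $A$ is symmetric of rank one, I would factor $A = vv^t$ for some $v\in\cn^{m+n}$; in this setting the conditions $\tr(A) = 0$ and $A^2 = 0$ both reduce to $v^t v = 0$. Expanding $\kappa(\psi_A,\psi_A)$ via Lemma~\ref{lem: tau kappa on real grass} produces two quadratic terms (from the $\psi\psi$ terms) and four $\delta$-linear terms. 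Each of the four $\delta$-terms contracts two indices across two separate copies of $A$ and, after relabelling, collapses (using symmetry of $A$) to an expression of the form $\sum_{i,j}(A^2)_{ij}M_{ij}$, which vanishes by $A^2 = 0$. A careful index rearrangement, using $A^t = A$ and $M^t = M$, shows that both quadratic terms evaluate to $-\tr((AM)^2)$, for a total of $-2\,\tr((AM)^2)$. The factorisation $A = vv^t$ then gives
\[
(AM)^2 = v(v^t M v)v^t M = (v^t M v)\,AM = \psi_A\cdot AM,
\]
so $\tr((AM)^2) = \psi_A^2$, from which $\kappa(\psi_A,\psi_A) = -2\psi_A^2$ follows.

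The main obstacle will be the bookkeeping in the quadratic-term expansion: reducing both quadratic terms to $\tr((AM)^2)$ requires careful tracking of indices together with the symmetry of $A$ and $M$, while the final identification $\tr((AM)^2) = \psi_A^2$ relies crucially on the rank-one structure of $A$. By introducing the factorisation $A = vv^t$ at the outset, the entire argument becomes a short matrix calculation; without it the index manipulations quickly become unwieldy.
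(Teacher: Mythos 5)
Your proposal is correct, and it proves the same statement from the same starting point (bilinearity of $\tau$ and $\kappa$ applied to Lemma \ref{lem: tau kappa on real grass}, then descent via $\SO{m}\times\SO{n}$-invariance), but the mechanics of the $\kappa$-computation differ from the paper in a worthwhile way. The paper never factors $A$: it expands everything at the level of entries, collects the quadratic terms into $2\times 2$ minors $\det\begin{pmatrix} a_{j\alpha} & a_{j\beta}\\ a_{k\alpha} & a_{k\beta}\end{pmatrix}$, which vanish because $A$ has rank $1$, and collects the $\delta$-terms into $\ip{a_j}{a_\alpha} = (A^2)_{j\alpha}$, which vanish by $A^2=0$; the factorisation $A = p^t p$ only appears afterwards, in Example \ref{ex: explicit eigenfuncts on real grass}, as a device for producing admissible matrices. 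You instead pull the factorisation $A = vv^t$ into the proof itself and replace the minor identities by the trace identity $\tr((AM)^2) = (v^tMv)^2 = \tr(AM)^2$, which is cleaner and makes the role of each hypothesis transparent ($A^2=0$ kills the $\delta$-terms, rank one handles the quadratic terms). Your index bookkeeping checks out: with $M$ and $A$ both symmetric, each quadratic term is indeed $\tr(AMAM)$ and each of the four $\delta$-terms contracts to $\tfrac12\sum_{i,j}(A^2)_{ij}M_{ij}=0$. The only point you should justify explicitly is that every nonzero complex \emph{symmetric} matrix of rank $1$ admits a factorisation $A=vv^t$: writing $A = uw^t$, symmetry forces $w$ to be proportional to $u$, and the proportionality constant has a complex square root which can be absorbed into $v$; this is where working over $\cn$ (rather than $\rn$) matters. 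With that one sentence added, your argument is complete and, if anything, shorter than the paper's.
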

\begin{proof}
    It follows from Lemma \ref{lem: tau kappa on real grass} and the fact that $A$ is traceless that the tension field $\tau$ on $\SO{m+n}$ satisfies
    \begin{align*}
        \tau(\psi_A) = -(m+n)\cdot \psi_{A} + n\cdot \tr(A) = -(m+n)\cdot \psi_A.
    \end{align*}
    For the conformality operator $\kappa$ we have
    \begin{align*}
        &2\cdot\psi_A^2 + \kappa(\psi_A,\psi_A)\\
        &= 2\cdot\psi_A^2 + \sum_{j,\alpha,k,\beta = 1}^{m+n} \kappa(a_{j\alpha}\psi_{j\alpha},a_{k\beta}\psi_{k\beta})\\
        &= 2\cdot\psi_A^2 + \sum_{j,\alpha,k,\beta = 1}^{m+n}a_{j\alpha}a_{k\beta}\kappa(\psi_{j\alpha},\psi_{k\beta})\\
        &= 2\cdot\psi_A^2 - \sum_{j,\alpha, k, \beta = 1}^{m+n} a_{j\alpha}a_{k\beta}(\psi_{j\beta}\psi_{k\alpha} + \psi_{jk}\psi_{\alpha\beta})\\
        &\sed +\tfrac{1}{2}\cdot\sum_{j,\alpha,k,\beta=1}^{m+n}a_{j\alpha}a_{k\beta}(\delta_{\alpha\beta}\psi_{jk} + \delta_{k\alpha}\psi_{j\beta} +\delta_{j\beta}\psi_{k\alpha} + \delta_{jk}\psi_{\alpha\beta})\\
        &= \sum_{j,\alpha,k,\beta=1}^{m+n} (2a_{j\alpha}a_{k\beta} - a_{j\beta}a_{k\alpha} - a_{jk}a_{\alpha\beta})\cdot \psi_{j\alpha}\psi_{k\beta} + 2\sum_{j,\alpha,t=1}^{m+n} a_{jt}a_{\alpha t} \psi_{j\alpha}\\
        &= \sum_{j,\alpha,k,\beta=1}^{m+n}\left(\det 
        \begin{pmatrix}
            a_{j\alpha} & a_{j\beta}\\
            a_{k\alpha} & a_{k\beta}
        \end{pmatrix} + 
        \det 
        \begin{pmatrix}
            a_{j\alpha} & a_{jk}\\
            a_{\beta\alpha} & a_{\beta k} 
        \end{pmatrix}
        \right)\cdot \psi_{j\alpha}\psi_{k\beta}\\
        &\sed +2\sum_{j,\alpha = 1}^{m+n} \ip{a_j}{a_\alpha}\cdot\psi_{j\alpha}\\
        &= 0,
    \end{align*}
    as desired. The last line follows from the fact that $A$ is symmetric of rank $1$ and $A^2 = 0$.
\end{proof}
In the following we construct matrices satisfying the conditions of Theorem \ref{thm: eigenfunctions on real grass}, and hence get a multi-dimensional family of eigenfunctions on the real Grassmannian $\SO{m+n}/\SO{m}\times \SO{n}$.
\begin{example}[\cite{Gud-Gha 1}]\label{ex: explicit eigenfuncts on real grass}
Let $p = (p_1,\dots, p_{m+n}) \in \cn^{m+n}$ be a non-zero isotropic vector i.e.
    \begin{align*}
        \ip{p}{p} = p_1^2 + \cdots + p_{m+n}^2 =0.
    \end{align*}
Then the complex $(m+n)\times (m+n)$ 
symmetric matrix $A = p^t \cdot p$ satisfies $A^2 = 0$, $\tr(A) = 0$ and rank $A =1$. Hence by Theorem \ref{thm: eigenfunctions on real grass}, the $\SO{m}\times \SO{n}$-invariant function 
\begin{align*}
    \psi_{p} = \sum_{j,\alpha = 1}p_{j}p_{\alpha} \cdot \psi_{j\alpha}
\end{align*}
induces an eigenfunction $\hat{\psi}_p$ on $\SO{m+n}/\SO{m}\times \SO{n}$, satisfying
    \begin{align*}
        \tau(\hat{\psi}_p) = -(m+n)\cdot \hat{\psi}_p \quad \text{and} \quad \kappa(\hat{\psi}_p,\hat{\psi}_p) = -2 \cdot\hat{\psi}_p^2.
    \end{align*}
This provides a complex $(m+n-1)$-dimensional family of eigenfunctions on the real Grassmannian $\SO{m+n}/\SO{m}\times \SO{n}$.
\end{example}

\begin{remark}
    The eigenfunctions found in this section coincides with those found by Gudmundsson and Ghandour in \cite{Gud-Gha 1}.
\end{remark}

\section[The Symmetric Space \textbf{Sp}(m+n)/\textbf{Sp}(m) x \textbf{Sp}(n)]{The Symmetric Space $\Sp{m+n}/\Sp{m}\times\Sp{n}$}
In this section we describe the quaternionic Grassmannians $\Sp{m+n}/\Sp{m}\times\Sp{n}$ as symmetric spaces and give an explicit formula for the Cartan map. We then use the techniques developed in Chapters \ref{ch: harmonic morphisms} and \ref{ch: eigen} to construct a multitude of eigenfunctions and eigenfamilies on these spaces. In this case we find new eigenfamilies which significantly extend the eigenfamilies already found by Gudmundsson and Ghandour in \cite{Gud-Gha 2}.
\begin{example}\label{Quat-Grass}
Let $m,n$ be positive integers. We then set $M = \Sp{m+n}/\Sp{m}\times\Sp{n}$. 
In this case it is easier to first think of the elements of $\Sp{k}$ as quaternionic matrices. The subgroup $\Sp{m}\times \Sp{n}$ then sits inside $\Sp{m+n}$ as the group of quaternionic block matrices
\begin{align*}
    \Sp{n}\times\Sp{m} \cong 
    \left\{ 
    \begin{pmatrix}
    q^{(11)} & 0\\
    0 & q^{(22)}
    \end{pmatrix}
    \in \Sp{m+n} \
    \middle | \
    q^{(11)} \in \Sp{m}, \ q^{(22)} \in \Sp{n}
    \right\}.
\end{align*}
As in Examples \ref{ex: SU(n+m)/S(U(n)*U(m))} and \ref{SO(n+m)/(SO(n)*SO(m))}, we set $\sigma(q) = I_{m,n}qI_{m,n}$ and by similar arguments and computations as in those cases we see that $\sigma$ is an involutive automorphism of $\Sp{m+n}$ with fixed subgroup $\Sp{m}\times \Sp{n}$. Hence $M = \Sp{m+n}/\Sp{m}\times \Sp{n}$ is a symmetric space. Since $\Sp{m+n}$ is compact and semisimple, $M$ is of compact type. Note also that $I_{m,n} \in \Sp{m+n}$.

As our aim is to find complex-valued eigenfamilies, it will be more useful to describe $M$ and its Cartan embedding in terms of the standard complex representation, as found in Proposition \ref{Sp(n), GL(H) Lie groups},  of $\Sp{m+n}$. For the remainder of this section we therefore shall think of $q \in \Sp{m+n}$ as a $2(m+n)\times 2(m+n)$ complex matrix. In terms of this representation the automorphism $\sigma$ is described by $\sigma(q) = \tilde{I}_{m,n}q\tilde{I}_{m,n}$, where $\tilde{I}_{m,n}$ is the 
$$((m+n) + (m+n))\times ((m+n) + (m+n))$$ block matrix
\begin{align*}
    \tilde{I}_{m,n} = \begin{pmatrix}
    I_{m,n} & 0\\
    0 & I_{m,n}
    \end{pmatrix}
\end{align*}
The corresponding Cartan embedding is given by
\begin{align*}
    \Phi: q \mapsto q\tilde{I}_{m,n}\bar{q}^t\tilde{I}_{m,n}.
\end{align*}
\end{example}
In a similar vein to the complex and real Grassmannians, we define the maps $\psi_{j\alpha}: \Sp{m+n} \to \cn$ by
\begin{align*}
    \psi_{j\alpha}: q \mapsto \tfrac{1}{2}\cdot(q\tilde{I}_{m,n}\bar{q}^t + I_{2(m+n)})_{j\alpha},
\end{align*}
for $1 \leq j,\alpha \leq 2(m+n)$. It is easy to verify that $\psi_{j\alpha}$ then satisfies
\begin{align}\label{eq: psi as poly}
    \psi_{j\alpha} = \sum_{r=1}^{m} q_{jr}\bar{q}_{\alpha r} + \sum_{r= m+n+1}^{2m + n} q_{jr}\bar{q}_{\alpha r}.
\end{align}
We furthermore see that we may write $\psi_{j\alpha}$ as a composition $\psi_{j\alpha} = \eta_{j\alpha} \circ \Phi$, where $\eta_{j\alpha}: \Sp{m+n} \to \cn$ is given by
\begin{align*}
    \eta_{j\alpha}: q \mapsto \tfrac{1}{2}\cdot(q\cdot \tilde{I}_{m,n} + I_{2(m+n)})_{j\alpha}
\end{align*}
and $\Phi: \Sp{m+n} \to \Sp{m+n}$ is the Cartan map. Hence $\psi_{j\alpha}$ is $\Sp{m}\times \Sp{n}$-invariant.

We could now employ Theorem \ref{composition relations} to calculate the tension field and conformality operator acting on the functions $\psi_{j\alpha}$. However, in practice the computations are simpler if we instead use equation \eqref{eq: psi as poly} and previous results concering the actions of these operators on the matrix elements $q_{j\alpha}$. Recall that for the standard complex representation we have
\begin{align*}
    q = \begin{pmatrix}
    z & w\\
    -\bar{w} & \bar{z}
    \end{pmatrix}.
\end{align*}
We introduce the functions $f_{j\alpha},g_{j\alpha}: \Sp{m+n} \to \cn$ with
\begin{align*}
    f_{j\alpha}(q) = \sum_{r=1}^{m}( z_{jr}\bar{z}_{\alpha r} + w_{jr}\bar{w}_{\alpha r}),
\end{align*}
and
\begin{align*}
    g_{j\alpha}(q) = \sum_{r=1}^{m} (w_{jr}z_{\alpha r} - z_{jr}w_{\alpha r}).
\end{align*}
This means that
\begin{align}\label{eq: psi comp}
    \begin{split}
    \psi_{j\alpha } &= f_{j\alpha },\\
    \psi_{j,\alpha +m+n} &= g_{j\alpha },\\
    \psi_{j+m+n,\alpha } &= -\bar{g}_{j\alpha },\\
    \psi_{j+m+n,\alpha +m+n} &= \bar{f}_{j\alpha },  
    \end{split}
\end{align}
for $1 \leq j,\alpha, \leq m+n$. We will now determine the action of the tension field and the conformality operator on these functions. For this we will make use of the following result, which is Lemma 9.1 of \cite{Gud-Gha 2}.
\begin{lemma}[\cite{Gud-Gha 2},\cite{Gud-Sak}]\label{tau kappa quat grass}
Let $z_{j\alpha},w_{j\alpha}: \Sp{n+m} \to \cn$ be the matrix elements of the standard complex representation of the quaternionic unitary group $\Sp{n+m}$. Then the tension field $\tau$ and conformality operator $\kappa$ satisfy the following relations
\begin{align*}
    \tau(z_{j\alpha}) = -\tfrac{2(n+m)+1}{2}\cdot z_{j\alpha}, &\quad \tau(w_{j\alpha}) = -\tfrac{2(n+m)+1}{2}\cdot w_{j\alpha},\\
    \kappa(z_{j\alpha},z_{k \beta}) = -\tfrac{1}{2}\cdot z_{j\beta}z_{k\alpha}, &\quad \kappa(w_{j\alpha},w_{k\beta}) = -\tfrac{1}{2}\cdot w_{j\beta}w_{k\alpha},\\
    \kappa(z_{j\alpha},w_{k\beta}) &= -\tfrac{1}{2}\cdot z_{k\alpha}w_{j\beta},\\
    \tau(\bar{z}_{j\alpha}) = -\tfrac{2(n+m)+1}{2}\cdot \bar{z}_{j\alpha}, &\quad \tau(\bar{w}_{j\alpha}) = -\tfrac{2(n+m)+1}{2}\cdot \bar{w}_{j\alpha},\\
    \kappa(\bar{z}_{j\alpha},\bar{z}_{k\beta}) = -\tfrac{1}{2}\cdot \bar{z}_{j \beta}\bar{z}_{k\alpha}, &\quad \kappa(\bar{w}_{j\alpha},\bar{w}_{k\beta}) = -\tfrac{1}{2}\cdot \bar{w}_{j\beta}\bar{w}_{k\alpha},\\
    \kappa(\bar{z}_{j\alpha},\bar{w}_{k\beta}) &= -\tfrac{1}{2}\cdot \bar{z}_{k\alpha}\bar{w}_{j\beta},
\end{align*}
\begin{equation*}
    \kappa(z_{j\alpha},\bar{z}_{k\beta}) = \tfrac{1}{2}\cdot(w_{j\beta}\bar{w}_{k\alpha} + \delta_{jk}\delta_{\alpha\beta}), \quad \kappa(z_{j\alpha},\bar{w}_{k\beta}) = -\tfrac{1}{2}\cdot z_{j\beta}\bar{w}_{k\alpha},
\end{equation*}
\begin{equation*}
    \kappa(w_{j\alpha},\bar{z}_{k\beta}) = -\tfrac{1}{2}\cdot w_{j\beta}\bar{z}_{k\alpha}, \quad \kappa(w_{j\alpha},\bar{w}_{k\beta}) = \tfrac{1}{2}\cdot(z_{j\beta}\bar{z}_{k\alpha} + \delta_{jk}\delta_{\alpha\beta}).
\end{equation*}
\end{lemma}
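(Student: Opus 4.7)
The plan is to exploit the fact that the metric on $\Sp{n+m}$ is bi-invariant, so by Proposition \ref{Levi-Civita on compact} we have $\nab{}{X}{X} = \tfrac{1}{2}[X,X] = 0$ for every left-invariant $X \in \sp{n+m}$. Using Equations \eqref{eq:local tension} and \eqref{eq: local formula kappa} with the global orthonormal frame coming from the basis $\basis$ of $\sp{n+m}$ in Proposition \ref{basis sp}, this reduces the problem to the identities
\begin{align*}
    \tau(\phi)(q) = \sum_{X\in \basis} X^{2}(\phi)(q), \sed \kappa(\phi,\psi)(q) = \sum_{X \in \basis} X(\phi)(q)\cdot X(\psi)(q).
\end{align*}
Each of $z_{j\alpha}, w_{j\alpha}, \bar{z}_{j\alpha}, \bar{w}_{j\alpha}$ is a matrix coefficient of the standard complex representation, and hence a restriction of one of the coordinate functions $q_{a b}$ on $\gl{2(n+m)}{\cn}$ (for $1 \leq a,b \leq 2(n+m)$). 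For such a coordinate function the left-invariant action is linear: $X(q_{ab})(q) = (q\cdot X)_{ab}$ and $X^{2}(q_{ab})(q) = (q\cdot X^{2})_{ab}$.

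For the tension field, this reduces everything to computing the Casimir matrix $\mathcal{C} = \sum_{X \in \basis} X^{2} \in \gl{2(n+m)}{\cn}$. Each of the seven families of basis elements in Proposition \ref{basis sp} squares to a block-diagonal matrix of the form $\tfrac{1}{2}\,\mathrm{diag}(\pm Y_{rs}^2, \pm Y_{rs}^2)$, $\tfrac{1}{2}\,\mathrm{diag}(-X_{rs}^2, -X_{rs}^2)$ or $\tfrac{1}{2}\,\mathrm{diag}(-D_t^2, -D_t^2)$; summing with the aid of Lemma \ref{square sum relation} shows that four of the families each contribute $-\tfrac{n+m-1}{4}\cdot I_{2(n+m)}$ and the remaining three each contribute $-\tfrac{1}{2}\cdot I_{2(n+m)}$, for a total of $\mathcal{C} = -\tfrac{2(n+m)+1}{2}\cdot I_{2(n+m)}$. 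Multiplying by $q$ on the left immediately yields $\tau(q_{ab}) = -\tfrac{2(n+m)+1}{2}\cdot q_{ab}$, from which all the stated formulas for $\tau$ follow by substitution.

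For the conformality operator, the computation is genuinely harder because it requires the polarised sum
\begin{align*}
    T_{(a,b)(c,d)} \;=\; \sum_{X \in \basis} X_{ab}\cdot X_{cd},
\end{align*}
so that $\kappa(q_{j\alpha},q_{k\beta})(q) = \sum_{a,c} q_{ja} q_{kc}\, T_{(a,\alpha)(c,\beta)}$. One has to split each index pair according to whether it lies in the upper $(n+m)$-block or the lower one; each of the seven families contributes only to certain index patterns and with a definite sign (coming from the $\pm 1$ and $\pm i$ in Proposition \ref{basis sp}), so the tensor $T$ decomposes into a sum of Kronecker-type terms. Using the standard identities $\sum_{r<s}(Y_{rs})_{ab}(Y_{rs})_{cd} = \tfrac{1}{2}(\delta_{ac}\delta_{bd}-\delta_{ad}\delta_{bc})$, $\sum_{r<s}(X_{rs})_{ab}(X_{rs})_{cd} = \tfrac{1}{2}(1-\delta_{ab})(\delta_{ac}\delta_{bd}+\delta_{ad}\delta_{bc})$ and $\sum_t (D_t)_{ab}(D_t)_{cd} = \delta_{ab}\delta_{ac}\delta_{ad}$, the contributions of families 3, 6, 7 cancel the $(1-\delta_{ab})$ corrections in families 1, 2, 4, 5 and produce a clean closed form of $T$. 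Translating this back through the identifications $z_{j\alpha}=q_{j\alpha}$, $w_{j\alpha}=q_{j,\alpha+n+m}$, $\bar z_{j\alpha}=q_{j+n+m,\alpha+n+m}$ and $\bar w_{j\alpha}=-q_{j+n+m,\alpha}$ yields precisely the twelve $\kappa$-identities in the statement.

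The main obstacle is the bookkeeping for $\kappa$: one must track which of the seven basis families contributes to each of the four block-types of index pairs, and how the $\pm i$ factors in families 2, 5, 7 combine. The apparent ``mixed'' terms such as $\kappa(z_{j\alpha},\bar z_{k\beta}) = \tfrac{1}{2}(w_{j\beta}\bar w_{k\alpha}+\delta_{jk}\delta_{\alpha\beta})$ arise precisely from the cancellations between the off-diagonal families 4--7 and the diagonal families 1--3; the $\delta_{jk}\delta_{\alpha\beta}$ contribution comes from the $D_t$-terms in families 3, 6, 7, which are the only pieces surviving when $a=b$ in the polarised sum.
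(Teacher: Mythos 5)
Your proof is correct. Note first that the paper itself does not prove this lemma --- it imports it verbatim from the cited references --- so there is no in-paper argument to compare against; but your route (bi-invariance kills the $\nabla_X X$ terms, matrix coefficients are linear under the left-invariant frame, so everything reduces to the Casimir matrix $\sum_X X^2$ for $\tau$ and the polarised tensor $\sum_X X_{ab}X_{cd}$ for $\kappa$) is exactly the method the paper uses for the analogous computations on $\U{m+n}$, $\SO{2n}$ and $\SU{2n}$, and is the standard one. I checked the load-bearing claims: the seven families of Proposition \ref{basis sp} do square-sum to $-\tfrac{2(n+m)+1}{2}\cdot I_{2(n+m)}$ (four contributing $-\tfrac{n+m-1}{4}I$ and three contributing $-\tfrac{1}{2}I$), your three Kronecker identities for $\sum_{r<s}(Y_{rs})_{ab}(Y_{rs})_{cd}$, $\sum_{r<s}(X_{rs})_{ab}(X_{rs})_{cd}$ and $\sum_t(D_t)_{ab}(D_t)_{cd}$ are right, and carrying the block-by-block bookkeeping through for a representative case, e.g.\ $\kappa(z_{j\alpha},z_{k\beta})$ and $\kappa(z_{j\alpha},\bar z_{k\beta})$, reproduces the stated formulae.

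One small imprecision in your closing commentary: the term $\delta_{jk}\delta_{\alpha\beta}$ in $\kappa(z_{j\alpha},\bar z_{k\beta})$ and $\kappa(w_{j\alpha},\bar w_{k\beta})$ does not come solely from the $D_t$-families. The polarised sum produces terms proportional to $\delta_{\alpha\beta}\cdot\bigl(\sum_a z_{ja}\bar z_{ka}+\sum_a w_{ja}\bar w_{ka}\bigr)$, and it is the unitarity relation $z\bar z^t+w\bar w^t=I_{n+m}$ (i.e.\ $q\in\Sp{n+m}$, not merely $q\in\gl{n+m}{\hn}$) that collapses this to $\delta_{\alpha\beta}\delta_{jk}$. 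You should make that use of the defining relation of $\Sp{n+m}$ explicit, since without it the answer would not close up in terms of the matrix coefficients alone; the rest of the argument stands as written.
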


In order to calculate the tension field and conformality operator acting on the matrix elements $\psi_{j\alpha}$ it is, by equation \eqref{eq: psi comp}, sufficient to calculate the tension field and conformality operator on the functions $f_{j\alpha}$, $g_{j\alpha}$ and their complex conjugates. For this we have the following result.
\begin{lemma}\label{quat grass u f kappa tau}
The the tension field $\tau$ and the conformality operator $\kappa$ on the quaternion unitary group $\Sp{m+n}$ satisfy the following relations
$$
    \tau(f_{j\alpha}) = -2(m+n)\cdot f_{j\alpha} + 2n\cdot \delta_{j\alpha}, \quad \tau(g_{j\alpha}) = -2(m+n)\cdot g_{j\alpha},
$$
$$
    \tau(\bar{f}_{j\alpha}) = -2(m+n)\cdot \bar{f}_{j\alpha} + 2n\cdot \delta_{j\alpha}, \quad \tau(\bar{g}_{j\alpha}) = -2(m+n)\cdot \bar{g}_{j\alpha},
$$
$$
    \kappa(f_{j\alpha},f_{k\beta}) = -f_{j\beta}f_{k\alpha} + g_{kj}\bar{g}_{\alpha \beta} + \tfrac{1}{2}\cdot(f_{k\alpha }\delta_{j\beta} + f_{j\beta}\delta_{\alpha k}),
$$
$$
    \kappa(\bar{f}_{j\alpha },\bar{f}_{k\beta}) = -\bar{f}_{j\beta}\bar{f}_{k\alpha } + \bar{g}_{kj}g_{\alpha \beta} + \tfrac{1}{2}\cdot(\bar{f}_{k\alpha }\delta_{j\beta} + \bar{f}_{j\beta}\delta_{\alpha k}),
$$
$$
    \kappa(g_{j\alpha },g_{k\beta}) = -g_{\beta j}g_{\alpha k} + g_{\alpha \beta}g_{kj},
$$
$$
    \kappa(\bar{g}_{j\alpha },\bar{g}_{k\beta}) = -\bar{g}_{\beta j}\bar{g}_{\alpha k} + \bar{g}_{\alpha \beta}\bar{g}_{kj},
$$
$$
    \kappa(f_{j\alpha },\bar{f}_{k\beta}) = -f_{jk}f_{\beta \alpha } + g_{\beta j}\bar{g}_{\alpha k} + \tfrac{1}{2}\cdot(f_{\beta \alpha }\delta_{jk} + f_{jk}\delta_{\alpha \beta}),
$$
$$
    \kappa(g_{j\alpha },\bar{g}_{k\beta}) = f_{j\beta}f_{\alpha k} - f_{jk}f_{\alpha \beta} + \tfrac{1}{2}\cdot (f_{jk}\delta_{\alpha \beta} + f_{j\beta}\delta_{jk} - f_{j\beta}\delta_{\alpha k} - f_{\alpha k}\delta_{j\beta}),
$$
$$
 \kappa(g_{j\alpha },f_{k\beta}) = g_{kj}f_{\alpha \beta} + g_{\alpha k}f_{j\beta} + g_{jk}\delta_{\alpha \beta} + g_{k\alpha }\delta_{j\beta},
$$
$$
 \kappa(g_{j\alpha },\bar{f}_{k\beta}) = -g_{j\beta}\bar{f}_{k\alpha } + g_{\alpha \beta}\bar{f}_{kj} + g_{j\beta}\delta_{\alpha k} + g_{\beta \alpha }\delta_{jk},
$$
for each $1\leq j,\alpha,k,\beta \leq m+n$.
\end{lemma}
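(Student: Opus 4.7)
The plan is to reduce everything to Lemma \ref{tau kappa quat grass} by means of the bilinear Leibniz rules
\[
\tau(\phi\psi) = \phi\,\tau(\psi) + 2\,\kappa(\phi,\psi) + \tau(\phi)\,\psi
\]
and
\[
\kappa(\phi\psi,\, Fg) = \phi F\,\kappa(\psi,g) + \phi g\,\kappa(\psi,F) + \psi F\,\kappa(\phi,g) + \psi g\,\kappa(\phi,F),
\]
applied to the polynomial expressions defining $f_{j\alpha}$ and $g_{j\alpha}$.

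For the tension field identities I would expand $\tau(f_{j\alpha}) = \sum_{r=1}^{m}\bigl(\tau(z_{jr}\bar z_{\alpha r}) + \tau(w_{jr}\bar w_{\alpha r})\bigr)$ and substitute from Lemma \ref{tau kappa quat grass}: each of $z_{jr},\bar z_{\alpha r},w_{jr},\bar w_{\alpha r}$ is an eigenfunction of $\tau$ with eigenvalue $-\tfrac{2(m+n)+1}{2}$, while $\kappa(z_{jr},\bar z_{\alpha r}) = \tfrac{1}{2}(w_{jr}\bar w_{\alpha r} + \delta_{j\alpha})$ and $\kappa(w_{jr},\bar w_{\alpha r}) = \tfrac{1}{2}(z_{jr}\bar z_{\alpha r} + \delta_{j\alpha})$. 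The decisive observation is that the $\kappa$-produced cross terms swap the $z\bar z$ and $w\bar w$ pieces, so that when the two summands of $f_{j\alpha}$ are added those cross contributions combine with the leading $-(2(m+n)+1)$ terms to give exactly $-2(m+n)(z_{jr}\bar z_{\alpha r} + w_{jr}\bar w_{\alpha r}) + 2\delta_{j\alpha}$, whence summing in $r$ yields the claimed formula. The computation of $\tau(g_{j\alpha})$ is even cleaner: $\kappa(z,w)$ carries no Kronecker delta, and the antisymmetric combination $w_{jr}z_{\alpha r} - z_{jr}w_{\alpha r}$ forces the cross terms to reproduce the original summand with coefficient $+1$, which cancels the stray $-1$ inside $-(2(m+n)+1)$. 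The identities for $\tau(\bar f_{j\alpha})$ and $\tau(\bar g_{j\alpha})$ then follow by complex conjugation, since $\tau$ is a real differential operator.

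For the conformality identities the strategy is identical, but each expression such as $\kappa(f_{j\alpha}, f_{k\beta})$ unfolds into a double sum over $(r,s)$ of sixteen contributions produced by the bilinear product rule. Lemma \ref{tau kappa quat grass} evaluates each individual $\kappa$ on matrix coordinates, and the resulting terms reassemble systematically: pairings of $z$ with $\bar z$ and of $w$ with $\bar w$ become products of $f$'s and $\bar f$'s; pairings of $z$ with $w$ and of $\bar z$ with $\bar w$ become products of $g$'s and $\bar g$'s; the $\delta_{jk}\delta_{\alpha\beta}$ pieces inside $\kappa(z,\bar z)$ and $\kappa(w,\bar w)$ contract one summation index and produce the linear $f$-terms multiplying $\delta_{j\beta}, \delta_{k\alpha}, \delta_{jk}, \delta_{\alpha\beta}$ that appear in the statement. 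Each of the remaining eight $\kappa$-identities is obtained in precisely this fashion, with the $\bar f, \bar g$ versions deduced from the general identity $\kappa(\bar\phi,\bar\psi) = \overline{\kappa(\phi,\psi)}$, valid because $\kappa$ can be expressed in a real local orthonormal frame.

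The main obstacle is purely combinatorial bookkeeping. Every $\kappa$-identity generates sixteen double-sum contributions when the product rule is applied twice, and one must carefully identify which summation indices remain free (hence supply $f$ or $g$ factors upon summation over $r$ and $s$), which get contracted by a $\delta_{rs}$ arising through Lemma \ref{tau kappa quat grass}, and which contract with the external indices $j, \alpha, k, \beta$ to produce the linear $\delta$-prefactors in the statement. Recognising when a would-be $z\bar z$ contraction in fact cross-pairs to become a $g\bar g$ factor rather than an $f$, and keeping straight the distinction between $f_{k\alpha}$ and $f_{\alpha k}$ or between $g_{kj}$ and $g_{jk}$ in the output, is where the real labour lies; the individual algebraic manipulations are elementary, and once the indices are lined up correctly every identity falls out mechanically.
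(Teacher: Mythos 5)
Your proposal follows essentially the same route as the paper's own proof: expand $\tau(f_{j\alpha})$, $\tau(g_{j\alpha})$ and the various $\kappa$-pairings via the product identities for $\tau$ and $\kappa$, substitute the coordinate-level formulae of Lemma \ref{tau kappa quat grass}, and reassemble the resulting double sums into $f$-, $g$-, $\bar f$-, $\bar g$-factors and $\delta$-contractions. The only (immaterial) difference is that you obtain the barred identities from $\kappa(\bar\phi,\bar\psi)=\overline{\kappa(\phi,\psi)}$ and the reality of $\tau$, whereas the paper invokes complex (bi)linearity together with the relation $\bar f_{j\alpha}=f_{\alpha j}$; both are valid.
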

\begin{proof}
For the tension field of $f_{j\alpha}$ we have, using Lemma \ref{tau kappa quat grass},
\begin{align*}
    \tau(f_{j\alpha}) &= \sum_{r=1}^{m} \tau(z_{jr}\bar{z}_{\alpha r}) + \tau(w_{jr}\bar{w}_{\alpha r})\\
    &= \sum_{r=1}^{m}\big\{ \tau(z_{jr})\bar{z}_{\alpha r} + 2\kappa(z_{jr},\bar{z}_{\alpha r}) + z_{jr}\tau(\bar{z}_{\alpha r})\\
    &\sed+ \tau(w_{jr})\bar{w}_{\alpha r} + 2\kappa(w_{jr},\bar{w}_{\alpha r}) + w_{jr}\tau(\bar{w}_{\alpha r})\big\}\\
    &= \sum_{r=1}^{m}\big\{ -\tfrac{2(m+n)+1}{2}\cdot z_{jr}\bar{z}_{\alpha r} + w_{jr}\bar{w}_{\alpha r} + \delta_{j\alpha}\delta_{rr} -\tfrac{2(m+n)+1}{2}\cdot z_{jr}\bar{z}_{\alpha r}\\
    &\sed -\tfrac{2(m+n)+1}{2}\cdot w_{jr}\bar{w}_{\alpha r} + z_{jr}\bar{z}_{\alpha r} + \delta_{j\alpha}\delta_{rr} -\tfrac{2(m+n)+1}{2}\cdot w_{jr}\bar{w}_{\alpha r}\big\}\\
    &= -2(m+n)\cdot \sum_{r=1}^{m}\big\{ z_{jr}\bar{z}_{\alpha r} + w_{jr}\bar{w}_{\alpha r}\big\} + 2n\cdot \delta_{j\alpha}\\
    &= -2(m+n)\cdot f_{j\alpha} + 2n\cdot \delta_{j\alpha}.
\end{align*}
For $g_{j\alpha}$ we get
\begin{align*}
    \tau(g_{j\alpha}) &= \sum_{r=1}^{m}\big\{ \tau(w_{jr})z_{\alpha r} + 2\kappa(w_{jr},z_{\alpha r}) + w_{jr}\tau(z_{\alpha r})\\
    &\sed -\tau(z_{jr})w_{\alpha r} - 2\kappa(z_{jr},w_{\alpha r}) - z_{ik}\tau(w_{\alpha r})\big\}\\
    &= \sum_{r=1}^{m}\big\{ -\tfrac{2(m+n)+1}{2}\cdot w_{jr}z_{\alpha r} - w_{\alpha r}z_{jr} - \tfrac{2(m+n)+1}{2}\cdot w_{jr}z_{\alpha r}\\
    &\sed +\tfrac{2(m+n)+1}{2}\cdot z_{jr}w_{\alpha r} + z_{\alpha r}w_{jr} + \tfrac{2(m+n)+1}{2}\cdot z_{jr}w_{\alpha r}\big\}\\
    &= -2(m+n)\cdot \sum_{r=1}^{m}\big\{ w_{jr}z_{\alpha r} - z_{jr}w_{\alpha r}\big\}\\
    &= -2(m+n)\cdot g_{j\alpha}.
\end{align*}
We can then use the complex linearity of $\tau$ to derive the formulae for $\tau(\bar{f}_{j\alpha})$ and $\tau(\bar{g}_{j\alpha})$. To derive the formulae involving the conformality operator we only need to calculate $\kappa(f_{j\alpha},f_{k\beta})$, $\kappa(g_{j\alpha},g_{k\beta})$, $\kappa(g_{j\alpha},\bar{g}_{k\beta})$ and $\kappa(g_{j\alpha},\bar{f}_{k\beta})$ as we can then use the complex bilinearity of $\kappa$ together with the relation $\bar{f}_{j\alpha} = f_{\alpha j}$ for the remaining cases. For $\kappa(f_{j\alpha},f_{k\beta})$ we have
\begin{align*}
    &\kappa(f_{j\alpha},f_{k\beta})\\ 
    &= \kappa\left(\sum_{r=1}^{m} z_{jr}\bar{z}_{\alpha r}+w_{jr}\bar{w}_{\alpha r},\sum_{s=1}^{m} z_{ks}\bar{z}_{\beta s} + w_{ks}\bar{w}_{\beta s}\right)\\
    &= \sum_{r,s = 1}^{m} \big\{ \kappa(z_{jr}\bar{z}_{\alpha r},z_{ks}\bar{z}_{\beta s}) + \kappa(w_{jr}\bar{w}_{\alpha r},z_{ks}\bar{z}_{\beta s})\\
    &\sed + \kappa(z_{jr}\bar{z}_{\alpha r},w_{ks}\bar{w}_{\beta s}) + \kappa(w_{jr}\bar{w}_{\alpha r},w_{ks}\bar{w}_{\beta s})\big\}\\
    &= \sum_{r,s= 1}^{m} \big\{ z_{jr}z_{ks}\kappa(\bar{z}_{\alpha r},\bar{z}_{\beta s}) + \bar{z}_{\alpha r}z_{ks}\kappa(z_{jr},\bar{z}_{\beta s}) + z_{jr}\bar{z}_{\beta s}\kappa(\bar{z}_{\alpha r},z_{ks})\\
    &\sed + \bar{z}_{\alpha r}\bar{z}_{\beta s}\kappa(z_{jr},z_{ks}) + w_{jr}z_{ks}\kappa(\bar{w}_{\alpha r},\bar{z}_{\beta s}) + \bar{w}_{\alpha r}z_{ks}\kappa(w_{jr},\bar{z}_{\beta s})\\
    &\sed + w_{jr}\bar{z}_{\beta s}\kappa(\bar{w}_{\alpha r},z_{ks}) + \bar{w}_{\alpha r}\bar{z}_{\beta s}\kappa(w_{jr},z_{ks}) + z_{jr}w_{ks}\kappa(\bar{z}_{\alpha r},\bar{w}_{\beta s})\\
    &\sed + \bar{z}_{\alpha r}w_{ks}\kappa(z_{jr},\bar{z}_{\beta s})
    + z_{jr}\bar{w}_{\beta s}\kappa(\bar{z}_{\alpha r},w_{ks}) + \bar{z}_{\alpha r}\bar{w}_{\beta s}\kappa(z_{jr},w_{ks})\\ &\sed + w_{jr}w_{ks}\kappa(\bar{w}_{\alpha r},\bar{w}_{\beta s})
    + \bar{w}_{\alpha r}w_{ks}\kappa(w_{jr},\bar{w}_{\beta s})+ w_{jr}\bar{w}_{\beta s}\kappa(\bar{w}_{\alpha r},w_{ks})\\
    &\sed+ \bar{w}_{\alpha r}\bar{w}_{\beta s}\kappa(w_{jr},w_{ks})\big\}\\
    &= \tfrac{1}{2}\cdot \sum_{r,s = 1}^{m} \big\{ -z_{jr}z_{ks}\bar{z}_\alpha s\bar{z}_{\beta r} + \bar{z}_{\alpha r}z_{ks}w_{js}\bar{w}_{\beta r} + \bar{z}_{\alpha r}z_{ks}\delta_{j\beta}\delta_{rs} + z_{jr}\bar{z}_{\beta s}\bar{w}_{\alpha s}w_{kr}\\
    &\sed + z_{jr}\bar{z}_{\beta s}\delta_{\alpha k}\delta_{rs} - \bar{z}_{\alpha r}\bar{z}_{\beta s}z_{js}z_{kr} - w_{jr}z_{ks}\bar{w}_{\beta r}\bar{z}_{\alpha s} - \bar{w}_{\alpha r}z_{ks}w_{js}\bar{z}_{\beta r}\\
    &\sed - w_{jr}\bar{z}_{\beta s}\bar{w}_{\alpha s}z_{kr} - \bar{w}_{\alpha r}\bar{z}_{\beta s}w_{kr}z_{js} - z_{jr}w_{ks}\bar{z}_{\beta r}\bar{w}_{\alpha s} - \bar{z}_{\alpha r}w_{ks}z_{js}\bar{w}_{\beta r}\\
    &\sed - z_{jr}\bar{w}_{\beta s}\bar{z}_{\alpha s}w_{kr} -\bar{z}_{\alpha r}\bar{w}_{\beta s}z_{kr}w_{js} - w_{jr}w_{ks}\bar{w}_{\alpha s}\bar{w}_{\beta r} + \bar{w}_{\alpha r}w_{ks}z_{js}\bar{z}_{\beta r}\\
    &\sed + \bar{w}_{\alpha r}w_{ks}\delta_{j\beta}\delta_{rs} + w_{jr}\bar{w}_{\beta s}\bar{z}_{\alpha s}z_{kr} + w_{jr}\bar{w}_{\beta s}\delta_{\alpha k}\delta_{rs} - \bar{w}_{\alpha r}\bar{w}_{\beta s}w_{js}w_{kr}\big\}\\
    &= \sum_{r,s=1}^{m}\big\{ -z_{jr}\bar{z}_{\beta r}z_{ks}\bar{z}_{\alpha s} + z_{ks}w_{js}\bar{z}_{\alpha r}\bar{w}_{\beta r} +z_{jr}w_{kr}\bar{z}_{\beta s}\bar{w}_{\alpha s} - z_{ks}\bar{z}_{\alpha s}w_{jr}\bar{w}_{\beta r}\\
    &\sed -z_{ks}w_{js}\bar{z}_{\beta r}\bar{w}_{\alpha r} - z_{js}\bar{z}_{\beta s}w_{kr}\bar{w}_{\alpha r} - z_{js}w_{ks}\bar{z}_{\alpha r}\bar{w}_{\beta r} - w_{jr}\bar{w}_{\beta r}w_{ks}\bar{w}_{\alpha s}\\
    &\sed +\tfrac{1}{2}\cdot(\bar{z}_{\alpha r}z_{ks}\delta_{j\beta}\delta_{rs} + z_{jr}\bar{z}_{\beta s}\delta_{\alpha k}\delta_{rs} + \bar{w}_{\alpha r}w_{ks}\delta_{j\beta}\delta_{rs} + w_{jr}\bar{w}_{\beta s}\delta_{\alpha k}\delta_{rs}) \big\}\\
    &= \sum_{r,s = 1}^{m}\big\{(w_{kr}z_{jr}-z_{jr}w_{kr})(\bar{w}_{\alpha s}\bar{z}_{\beta s}-\bar{z}_{\beta s}\bar{w}_{\alpha s})-(z_{jr}\bar{z}_{\beta r} + w_{jr}\bar{w}_{\beta r})(z_{ks}\bar{z}_{\alpha s} + w_{ks}\bar{w}_{\alpha s})\big\}\\
    &\sed + \tfrac{1}{2}\cdot \sum_{r=1}^{m}\big\{(z_{kr}\bar{z}_{\alpha r}+w_{kr}\bar{w}_{\alpha r})\cdot \delta_{j\beta} +(z_{jr}\bar{z}_{\beta r} + w_{jr}\bar{w}_{\beta r})\cdot \delta_{\alpha k}\big\}\\
    &= g_{kj}\bar{g}_{\alpha \beta} - f_{j\beta}f_{k\alpha} + \tfrac{1}{2}\cdot (f_{k\alpha}\delta_{j\beta}+f_{j\beta}\delta_{\alpha k})
\end{align*}
as desired. For $\kappa(g_{j\alpha},g_{k\beta})$ we get
\begin{align*}
    \kappa(g_{j\alpha},g_{k\beta} ) &= \kappa\left(\sum_{r=1}^{m}w_{jr}z_{\alpha r}-z_{jr}w_{\alpha r},\sum_{s=1}^{m}w_{ks}z_{\beta  s}-z_{ks}w_{\beta  s}\right)\\
    &= \sum_{r,s = 1}^{m} \big\{ z_{jr}z_{ks}\kappa(w_{\alpha r},w_{\beta  s}) + z_{jr}w_{\beta  s}\kappa(w_{\alpha r},z_{ks}) + w_{\alpha r}z_{ks}\kappa(z_{jr},w_{\beta  s})\\
    &\sed + w_{\alpha r}w_{\beta  s}\kappa(z_{jr},z_{ks}) - z_{\alpha r}z_{ks}\kappa(w_{jr},w_{\beta  s}) - z_{\alpha r}w_{\beta  s}\kappa(w_{jr},z_{ks})\\
    &\sed -w_{jr}z_{ks}\kappa(z_{\alpha r},w_{\beta  s}) - w_{jr}w_{\beta  s}\kappa(z_{\alpha r},z_{ks}) - z_{jr}z_{\beta  s}\kappa(w_{\alpha r},w_{ks})\\
    &\sed - z_{jr}w_{ks}\kappa(w_{\alpha r},z_{\beta  s}) - w_{\alpha r}z_{\beta  s}\kappa(z_{jr},w_{ks}) - w_{\alpha r}w_{ks}\kappa(z_{jr},z_{\beta  s})\\
    &\sed + z_{\alpha r}z_{\beta  s}\kappa(w_{jr},w_{ks}) + z_{\alpha r}w_{ks}\kappa(w_{jr},z_{\beta  s}) + w_{jr}z_{\beta  s}\kappa(z_{\alpha r},w_{ks})\\
    &\sed + w_{jr}w_{ks}\kappa(z_{\alpha r},z_{\beta  s})\big\}.
\end{align*}
Applying Lemma \ref{tau kappa quat grass} and combining terms then yields
\begin{align*}
    &\kappa(g_{j\alpha},g_{k\beta})\\
    &= -\sum_{r,s = 1}^{m} \big\{w_{\beta r}z_{jr}w_{\alpha s}z_{ks} + w_{kr}z_{jr}w_{\beta s}z_{\alpha s} + w_{js}z_{ks}w_{\alpha r}z_{\beta r} + w_{kr}z_{\alpha r}w_{js}z_{\beta s}\\
    &\sed - z_{\alpha r}w_{\beta r}z_{ks}w_{js}- z_{\alpha r}w_{kr}z_{js}w_{\beta s} -  z_{ks}w_{\alpha s}z_{\beta r}w_{jr} -  z_{jr}w_{kr}z_{\beta s}w_{\alpha s}\big\}\\
    &= - \sum_{r,s = 1}^{m} \big\{(w_{\beta r}z_{jr}-z_{jr}w_{\beta r})(w_{\alpha s}z_{ks}-z_{\alpha s}w_{ks})\\
    &\sed- (w_{\alpha r}z_{\beta r}-z_{\alpha r}w_{\beta r})(w_{ks}z_{js}-z_{ks}w_{js}) \big\}\\
    &= -g_{\beta j}g_{\alpha k}+g_{\alpha \beta}g_{kj}.
\end{align*}

The calculations for $\kappa(g_{j\alpha},\bar{g}_{k\beta})$ and $\kappa(g_{j\alpha},\bar{f}_{k\beta})$ are similar and will not be presented here. This then concludes the proof.
\end{proof}
As a consequence of Lemma \ref{quat grass u f kappa tau} we then have the following relations for the functions $\psi_{j \alpha}$.
\begin{proposition}\label{quat grass psi kappa tau}
Let $1 \leq j,k,\alpha  \leq 2(m+n)$ with $j,k \neq \alpha$ . Then the tension field $\tau$ and the conformality operator $\kappa$ on the quaternionic unitary group $\Sp{m+n}$ satisfy
\begin{align*}
    \tau(\psi_{j\alpha}) = -2(m+n)\cdot \psi_{j\alpha} \quad \text{and} \quad \kappa(\psi_{j\alpha},\psi_{k\alpha}) = -\psi_{j\alpha}\psi_{k\alpha}.
\end{align*}
\end{proposition}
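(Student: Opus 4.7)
The plan is to reduce both identities to the formulae of Lemma \ref{quat grass u f kappa tau} via the decomposition \eqref{eq: psi comp}, splitting the indices according to whether they lie in the ``upper'' range $[1,m+n]$ or the ``lower'' range $[m+n+1,2(m+n)]$. In each case $\psi_{j\alpha}$ equals one of $f_{j'\alpha'}$, $g_{j'\alpha'}$, $-\bar{g}_{j'\alpha'}$ or $\bar{f}_{j'\alpha'}$, with the primed indices being the relevant shifts.

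For the first identity, I would check the four cases for the pair $(j,\alpha)$ separately. In each case Lemma \ref{quat grass u f kappa tau} gives $\tau$ acting on the corresponding $f$, $g$, $\bar{g}$ or $\bar{f}$. The only extra terms that appear are the $2n\cdot\delta_{j'\alpha'}$ terms in the $\tau(f)$ and $\tau(\bar f)$ cases, and these occur only when $j$ and $\alpha$ lie in the same range. Under the hypothesis $j\neq \alpha$, the corresponding Kronecker delta vanishes in all four cases, yielding $\tau(\psi_{j\alpha}) = -2(m+n)\cdot \psi_{j\alpha}$ uniformly.

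For the second identity, I would split further according to the location of $\alpha$, giving two main cases ($\alpha \leq m+n$ and $\alpha > m+n$), and in each of these consider the four subcases determined by the ranges of $j$ and $k$. Three observations will make the case analysis almost mechanical: firstly, the summand defining $g_{j\alpha}$ is antisymmetric in $j,\alpha$, so $g_{\alpha\alpha} = \bar{g}_{\alpha\alpha} = 0$, which kills all terms of the form $g_{\alpha\alpha}\cdot(\cdot)$ and $\bar{g}_{\alpha\alpha}\cdot(\cdot)$ in the expressions from Lemma \ref{quat grass u f kappa tau}; secondly, every surviving Kronecker delta in those expressions involves $\alpha$ paired with $j$ or with $k$, and hence vanishes by the hypothesis $j,k\neq \alpha$; thirdly, by the symmetry of $\kappa$ and the identity $\kappa(\bar\phi,\bar\psi)=\overline{\kappa(\phi,\psi)}$, the mixed-type brackets $\kappa(f,\bar g)$ and $\kappa(g,\bar f)$ that are not listed in Lemma \ref{quat grass u f kappa tau} can be obtained directly from the ones that are. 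After these simplifications, the only non-trivial term remaining in each subcase is precisely the product that equals $-\psi_{j\alpha}\psi_{k\alpha}$, with the correct sign accounted for by the minus sign in $\psi_{j+m+n,\alpha} = -\bar{g}_{j\alpha}$.

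The real obstacle is nothing conceptual but simply keeping signs and complex conjugations straight across the eight subcases; given observations (i)--(iii) above, each subcase reduces to a one-line computation, and no new calculation beyond Lemma \ref{quat grass u f kappa tau} is required.
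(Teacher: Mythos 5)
Your proposal is correct and follows essentially the same route as the paper's own proof: the paper likewise derives the $\tau$-identity directly from Lemma \ref{quat grass u f kappa tau} via \eqref{eq: psi comp}, and establishes the $\kappa$-identity by the same case analysis on the ranges of $j,k,\alpha$, using exactly your three observations (vanishing of $g_{\alpha\alpha}$ and $\bar{g}_{\alpha\alpha}$, vanishing of the Kronecker deltas under $j,k\neq\alpha$, and complex conjugation to handle the remaining cases).
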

\begin{proof}
It follows directly from Lemma \ref{quat grass u f kappa tau} and Equation \eqref{eq: psi comp} that
\begin{align*}
    \tau(\psi_{j\alpha}) = -2(m+n)\cdot \psi_{j\alpha}
\end{align*}
whenever $j \neq \alpha$. For the conformality operator we divide this into cases depending on the values of the indices. For $j,k,\alpha \leq m+n$ we have
\begin{align*}
    \kappa(\psi_{j\alpha},\psi_{k\alpha}) &= \kappa(f_{j\alpha},f_{k\alpha})\\
    &= -f_{j\alpha}f_{k\alpha} + g_{kj}\bar{g}_{\alpha \alpha} + \tfrac{1}{2}\cdot (f_{k\alpha}\delta_{j\alpha} + f_{j\alpha}\delta_{\alpha k})\\
    &= -f_{j\alpha}f_{k\alpha}\\
    &= -\psi_{j\alpha}\psi_{k\alpha},
\end{align*}
since $g_{\alpha\alpha} = 0$ and $j,k \neq \alpha$. As $\bar{\psi}_{j+m+n,\alpha+m+n} = \psi_{j\alpha}$ for $j,k,\alpha \leq m+n$ this also covers the case with $j,k,\alpha \geq m+n+1$.
In the case with $j,k \leq m+n$ and $\alpha \geq m+n+1$, setting $\tilde{\alpha} = \alpha-(m+n)$ we get
\begin{align*}
    \kappa(\psi_{j\alpha},\psi_{k\alpha}) &= \kappa(g_{j\tilde{\alpha}},g_{k\tilde{\alpha}})\\ 
    &= -g_{\tilde{\alpha} j}\cdot g_{\tilde{\alpha} k} + g_{\tilde{\alpha}\tilde{\alpha}}\cdot g_{kj}\\
    &= -g_{j\tilde{\alpha}}\cdot g_{k\tilde{\alpha}}\\
    &= -\psi_{j\alpha}\psi_{k\alpha},
\end{align*}
since $g_{j\alpha} = - g_{\alpha j}$. As $\bar{\psi}_{j+(m+n),\alpha} = \psi_{j,\alpha +m+n}$ for $j,k \leq m+n$ and $\alpha \geq m+n+1$ this also covers the case with $j,k \geq m+n+1$ and $\alpha \leq m+n$. For $j\leq m+n < k$ and $\alpha \geq m+n+1$, setting $\tilde{\jmath} = j - (m+n)$ and $\tilde{\alpha} = \alpha-(m+n)$ we obtain
\begin{align*}
    \kappa(\psi_{j\alpha},\psi_{k\alpha}) &= \kappa(g_{j\tilde{\alpha}},\bar{f}_{\tilde{\jmath}\tilde{\alpha}})\\
    &= -g_{\tilde{\jmath}\tilde{\alpha}}\bar{f}_{k\tilde{\alpha}} + g_{\tilde{\alpha}\tilde{\alpha}}\bar{f}_{k\tilde{\jmath}} + g_{\tilde{\jmath}\tilde{\alpha}}\delta_{\tilde{\alpha}k} + g_{\tilde{\alpha}\tilde{\alpha}}\delta_{\tilde{\jmath}k}\\
    &= -g_{\tilde{\jmath}\tilde{\alpha}}\bar{f}_{k\tilde{\alpha}}\\
    &= -\psi_{j\alpha}\psi_{k\alpha}.
\end{align*}
Conjugating on both sides then gives the desired relation also in the final case of $j\leq m+n < k$ and $\alpha \leq m+n$. This concludes the proof
\end{proof}
The $\Sp{m}\times \Sp{n}$-invariant functions $\psi_{j\alpha}: \Sp{m+n} \to \cn$ induce functions $\hat{\psi}_{j\alpha}: \Sp{m+n}/\Sp{m}\times \Sp{n} \to \cn$. The eigenfunctions obtained by taking $1 \leq j,\alpha \leq m+n$ were found by Gudmundsson and Ghandour already in \cite{Gud-Gha 2}. The remaining cases, however, are entirely new as of this thesis. Hence, in the following, we find new eigenfamilies on the quaternionic Grassmannians in addition to greatly expanding those already discovered in \cite{Gud-Gha 2}.
\begin{theorem}\label{eigenfams on quat grass}
For a fixed natural number $1 \leq \alpha \leq 2(m+n)$ the set
\begin{align*}
    \mathcal{E}_{\alpha} = \{ \hat{\psi}_{j\alpha}:\Sp{m+n}/\Sp{n}\times \Sp{m} \to \cn \ | \ 1 \leq j \leq 2(m+n),\ j \neq \alpha\}
\end{align*}
is an eigenfamily on the quaternionic Grassmannian $\Sp{m+n}/\Sp{m}\times \Sp{n}$ such that the tension field $\tau$ and conformality operator $\kappa$ on $\Sp{m+n}/\Sp{m}\times \Sp{n}$ satsify
\begin{align*}
    \tau(\hat{\phi}) = -2(m+n)\cdot \hat{\phi} \quad \text{and} \quad \kappa(\hat{\phi},\hat{\psi}) = -\hat{\phi}\cdot \hat{\psi},
\end{align*}
for all $\hat{\phi},\hat{\psi} \in \mathcal{E}_{\alpha}$.
\end{theorem}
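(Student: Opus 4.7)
The proof is an almost immediate consequence of Proposition \ref{quat grass psi kappa tau} together with the general correspondence between $K$-invariant eigenfunctions on $G$ and eigenfunctions on $G/K$ established via Lemma \ref{lem: induced functions}. The plan is as follows.

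First I would note that each function $\psi_{j\alpha} \colon \Sp{m+n} \to \cn$ factors as $\psi_{j\alpha} = \eta_{j\alpha} \circ \Phi$, where $\Phi$ is the Cartan map associated with the symmetric triple in Example \ref{Quat-Grass}. Since $\Phi$ is constant on the cosets of $K = \Sp{m}\times\Sp{n}$, each $\psi_{j\alpha}$ is $K$-invariant and therefore descends to a well defined function $\hat{\psi}_{j\alpha} \colon \Sp{m+n}/\Sp{m}\times\Sp{n} \to \cn$ via $\hat{\psi}_{j\alpha}(pK) = \psi_{j\alpha}(p)$.

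Next I would invoke Proposition \ref{quat grass psi kappa tau} for the indices $j,k \neq \alpha$, which gives
\begin{align*}
\tau(\psi_{j\alpha}) = -2(m+n)\cdot \psi_{j\alpha}
\quad \text{and} \quad
\kappa(\psi_{j\alpha},\psi_{k\alpha}) = -\psi_{j\alpha}\cdot \psi_{k\alpha},
\end{align*}
where $\tau$ and $\kappa$ are the tension field and conformality operator on $\Sp{m+n}$ equipped with its standard bi-invariant metric. This is precisely the joint eigenvalue condition required to conclude that
\begin{align*}
\mathcal{E}_{\alpha}^{G} = \{\psi_{j\alpha} \ | \ 1\leq j\leq 2(m+n),\ j\neq \alpha\}
\end{align*}
is a $K$-invariant eigenfamily on $G$ with eigenvalues $\lambda = -2(m+n)$ and $\mu = -1$.

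Finally I would apply Lemma \ref{lem: induced functions} to transfer these relations to the quotient. Since the natural projection $\pi \colon G \to G/K$ is a Riemannian submersion that is also a harmonic morphism, the operators $\tau$ and $\kappa$ on $G$ agree, up to composition with $\pi$, with the corresponding operators $\hat{\tau}$ and $\hat{\kappa}$ on $G/K$ when applied to $K$-invariant functions. Hence the eigenvalue equations above descend verbatim to $\mathcal{E}_{\alpha}$, yielding $\hat{\tau}(\hat{\phi}) = -2(m+n)\cdot \hat{\phi}$ and $\hat{\kappa}(\hat{\phi},\hat{\psi}) = -\hat{\phi}\cdot\hat{\psi}$ for all $\hat{\phi},\hat{\psi} \in \mathcal{E}_{\alpha}$. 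There is no real obstacle here: the heavy lifting has already been done in Lemma \ref{quat grass u f kappa tau} and Proposition \ref{quat grass psi kappa tau}; the present statement is essentially a repackaging of those computations in the language of the symmetric space.
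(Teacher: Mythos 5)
Your proposal is correct and follows exactly the route the paper intends: the theorem is an immediate consequence of Proposition \ref{quat grass psi kappa tau} (which already covers the case $j=k$ needed for $\kappa(\hat{\phi},\hat{\phi})$) combined with the correspondence between $K$-invariant eigenfamilies on $G$ and eigenfamilies on $G/K$ from Lemma \ref{lem: induced functions}. The paper offers no separate argument beyond this, so your write-up is, if anything, more explicit than the original.
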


\section[The Symmetric Space \textbf{SU}(n)/\textbf{SO}(n)]{The Symmetric Space $\SU{n}/\SO{n}$}
In this section we describe the family of symmetric spaces $M = \SU{n}/\SO{n}$ and  derive explicit formulae for the corresponding Cartan maps. We then use the techniques from Chapters \ref{ch: harmonic morphisms} and \ref{ch: eigen} to find complex-valued eigenfunctions, which may then be used to construct a variety of $p$-harmonic functions on these spaces.
\begin{example}\label{SU/SO}
We now turn to the case $M = {\SU{n}}/{\SO{n}}$. It is clear that for $z \in \SU{n}$, $z$ belongs to $\SO{n}$ if and only if $z = \Bar{z}$. This indicates that a good choice for an involutive automorphism might be 
\begin{align*}
    \sigma: \SU{n} \to \SU{n}, \ z \mapsto \bar{z}.
\end{align*}
Indeed for $z$, $w \in \SU{n}$ we have $$\sigma(zw) = \overline{zw} = \Bar{z}\Bar{w} = \sigma(z)\sigma(w) ,\ \sigma(e) = e \ \ \text{and} \ \ \sigma^2(z) = z,$$ so that $\sigma$ is an involutive automorphism of $\SU{n}$ fixing $\SO{n}$. Thus, $M = {\SU{n}}/{\SO{n}}$ is indeed a symmetric space. 
The corresponding Cartan embedding is 
\begin{align*}
    \Phi: z \mapsto z\overline{\bar{z}}^t = zz^t.
\end{align*}
The image of $\Phi$ is 
\begin{align*}
    \text{Im}(\Phi) = \{zz^t \ | \ z \in \SU{n}\} \subset (\SU{n} \cap \sym{\cn^{n\times n}}).
\end{align*}
Since $\SU{n}$ is semisimple and compact, $M$ is of compact type.
\end{example}
\begin{lemma}\label{basis SU/SO}
Let $\su{m} = \la{k} \oplus \la{p}$ be the Cartan decomposition corresponding to the symmetric triple $(\SU{n},\SO{n},\sigma)$ of Example \ref{SU/SO}. Then the set 
\begin{align*}
   \mathcal{C} = \{Y_{rs} \ | \ 1\leq r < s \leq n \} \cup \{\tfrac{i}{\sqrt{n}}\cdot I_{n}\}
\end{align*}
is an orthonormal basis for the orthogonal complement to $\la{p}$ in $\u{n}$.
\end{lemma}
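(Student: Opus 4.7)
The plan is to identify the subspace $\la{p}$ explicitly as the $-1$-eigenspace of $d\sigma$ on $\su{n}$, note that $\la{p}$ lies in the codimension-one subalgebra $\su{n} \subset \u{n}$, and then check orthonormality of $\mathcal{C}$ and orthogonality to $\la{p}$ by direct computation with the trace form $\langle A, B\rangle = \re \tr (\bar{A}^t B)$.

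First I would observe that $\sigma(z) = \bar{z}$ acts on $\u{n}$ by componentwise conjugation, so $d\sigma(X) = \bar{X}$. The $+1$-eigenspace within $\su{n}$ is exactly $\so{n}$, confirming $\la{k} = \so{n}$, while the $-1$-eigenspace $\la{p}$ consists of the purely imaginary skew-Hermitian traceless matrices; writing $X = iS$, the skew-Hermitian condition reduces to $S = S^t$ real and traceless, so
$$\la{p} = \{\, iS \mid S \in \sym{\rn^{n\times n}},\ \tr(S) = 0\,\}, \qquad \dim \la{p} = \tfrac{n(n+1)}{2} - 1.$$
Because $\la{p} \subset \su{n}$ and the orthogonal complement of $\su{n}$ in $\u{n}$ is the real line spanned by $\tfrac{i}{\sqrt{n}} I_n$ (Remark \ref{basis for su}), the orthogonal complement of $\la{p}$ in $\u{n}$ decomposes as $\so{n} \oplus \rn \cdot \tfrac{i}{\sqrt{n}} I_n$. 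A dimension count gives $n^2 - \bigl(\tfrac{n(n+1)}{2}-1\bigr) = \tfrac{n(n-1)}{2} + 1$, which matches $|\mathcal{C}|$.

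It then remains to verify orthonormality and mutual orthogonality. The $Y_{rs}$ already form an orthonormal basis of $\so{n}$ by Proposition \ref{basis so}, and $\tfrac{i}{\sqrt{n}} I_n$ is of unit length by Remark \ref{basis for su}. The cross inner products vanish because $\tr(Y_{rs}) = 0$. For orthogonality to $\la{p}$, given $iS \in \la{p}$ one computes
\begin{align*}
    \langle Y_{rs}, iS \rangle &= \re\tr(-Y_{rs}\cdot iS) = -\re\bigl(i \cdot \tr(Y_{rs} S)\bigr) = 0,
\end{align*}
since $Y_{rs}$ and $S$ are both real so $\tr(Y_{rs} S) \in \rn$; and
\begin{align*}
    \langle \tfrac{i}{\sqrt{n}} I_n, iS \rangle &= \re\tr\bigl(\tfrac{-i}{\sqrt{n}} I_n \cdot iS\bigr) = \tfrac{1}{\sqrt{n}} \tr(S) = 0
\end{align*}
by tracelessness of $S$.

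There is no real obstacle here; the one subtle point to get right is that the orthogonal complement is taken inside $\u{n}$ rather than inside $\su{n}$, which is precisely what forces the inclusion of the extra unit vector $\tfrac{i}{\sqrt{n}} I_n$ spanning the direction orthogonal to $\su{n}$. Once that is noted, everything reduces to bookkeeping with the trace form.
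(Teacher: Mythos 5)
Your proof is correct and follows essentially the same route as the paper: both identify the orthogonal complement of $\la{p}$ in $\u{n}$ as $\la{k} = \so{n}$ together with the line spanned by $\tfrac{i}{\sqrt{n}}I_n$ generating $(\su{n})^{\perp}$, and then invoke Proposition \ref{basis so} and Remark \ref{basis for su}. The only difference is that you verify the orthogonality $\la{k}\perp\la{p}$ and the explicit form of $\la{p}$ by direct trace computations, where the paper cites the general orthogonality of the Cartan decomposition; both are fine.
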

\begin{proof}
The set
\begin{align*}
   \basis_{\la{k}} = \{Y_{rs} \ | \ 1\leq r < s \leq n \}
\end{align*}
is clearly an orthonormal basis for the Lie subalgebra $\la{k}$ which is the orthogonal complement of $\la{p}$ in $\su{n}$. By Remark \ref{basis for su} the unit vector $\tfrac{i}{\sqrt{n}}\cdot I_{n}$ in turn generates the orthogonal complement of $\su{n}$ in $\u{n}$, and so we are done.
\end{proof}
The remainder of this section will be devoted to obtaining eigenfunctions on the symmetric space $M = \SU{n}/\SO{n}$. We define $\phi_{j\alpha}: \SU{n} \to \cn$ by
\begin{align*}
    \phi_{j\alpha}: z \mapsto (\Phi(z))_{j\alpha} = e_{j}\cdot zz^t \cdot e_{\alpha}^t
\end{align*}
for $1 \leq j, \alpha \leq n$. The functions $\phi_{j\alpha}$ are clearly $\SO{n}$-invariant and we have $\phi_{j\alpha} = z_{j\alpha}\circ \Phi$. Hence, we may use Theorem \ref{composition relations} to deduce the action of the tension field and the conformality operator of $\SU{n}$ on the functions $\phi_{j\alpha}$.
\begin{lemma}[\cite{Gud-Sif-Sob}]\label{tau kappa SU/SO}
The tension field and the conformality operator on the special unitary group $\SU{n}$ satisfy the relations
\begin{align*}
    \tau(\phi_{j\alpha}) = \tfrac{-2(n^2 +n -2)}{n}\cdot \phi_{j\alpha}
\end{align*}
and
\begin{align*}
    \kappa(\phi_{j\alpha},\phi_{k\beta}) = -2\cdot \phi_{jk}\phi_{\alpha \beta} - 2\cdot\phi_{j\beta}\phi_{k\alpha} + \tfrac{4}{n}\cdot \phi_{j\alpha}\phi_{k\beta}.
\end{align*}
\end{lemma}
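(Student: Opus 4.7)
The natural plan is to follow the template set by the proofs of Lemma \ref{tau kappa c grass} and Lemma \ref{lem: tau kappa on real grass}. Write $\phi_{j\alpha} = f_{j\alpha}\circ \Phi$, where $f_{j\alpha}(w) = w_{j\alpha}$ is the ambient matrix coordinate, and then invoke Theorem \ref{composition relations} to transfer the computation to the tension field and conformality operator along the totally geodesic image $N = \Phi(\SU{n})$, up to a factor of $4$. By Remark \ref{expression for basis}, at each point $\Phi(p)$ an orthonormal frame for $T_{\Phi(p)}N$ is obtained from an orthonormal basis $\basis_{\la{p}}$ of $\la{p}$ via the vectors $V_X$ corresponding to $\Ad{\sigma(p)}(X) = \bar{p}Xp^t$, and the identity $p^t\bar{p} = I$ valid on $\SU{n}$ simplifies the flow along $N$ to
\begin{align*}
    \Phi(p)\exp(t\Ad{\sigma(p)}(X)) = p\exp(tX)p^t.
\end{align*}
From this it follows immediately that $V_X(f_{j\alpha})(\Phi(p)) = (pXp^t)_{j\alpha}$ and $V_X^2(f_{j\alpha})(\Phi(p)) = (pX^2p^t)_{j\alpha}$, so the whole problem reduces to evaluating the sums $\sum_X X^2$ for the tension field and $\sum_X X^{rs}X^{uv}$ for the conformality operator, taken over $\basis_{\la{p}}$.

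The main obstacle is that $\la{p}$ consists of purely imaginary traceless Hermitian matrices, and the tracelessness constraint on the diagonal subspace means Proposition \ref{basis u} does not hand us an orthonormal basis as directly as in the Grassmannian cases. My plan to bypass this is to exploit Lemma \ref{basis SU/SO}: the larger subspace $i\cdot \sym{n,\rn}$ splits orthogonally as $\la{p}\oplus \rn\cdot \tfrac{i}{\sqrt n}I_n$, and on this larger space the collection $\{iX_{rs}\}\cup\{iD_t\}$ is an orthonormal basis. Computing the sum of squares over this larger basis with Lemma \ref{square sum relation} and subtracting the contribution of $\tfrac{i}{\sqrt n}I_n$ yields
\begin{align*}
    \sum_{X\in \basis_{\la{p}}} X^2 \;=\; -\tfrac{n+1}{2}\cdot I_n + \tfrac{1}{n}\cdot I_n \;=\; -\tfrac{n^2+n-2}{2n}\cdot I_n.
\end{align*}
Substituting into $\left(p\sum_X X^2\, p^t\right)_{j\alpha}$ and applying the factor of $4$ from Theorem \ref{composition relations} produces the claimed eigenvalue $-2(n^2+n-2)/n$ for $\phi_{j\alpha}$.

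For the conformality operator the key observation is that $\sum_X X^{rs}X^{uv}$ is, up to a sign, the matrix of the orthogonal projection from $\gl{n}{\rn}$ onto the space of real symmetric trace-free matrices. Writing $X = iS$ with $S$ real symmetric and trace-free and using that the projection onto the full symmetric subspace has entry $\tfrac12(\delta_{ru}\delta_{sv}+\delta_{rv}\delta_{su})$ while the rank-one projection onto $\mathrm{span}(I_n)$ contributes $\tfrac{1}{n}\delta_{rs}\delta_{uv}$ gives
\begin{align*}
    \sum_{X\in \basis_{\la{p}}} X^{rs}X^{uv} \;=\; -\tfrac12(\delta_{ru}\delta_{sv}+\delta_{rv}\delta_{su}) + \tfrac{1}{n}\delta_{rs}\delta_{uv}.
\end{align*}
Plugging this into the quadruple sum $\sum_{r,s,u,v} p_{jr}p_{\alpha s}p_{ku}p_{\beta v}(\,\cdots)$ collapses the three pieces into $(pp^t)_{jk}(pp^t)_{\alpha\beta}$, $(pp^t)_{j\beta}(pp^t)_{k\alpha}$ and $(pp^t)_{j\alpha}(pp^t)_{k\beta}$, which are precisely $\phi_{jk}\phi_{\alpha\beta}$, $\phi_{j\beta}\phi_{k\alpha}$ and $\phi_{j\alpha}\phi_{k\beta}$. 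Multiplying by the factor of $4$ from Theorem \ref{composition relations} yields the stated formula.
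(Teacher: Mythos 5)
Your proposal is correct and follows essentially the same route as the paper: reduce to $\tau_N$ and $\kappa_N$ on $N=\Phi(\SU{n})$ via Theorem \ref{composition relations}, use the frame $\Ad{\sigma(z)}(X)$ from Remark \ref{expression for basis}, and handle the tracelessness of $\la{p}$ by summing over $\{iX_{rs}\}\cup\{iD_t\}$ and subtracting the $\tfrac{i}{\sqrt{n}}I_n$ contribution exactly as the paper does with Lemma \ref{basis SU/SO} and Lemma \ref{square sum relation}. The only difference is cosmetic: you package the quadruple sum for $\kappa$ as the projection kernel $-\tfrac12(\delta_{ru}\delta_{sv}+\delta_{rv}\delta_{su})+\tfrac1n\delta_{rs}\delta_{uv}$ onto trace-free symmetric matrices, where the paper expands the $X_{rs}$ and $D_t$ terms explicitly; both yield the same identity.
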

\begin{proof}
By Theorem \ref{composition relations} it is sufficient to compute 

$$\tau_{N}(z_{j\alpha})(\Phi(z)) \quad \mathrm{and}\quad \kappa_{N}(z_{j\alpha},z_{k\beta})(\Phi(z))$$
for $1 \leq j,k,\alpha,\beta \leq n$. Here $\tau_{N}$ and $\kappa_{N}$ are the tension field and conformality operator on $N = \Phi(\SU{n})$, respectively. By Remark \ref{basis cartan embedding}, if $\basis_{\la{p}}$ is an orthonormal basis for $\la{p}$ we have
\begin{align*}
    \tau_{N}(x_{j\alpha})(\Phi(z)) = \sum_{X\in \basis_{\la{p}}} e_{j}\cdot zX^2z^t\cdot e_{\alpha}^t
\end{align*}
and
\begin{align*}
    \kappa_N(x_{j\alpha},x_{k\beta})(\Phi(z)) = \sum_{X\in \basis_{\la{p}}} e_j\cdot zXz^t\cdot e_{\alpha}^t\cdot e_{k}\cdot zXz^t\cdot e_{\beta}^t.
\end{align*}
By combining Lemma \ref{basis SU/SO} with Proposition \ref{basis u}, and using the identities in Proposition \ref{square sum relation}, we get for $\tau_{N}$ 
\begin{align*}
        \tau_{N}(x_{j\alpha})(\Phi(z)) &= \sum_{X\in \basis_{\la{p}}} e_{j}\cdot zX^2z^t\cdot e_{\alpha}^t\\
        &= -\sum_{r < s}\big\{ e_{j}\cdot zX_{rs}^2z^t\cdot e_{\alpha}^t\big\} - \sum_{t=1}^n \big\{ e_{j}\cdot zD_{t}^2z^t\cdot e_{\alpha}^t\big\} + \tfrac{1}{n} e_{j}zI_n^2z^te_{\alpha}^t\\
        &= (-\tfrac{n-1}{2}-1+\tfrac{1}{n})\cdot\phi_{j\alpha}\\
        &= -\tfrac{n^2 + n - 2}{2n}\cdot\phi_{j\alpha}.
\end{align*}
For the conformality operator $\kappa_{N}$ we have
\begin{align}\label{eq: kappa SU/SO}
    \begin{split}
        &\kappa_N(x_{j\alpha},x_{k\beta})(\Phi(z))\\
        &= \sum_{X\in \basis_{\la{p}}} e_j\cdot zXz^t\cdot e_{\alpha}^t\cdot e_{k}\cdot zXz^t\cdot e_{\beta}^t\\
        &= -\sum_{r<s} e_j\cdot zX_{rs}z^t\cdot e_{\alpha}^t\cdot e_{k}\cdot zX_{rs}z^t\cdot e_{\beta}^t - \sum_{t=1}^{n} e_j\cdot zD_tz^t\cdot e_{\alpha}^t\cdot e_{k}\cdot zD_tz^t\cdot e_{\beta}^t\\
        &\sed + \tfrac{1}{n}\cdot e_j\cdot zI_nz^t\cdot e_{\alpha}^t\cdot e_{k}\cdot zI_nz^t\cdot e_{\beta}^t.    
    \end{split}
\end{align}
For the first sum we obtain
\begin{align*}
    &-\sum_{r<s} e_j\cdot zX_{rs}z^t\cdot e_{\alpha}^t\cdot e_{k}\cdot zX_{rs}z^t\cdot e_{\beta}^t\\
    &= -\tfrac{1}{2}\cdot\sum_{r<s} z_{jr}z_{\alpha s}z_{kr}z_{\beta s} + z_{jr}z_{\alpha s}z_{ks}z_{\beta r} + z_{js}z_{\alpha r}z_{kr}z_{\beta s} + z_{js}z_{\alpha r}z_{ks}z_{\beta r}\\
    &= -\tfrac{1}{2}\cdot\sum_{\mathclap{\substack{r,s = 1\\ r\neq s}}}^n z_{jr}z_{kr}z_{\alpha s}z_{\beta s} + z_{jr}z_{\beta r}z_{ks}z_{\alpha s},
\end{align*}
and for the second we get
\begin{align*}
    - \sum_{t=1}^{n} e_j\cdot zD_tz^t\cdot e_{\alpha}^t\cdot e_{k}\cdot zD_tz^t\cdot e_{\beta}^t = -\sum_{t = 1}^{n}z_{jt}z_{\alpha t}z_{kt}z_{\beta t}. 
\end{align*}
Inserting these into equation \eqref{eq: kappa SU/SO} and simplifying gives
\begin{align*}
    \kappa_N(x_{j\alpha},x_{k\beta})(\Phi(z)) &= -\tfrac{1}{2}\cdot\left\{\sum_{r,s = 1}^n z_{jr}z_{kr}z_{\alpha s}z_{\beta s} + z_{jr}z_{\beta r}z_{kr}z_{\alpha r}\right\} + \tfrac{1}{n}\cdot \phi_{j\alpha}\phi_{k\beta}\\
    &= -\tfrac{1}{2}\cdot \phi_{jk}\phi_{\alpha\beta} -\tfrac{1}{2}\cdot \phi_{j\beta}\phi_{k\alpha} + \tfrac{1}{n}\phi_{j\alpha}\phi_{k\beta}.
\end{align*}
Applying Theorem \ref{composition relations} to the expressions for $\tau_{N}$ and $\kappa_{N}$ then yields the desired formulae for $\tau$ and $\kappa$.
\end{proof}
\begin{theorem}[\cite{Gud-Sif-Sob}]\label{eigenfunctions SU/SO}
Let $A$ be a symmetric matrix given by $A = a^t a$ for some non-zero vector $a \in \cn^{n}$. Define $\phi: \SU{n} \to \cn$ by
\begin{align*}
    \phi(z) = \tr(A \cdot \Phi(z)) = \sum_{j,\alpha = 1}^{n}a_{j}a_{\alpha}\phi_{j\alpha}.
\end{align*}
Then $\phi$ is an $\SO{n}$-invariant eigenfunction on $\SU{n}$ satisfying
\begin{align*}
    \tau{\phi} = -\tfrac{2(n^2 +n -2)}{n}\cdot \phi \quad \textrm{and}\quad \kappa(\phi,\phi) = -\tfrac{4(n-1)}{n}\cdot \phi^2.
\end{align*}
\end{theorem}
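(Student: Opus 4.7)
The plan is to reduce the eigenvalue calculation for $\phi$ to the already computed actions of $\tau$ and $\kappa$ on the generating functions $\phi_{j\alpha}$ in Lemma \ref{tau kappa SU/SO}, exploiting linearity of $\tau$ and bilinearity of $\kappa$. First, $\SO{n}$-invariance of $\phi$ is immediate: each $\phi_{j\alpha}$ equals $z_{j\alpha}\circ \Phi$ and $\Phi$ is constant on $\SO{n}$-cosets, and $\phi$ is simply a $\cn$-linear combination of the $\phi_{j\alpha}$.

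For the tension field, since $\tau$ is $\cn$-linear on smooth functions, I would compute
\begin{align*}
\tau(\phi) = \sum_{j,\alpha=1}^{n} a_j a_\alpha \, \tau(\phi_{j\alpha}) = -\tfrac{2(n^2+n-2)}{n}\sum_{j,\alpha=1}^{n}a_j a_\alpha\, \phi_{j\alpha} = -\tfrac{2(n^2+n-2)}{n}\cdot \phi,
\end{align*}
using the eigenvalue of each $\phi_{j\alpha}$ from Lemma \ref{tau kappa SU/SO}.

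For the conformality operator, the key observation is that all three terms in the formula
\[
\kappa(\phi_{j\alpha},\phi_{k\beta}) = -2\cdot \phi_{jk}\phi_{\alpha\beta} - 2\cdot \phi_{j\beta}\phi_{k\alpha} + \tfrac{4}{n}\cdot \phi_{j\alpha}\phi_{k\beta}
\]
factor over the summation indices $(j,\alpha,k,\beta)$ into a product of two copies of $\phi$ after pairing with $a_j a_\alpha a_k a_\beta$. Concretely, I would write
\begin{align*}
\kappa(\phi,\phi) &= \sum_{j,\alpha,k,\beta}a_j a_\alpha a_k a_\beta\,\kappa(\phi_{j\alpha},\phi_{k\beta})\\
&= -2\Bigl(\sum_{j,k}a_j a_k\phi_{jk}\Bigr)\Bigl(\sum_{\alpha,\beta}a_\alpha a_\beta\phi_{\alpha\beta}\Bigr) -2\Bigl(\sum_{j,\beta}a_j a_\beta\phi_{j\beta}\Bigr)\Bigl(\sum_{k,\alpha}a_k a_\alpha\phi_{k\alpha}\Bigr)\\
&\sed +\tfrac{4}{n}\Bigl(\sum_{j,\alpha}a_j a_\alpha\phi_{j\alpha}\Bigr)\Bigl(\sum_{k,\beta}a_k a_\beta\phi_{k\beta}\Bigr),
\end{align*}
and each parenthesized factor equals $\phi$ by the definition of $\phi$. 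Collecting constants yields $\bigl(-2-2+\tfrac{4}{n}\bigr)\phi^2 = -\tfrac{4(n-1)}{n}\cdot \phi^2$, which is the claimed identity.

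There is no real obstacle: the computation is essentially bookkeeping once Lemma \ref{tau kappa SU/SO} is in hand, and the only mildly delicate point is recognizing that the symmetry $a_j a_\alpha = a_\alpha a_j$ (coming from the symmetry of $A = a^t a$) is what allows each of the three sums to factor cleanly as $\phi\cdot \phi$ rather than producing more complicated contractions. Since any relabeling of dummy indices is legitimate, this is immediate but worth stating explicitly for the reader.
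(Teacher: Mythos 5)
Your proposal is correct and follows essentially the same route as the paper: linearity of $\tau$ gives the first eigenvalue immediately, and bilinearity of $\kappa$ together with the rank-one factorisation $a_{j\alpha}=a_j a_\alpha$ lets each of the three terms in Lemma \ref{tau kappa SU/SO} collapse to $\phi^2$, yielding $(-2-2+\tfrac{4}{n})\phi^2=-\tfrac{4(n-1)}{n}\phi^2$. The only nuance is that the clean factorisation is really due to $A$ having rank one (so that the coefficient $a_j a_\alpha a_k a_\beta$ splits across any pairing of the four indices), rather than to the symmetry of $A$ as such, but this does not affect the validity of your argument.
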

\begin{proof}
The expression for $\tau(\phi)$ follows directly from Lemma \ref{tau kappa SU/SO} by the linearity of $\tau$. For $\kappa$ we have by bilinearity
\begin{align*}
    \kappa(\phi,\phi) &= \sum_{j,\alpha,k,\beta} a_{j}a_{\alpha}a_{k}a_{\beta} \cdot \kappa(\phi_{j\alpha},\phi_{k\beta})\\
    &= \sum_{j,\alpha,k,\beta}a_{j}a_{\alpha}a_{k}a_{\beta}\cdot (-2\cdot \phi_{jk}\phi_{\alpha \beta} - 2\cdot\phi_{j\beta}\phi_{k\alpha} + \tfrac{4}{n}\cdot \phi_{j\alpha}\phi_{k\beta})\\
    &= -2\cdot \left(\sum_{j,k =1}^n a_{j}a_{k}\phi_{jk}\right)\left(\sum_{\alpha,\beta = 1}^n a_{\alpha}a_{\beta} \phi_{\alpha\beta}\right)\\
    &\sed -2\cdot \left(\sum_{j,\beta =1}^n a_{j}a_{\beta}\phi_{j\beta}\right)\left(\sum_{k,\alpha = 1}^n a_{k}a_{\alpha} \phi_{k\alpha}\right)\\
    &\sed + \tfrac{1}{n}\cdot \left(\sum_{j,\alpha =1}^n a_{j}a_{\alpha}\phi_{j\alpha}\right)\left(\sum_{k,\beta = 1}^n a_{k}a_{\beta} \phi_{k\beta}\right)\\
    &= -\tfrac{4(n-1)}{n}\cdot \phi^2
\end{align*}
as desired.
\end{proof}
\begin{remark}
The $\SO{n}$-invariant eigenfunction $\phi$ induces an eigenfunction $\hat{\phi}: \SU{n}/\SO{n} \to \cn$ with the same eigenvalues. These eigenfunctions coincide with those found in the paper \cite{Gud-Sif-Sob} by S. Gudmundsson, A. Siffert and M. Sobak.
\end{remark}
\section[The Symmetric Space \textbf{SO}(2n)/\textbf{U}(n)]{The Symmetric Space $\SO{2n}/\U{n}$}
In this section we describe the family of symmetric spaces $M = \SO{2n}/\U{n}$ and  derive explicit formulae for the corresponding Cartan maps. We then use the techniques from Chapters \ref{ch: harmonic morphisms} and \ref{ch: eigen} to find complex-valued eigenfunctions, which may then be used to construct a variety of $p$-harmonic functions on these spaces.
\begin{example}\label{SO(2n)/U(n)}
Here we will discuss the symmetric space $$M = \SO{2n}/\U{n}.$$ To this end we must first describe how $\U{n}$ is embedded into $\SO{2n}$. We may decompose an element $z$ of $\U{n}$ into real and imaginary components as
\begin{align*}
    z = x + iy
\end{align*}
where $x$ and $y$ are real. We then define a map $\varphi: \U{n} \to \GL{2n,\rn}$ by
\begin{align*}
    \varphi: x+iy \mapsto \begin{pmatrix}
    x & y\\
    -y & x
    \end{pmatrix}. 
\end{align*}
The computation
\begin{align*}
    \varphi((x+iy)(a+ib)) &= \varphi(xa - yb + i(xb + ya))\\
    &=
    \begin{pmatrix}
        xa - yb & xb + ya\\
        -(xb + yb) & xa - yb
    \end{pmatrix}\\
    &= \begin{pmatrix}
        x & y\\
        -y & x
    \end{pmatrix}
    \begin{pmatrix}
        a & b\\
        -b & a
    \end{pmatrix}\\
    &= \varphi(x+iy)\varphi(a+ib),
\end{align*}
shows $\varphi$ to be an group homomorphism. Since $\varphi$ is also smooth it is a Lie group homomorphism. As $\varphi(x+iy) = \varphi(a+ib)$ if and only if $x = a$ and $y = b$, $\varphi$ is also injective. Since $\varphi(\bar{z}^t) = \varphi(z)^t$ we must have $\text{Im}(\varphi) \subset \O{2n}$. Using elementary row and column operations on the block matrices we get
\begin{align*}
    \det{\begin{pmatrix}
        x & y\\
        -y & x
    \end{pmatrix}} &= 
    \det{\begin{pmatrix}
        x - iy & y+ix\\
        -y & x
    \end{pmatrix}}\\
    &= \det{\begin{pmatrix}
        x - iy & 0\\
        -y & x + iy
    \end{pmatrix}}\\
    &=
    \det{\begin{pmatrix}
        x - iy & 0\\
        \frac{1}{2}ix - \frac{1}{2}y & x + iy
    \end{pmatrix}}\\
    &=
    \det{\begin{pmatrix}
        x - iy & 0\\
        0 & x + iy
    \end{pmatrix}}\\
    &=
    \det{\left[\begin{pmatrix}
        x - iy & 0\\
        0 & I_n
    \end{pmatrix}
    \begin{pmatrix}
        I_n & 0 \\
        0 & x + iy
    \end{pmatrix}\right]}\\
    &= \det{(x+iy)}\det{(x-iy)}\\
    &= |\det(x+iy)|^2.    
\end{align*}
Hence, the image of $\varphi$ is contained in $\SO{2n}$. We will henceforth identify this image with $\U{n}$. If we let $K$ be the set of all $2n\times 2n$ block matrices in $\SO{2n}$ of the form
\begin{align*}
    k = \begin{pmatrix}
        x & y\\
        -y & x
    \end{pmatrix}
\end{align*}
then

\begin{align*}
    \psi: \begin{pmatrix}
        x & y\\
        -y & x
    \end{pmatrix} \mapsto x + iy
\end{align*}
clearly maps $K$ into $\U{n}$ and is the inverse map of $\varphi$. Hence we may identify $\U{n}$ with $K$. Let
\begin{align*}
    x = \begin{pmatrix}
    x^{(11)} & x^{(12)}\\
    x^{(21)} & x^{(22)}
    \end{pmatrix} \in \SO{2n},
\end{align*}
with $x^{(11)}$, $x^{(12)}$, $x^{(21)}$ and $x^{(22)}$ real $n\times n$ matrices, and set

\begin{align*}
    J = \begin{pmatrix}
    0 & I_n\\
    -I_n & 0
    \end{pmatrix}.
\end{align*}
Then $JJ^t = e$ and $\det(J) = 1$ so that $y \in \SO{2n}$,  and
\begin{align*}
    JxJ^t &= 
    \begin{pmatrix}
    0 & I_n\\
    -I_n & 0
    \end{pmatrix}
    \begin{pmatrix}
    x^{(11)} & x^{(12)}\\
    x^{(21)} & x^{(22)}
    \end{pmatrix}
\begin{pmatrix}
    0 & -I_n\\
    I_n & 0
    \end{pmatrix}\\
    &= 
    \begin{pmatrix}
    x^{(22)} & -x^{(21)}\\
    -x^{(12)} & x^{(11)}
    \end{pmatrix}.
\end{align*}
Hence, the conjugation by $J$ is an automorphism of $\SO{2n}$ keeping $\U{n}$ fixed. It is also clear from the computation that this is an involution and so we set $\sigma(x) = JxJ^t$. Thus, $M = \SO{2n}/\U{n}$ is a symmetric space. Since $\SO{2n}$ is compact and semisimple, $M$ is of compact type. The corresponding Cartan embedding is given by
\begin{align*}
    \Phi: x \mapsto xJx^t J^t = -xJx^tJ.
\end{align*}
\end{example}
\begin{lemma}\label{basis SO/U}
Let $\so{2n} = \la{k}\oplus \la{p}$ be the Cartan decomposition corresponding to the symmetric triple $(\SO{2n},\U{n},\sigma)$ of Example \ref{SO(2n)/U(n)}. Then an orthonormal basis for $\la{p}$ is given by
\begin{align*}
    \basis_{\la{p}} = \left\{ \tfrac{1}{\sqrt{2}}\begin{pmatrix}
    Y_{rs}& 0\\
    0 & -Y_{rs}
    \end{pmatrix}, \tfrac{1}{\sqrt{2}}\begin{pmatrix}
    0 & Y_{rs}\\
    Y_{rs} & 0
    \end{pmatrix} \ \middle| \ 1 \leq r<s \leq n \right\}.
\end{align*}
\end{lemma}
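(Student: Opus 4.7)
The plan is to verify three things: that the proposed set $\basis_{\la{p}}$ consists of elements actually lying in $\la{p}$, that these elements are orthonormal with respect to the standard bi-invariant inner product on $\so{2n}$, and finally that the cardinality of $\basis_{\la{p}}$ matches $\dim \la{p}$ so that we have a full basis.

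First, I would explicitly compute the action of $d\sigma$ on a general block matrix. Writing $X = \begin{pmatrix} A & B \\ C & D \end{pmatrix} \in \so{2n}$ and using that $J^t = -J = J^{-1}$, a direct computation yields
\begin{align*}
    d\sigma(X) = JXJ^t = \begin{pmatrix} D & -C \\ -B & A \end{pmatrix}.
\end{align*}
The condition $d\sigma(X) = -X$ therefore characterises $\la{p}$ as the set of matrices of the form $\begin{pmatrix} A & B \\ B & -A \end{pmatrix}$ with $A, B \in \so{n}$ (the constraints $B^t = -C$ from $X \in \so{2n}$ together with $B = C$ force both $A$ and $B$ to be skew-symmetric). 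Each element of the proposed $\basis_{\la{p}}$ is visibly of this shape, so $\basis_{\la{p}} \subset \la{p}$.

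Next I would compute the dimensions. Since $\dim \so{2n} = n(2n-1)$ and $\dim \u{n} = n^2$, we have $\dim \la{p} = n(n-1)$, which equals $2 \cdot \binom{n}{2}$, the cardinality of $\basis_{\la{p}}$. For orthonormality, I would use $\ip{X}{Y} = \tr(X^t Y)$ and exploit the block structure: the inner product of two block matrices decomposes into a sum of $\tr$-pairings over the four blocks, and within each block Proposition \ref{basis so} guarantees that $\{Y_{rs}\}$ is orthonormal. The $\tfrac{1}{\sqrt 2}$ normalisation exactly compensates for the factor $2$ arising from the two non-zero block contributions in each element. Cross terms between the two families vanish automatically because one family has only diagonal blocks non-zero while the other has only off-diagonal blocks non-zero. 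Different $(r,s)$ indices give zero by orthonormality of the $Y_{rs}$ in $\so{n}$.

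No serious obstacle is expected; the computation is essentially bookkeeping. The only care needed is the dimension count (confirming that the two families together exhaust $\la{p}$) and keeping the block inner product normalisation straight, since the factor $\tfrac{1}{\sqrt 2}$ could easily be dropped by accident when combining contributions from two identical blocks.
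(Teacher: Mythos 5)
Your proof is correct, but it takes a more direct route than the paper. The paper's argument never touches $\la{p}$ itself: it writes down the block description of $\la{k}\cong\u{n}$, exhibits an orthonormal basis $\basis_{\la{k}}$, observes that $\basis_{\la{k}}\cup\basis_{\la{p}}$ is an orthonormal basis of all of $\so{2n}$, and then invokes the orthogonality of $\la{k}$ and $\la{p}$ (with respect to the bi-invariant metric, which is proportional to the Killing form) to conclude that $\basis_{\la{p}}$ must span $\la{p}$. You instead characterise $\la{p}$ directly as the $(-1)$-eigenspace of $d\sigma$, check membership of each proposed element, verify orthonormality by the block decomposition of $\tr(X^tY)$, and close the argument with the dimension count $\dim\la{p}=\dim\so{2n}-\dim\u{n}=n(n-1)=\lvert\basis_{\la{p}}\rvert$. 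Both arguments are elementary bookkeeping, but yours is more self-contained: it does not require exhibiting a complete basis of $\la{k}$ (note that the basis of $\la{k}$ displayed in the paper's proof is in fact missing the $n$ diagonal generators $\tfrac{1}{\sqrt{2}}\bigl(\begin{smallmatrix}0 & D_t\\ -D_t & 0\end{smallmatrix}\bigr)$ of the symmetric part, so its cardinality is $n(n-1)$ rather than $n^2$; your dimension count sidesteps this issue entirely). What the paper's approach buys in exchange is that it reuses the already established orthonormal basis of $\so{2n}$ and the general fact that the Cartan decomposition is orthogonal, so no new eigenspace computation is needed. Your explicit computation of $d\sigma$ on block matrices and the resulting description $\la{p}=\bigl\{\bigl(\begin{smallmatrix}A & B\\ B & -A\end{smallmatrix}\bigr)\ \big|\ A,B\in\so{n}\bigr\}$ are both correct, and the $\tfrac{1}{\sqrt{2}}$ normalisation is handled properly.
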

\begin{proof}
It is easily verified that the subalgebra $\la{k} \cong \u{n}$ is the subspace
\begin{align*}
    \la{k} = \left\{ \begin{pmatrix}
    X & Y\\
    -Y & X
    \end{pmatrix} \ \middle | \ X + X^t = Y - Y^t = 0\right\}.
\end{align*}
Hence the set
\begin{align*}
    \basis_{\la{k}} = \left\{ \tfrac{1}{\sqrt{2}}\begin{pmatrix}
    Y_{rs}& 0\\
    0 & Y_{rs}
    \end{pmatrix}, \tfrac{1}{\sqrt{2}}\begin{pmatrix}
    0 & X_{rs}\\
    -X_{rs} & 0
    \end{pmatrix} \ \middle| \ 1 \leq r<s \leq n\right\}
\end{align*}
is an orthonormal basis for $\la{k}$. Furthermore $\basis = \basis_{\la{p}} \cup \basis_{\la{p}}$ gives an orthonormal basis for $\so{2n}$. Thus, since $\la{k}$ are orthogonal $\la{p}$ with respect to the standard bi-invariant metric on $\so{2n}$ we are done.
\end{proof}
The remainder of this section will be devoted to obtaining eigenfunctions on the symmetric space $M = \SO{2n}/\U{n}$. We define $\psi_{j\alpha}: \SO{2n} \to \cn$ by
\begin{align*}
    \psi_{j\alpha}: x \mapsto (xJx^t)_{j\alpha}
\end{align*}
for $1 \leq j, \alpha \leq 2n$. One then easily verifies that $\psi_{j\alpha}$ satisfies
\begin{align*}
    \psi_{j\alpha}(x) = \sum_{r=1}^{n}x_{jr}x_{\alpha,r+n} - x_{j,r+n}x_{\alpha r}.
\end{align*}
We also see that we may write $\psi_{j\alpha}$ as a composition $\psi_{j\alpha} = f_{j\alpha} \circ \Phi$ of the function 
\begin{align*}
    f_{j\alpha}:x \mapsto (xJ)_{j\alpha}
\end{align*}
with the Cartan map $\Phi$. This means that the functions $\psi_{j\alpha}$ are $\U{n}$-invariant and we can use Theorem \ref{composition relations} to prove the following result.
\begin{lemma}[\cite{Gud-Sif-Sob}]\label{tau kappa on SO/U}
The tension field and the conformality operator on the special orthogonal group $\SO{2n}$ satisfy the relations
\begin{align*}
    \tau(\psi_{j\alpha}) = -2(n-1)\cdot \psi_{j\alpha}
\end{align*}
and
\begin{align*}
    \kappa(\psi_{j\alpha},\psi_{k\beta}) = -(\psi_{j\beta}\psi_{k\alpha} + \psi_{jk}\psi_{\alpha\beta}) -(\delta_{j\beta}\delta_{k\alpha} - \delta_{jk}\delta_{\alpha\beta})
\end{align*}
\end{lemma}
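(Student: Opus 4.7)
The plan is to use Theorem \ref{composition relations} together with the orthonormal basis of $\la{p}$ supplied by Lemma \ref{basis SO/U}. First I would note the composition $\psi_{j\alpha} = f_{j\alpha}\circ \Phi$ with $f_{j\alpha}(y) = (yJ)_{j\alpha}$: indeed $f_{j\alpha}(\Phi(x)) = (xJx^tJ^tJ)_{j\alpha} = (xJx^t)_{j\alpha} = \psi_{j\alpha}(x)$ since $J^tJ = -J^2 = I_{2n}$. Theorem \ref{composition relations} then reduces the problem to computing $\tau_N(f_{j\alpha})(\Phi(x))$ and $\kappa_N(f_{j\alpha}, f_{k\beta})(\Phi(x))$, where $\tau_N$ and $\kappa_N$ denote the tension field and conformality operator on $N = \Phi(\SO{2n})$.

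Next, by Remark \ref{expression for basis}, an orthonormal basis for $T_{\Phi(x)}N$ is $\{\Ad{\sigma(x)}(X)_{\Phi(x)} : X \in \basis_{\la{p}}\}$. Using the identity $\Phi(x)\sigma(x) = x$ (a direct consequence of $\Phi(x) = x\sigma(x)^{-1}$) together with $\sigma(x)^{-1}J = Jx^t$ (which follows from $\sigma(x) = JxJ^t$ and $J^2 = -I_{2n}$), a straightforward differentiation along the left-invariant vector field $\Ad{\sigma(x)}(X)$ yields the key identities
\begin{equation*}
\Ad{\sigma(x)}(X)(f_{j\alpha})(\Phi(x)) = (xXJx^t)_{j\alpha}, \qquad \Ad{\sigma(x)}(X)^2(f_{j\alpha})(\Phi(x)) = (xX^2Jx^t)_{j\alpha}.
\end{equation*}

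Denoting the two families of basis vectors in Lemma \ref{basis SO/U} by
\begin{equation*}
A_{rs} = \tfrac{1}{\sqrt{2}}\begin{pmatrix}Y_{rs} & 0 \\ 0 & -Y_{rs}\end{pmatrix}, \qquad B_{rs} = \tfrac{1}{\sqrt{2}}\begin{pmatrix}0 & Y_{rs} \\ Y_{rs} & 0\end{pmatrix},
\end{equation*}
a short block computation gives $A_{rs}^2 = B_{rs}^2 = \tfrac{1}{2}\mathrm{diag}(Y_{rs}^2, Y_{rs}^2)$. Summing and applying Lemma \ref{square sum relation} then yields $\sum_{X\in \basis_{\la{p}}} X^2 = -\tfrac{n-1}{2}\cdot I_{2n}$, from which $\tau_N(f_{j\alpha})(\Phi(x)) = -\tfrac{n-1}{2}\cdot \psi_{j\alpha}(x)$. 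Multiplying by $4$ via Theorem \ref{composition relations} produces $\tau(\psi_{j\alpha}) = -2(n-1)\cdot \psi_{j\alpha}$.

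The hard part will be the conformality operator. Exploiting $A_{rs}J = B_{rs}$ and $B_{rs}J = -A_{rs}$ to recast the sum, one obtains
\begin{equation*}
\kappa_N(f_{j\alpha}, f_{k\beta})(\Phi(x)) = \sum_{r<s}\left\{(xA_{rs}x^t)_{j\alpha}(xA_{rs}x^t)_{k\beta} + (xB_{rs}x^t)_{j\alpha}(xB_{rs}x^t)_{k\beta}\right\}.
\end{equation*}
The main obstacle is collapsing this quartic expression to the stated form. I would split $x$ into $n\times n$ blocks and expand using the contraction identity $\sum_{r<s}(Y_{rs})_{uv}(Y_{rs})_{pq} = \tfrac{1}{2}(\delta_{up}\delta_{vq} - \delta_{uq}\delta_{vp})$. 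Repeated application of the orthogonality relations $xx^t = I_{2n}$ (equivalently the block identities relating the four $n\times n$ blocks of $x$) then collapses the expression to $-\tfrac{1}{4}\{\psi_{j\beta}\psi_{k\alpha} + \psi_{jk}\psi_{\alpha\beta} + \delta_{j\beta}\delta_{k\alpha} - \delta_{jk}\delta_{\alpha\beta}\}$. The Kronecker $\delta$ terms arise precisely from those contractions in which row orthonormality of $x$ simplifies products of row entries to deltas. Multiplying by $4$ via Theorem \ref{composition relations} then yields the stated formula for $\kappa(\psi_{j\alpha}, \psi_{k\beta})$.
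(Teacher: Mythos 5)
Your proposal is correct and follows essentially the same route as the paper's own proof: reduction via Theorem \ref{composition relations}, the orthonormal basis of Lemma \ref{basis SO/U} transported by Remark \ref{expression for basis}, the identities $A_{rs}J = B_{rs}$, $B_{rs}J = -A_{rs}$, Lemma \ref{square sum relation} for the tension field, and an index expansion collapsed by $xx^t = I_{2n}$ for the conformality operator. As a minor aside, your intermediate expression $-\tfrac{1}{4}\{\psi_{j\beta}\psi_{k\alpha}+\psi_{jk}\psi_{\alpha\beta}+\delta_{j\beta}\delta_{k\alpha}-\delta_{jk}\delta_{\alpha\beta}\}$ carries the delta signs consistent with the lemma statement (e.g.\ checking $j=k=1$, $\alpha=\beta=2$ at $x=I_{2n}$ gives $\kappa_N = \tfrac14$), whereas the paper's displayed intermediate formula has those two signs flipped, which appears to be a typo there.
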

\begin{proof}
By Theorem \ref{composition relations} it is sufficient to compute $$\tau_{N}(f_{j\alpha})(\Phi(x)) \quad \mathrm{and} \quad \kappa_{N}(f_{j\alpha},f_{k\beta})(\Phi(x))$$
for $1 \leq j,k,\alpha,\beta \leq 2n$. Here $\tau_{N}$ and $\kappa_{N}$ are the tension field and conformality operator on $N = \Phi(\SO{2n})$, respectively. By Remark \ref{basis cartan embedding} and using the basis for $\la{p}$ from Lemma \ref{basis SO/U} we have 
\begin{align*}
    \tau_N(f_{j\alpha})(\Phi(x)) &=\tfrac{1}{2}\cdot\sum_{r < s}^{n}\bigg\{e_{j}\cdot x
\begin{psmallmatrix}
Y_{rs} & 0\\
0 & -Y_{rs}
\end{psmallmatrix}^2 Jx^t\cdot e_{\alpha}^t
 + e_{j}\cdot x
\begin{psmallmatrix}
0 & Y_{rs}\\
Y_{rs} & 0
\end{psmallmatrix}^2 Jx^t\cdot  e_{\alpha}^t\bigg\}\\
&= \sum_{r < s}^{n}e_{j}\cdot x\begin{psmallmatrix}
Y_{rs}^2 & 0\\
0 & Y_{rs}^2
\end{psmallmatrix}Jx^t\cdot e_{\alpha}^t\\
&= -\tfrac{n-1}{2}\cdot \psi_{j\alpha}.
\end{align*}
For $\kappa_{N}$ we get
\begin{align}\label{kappa SO/U}
\begin{split}
&\kappa_{N}(f_{j\alpha},f_{k\beta})(\Phi(x))\\
&= \tfrac{1}{2}\cdot\sum_{r<s}^{n} e_{j}\cdot x
\begin{psmallmatrix}
Y_{rs} & 0\\
0 & -Y_{rs}
\end{psmallmatrix} Jx^t\cdot  e_{\alpha}^t\cdot e_{k}\cdot x
\begin{psmallmatrix}
Y_{rs} & 0\\
0 & -Y_{rs}
\end{psmallmatrix} Jx^t\cdot e_{\beta}^t\\
&\sed +\tfrac{1}{2}\cdot \sum_{r<s}^{n} e_{j}\cdot x
\begin{psmallmatrix}
0 & Y_{rs}\\
Y_{rs} & 0
\end{psmallmatrix} Jx^t \cdot e_{\alpha}^t\cdot  e_{k}\cdot x
\begin{psmallmatrix}
0 & Y_{rs}\\
Y_{rs} & 0
\end{psmallmatrix} Jx^t\cdot  e_{\beta}^t\\
&= \tfrac{1}{2}\cdot\sum_{r<s}^{n} e_{j}\cdot x
\begin{psmallmatrix}
0 & Y_{rs}\\
Y_{rs} & 0 
\end{psmallmatrix} x^t\cdot e_{\alpha}^t\cdot e_{k}\cdot x
\begin{psmallmatrix}
0 & Y_{rs}\\
Y_{rs} & 0 
\end{psmallmatrix} x^t\cdot e_{\beta}^t\\
&\sed + \tfrac{1}{2}\cdot\sum_{r<s}^{n} e_{j}\cdot x
\begin{psmallmatrix}
-Y_{rs} & 0\\
0 & Y_{rs}
\end{psmallmatrix} x^t\cdot e_{\alpha}^t\cdot e_{k}\cdot x
\begin{psmallmatrix}
-Y_{rs} & 0\\
0 & Y_{rs}
\end{psmallmatrix} x^t\cdot e_{\beta}^t.    
\end{split}
\end{align}
Deonting by $\tilde{r}$ and $\tilde{s}$ the indicies $r+n$ and $s+n$, respectively, the first sum simplifies to
\begin{align*}
&\tfrac{1}{2}\cdot\sum_{r<s}^{n} e_{j}\cdot x
\begin{psmallmatrix}
0 & Y_{rs}\\
Y_{rs} & 0 
\end{psmallmatrix} x^t\cdot e_{\alpha}^t\cdot e_{k}\cdot x
\begin{psmallmatrix}
0 & Y_{rs}\\
Y_{rs} & 0 
\end{psmallmatrix} x^t\cdot e_{\beta}^t\\
&=\tfrac{1}{4}\cdot \sum_{r\neq s}^{n} x_{jr}x_{kr}x_{\alpha\tilde{s}}x_{\beta\tilde{s}} - x_{jr}x_{\beta \tilde{r}}x_{ks}x_{\alpha\tilde{s}} + x_{jr}x_{k\tilde{r}}x_{\beta s}x_{\alpha\tilde{s}} - x_{jr}x_{\beta r}x_{k\tilde{s}}x_{\alpha\tilde{s}}\\
 &\sed +x_{j\tilde{r}}x_{kr}x_{\alpha s}x_{\beta\tilde{s}} - x_{j\tilde{r}}x_{\beta\tilde{r}}x_{ks}x_{\alpha s} + x_{j\tilde{r}}x_{k\tilde{r}}x_{\alpha s}x_{\beta s} - x_{j\tilde{r}}x_{\beta r}x_{k\tilde{s}}x_{\alpha s}.
\end{align*}
For the second sum we get
\begin{align*}
&\tfrac{1}{2}\cdot\sum_{r<s}^{n} e_{j}\cdot x
\begin{psmallmatrix}
-Y_{rs} & 0\\
0 & Y_{rs}
\end{psmallmatrix} x^t\cdot e_{\alpha}^t\cdot e_{k}\cdot x
\begin{psmallmatrix}
-Y_{rs} & 0\\
0 & Y_{rs}
\end{psmallmatrix} x^t\cdot e_{\beta}^t\\
&=\tfrac{1}{4}\cdot \sum_{r\neq s} x_{jr}x_{kr}x_{\alpha s}x_{\beta s} - x_{jr}x_{\beta r}x_{ks}x_{\alpha s}- x_{jr}x_{k\tilde{r}}x_{\alpha s}x_{\beta\tilde{s}} + x_{jr}x_{\beta\tilde{r}}x_{k\tilde{s}}x_{\alpha s}\\
&\sed -x_{j\tilde{r}}x_{kr}x_{\alpha\tilde{s}}x_{\beta s} + x_{j\tilde{r}}x_{\beta r}x_{ks}x_{\alpha\tilde{s}} + x_{j\tilde{r}}x_{k\tilde{r}}x_{\alpha\tilde{s}}x_{\beta\tilde{s}} - x_{j\tilde{r}}x_{\beta\tilde{r}}x_{k\tilde{s}}x_{\alpha\tilde{s}}.
\end{align*}
Inserting these into the expression \eqref{tau kappa on SO/U} for $\kappa_N$ we obtain, upon factoring,
\begin{align*}
    \kappa_{N}(f_{j\alpha},f_{k\beta})(\Phi(x)) &= \tfrac{1}{4}\cdot \sum_{r\neq s} (x_{jr}x_{kr}+x_{j\tilde{r}}x_{k\tilde{r}})(x_{\alpha s}x_{\beta s}+x_{\alpha\tilde{s}}x_{\beta\tilde{s}})\\
    &\sed - (x_{jr}x_{\beta r}+ x_{j\tilde{r}}x_{\beta\tilde{r}})(x_{ks}x_{\alpha s}+x_{k\tilde{s}}x_{\alpha\tilde{s}})\\
    &\sed + (x_{j\tilde{r}}x_{kr} - x_{jr}x_{k\tilde{r}})(x_{\alpha s}x_{\beta\tilde{s}} -x_{\alpha\tilde{s}}x_{\beta s})\\
    &\sed + (x_{j\tilde{r}}x_{\beta r} - x_{jr}x_{\beta\tilde{r}})(x_{ks}x_{\alpha\tilde{s}}-x_{k\tilde{s}}x_{\alpha s}).
\end{align*}
This expression can be simplified further, using the identity $xx^t = I_{2n}$, to yield
\begin{align*}
    \kappa_{N}(f_{j\alpha},f_{k\beta})(\Phi(x)) =-\tfrac{1}{4}(\psi_{j\beta}\psi_{k\alpha} + \psi_{jk}\psi_{\alpha\beta} - \delta_{j\beta}\delta_{k\alpha} + \delta_{jk}\delta_{\alpha\beta}).
\end{align*}
The desired formulae for $\tau$ and $\kappa$ are then easily deduced from Theorem \ref{composition relations}.
\end{proof}

\begin{theorem}[\cite{Gud-Sif-Sob}]\label{eigenfunctions on SO/U}
Let $V \subset \cn^{2n}$ be an isotropic subspace, i.e. $\ip{Z}{W} = 0$ for all $Z,W\in V$, where $\ip{\cdot}{\cdot}$ is the $\cn$-bilinear extension of the standard inner product on $\rn^{2n}$. Let $a,b \in V$ be linearly independent and define $A$ to be the skew-symmetric matrix
\begin{align*}
    A = \sum_{r,s = 1}^{2n}a_{r} b_{s} Y_{rs}.
\end{align*}
Then the function $\psi:\SO{2n} \to \cn$ given by
\begin{align*}
    \psi: x \mapsto -\sum_{j\alpha} A_{j\alpha}\psi_{j\alpha}
\end{align*}
is a $\U{n}$-invariant eigenfunction on $\SO{2n}$ satisfying
\begin{align*}
    \tau(\psi) = -2(n-1)\cdot \psi \quad \textrm{and} \quad \kappa(\psi,\psi) = -\psi^2.
\end{align*}
\end{theorem}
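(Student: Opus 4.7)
The plan is to deduce both eigenvalue equations directly from Lemma \ref{tau kappa on SO/U} by expanding $\psi$ bilinearly. Since each $\psi_{j\alpha}$ is $\U{n}$-invariant, so is the linear combination $\psi$. The tension-field equation is immediate: linearity of $\tau$ together with $\tau(\psi_{j\alpha}) = -2(n-1)\cdot\psi_{j\alpha}$ gives $\tau(\psi) = -2(n-1)\cdot\psi$ at once.

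The substantive content lies in the conformality operator. Expanding
\begin{align*}
\kappa(\psi,\psi) = \sum_{j,\alpha,k,\beta} A_{j\alpha}A_{k\beta}\cdot \kappa(\psi_{j\alpha},\psi_{k\beta})
\end{align*}
and inserting the formula from Lemma \ref{tau kappa on SO/U} produces four sums: two in which pairs of $\psi_{\cdot\cdot}$'s appear and two Kronecker-delta contributions. Using skew-symmetry of $A$, the Kronecker part collapses to $2 \sum_{j,\alpha} A_{j\alpha}^2$. The explicit entry $A_{j\alpha} = \tfrac{1}{\sqrt{2}}(a_j b_\alpha - a_\alpha b_j)$ then gives
\begin{align*}
\sum_{j,\alpha} A_{j\alpha}^{2} = \ip{a}{a}\ip{b}{b} - \ip{a}{b}^{2},
\end{align*}
which vanishes because $a,b$ lie in the isotropic subspace $V$. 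Hence only the $\psi\psi$ part contributes.

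For those terms I would introduce the skew-symmetric matrix $P$ with entries $P_{j\alpha} = \psi_{j\alpha}(x) = (xJx^t)_{j\alpha}$ and recognise the two remaining sums, via cyclic invariance of the trace and the skew-symmetries $A^t = -A$ and $P^t = -P$, as the matrix traces $\tr((A^tP)^2)$ and $\tr((AP)^2)$ respectively; both equal $\tr((AP)^2)$. Since also $\psi = \tr(AP)$, the theorem reduces to the algebraic identity
\begin{align*}
\tr((AP)^2) = \tfrac{1}{2}\cdot \tr(AP)^{2}.
\end{align*}

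The final step is to verify this identity. I would use the rank-two factorisation $A = \tfrac{1}{\sqrt{2}}(a^t b - b^t a)$ and observe that the skew-symmetry of $P$ alone forces $aPa^t = bPb^t = 0$. Setting $c = aPb^t$, one then has $bPa^t = -c$ and $\tr(AP) = -\sqrt{2}\,c$, after which a direct expansion of $(AP)^2$ telescopes to $-(c/\sqrt{2})\cdot AP$, from which the identity is immediate. The main obstacle is the bookkeeping in the previous paragraph: correctly reducing the two four-index $\psi\psi$-sums to the single quantity $\tr((AP)^2)$ and tracking the sign changes produced by the skew-symmetries of $A$ and $P$. Once that reduction is in place, the isotropy hypothesis enters in precisely one spot—killing the Kronecker-delta contribution—while the final identity is rank-theoretic and does not itself require isotropy.
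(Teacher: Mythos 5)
Your proposal is correct and follows essentially the same route as the paper: both deduce the tension-field equation by linearity, expand $\kappa(\psi,\psi)$ bilinearly via Lemma \ref{tau kappa on SO/U}, use isotropy of $a,b$ only to kill the Kronecker-delta contribution $2\sum_{j,\alpha}A_{j\alpha}^2 = 2(\ip{a}{a}\ip{b}{b}-\ip{a}{b}^2)$, and then reduce the remaining quadratic terms by exploiting the skew-symmetry of $A$ and $xJx^t$. Your reformulation of that last step as the trace identity $\tr((AP)^2)=\tfrac{1}{2}\tr(AP)^2$ for the rank-two factorisation $A=\tfrac{1}{\sqrt{2}}(a^tb-b^ta)$ is a clean and correct way of carrying out the expansion the paper only sketches.
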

\begin{proof}
The expression for $\tau(\psi)$ follows directly from Lemma \ref{tau kappa on SO/U} by the linearity of $\tau$. The components of the matrix $A$ have the form
\begin{align*}
    A_{j\alpha} = \tfrac{1}{\sqrt{2}}\cdot(a_{j}b_{\alpha}-b_{j}a_{\alpha}).
\end{align*}
For the conformality operator $\kappa$ we then use bilinearity to obtain
\begin{align*}
    \kappa(\psi,\psi) &= \sum_{j,\alpha,k,\beta} A_{j\alpha}A_{k\beta}\kappa(\psi_{j\alpha},\psi_{k\beta})\\
    &= -\sum_{j,\alpha,k,\beta} A_{j\alpha}A_{k\beta}(\psi_{j\beta}\psi_{k\alpha} + \psi_{jk}\psi_{\alpha\beta})\\
    &\sed - \sum_{j,\alpha,k,\beta} A_{j\alpha}A_{k\beta}(\delta_{j\beta}\delta_{k\alpha}-\delta_{jk}\delta_{\alpha\beta})\\
    &= -\tfrac{1}{2}\cdot\sum_{j,\alpha,k,\beta}(a_{j}b_{\alpha} - b_{j}a_{\alpha})(a_{k}b_{\beta} - b_{k}a_{\beta})(\psi_{j\beta}\psi_{k\alpha} + \psi_{jk}\psi_{\alpha\beta})\\
    &\sed -\sum_{jk}A_{jk}A_{jk} + \sum_{j\beta} A_{j\beta}A_{\beta j}\\
    &= -\tfrac{1}{2}\cdot\sum_{j,\alpha,k,\beta}(a_{j}b_{\alpha} - b_{j}a_{\alpha})(a_{k}b_{\beta} - b_{k}a_{\beta})(\psi_{j\beta}\psi_{k\alpha} + \psi_{jk}\psi_{\alpha\beta})\\
    &\sed +2\cdot \sum_{jk}A_{jk}A_{jk}\\
    &= -\tfrac{1}{2}\cdot\sum_{j,\alpha,k,\beta}(a_{j}b_{\alpha} - b_{j}a_{\alpha})(a_{k}b_{\beta} - b_{k}a_{\beta})(\psi_{j\beta}\psi_{k\alpha} + \psi_{jk}\psi_{\alpha\beta}),
\end{align*}
where the sum over $A_{jk}$ vanishes since it is equal to $2(\ip{a}{a}\ip{b}{b}-\ip{a}{b}^2)$, and $a$ and $b$ belong to the same isotropic subspace. By expanding the remaining term and factoring and simplifying, making use of the skew symmetry of $A$, we get
\begin{align*}
    \kappa(\psi,\psi) = -2\cdot\left(\sum_{j,\alpha}a_{j}b_{\alpha}\psi_{j\alpha}\right)^2 = -2\cdot \left(\tfrac{1}{2}\psi^2\right) = -\psi^2
\end{align*}
as desired.
\end{proof}
\begin{remark}
The $\U{n}$-invariant eigenfunctions in Theorem \ref{eigenfunctions on SO/U} induce eigenfunctions on the symmetric space $M = \SO{2n}/\U{n}$ with the same eigenvalues. These eigenfunctions coincide with those found in the paper \cite{Gud-Sif-Sob} by S. Gudmundsson, A. Siffert and M. Sobak.
\end{remark}
\section[The Symmetric Space \textbf{Sp}(n)/\textbf{U}(n)]{The Symmetric Space $\Sp{n}/\U{n}$}
In this section we describe the family of symmetric spaces $M = \Sp{n}/\U{n}$ and  derive explicit formulae for the corresponding Cartan maps. We then use the techniques from Chapters \ref{ch: harmonic morphisms} and \ref{ch: eigen} to find complex-valued eigenfunctions, which may then be used to construct a variety of $p$-harmonic functions on these spaces.
\begin{example}\label{ex Sp/U}
We now let $M = \Sp{n}/\U{n}$. $\U{n}$ sits inside $\Sp{n}$ as the group of block matrices
\begin{align*}
    \U{n} \cong 
    \left\{
    \begin{pmatrix}
    x & y\\
    -y & x
    \end{pmatrix} \in \Sp{n}
    \ \middle | \ x,y \in \GL{n,\rn}, \ x + iy \in \U{n} 
    \right\}.
\end{align*}
As in the case $\SU{n}/\SO{n}$ set $\sigma(q) = \bar{q}$. Then for 
$$
q = 
\begin{pmatrix}
z & w\\
-\bar{w} & \bar{z}
\end{pmatrix} \in \Sp{n},
$$
we get
\begin{align*}
    \sigma(q) = 
    \begin{pmatrix}
    \bar{z} & \bar{w}\\
    -{w} & {z}
    \end{pmatrix}
\end{align*}
so that $\sigma(q) = q$ if and only if $w = \bar{w}$ and $z = \bar{z}$, meaning $q \in \U{n}$. The map $\sigma$ is an involution and hence $M$ is indeed a symmetric space. Since $\Sp{n}$ is compact and semisimple, $M$ is of compact type. The corresponding Cartan embedding is
\begin{align*}
    \Phi: q \mapsto qq^t.
\end{align*}
\end{example}
\begin{lemma}[\cite{Gud-Sif-Sob}]\label{basis Sp/U}
Let $\sp{n} = \la{k}\oplus \la{p}$ be the Cartan decomposition of $\sp{n}$ corresponding to the symmetric triple $(\Sp{n},\U{n},\sigma)$ of Example \ref{ex Sp/U}. Then an orthonormal basis for $\la{p}$ is given by
\begin{align*}
\basis_{\la{p}} = &\bigg\{
    \tfrac{1}{\sqrt{2}}
    \begin{psmallmatrix}
    iX_{rs} & 0\\
    0 & -iX_{rs}
    \end{psmallmatrix},
    \tfrac{1}{\sqrt{2}}
    \begin{psmallmatrix}
    iD_{t} & 0\\
    0 & -iD_{t}
    \end{psmallmatrix}, \\
    &\quad \tfrac{1}{\sqrt{2}}
    \begin{psmallmatrix}
    0 & iX_{rs}\\
    iX_{rs} & 0
    \end{psmallmatrix},
    \tfrac{1}{\sqrt{2}}
    \begin{psmallmatrix}
    0 & iD_t\\
    iD_t & 0
    \end{psmallmatrix}\ \bigg| \ 1 \leq r<s\leq n, 1\leq t \leq n\bigg\}.    
\end{align*}
\end{lemma}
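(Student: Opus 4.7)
The plan is to identify $\la{p}$ as the purely imaginary subspace of $\sp{n}$ and then read off an orthonormal basis for it directly from Proposition \ref{basis sp}. Since $\sigma(q) = \bar{q}$ acts on $\Sp{n}$ by entry-wise complex conjugation, and since complex conjugation is $\rn$-linear, the differential at the identity is simply the restriction of the map $Q \mapsto \bar{Q}$ to $\sp{n} \subset \gl{2n}{\cn}$. Its $+1$-eigenspace $\la{k}$ therefore consists of the real matrices in $\sp{n}$, while its $-1$-eigenspace $\la{p}$ consists of the purely imaginary matrices in $\sp{n}$.

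Next I would inspect the basis for $\sp{n}$ from Proposition \ref{basis sp}, which is orthonormal with respect to the standard bi-invariant inner product (the $1/\sqrt{2}$-factors are chosen precisely for this purpose). Of its seven listed types, three consist of real matrices (the diagonal $Y_{rs}$ blocks, together with the two off-diagonal types involving $X_{rs}$ and $D_t$ without a factor of $i$) and hence lie in $\la{k}$, while the remaining four, which are precisely those appearing in the statement of the present lemma, consist of purely imaginary matrices and hence lie in $\la{p}$.

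The conclusion then follows from a simple general observation: if $\basis$ is an orthonormal basis of an inner product space $V$ and $V = V_1 \oplus V_2$ is an orthogonal decomposition such that each element of $\basis$ lies in one of the two summands, then the elements of $\basis$ lying in $V_i$ form an orthonormal basis of $V_i$. Orthogonality of $\la{k}$ and $\la{p}$ is guaranteed by Proposition \ref{cartan decomposition}, since the inner product on $\sp{n}$ is a negative multiple of the Killing form. As a dimension sanity check, the four types making up $\basis_{\la{p}}$ contribute $2\binom{n}{2} + 2n = n(n+1)$ vectors, matching $\dim(\Sp{n}/\U{n}) = n(2n+1) - n^2$.

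The argument presents no substantive obstacle — it is a routine eigenspace computation, and essentially all of the real work has already been carried out in the explicit construction of the basis for $\sp{n}$ in Chapter \ref{ch: classical groups}.
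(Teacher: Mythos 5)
Your proposal is correct and follows essentially the same route as the paper: partition the orthonormal basis of $\sp{n}$ from Proposition \ref{basis sp} according to the $\pm 1$-eigenspaces of $d\sigma$ (real versus purely imaginary matrices) and use the orthogonality of the Cartan decomposition. Your version is slightly more explicit than the paper's, which simply observes that the complement of $\basis_{\la{p}}$ is a basis for $\la{k}\cong\u{n}$ and concludes by orthogonality, but the substance is identical and your dimension count is a correct sanity check.
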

\begin{proof}
The complement of $\basis_{\la{p}}$ in the orthonormal basis $\basis$ for $\sp{n}$ from Lemma \ref{basis sp} is clearly a basis for $\la{k} \cong \u{n}$. Hence, since $\la{p}$ and $\la{k}$ are orthogonal, $\basis_{\la{p}}$ is an orthonormal basis for $\la{p}$.
\end{proof}
The remainder of this section will be devoted to obtaining eigenfunctions on the symmetric space $M = \Sp{n}/\U{n}$. We define $\phi_{j\alpha}: \Sp{n} \to \cn$ by
\begin{align*}
    \phi_{j\alpha}: q \mapsto (\Phi(q))_{j\alpha} = e_{j}\cdot qq^t\cdot e_{\alpha}^t.
\end{align*}
The functions $\phi_{j\alpha}$ are then clearly $\U{n}$-invariant as we have $\phi_{j\alpha} = q_{j\alpha}\circ \Phi$. Hence, we may use Theorem \ref{composition relations} to deduce the actions of the tension field and the conformality operator of $\Sp{n}$ on the functions $\phi_{j\alpha}$.
\begin{lemma}[\cite{Gud-Sif-Sob}]\label{tau kappa Sp/U}
The tension field and the conformality operators on the quaternionic unitary group $\Sp{n}$ satisfy
\begin{align*}
    \tau(\phi_{j\alpha}) = -2(n+1)\cdot \phi_{j\alpha}
\end{align*}
and
\begin{align*}
    \kappa(\phi_{j\alpha},\phi_{k\beta}) = -(\phi_{j\beta}\phi_{k\alpha} + \phi_{jk}\phi_{\alpha\beta}) +(J_n)_{j\beta}(J_n)_{\alpha k} + (J_n)_{jk}(J_n)_{\alpha\beta}.
\end{align*}
\end{lemma}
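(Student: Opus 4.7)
The plan is to use Theorem \ref{composition relations} in the same pattern as the proofs for the other Grassmannians. Observe that $\phi_{j\alpha} = q_{j\alpha}\circ \Phi$, where $q_{j\alpha}$ is the matrix-coordinate function on $\Sp{n}$, so each $\phi_{j\alpha}$ is $\U{n}$-invariant. Theorem \ref{composition relations} therefore reduces the problem to computing $\tau_N(q_{j\alpha})$ and $\kappa_N(q_{j\alpha},q_{k\beta})$ at the point $\Phi(q)$, where $N = \Phi(\Sp{n})$. By Remark \ref{expression for basis} combined with Lemma \ref{basis Sp/U}, an orthonormal basis for $T_{\Phi(q)}N$ is $\{\mathrm{Ad}_{\sigma(q)}(X)_{\Phi(q)} : X\in \basis_{\la{p}}\}$. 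For the involution $\sigma(q)=\bar q$ we have $\sigma(q^{-1})=q^{t}$ (since $q^{-1}=\bar q^{t}$), so the geodesic at $\Phi(q)$ with this initial velocity is $t\mapsto q\exp(tX)q^{t}$, giving the compact formulae
\[
\mathrm{Ad}_{\sigma(q)}(X)_{\Phi(q)}(q_{j\alpha}) = (qXq^{t})_{j\alpha},\qquad
\mathrm{Ad}_{\sigma(q)}(X)_{\Phi(q)}^{\,2}(q_{j\alpha}) = (qX^{2}q^{t})_{j\alpha}.
\]

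For the tension field, I would sum the second-order terms: $\tau_N(q_{j\alpha})(\Phi(q)) = e_j\cdot q\bigl(\sum_{X\in\basis_{\la{p}}}X^{2}\bigr)q^{t}\cdot e_{\alpha}^{t}$. Plugging in the four types of basis elements from Lemma \ref{basis Sp/U} and using Lemma \ref{square sum relation} (namely $\sum_{r<s}X_{rs}^{2} = \tfrac{n-1}{2}I_n$ and $\sum_{t}D_{t}^{2}=I_{n}$), a block-by-block computation collapses all four sums into $\sum_{X\in\basis_{\la{p}}}X^{2}=-\tfrac{n+1}{2}I_{2n}$. Hence $\tau_N(q_{j\alpha})(\Phi(q)) = -\tfrac{n+1}{2}\phi_{j\alpha}(q)$, and multiplying by $4$ yields $\tau(\phi_{j\alpha})=-2(n+1)\phi_{j\alpha}$.

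For the conformality operator I would evaluate $\kappa_N(q_{j\alpha},q_{k\beta})(\Phi(q)) = \sum_{X\in\basis_{\la{p}}}(qXq^{t})_{j\alpha}(qXq^{t})_{k\beta}$ by splitting the basis into \emph{A-type} elements $\tfrac{i}{\sqrt 2}\bigl(\begin{smallmatrix}A&0\\0&-A\end{smallmatrix}\bigr)$ and \emph{B-type} elements $\tfrac{i}{\sqrt 2}\bigl(\begin{smallmatrix}0&B\\B&0\end{smallmatrix}\bigr)$, where $A,B$ run over the orthonormal basis $\{X_{rs}\}\cup\{D_t\}$ of real symmetric $n\times n$ matrices. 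The key combinatorial identity is the reproducing-kernel formula $\sum_{A} A_{ij}A_{kl} = \tfrac{1}{2}(\delta_{ik}\delta_{jl} + \delta_{il}\delta_{jk})$. Substituting this for both types produces block-indexed sums of products $q_{jc}q_{\alpha d}q_{ke}q_{\beta f}$ with specific constraints on whether the blocks match or swap.

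The main obstacle is turning those block-indexed contractions into the intrinsic quantities appearing on the right-hand side. Here the two defining relations of $\Sp{n}$ intervene: unitarity $q\bar q^{t}=I_{2n}$ gives contractions of the form $\sum_{c}q_{jc}q_{\alpha c}=(qq^{t})_{j\alpha}=\phi_{j\alpha}$, which together with the symmetry $\phi_{j\alpha}=\phi_{\alpha j}$ produces the terms $-(\phi_{j\beta}\phi_{k\alpha}+\phi_{jk}\phi_{\alpha\beta})$; the symplectic relation $qJq^{t}=J$ converts the mismatch contributions (those forced by the off-diagonal B-type basis and the sign pattern of the A-type basis) into contractions against the constant matrix $J_n$, which after using $J_n$-antisymmetry assemble into $(J_n)_{j\beta}(J_n)_{\alpha k}+(J_n)_{jk}(J_n)_{\alpha\beta}$. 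Carefully tracking signs and index permutations through this simplification is what requires the most care; the identity can be checked directly at $q=I_{2n}$ as a sanity test (where $\phi_{j\alpha}=\delta_{j\alpha}$ and the formula reduces to a purely combinatorial check involving $\delta$'s and $J$'s). Multiplying the resulting expression by $4$ via Theorem \ref{composition relations} produces the stated formula for $\kappa(\phi_{j\alpha},\phi_{k\beta})$.
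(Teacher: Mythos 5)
Your proposal is correct and follows essentially the same route as the paper: reduce via Theorem \ref{composition relations} to the image $N=\Phi(\Sp{n})$, use the orthonormal basis of $\la{p}$ from Lemma \ref{basis Sp/U} together with Lemma \ref{square sum relation} to get $\sum X^2=-\tfrac{n+1}{2}I_{2n}$ for the tension field, and for the conformality operator expand over the basis and simplify using $qJ_nq^t=J_n$ (and the symmetry of $qq^t$) before multiplying by $4$. Your use of the completeness identity $\sum_A A_{ij}A_{kl}=\tfrac12(\delta_{ik}\delta_{jl}+\delta_{il}\delta_{jk})$ is merely a tidier bookkeeping of the same term-by-term expansion the paper carries out explicitly.
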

\begin{proof}
By Theorem \ref{composition relations} it is sufficient to compute $\tau_{N}(f_{j\alpha})(\Phi(q))$ and \\$\kappa_{N}(f_{j\alpha},f_{k\beta})(\Phi(q))$ for $1 \leq j,k,\alpha,\beta \leq 2n$. Here $\tau_{N}$ and $\kappa_{N}$ are the tension field and the conformality operator on $N = \Phi(\Sp{n})$, respectively. By Remark \ref{basis cartan embedding} and using the basis for $\la{p}$ from Lemma \ref{basis Sp/U} we have
\begin{align*}
    \tau_{N}(f_{j\alpha})(\Phi(q)) &= -\tfrac{1}{2}\cdot\sum_{r<s} e_{j}\cdot q
    \begin{psmallmatrix}
    X_{rs} & 0\\
    0 & -X_{rs}
    \end{psmallmatrix}^2
    q^t \cdot e_{\alpha}^t + e_j \cdot q
    \begin{psmallmatrix}
    0 & X_{rs}\\
    X_{rs} & 0
    \end{psmallmatrix}^2
    q^t\cdot e_{\alpha}^t\\
    &\sed -\tfrac{1}{2}\sum_{t=1}^n e_{j}\cdot q 
    \begin{psmallmatrix}
    D_t & 0\\
    0 & -D_{t}
    \end{psmallmatrix}
    q^t \cdot e_{\alpha}^t + e_{j}\cdot q
    \begin{psmallmatrix}
    0 & D_{t}\\
    D_t & 0
    \end{psmallmatrix}
    q^t \cdot e_{\alpha}^t.
\end{align*}
Then by a simple computation using Lemma \ref{square sum relation} we find
\begin{align*}
    \tau_{N}(f_{j\alpha})(\Phi(q)) = -\tfrac{n+1}{2}\cdot\phi_{j\alpha}. 
\end{align*}
For $\kappa_{N}$ we have
\begin{align}\label{kappa Sp/U}
\begin{split}
    &\kappa_{N}(f_{j\alpha},f_{k\beta})(\Phi(q))\\
    &= -\tfrac{1}{2}\sum_{r<s} e_{j}\cdot q
    \begin{psmallmatrix}
    X_{rs} & 0\\
    0 & -X_{rs}
    \end{psmallmatrix}
    q^t \cdot e_{\alpha}^t \cdot e_{k} \cdot q
    \begin{psmallmatrix}
    X_{rs} & 0\\
    0 & -X_{rs}
    \end{psmallmatrix}
    q^t \cdot e_{\beta}^t\\
    &\sed -\tfrac{1}{2}\sum_{r<s} e_{j}\cdot q
    \begin{psmallmatrix}
    0 & X_{rs}\\
    X_{rs} & 0
    \end{psmallmatrix}
    q^t \cdot e_{\alpha}^t \cdot e_{k} \cdot q
    \begin{psmallmatrix}
    0 & X_{rs}\\
    X_{rs} & 0
    \end{psmallmatrix}
    q^t \cdot e_{\beta}^t\\
    &\sed - \tfrac{1}{2}\sum_{t=1}^n e_{j}\cdot q
    \begin{psmallmatrix}
    D_t & 0\\
    0 & D_t
    \end{psmallmatrix}
    q^t \cdot e_{\alpha}^t \cdot e_{k}\cdot q
    \begin{psmallmatrix}
    D_t & 0\\
    0 & D_t
    \end{psmallmatrix}
    q^t \cdot e_{\beta}^t\\
    &\sed - \tfrac{1}{2}\sum_{t=1}^n e_{j}\cdot q
    \begin{psmallmatrix}
    0 & D_{t}\\
    -D_{t} & 0
    \end{psmallmatrix}
    q^t \cdot e_{\alpha}^t \cdot e_{k}\cdot q
    \begin{psmallmatrix}
    0 & D_{t}\\
    -D_{t} & 0
    \end{psmallmatrix}
    q^t \cdot e_{\beta}^t.
    \end{split}
\end{align}
With similar computations as for the other symmetric spaces, we obtain
\begin{align*}
    &-\tfrac{1}{2}\sum_{r<s} e_{j}\cdot q
    \begin{psmallmatrix}
    X_{rs} & 0\\
    0 & -X_{rs}
    \end{psmallmatrix}
    q^t \cdot e_{\alpha}^t \cdot e_{k} \cdot q
    \begin{psmallmatrix}
    X_{rs} & 0\\
    0 & -X_{rs}
    \end{psmallmatrix}
    q^t \cdot e_{\beta}^t\\
    &= -\tfrac{1}{4}\sum_{r\neq s} q_{jr}q_{kr}q_{\alpha s}q_{\beta s} + q_{jr}q_{\beta r}q_{ks}q_{\alpha s} - q_{jr}q_{k,r+n}q_{\alpha s}q_{\beta, s+n} - q_{jr}q_{\beta, r+n}q_{k,s+n}q_{\alpha s}\\
    &\sed -q_{j,r+n}q_{kr}q_{\alpha,s+n}q_{\beta s} - q_{j,r+n}q_{\beta r}q_{ks}q_{\alpha,s+n} + q_{j,r+n}q_{k,r+n}q_{\alpha,s+n}q_{\beta,s+n}\\
    &\sed+ q_{j,r+n}q_{\beta,r+n}q_{k,s+n}q_{\alpha,s+n}
\end{align*}
for the first sum. For the second sum we obtain
\begin{align*}
    &-\tfrac{1}{2}\sum_{r<s} e_{j}\cdot q
    \begin{psmallmatrix}
    0 & X_{rs}\\
    X_{rs} & 0
    \end{psmallmatrix}
    q^t \cdot e_{\alpha}^t \cdot e_{k} \cdot q
    \begin{psmallmatrix}
    0 & X_{rs}\\
    X_{rs} & 0
    \end{psmallmatrix}
    q^t \cdot e_{\beta}^t\\
    &= -\tfrac{1}{4}\sum_{r\neq s} q_{jr}q_{kr}q_{\alpha,s+n}q_{\beta,s+n} + q_{jr}q_{\beta,s+n}q_{ks}q_{\alpha,s+n} + q_{jr}q_{k,r+n}q_{\alpha,s+n}q_{\beta s}\\
    &\sed + q_{jr}q_{\beta r}q_{k,s+n}q_{\alpha,s+n} +q_{j,r+n}q_{kr}q_{\alpha s}q_{\beta,s+n} + q_{j,r+n}q_{\beta,r+n}q_{ks}q_{\alpha s}\\
    &\sed +q_{j,r+n}q_{k,r+n}q_{\alpha s}q_{\beta s} + q_{j,r+n}q_{\beta r}q_{k,s+n}q_{\alpha s}.
\end{align*}
For the third and fourth sums we get
\begin{align*}
    &- \tfrac{1}{2}\sum_{t=1}^n e_{j}\cdot q
    \begin{psmallmatrix}
    D_t & 0\\
    0 & D_t
    \end{psmallmatrix}
    q^t \cdot e_{\alpha}^t \cdot e_{k}\cdot q
    \begin{psmallmatrix}
    D_t & 0\\
    0 & D_t
    \end{psmallmatrix}
    q^t \cdot e_{\beta}^t\\
    &= -\tfrac{1}{2}\cdot \sum_{t=1}^n q_{jt}q_{\alpha t}q_{kt}q_{\beta t} -q_{jt}q_{\alpha t}q_{k,t+n}q_{\beta, t+n}\\
    &\sed - q_{j,t+n}q_{\alpha,t+n}q_{kt}q_{\beta t} + q_{j,t+n}q_{\alpha,t+n}q_{k,t+n}q_{\beta,t+n}
\end{align*}
and
\begin{align*}
    &- \tfrac{1}{2}\sum_{t=1}^n e_{j}\cdot q
    \begin{psmallmatrix}
    0 & D_{t}\\
    -D_{t} & 0
    \end{psmallmatrix}
    q^t \cdot e_{\alpha}^t \cdot e_{k}\cdot q
    \begin{psmallmatrix}
    0 & D_{t}\\
    -D_{t} & 0
    \end{psmallmatrix}
    q^t \cdot e_{\beta}^t\\
    &= -\tfrac{1}{2}\cdot\sum_{t=1}^{n} q_{jt}q_{\alpha,t+n}q_{kt}q_{\beta,t+n} + q_{jt}q_{\alpha,t+n}q_{k,t+n}q_{\beta t}\\
    &\sed + q_{j,t+n}q_{\alpha t}q_{kt}q_{\beta,t+n} + q_{j,t+n}q_{\alpha t}q_{k,t+n}q_{\beta t}.
\end{align*}
Inserting these into equation \eqref{kappa Sp/U}, simplifying, factorising and making use of the identity $qJ_nq^t =J_n$ for all $q \in \Sp{n}$ yields
\begin{align*}
    \kappa_{N}(f_{j\alpha},f_{k\beta})(\Phi(q)) = -\tfrac{1}{4}(\phi_{k\alpha}\phi_{j\beta} + \phi_{jk}\phi_{\alpha\beta}) + \tfrac{1}{4}\cdot ((J_n)_{\alpha k}(J_n)_{j\beta} + (J_n)_{jk}(J_n)_{\alpha\beta}.
\end{align*}
The desired formulae for $\tau$ and $\kappa$ are then easily deduced via Theorem \ref{composition relations}.
\end{proof}
\begin{theorem}[\cite{Gud-Sif-Sob}]\label{eigenfunctions on Sp/U}
Let the complex symmetric matrix $A$ be given by $A = a^t\cdot a$ for some non-zero element $a\in \cn^{2n}$. Define the function $\phi: \Sp{n} \to \cn$ by
\begin{align*}
    \phi(q) = \tr(A \cdot \Phi(z)) = \sum_{j,\alpha =1}^{2n} a_{j}a_{\alpha} \phi_{j\alpha}.
\end{align*}
Then $\phi$ is a $\U{n}$-invariant eigenfunction on $\Sp{n}$ satisfying
\begin{align*}
    \tau(\phi) =-2(n+1)\cdot \phi \quad \mathrm{and}\quad \kappa(\phi,\phi) = -2\cdot\phi^2.
\end{align*}
\end{theorem}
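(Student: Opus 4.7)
The plan is to mirror the structure of the analogous result for $\SU{n}/\SO{n}$ (Theorem 7.14), exploiting linearity and bilinearity of $\tau$ and $\kappa$ to reduce the problem to Lemma 7.19 and some elementary manipulations involving the matrix $J_n$.

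First, since $\tau$ is a linear operator and $\phi = \sum_{j,\alpha} a_j a_\alpha \phi_{j\alpha}$, the eigenvalue equation $\tau(\phi) = -2(n+1)\cdot\phi$ follows immediately from the relation $\tau(\phi_{j\alpha}) = -2(n+1)\cdot \phi_{j\alpha}$ given in Lemma 7.19. The $\U{n}$-invariance is likewise inherited from the $\U{n}$-invariance of each $\phi_{j\alpha}$, which was established by writing $\phi_{j\alpha} = q_{j\alpha}\circ \Phi$.

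The main content is the verification $\kappa(\phi,\phi) = -2\cdot \phi^2$. I would expand using bilinearity of $\kappa$ to get
$$\kappa(\phi,\phi) = \sum_{j,\alpha,k,\beta = 1}^{2n} a_j a_\alpha a_k a_\beta \cdot \kappa(\phi_{j\alpha},\phi_{k\beta})$$
and then insert the formula from Lemma 7.19. The two summands $-\phi_{j\beta}\phi_{k\alpha}$ and $-\phi_{jk}\phi_{\alpha\beta}$ each contribute $-\phi^2$ upon reindexing and factoring, since, for instance,
$$\sum_{j,\alpha,k,\beta} a_j a_\alpha a_k a_\beta \phi_{j\beta}\phi_{k\alpha} = \Big(\sum_{j,\beta} a_j a_\beta \phi_{j\beta}\Big)\Big(\sum_{k,\alpha} a_k a_\alpha \phi_{k\alpha}\Big) = \phi^2.$$
The two remaining $J_n$ terms contribute a factor of
$$\Big(\sum_{j,\beta} a_j a_\beta (J_n)_{j\beta}\Big)\Big(\sum_{\alpha,k} a_\alpha a_k (J_n)_{\alpha k}\Big) = (a\, J_n\, a^t)^2,$$
and similarly for the other $J_n$-term.

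The key observation is that $J_n^t = -J_n$, so $a\, J_n\, a^t$ is a $1\times 1$ matrix equal to its own transpose and its own negative, hence zero. Both $J_n$ contributions therefore vanish identically, and we are left with $\kappa(\phi,\phi) = -\phi^2 - \phi^2 = -2\cdot \phi^2$, as required. No step here is really an obstacle; the only point that requires any care beyond bookkeeping is noticing that the quadratic form $a \mapsto a J_n a^t$ vanishes on all of $\cn^{2n}$ by skew-symmetry, which is what makes this case cleaner than the $\SO{2n}/\U{n}$ setting where an isotropy condition on the vector was required.
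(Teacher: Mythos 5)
Your proposal is correct and follows essentially the same route as the paper: linearity of $\tau$ for the first identity, bilinearity of $\kappa$ together with Lemma \ref{tau kappa Sp/U} for the second, with the two $\phi$-quadratic terms each factoring as $-\phi^2$ and the $J_n$-terms vanishing. Your observation that $a\,J_n\,a^t = 0$ by skew-symmetry is exactly the paper's remark that $\tr(AJ_n) = 0$ because the product of a symmetric and a skew-symmetric matrix is traceless.
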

\begin{proof}
The computations are very similar to those in the proof of Theorem \ref{eigenfunctions SU/SO}. The formula involving the tension field $\tau$ follows immediately from Lemma \ref{tau kappa Sp/U} and the linearity of $\tau$. For the conformality operator $\kappa$ we get, by bilinearity,
\begin{align*}
    &\kappa(\phi,\phi)\\
    &= \sum_{j,\alpha,k,\beta} a_{j}a_{\alpha}a_{k}a_{\beta}\cdot \kappa(\phi_{j\alpha},\phi_{k\beta})\\
    &= \sum_{j,\alpha,k,\beta} a_{j}a_{\alpha}a_{k}a_{\beta}\cdot (-\phi_{j\beta}\phi_{k\alpha}-\phi_{jk}\phi_{\alpha\beta} + (J_n)_{j\beta}(J_n)_{\alpha k} + (J_n)_{jk}(J_n)_{\alpha\beta}))\\
    &= -2\cdot\phi^2 + 2\cdot \tr(AJ_n)^2\\
    &= -2\cdot \phi^2.
\end{align*}
In the last line we have used that the product of a symmetric and a skew-symmetric matrix is always traceless.
\end{proof}
\begin{remark}
The eigenfunctions from Theorem \ref{eigenfunctions on Sp/U} induce eigenfunctions on the symmetric space $M = \Sp{n}/\U{n}$ with the same eigenvalues. These coincide with the eigenfunctions found by  Gudmundsson, Siffert and Sobak in their paper \cite{Gud-Sif-Sob}.
\end{remark}
\section[The Symmetric Space \textbf{SU}(2n)/\textbf{Sp}(n)]{The Symmetric Space $\SU{2n}/\Sp{n}$}
In this section we describe the family of symmetric spaces $M = \SU{2n}/\Sp{n}$ and  derive explicit formulae for the corresponding Cartan maps. We then use the techniques from Chapters \ref{ch: harmonic morphisms} and \ref{ch: eigen} to find complex-valued eigenfunctions, which may then be used to construct a variety of $p$-harmonic functions on these spaces.
\begin{example}\label{SU/Sp}
As we have seen, $\Sp{n} \subset \SU{2n}$ and indeed $M = \SU{2n}/\Sp{n}$ is one of the classical compact symmetric spaces. In the proof of Proposition \ref{Sp(n), GL(H) Lie groups} we saw that with
$$
J = 
\begin{pmatrix}
0 & I_n\\
-I_n & 0
\end{pmatrix}
$$
an element $z \in \GL{2n,\cn}$ belongs to $\GL{n,\mathbb{H}}$ if and only if $Jz =\bar{z}J$, or equivalently $JzJ^t = \bar{z}$. Hence the map $\sigma: \SU{2n} \to \SU{2n}$ defined by
\begin{align*}
    \sigma: z \mapsto J\bar{z}J^t
\end{align*}
fixes precisely the subgroup $\Sp{n}$. Obviously $\sigma$ is an automorphism of $\SU{2n}$. Since $\bar{J} = J$ and $J^2=-I_{2n}$ we have
\begin{align*}
    \sigma^2(z) = J^2z(J^t)^2 = -I_{2n}z(-I_{2n}) = z
\end{align*}
for all $z \in \SU{2n}$. Therefore, $\sigma$ is also an involution. Hence $M = \SU{2n}/\Sp{n}$ is a symmetric space. Since $\SU{2n}$ is compact and semisimple, $M$ is of compact type. The Cartan embedding corresponding to $\sigma$ is then given by
\begin{align*}
    \Phi: z \mapsto zJz^tJ^t.
\end{align*}
\end{example}

We let $\su{2n} = \la{k}\oplus \la{p}$, $\la{k} \cong \sp{n}$ be the Cartan decomposition of $\su{2n}$ corresponding to the symmetric triple $(\SU{2n},\Sp{n},\sigma)$ of Example \ref{SU/Sp}. The conditions 
$$
d\sigma(Z) = J\bar{Z}J^t = -Z, \quad\bar{Z}^t = -Z \quad \mathrm{and} \quad \tr(Z) = 0
$$
must be satisfied for all $Z \in \la{p}$. From these conditions we deduce that  that $\la{p}$ is the subspace of $\su{2n}$ defined by
\begin{align}\label{eq: p in SU/Sp}
    \la{p} = \left\{\begin{pmatrix}
    Z & W\\
    \bar{W} & -\bar{Z}\end{pmatrix} \ \middle| \ \bar{Z}^t + Z = W^t + W = 0\quad \mathrm{and}\quad \tr(Z) = 0
    \right\}.
\end{align}
For the upcoming calculations it will be useful to introduce the space $\la{p}\subset \tilde{\la{p}}\subset \u{n}$ satisfying
\begin{align*}
    \tilde{\la{p}} = \left\{ \begin{pmatrix}
    Z & W\\
    \bar{W} & -\bar{Z}
    \end{pmatrix} \ \middle| \ \bar{Z}^t + Z = W^t + W = 0 \right\}.
\end{align*}
It is then easy to verify that the set
\begin{align}\label{eq: almost basis SU/Sp}
    \begin{split}
       \basis_{\tilde{\la{p}}} &= \Big\{
       \tfrac{1}{\sqrt{2}}\begin{psmallmatrix}
            Y_{rs} & 0\\
            0 & -Y_{rs}
        \end{psmallmatrix},
        \tfrac{1}{\sqrt{2}}\begin{psmallmatrix}
            iX_{rs} & 0\\
            0 & iX_{rs}
        \end{psmallmatrix},
        \tfrac{1}{\sqrt{2}}\begin{psmallmatrix}
            iD_{t} & 0\\
            0 & iD_{t}
        \end{psmallmatrix},\\
        &\sed \tfrac{1}{\sqrt{2}}\begin{psmallmatrix}
            0 & Y_{rs}\\
            Y_{rs} & 0
        \end{psmallmatrix},
        \tfrac{1}{\sqrt{2}}\begin{psmallmatrix}
            0 & iY_{rs}\\
            -iY_{rs} & 0
        \end{psmallmatrix}
        \ \Big| \ 1 \leq r < s \leq n, \quad 1 \leq t \leq n \Big\} 
    \end{split}
\end{align}
is an orthonormal basis for $\tilde{\la{p}}$. Further, the orthogonal complement of $\la{p}$ in $\tilde{\la{p}}$ is $1$-dimensional and is generated by the unit vector
\begin{align*}
    X = \tfrac{i}{\sqrt{2n}}\cdot I_{2n}.
\end{align*}

The remainder of this section will be devoted to finding eigenfunctions on the symmetric space $M = \SU{2n}/\Sp{n}$. We define the functions $\psi_{j\alpha}: \SU{2n} \to \cn$ by
\begin{align*}
    \psi_{j\alpha}: z \mapsto (zJz^t)_{j\alpha}
\end{align*}
for $1 \leq j,\alpha \leq 2n$. One easily checks that these functions satisfy 
\begin{align*}
    \psi_{j\alpha}(z) = \sum_{r=1}^{n} z_{jr}z_{\alpha,r+n} - z_{j,r+n}z_{\alpha r}.
\end{align*}
We also see that we may write  $\psi_{j\alpha}$ as a composition $\psi_{j\alpha} = f_{j\alpha} \circ \Phi$, where $f_{j\alpha}$ are the functions
\begin{align*}
    f_{j\alpha}:z \mapsto (zJ)_{j\alpha}
\end{align*}
and $\Phi$ is the Cartan map. This means that the functions $\psi_{j\alpha}$ are $\Sp{n}$-invariant and we may utilise Theorem \ref{composition relations} to prove the following result.
\begin{lemma}\label{tau kappa SU/Sp}
The tension field and the conformality operator on the special unitary group $\SU{2n}$ satisfy the relations
\begin{align*}
    \tau(\psi_{j\alpha}) = -\tfrac{2(2n^2-n-1)}{n}\cdot \psi_{j\alpha}
\end{align*}
and
\begin{align*}
    \kappa(\psi_{j\alpha},\psi_{k\beta}) = -2\cdot \psi_{j\beta}\psi_{k\alpha} - 2\cdot \psi_{jk}\psi_{\alpha\beta} + \tfrac{2}{n}\cdot \psi_{j\alpha}\psi_{k\beta}.
\end{align*}
\end{lemma}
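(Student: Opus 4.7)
The proof follows the template of Lemma \ref{tau kappa SU/SO}. By Theorem \ref{composition relations} it suffices to evaluate the tension field $\tau_N$ and conformality operator $\kappa_N$ on the image $N = \Phi(\SU{2n})$ at the functions $f_{j\alpha}:w\mapsto (wJ)_{j\alpha}$, and then multiply the result by $4$. The plan is to obtain a clean closed form for the directional derivatives of $f_{j\alpha}$ along an orthonormal basis of $T_{\Phi(z)} N$, and then sum.

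The first key step is an algebraic simplification. From Remark \ref{expression for basis}, an orthonormal basis for $T_{\Phi(z)}N$ is obtained by pushing an orthonormal basis of $\la{p}$ through $\Ad{\sigma(z)}$ composed with left-translation. Using $\Phi(z) = z\sigma(z^{-1})$, so that $\Phi(z)\sigma(z) = z$, together with the identities $J^2 = -I_{2n}$, $J^t = -J$ and $\sigma(z) = J\bar{z}J^t$, one checks that $\sigma(z)^{-1} J = Jz^t$. Combining these yields the compact formula
\begin{align*}
    \Ad{\sigma(z)}(X)(f_{j\alpha})(\Phi(z)) &= e_j\cdot zXJz^t\cdot e_\alpha^t,\\
    \Ad{\sigma(z)}(X)^2(f_{j\alpha})(\Phi(z)) &= e_j\cdot zX^2Jz^t\cdot e_\alpha^t.
\end{align*}
This reduces $\tau_N(f_{j\alpha})(\Phi(z))$ to $e_j\cdot z\,\Big(\sum_X X^2\Big)\,Jz^t\cdot e_\alpha^t$ and $\kappa_N(f_{j\alpha},f_{k\beta})(\Phi(z))$ to a bilinear sum in the matrices $zXJz^t$.

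The second key step deals with the fact that $\la{p}$ itself does not admit a basis of simple block type, whereas the auxiliary space $\tilde{\la{p}}$ does. As noted in the excerpt, $\la{p}$ is the orthogonal complement in $\tilde{\la{p}}$ of the unit vector $X_0=\tfrac{i}{\sqrt{2n}}\,I_{2n}$, so
\begin{align*}
    \sum_{X\in\basis_{\la{p}}} X^2 = \sum_{X\in\basis_{\tilde{\la{p}}}} X^2 - X_0^2,\quad
    \sum_{X\in\basis_{\la{p}}}(\cdots)_X = \sum_{X\in\basis_{\tilde{\la{p}}}}(\cdots)_X - (\cdots)_{X_0}.
\end{align*}
Each block type in $\basis_{\tilde{\la{p}}}$ squares to a block-diagonal matrix $\mathrm{diag}(A^2,A^2)$ with $A \in \{Y_{rs}, iX_{rs}, iD_t\}$, so Lemma \ref{square sum relation} collapses the first sum into a scalar multiple of $I_{2n}$; a direct count gives $-\tfrac{2n-1}{2}\,I_{2n}$. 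Subtracting $X_0^2 = -\tfrac{1}{2n}I_{2n}$ produces the eigenvalue $-\tfrac{2n^2-n-1}{2n}$ for $\tau_N$, which after the factor $4$ from Theorem \ref{composition relations} matches the stated formula.

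The main obstacle, as in every case treated so far, is the bilinear sum for $\kappa_N$. One expands each basis element of $\tilde{\la p}$ in block form, carries out the four scalar products $e_j\cdot zXJz^t\cdot e_\alpha^t$ entrywise, and sums over $r<s$ and $t$. The resulting double sums on the indices $r,s$ separate into products which, via $zz^{\ast}= I_{2n}$ and using $\psi_{j\alpha}(z) = (zJz^t)_{j\alpha}$, recombine into $\psi_{j\beta}\psi_{k\alpha}$ and $\psi_{jk}\psi_{\alpha\beta}$ with coefficient $-\tfrac{1}{2}$ each. The contribution from $X_0$ is immediate, $(e_j z X_0 J z^t e_\alpha^t)(e_k z X_0 J z^t e_\beta^t) = -\tfrac{1}{2n}\psi_{j\alpha}\psi_{k\beta}$, and subtracting this correction supplies precisely the remaining term $+\tfrac{1}{2n}\psi_{j\alpha}\psi_{k\beta}$. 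Applying Theorem \ref{composition relations} multiplies each coefficient by $4$ and yields the announced formula. The hard part is purely the combinatorial bookkeeping of the many off-diagonal cross terms in the $\kappa_N$ sum; the correct grouping of those terms into the three quadratic expressions in the $\psi$'s is the crucial check.
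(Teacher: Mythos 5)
Your proposal is correct and follows essentially the same route as the paper: reduce to $\tau_N$ and $\kappa_N$ on $N=\Phi(\SU{2n})$ via Theorem \ref{composition relations}, use the compact formula $e_j\cdot zXJz^t\cdot e_\alpha^t$ for the directional derivatives, sum over the block basis of $\tilde{\la{p}}$ via Lemma \ref{square sum relation}, and correct by subtracting the contribution of $X_0=\tfrac{i}{\sqrt{2n}}I_{2n}$, with all intermediate coefficients ($-\tfrac{2n-1}{2}$, the $-\tfrac12$ quadratic terms, the $+\tfrac{1}{2n}$ correction, and the final factor $4$) matching the paper's. The only part left as bookkeeping, the recombination of the cross terms in $\kappa_N$ into the three quadratic expressions in the $\psi$'s, is likewise deferred in the paper to "similar computations" as in Lemma \ref{tau kappa on SO/U}.
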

\begin{proof}
By Theorem \ref{composition relations} it is sufficient to compute $\tau_{N}(f_{j\alpha})(\Phi(z))$ and \\$\kappa_{N}(f_{j\alpha},f_{k\beta})(\Phi(z))$ for $1\leq j,k,\alpha,\beta \leq 2n$. Here $\tau_{N}$ are the tension field and the conformality operator on $N = \Phi(\SU{2n})$, respectively. From Remark \ref{basis cartan embedding} we see that the tension field $\tau_N$ and $\kappa_N$ satisfies
\begin{align*}
    \tau_{N}(f_{j\alpha}) = \sum_{Z \in \basis_{\la{p}}} e_{j}\cdot zZ^2 Jz^t\cdot e_{\alpha}^t.  
\end{align*}
Since $\tilde{\la{p}} = \la{p} \oplus \mathrm{span}(\tfrac{i}{\sqrt{2n}}\cdot I_{2n})$, we have
\begin{align*}
    \tau_{N}(f_{j\alpha})(\Phi(z))
    &= \sum_{Z \in \basis_{\tilde{\la{p}}}}\big( e_{j}\cdot zZ^2 Jz^t\cdot e_{\alpha}^t\big) + \tfrac{1}{2n}\cdot e_{j}\cdot zJz^t \cdot e_{\alpha}^t\\
    &= \tfrac{1}{2} \cdot\sum_{r<s} e_{j}\cdot
    \begin{psmallmatrix}
    Y_{rs} & 0\\
    0 & -Y_{rs}
    \end{psmallmatrix}^2 J z^t \cdot e_{\alpha}^t\\
    &\sed - \tfrac{1}{2}\cdot\sum_{r<s}e_{j}\cdot z
    \begin{psmallmatrix}
    X_{rs} & 0\\
    0 & X_{rs}
    \end{psmallmatrix}^2
    J z^t \cdot e_{\alpha}^t\\
    &\sed -\tfrac{1}{2}\cdot \sum_{t=1}^n e_{j}\cdot z 
    \begin{psmallmatrix}
    D_t & 0\\
    0 & D_t
    \end{psmallmatrix}^2
    J z^t \cdot e_{\alpha}^t\\
    &\sed - \tfrac{1}{2}\cdot \sum_{r<s} e_{j}\cdot z
    \begin{psmallmatrix}
    0 & Y_{rs}\\
    -Y_{rs} & 0
    \end{psmallmatrix}^2
    j z^t \cdot e_{\alpha}^t\\
    &\sed +\tfrac{1}{2}\cdot \sum_{r<s} e_{j} \cdot z 
    \begin{psmallmatrix}
    0 & Y_{rs}\\
    Y_{rs} & 0
    \end{psmallmatrix}^2
    J z^t \cdot e_{\alpha}^t\\
    &\sed+ \tfrac{1}{2n}\cdot e_{j}\cdot zJz^t \cdot e_{\alpha}^t\\
    &= -(n-1)\cdot e_{j}\cdot zJz^t\cdot e_{\alpha}^t - \tfrac{1}{2}\cdot e_{j}\cdot  zJz^t \cdot e_{\alpha}^t + \tfrac{1}{2n}\cdot e_{j}\cdot zJz^t \cdot e_{\alpha}^t\\
    &= \tfrac{1}{4}\cdot\left(-\tfrac{2(2n^2-n-1)}{n}\right)\cdot \psi_{j\alpha}.
\end{align*}
Here, in the second to last line, we have used the identities of Lemma \ref{square sum relation}. Similarily, for the conformality operator $\kappa_{N}$ we get
\begin{align*}
    &\kappa_N(f_{j\alpha},f_{k\beta})(\Phi(z))\\
    &= \tfrac{1}{2}\cdot \sum_{r<s}e_{j}\cdot z 
    \begin{psmallmatrix}
    Y_{rs} & 0\\
    0 & -Y_{rs}
    \end{psmallmatrix}
    J z^t \cdot e_{\alpha}^t \cdot e_{k}\cdot z
    \begin{psmallmatrix}
    Y_{rs} & 0\\
    0 & -Y_{rs}
    \end{psmallmatrix}
    J z^t e_{\beta}^t\\
    &\sed - \tfrac{1}{2}\cdot \sum_{r<s}e_{j}\cdot z 
    \begin{psmallmatrix}
    X_{rs} & 0\\
    0 & X_{rs}
    \end{psmallmatrix}
    J z^t \cdot e_{\alpha}^t \cdot e_{k}\cdot z
    \begin{psmallmatrix}
    X_{rs} & 0\\
    0 & X_{rs}
    \end{psmallmatrix}
    J z^t e_{\beta}^t\\
    &\sed - \tfrac{1}{2}\cdot \sum_{r<s}e_{j}\cdot z 
    \begin{psmallmatrix}
    D_t & 0\\
    0 & D_t
    \end{psmallmatrix}
    J z^t \cdot e_{\alpha}^t \cdot e_{k}\cdot z
    \begin{psmallmatrix}
    D_t & 0\\
    0 & D_t
    \end{psmallmatrix}
    J z^t e_{\beta}^t\\
    &\sed - \tfrac{1}{2}\cdot \sum_{r<s}e_{j}\cdot z 
    \begin{psmallmatrix}
    0 & Y_{rs}\\
    -Y_{rs} & 0
    \end{psmallmatrix}
    J z^t \cdot e_{\alpha}^t \cdot e_{k}\cdot z
    \begin{psmallmatrix}
    0 & Y_{rs}\\
    -Y_{rs} & 0
    \end{psmallmatrix}
    J z^t e_{\beta}^t\\
    &\sed + \tfrac{1}{2}\cdot \sum_{r<s}e_{j}\cdot z 
    \begin{psmallmatrix}
    0 & Y_{rs}\\
    Y_{rs} & 0
    \end{psmallmatrix}
    J z^t \cdot e_{\alpha}^t \cdot e_{k}\cdot z
    \begin{psmallmatrix}
    0 & Y_{rs}\\
    Y_{rs} & 0
    \end{psmallmatrix}
    J z^t e_{\beta}^t\\
    &\sed + \tfrac{1}{2n}\cdot e_{j}\cdot zJz^t\cdot e_{\alpha}^t\cdot e_{k}\cdot zJz^t \cdot e_{\beta}^t.
\end{align*}
Using similar computations as in the proof of Lemma \ref{tau kappa on SO/U} we then find
\begin{align*}
    \kappa_N(f_{j\alpha},f_{k\beta})(\Phi(z)) = -\tfrac{1}{2}\cdot \psi_{j\beta}\psi_{k\alpha} -\tfrac{1}{2}\cdot \psi_{jk}\psi_{\alpha\beta} + \tfrac{1}{2n}\cdot \psi_{j\alpha}\psi_{k\beta}.
\end{align*}
From Theorem \ref{composition relations} we then obtain the desired formulae for the tension field $\tau$ and the conformality operator $\kappa$.
\end{proof}
\begin{theorem}[\cite{Gud-Sif-Sob}]\label{eigenfunctions on SU/Sp}
For non-zero linearly independent elements $a,b\in \cn^{2n}$, let $A \in \cn^{2n \times 2n}$ be the skew-symmetric matrix
\begin{align*}
    A = \sum_{r,s = 1}^{2n} a_{r}b_{s}Y_{rs}.
\end{align*}
Define the function $\psi: \SU{2n} \to \cn$ by
\begin{align*}
    \psi(z) = -\sum_{j,\alpha} A_{j\alpha} \psi_{j\alpha}.
\end{align*}
Then $\psi$ is an $\Sp{n}$-invariant eigenfunction on $\SU{2n}$ satisfying
\begin{align*}
    \tau(\psi) = -\tfrac{2(2n^2-n-1)}{2}\cdot \psi \quad \mathrm{and} \quad \kappa(\psi,\psi) = -\tfrac{2(n-1)}{n}\cdot \psi^2.
\end{align*}
\end{theorem}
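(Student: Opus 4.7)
The plan is to deduce both identities by expanding $\psi$ as a linear combination of the building blocks $\psi_{j\alpha}$ and applying Lemma \ref{tau kappa SU/Sp}. The tension field identity is essentially immediate: by $\cn$-linearity of $\tau$ together with the formula $\tau(\psi_{j\alpha}) = -\tfrac{2(2n^2-n-1)}{n}\cdot \psi_{j\alpha}$ from Lemma \ref{tau kappa SU/Sp}, I obtain $\tau(\psi) = -\tfrac{2(2n^2-n-1)}{n}\cdot \psi$ at once. The $\Sp{n}$-invariance of $\psi$ is also clear, since it is a $\cn$-linear combination of the $\Sp{n}$-invariant functions $\psi_{j\alpha}$.

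The conformality identity is where the real work lies. Using bilinearity of $\kappa$ together with Lemma \ref{tau kappa SU/Sp}, I would write
\begin{align*}
    \kappa(\psi,\psi) = \sum_{j,\alpha,k,\beta} A_{j\alpha}A_{k\beta}\Big(-2\,\psi_{j\beta}\psi_{k\alpha} - 2\,\psi_{jk}\psi_{\alpha\beta} + \tfrac{2}{n}\,\psi_{j\alpha}\psi_{k\beta}\Big)
\end{align*}
and then recognise the three sums as traces of matrix products. Let $\Psi$ denote the matrix with entries $\Psi_{j\alpha} = \psi_{j\alpha}$. Since $\psi_{j\alpha}(z) = (zJz^t)_{j\alpha}$ and $J^t = -J$, the matrix $\Psi$ is skew-symmetric, and $A$ is skew-symmetric by construction. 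A straightforward reindexing then shows that the first two sums both equal $\tr((A\Psi)^2)$, while the last sum equals $(\tr(A\Psi))^2 = \psi^2$. Hence the computation collapses to
\begin{align*}
    \kappa(\psi,\psi) = -4\,\tr((A\Psi)^2) + \tfrac{2}{n}\,\psi^2.
\end{align*}

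The crux of the argument is the identity $\tr((A\Psi)^2) = \tfrac{1}{2}\,\psi^2$. This is where the special structure of $A$ is essential: from the definition of $Y_{rs}$ one reads off that $A = \tfrac{1}{\sqrt{2}}(ab^t - ba^t)$, so in particular $A$ has rank at most two. Consequently $A\Psi = \tfrac{1}{\sqrt{2}}(a\tilde{b}^t - b\tilde{a}^t)$ with $\tilde{a}^t = a^t\Psi$ and $\tilde{b}^t = b^t\Psi$, and expanding $\tr((A\Psi)^2)$ produces four terms involving the scalars $\tilde{a}^t a$, $\tilde{b}^t b$, $\tilde{a}^t b$ and $\tilde{b}^t a$. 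The skew-symmetry $\Psi^t = -\Psi$ annihilates the diagonal scalars $a^t\Psi a = b^t\Psi b = 0$ and identifies the off-diagonal ones up to sign, so the expansion reduces to $(a^t\Psi b)^2$. The same computation gives $\tr(A\Psi) = -\sqrt{2}\,a^t\Psi b$, whence $\psi^2 = (\tr(A\Psi))^2 = 2(a^t\Psi b)^2 = 2\tr((A\Psi)^2)$. Plugging this back in yields $\kappa(\psi,\psi) = -2\psi^2 + \tfrac{2}{n}\psi^2 = -\tfrac{2(n-1)}{n}\psi^2$, as required.

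The principal obstacle is the rank-two trace identity $\tr((A\Psi)^2) = \tfrac{1}{2}(\tr(A\Psi))^2$; it is a purely linear-algebraic statement about the product of a rank-two skew matrix with an arbitrary skew matrix, and closely parallels the computation carried out in the proof of Theorem \ref{eigenfunctions on SO/U} for the analogous space $\SO{2n}/\U{n}$. Once this identity is in hand, assembling the three summands and the scaling factor $\tfrac{2}{n}$ is routine.
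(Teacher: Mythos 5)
Your proof is correct and follows essentially the same route as the paper: linearity of $\tau$ and bilinearity of $\kappa$ applied to Lemma \ref{tau kappa SU/Sp}, with the conformality identity reduced to the trace identity $\tr((A\Psi)^2)=\tfrac{1}{2}(\tr(A\Psi))^2$ for the rank-two skew-symmetric matrix $A$ against the skew-symmetric $\Psi$ --- which is precisely the computation in the proof of Theorem \ref{eigenfunctions on SO/U} to which the paper's own (omitted) proof defers, merely repackaged in trace notation. Note that the eigenvalue $-\tfrac{2(2n^2-n-1)}{n}$ you obtain for $\tau$ is the one that actually follows from Lemma \ref{tau kappa SU/Sp}; the denominator $2$ in the theorem statement is a typo.
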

\begin{proof}
To prove the Theorem we employ Lemma \ref{tau kappa SU/Sp} together with the linearity and bilinearity of the tension field and the conformality operator, respectively.  The computations are almost identical to those in the proof of Theorem \ref{eigenfunctions on SO/U} and thus won't be presented here.
\end{proof}
\begin{remark}
The eigenfunctions in Theorem \ref{eigenfunctions on SU/Sp} induce eigenfunctions on the symmetric space $M = \SU{2n}/\Sp{n}$ with the same eigenvalues. These eigenfunctions are the same as those found by Gudmundsson, Siffert and Sobak in thei paper \cite{Gud-Sif-Sob}.
\end{remark}
\appendix
\chapter{Calculating the Killing Forms}
In this appendix we will calculate explicitly the Killing forms of the Lie algebras $\gl{n}{\rn}, \sl{n}{\rn}, \so{n}, \u{n}, \su{n}$ and $\sp{n}$. To this end, the following Lemma will prove useful.
\begin{lemma}[\cite{Helgason}]\label{restriction of Killing form}
Let $\la{g}$ be a Lie algebra and $\la{a}$ an ideal in $\la{g}$. Then the Killing form $B_{\la{a}}$ of $\la{a}$ is the restriction $\restr{B}{\la{a}}$ of the Killing form $B$ of $\la{g}$ to $\la{a}$.
\end{lemma}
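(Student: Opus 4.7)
The plan is to exploit the defining property of an ideal, namely that $[\la{a}, \la{g}] \subset \la{a}$, to show that for $X, Y \in \la{a}$ the endomorphism $\ad_X \circ \ad_Y$ of $\la{g}$ has image contained in $\la{a}$, and then compute its trace using a basis adapted to a vector space decomposition $\la{g} = \la{a} \oplus V$.

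First I would fix $X, Y \in \la{a}$ and consider $\ad_X \circ \ad_Y: \la{g} \to \la{g}$. For any $Z \in \la{g}$, the bracket $[Y, Z]$ lies in $\la{a}$ because $Y \in \la{a}$ and $\la{a}$ is an ideal; the subsequent bracket $[X, [Y, Z]]$ then lies in $\la{a}$ for the same reason. Hence the image of $\ad_X \circ \ad_Y$ is contained in $\la{a}$, and moreover the restriction of $\ad_X \circ \ad_Y$ to $\la{a}$ is precisely $\ad_X|_{\la{a}} \circ \ad_Y|_{\la{a}}$, which is the composition of the corresponding adjoint endomorphisms computed in the Lie algebra $\la{a}$.

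Next I would choose any vector space complement $V$ of $\la{a}$ in $\la{g}$ (no Lie-theoretic property of $V$ is needed, only $\la{g} = \la{a} \oplus V$ as vector spaces) and pick a basis of $\la{g}$ obtained by concatenating bases of $\la{a}$ and $V$. Relative to this basis, the matrix of $\ad_X \circ \ad_Y$ has the block form
\begin{equation*}
\begin{pmatrix} A & C \\ 0 & 0 \end{pmatrix},
\end{equation*}
where the bottom row of blocks vanishes because the image is contained in $\la{a}$, and $A$ is exactly the matrix of $\ad_X|_{\la{a}} \circ \ad_Y|_{\la{a}}$ in the chosen basis of $\la{a}$. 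Taking the trace yields
\begin{equation*}
B(X, Y) = \tr(\ad_X \circ \ad_Y) = \tr(A) = \tr(\ad_X|_{\la{a}} \circ \ad_Y|_{\la{a}}) = B_{\la{a}}(X, Y),
\end{equation*}
which is the claimed equality. Since $X, Y \in \la{a}$ were arbitrary, this completes the argument.

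There is no real obstacle here: the proof is essentially a one-line block-matrix observation once one notices that the ideal condition forces $\ad_X \circ \ad_Y$ to factor through $\la{a}$. The only point worth spelling out carefully is the independence of the computation from the choice of complement $V$, which is automatic since both traces are intrinsic invariants of the respective endomorphisms.
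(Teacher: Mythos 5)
Your proof is correct and follows essentially the same route as the paper: both arguments use the ideal condition to force a block-triangular matrix with vanishing bottom rows in a basis adapted to $\la{g} = \la{a} \oplus V$, and then read off the trace. The only cosmetic difference is that the paper applies the block decomposition to $\ad_X$ and $\ad_Y$ separately before multiplying, whereas you apply it directly to the composition $\ad_X \circ \ad_Y$; the content is identical.
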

\begin{proof}
Pick a basis $\mathcal{B} =\{e_1, \dots, e_k, f_1 \dots, f_l\}$ of $\la{g}$ such that $\mathcal{B}_{\la{a}} = \{e_1, \dots, e_k\}$ is a basis for $\la{a}$. Let $X, Y \in \la{a}$. Then, since $\la{a}$ is an ideal, the matrix for $\ad_X$ with respect to the basis $\mathcal{B}$ has the form
\begin{align*}
    [\ad_X]
    = \begin{pmatrix}
    [\ad_X]_{\mathcal{B}_{\la{a}}} & *\\
    0 & 0
    \end{pmatrix}
\end{align*}
and similarly for the matrix of $\ad_Y$. Then the matrix of $\ad_X \circ \ad_Y$ will look like
\begin{align*}
    [\ad_X\ad_Y] =
    \begin{pmatrix}
    [\ad_X\ad_Y]_{\mathcal{B}_{\la{a}}} & *\\
    0 & 0
    \end{pmatrix}
\end{align*}
so that
\begin{align*}
    B(X,Y) &= \tr_{\la{g}}(\ad_X \circ \ad_Y)\\
    &= \tr_{\la{a}}(\ad_X \circ \ad_Y)\\
    &= B_{\la{a}}(X,Y).
\end{align*}
\end{proof}
As in the proof of the preceeding lemma we will use the notation $[X]_{\mathcal{B}}$ for the matrix of a linear map $X$ given some basis $\mathcal{B}$. If the choice of basis is clear from context we will omit the subscript. 
\par
First we shall calculate the Killing forms of  $\gl{n}{\rn}$ and $\sl{n}{\rn}$.
\begin{proposition}
The Killing forms of $\gl{n}{\rn}$ and $\sl{n}{\rn}$ are
\begin{align*}
    B_{\gl{n}{\rn}}(X,Y) = 2n\cdot\tr(X\cdot Y) - 2 \cdot \tr(X)\cdot \tr(Y)
\end{align*}
and
\begin{align*}
    B_{\sl{n}{\rn}}(X,Y) = 2n \cdot \tr(X \cdot Y).
\end{align*}
\end{proposition}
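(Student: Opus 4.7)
The plan is to compute the Killing form on $\gl{n}{\rn}$ directly in the standard basis $\{E_{ij}\}_{1\le i,j\le n}$, and then deduce the formula on $\sl{n}{\rn}$ as a corollary of Lemma~A.1, after verifying that $\sl{n}{\rn}$ is an ideal in $\gl{n}{\rn}$.

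For the $\gl{n}{\rn}$ computation I would fix the basis $\{E_{ij}\}$ and, for $X,Y\in\gl{n}{\rn}$, expand
\begin{align*}
    \ad_X\ad_Y(E_{ij}) = [X,[Y,E_{ij}]] = XYE_{ij} - XE_{ij}Y - YE_{ij}X + E_{ij}YX,
\end{align*}
using the bracket formula $[Z,E_{ij}] = ZE_{ij} - E_{ij}Z$. To read off the trace, I would extract the coefficient of $E_{ij}$ in this expression, i.e.\ the $(i,j)$-entry of the right-hand side, and then sum over all pairs $(i,j)$. Each of the four terms contributes a recognisable scalar: the first and last yield $n\cdot \tr(XY)$ and $n\cdot \tr(YX)$ respectively, while the cross terms $XE_{ij}Y$ and $YE_{ij}X$ contribute $\tr(X)\tr(Y)$ and $\tr(Y)\tr(X)$. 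Combining gives the claimed formula
\begin{align*}
    B_{\gl{n}{\rn}}(X,Y) = 2n\cdot\tr(X\cdot Y) - 2\cdot\tr(X)\cdot\tr(Y).
\end{align*}

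For $\sl{n}{\rn}$ I would first observe that it is an ideal in $\gl{n}{\rn}$. Indeed, for any $A,B\in\gl{n}{\rn}$ we have $\tr([A,B]) = \tr(AB)-\tr(BA) = 0$, so $[\gl{n}{\rn},\gl{n}{\rn}]\subset \sl{n}{\rn}$, which in particular gives $[\gl{n}{\rn},\sl{n}{\rn}]\subset \sl{n}{\rn}$. Lemma~A.1 then asserts that the Killing form of $\sl{n}{\rn}$ is the restriction of $B_{\gl{n}{\rn}}$. Since every $X\in\sl{n}{\rn}$ satisfies $\tr(X) = 0$, the trace-product term vanishes and we are left with $B_{\sl{n}{\rn}}(X,Y) = 2n\cdot\tr(X\cdot Y)$.

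The main obstacle is simply the index bookkeeping in extracting the $(i,j)$-coefficient of each of the four matrix products $XYE_{ij}$, $XE_{ij}Y$, $YE_{ij}X$ and $E_{ij}YX$; once those coefficients are written down correctly, summing over $i,j$ is immediate. No subtle argument is required, since everything reduces to the elementary identity $(AE_{ij}B)_{kl} = A_{ki}B_{jl}$ and the definition of the trace of a matrix product.
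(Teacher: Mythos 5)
Your proposal is correct and follows essentially the same route as the paper's own proof: a direct computation of $\tr(\ad_X\circ\ad_Y)$ in the basis $\{E_{ij}\}$ by extracting the $(i,j)$-coefficient of the four terms of $[X,[Y,E_{ij}]]$, followed by the observation that $\sl{n}{\rn}$ is an ideal (since brackets are traceless) so that Lemma~A.1 reduces the second formula to the first. The coefficient bookkeeping you outline ($n\tr(XY)$, $n\tr(YX)$, $-\tr(X)\tr(Y)$, $-\tr(Y)\tr(X)$) matches the paper's term-by-term evaluation exactly.
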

\begin{proof}
The set  
\begin{align*}
    \mathcal{B} = \{ E_{j\alpha} \ | \ 1 \leq j\alpha \leq n \}
\end{align*}
forms an orthonormal basis of  $\gl{n}{\rn}$ with respect to the inner product
\begin{align*}
    \ip{X}{Y} = \tr(X^t \cdot  Y).
\end{align*}
We may then calculate $B = B_{\gl{n}{\rn}}$ as
\begin{align}\label{appendix eq 1}
\begin{split}
    B(X,Y) &= \tr(\ad_X \circ \ad_Y)\\
    &= \sum_{j,\alpha = 1}^{n} \tr(E_{\alpha j} \cdot \ad_X \ad_Y (E_{j\alpha})).    
\end{split}
\end{align}
Writing
\begin{align*}
    (X)_{j\alpha} = x_{j\alpha}
\end{align*}
A simple computation shows $E_{k\beta}XE_{j\alpha} = x_{\beta j} E_{k\alpha}$. Letting $X, Y \in \gl{n}{\rn}$ and $1 \leq j,\alpha \leq n$ we then have
\begin{align*}
    E_{\alpha j}(\ad_X(\ad_Y(E_{j\alpha}))) &= E_{\alpha j}XYE_{j\alpha} - E_{\alpha j}XE_{j\alpha}Y - E_{\alpha j}YE_{j\alpha}X - E_{\alpha j}E_{j\alpha} YX\\
    &= (XY)_{jj}E_{\alpha\alpha} - x_{jj}E_{\alpha\alpha}Y - y_{jj}E_{\alpha\alpha}X + E_{\alpha\alpha}YX
\end{align*}
so that
\begin{align*}
    \tr(E_{\alpha j}\ad_X\ad_YE_{j\alpha}) &= \tr((XY)_{jj}E_{\alpha\alpha} - x_{jj}E_{\alpha\alpha}Y - y_{jj}E_{\alpha\alpha}X + E_{\alpha\alpha}YX)\\
    &= (XY)_{jj} - x_{jj}y_{\alpha\alpha} - y_{jj}x_{\alpha\alpha} + (YX)_{\alpha\alpha}.
\end{align*}
Inserting this into equation \eqref{appendix eq 1} we get
\begin{align*}
    B(X, Y) &= \sum_{j,\alpha = 1}^{n} (XY)_{jj} - x_{jj}y_{\alpha\alpha} - y_{jj}x_{\alpha\alpha} + (YX)_{\alpha\alpha}\\
    &= 2n \cdot \tr(X\cdot Y) - 2\cdot \sum_{j,\alpha = 1}^{n} x_{jj}y_{\alpha\alpha}\\
    &= 2n \cdot \tr(X\cdot Y) - 2\cdot \tr(X)\cdot\tr(Y).
\end{align*}
To get the formula for $B_{\sl{n}{\rn}}$ we note that $\sl{n}{\rn}$ is an ideal in $\gl{n}{\rn}$, since for each $X, Y \in \gl{n}{\rn}$,
\begin{align*}
    \tr([X,Y]) = \tr(X\cdot Y)- \tr(Y\cdot X) = 0.
\end{align*}
We can therefore apply Lemma \ref{restriction of Killing form} to get
\begin{align*}
    B_{\sl{n}{\rn}}(X,Y) = B_{\gl{n}{\rn}}(X, Y) = 2n \, \tr(X\cdot Y).
\end{align*}
\end{proof}
Next we compute the Killing form of $\so{n}$. 
\begin{proposition}\label{killing form so}
The Killing form of $\so{n}$ is given by
\begin{align*}
    B(X,Y) = (n-2) \, \tr(X \cdot Y)
\end{align*}
for $X,Y \in \so{n}$.
\end{proposition}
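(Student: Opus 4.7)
The plan is to verify the identity by computing the Killing form directly on the orthonormal basis $\mathcal{B} = \{Y_{rs} : 1 \leq r < s \leq n\}$ of $\so{n}$ from Proposition \ref{basis so}, and extending by bilinearity. Since both $B$ and $(X,Y) \mapsto \tr(X\cdot Y)$ are symmetric $\rn$-bilinear on $\so{n}$, it suffices to check the equality on pairs of basis elements. A short computation using $E_{ij}E_{kl}=\delta_{jk}E_{il}$ gives $\tr(Y_{ab}Y_{cd}) = -\delta_{ac}\delta_{bd}$ for $a<b$ and $c<d$, so the task reduces to showing $B(Y_{ab},Y_{ab}) = -(n-2)$ and $B(Y_{ab},Y_{cd}) = 0$ whenever $(a,b) \neq (c,d)$.

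The first step will be to establish the structure constants
$$[Y_{rs},Y_{pq}] = \tfrac{1}{\sqrt 2}\bigl(\delta_{sp}Y_{rq} - \delta_{sq}Y_{rp} - \delta_{rp}Y_{sq} + \delta_{rq}Y_{sp}\bigr),$$
with the convention $Y_{ji} = -Y_{ij}$. This is obtained by a direct expansion of the bracket. The crucial observation is that $[Y_{rs},Y_{pq}]$ is nonzero precisely when $\{r,s\}$ and $\{p,q\}$ share exactly one index, in which case the output is $\pm\tfrac{1}{\sqrt 2}$ times a single basis vector $Y_{mn}$ whose index set is the symmetric difference of $\{r,s\}$ and $\{p,q\}$.

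Next I will evaluate $B(Y_{ab},Y_{ab})$. Using the orthonormal basis to take the trace,
$$B(Y_{ab},Y_{ab}) = \sum_{p<q}\ip{Y_{pq}}{\ad_{Y_{ab}}^2(Y_{pq})},$$
and applying the structure constants twice, one finds $\ad_{Y_{ab}}^2(Y_{pq}) = -\tfrac{1}{2}\cdot Y_{pq}$ whenever $|\{p,q\}\cap\{a,b\}| = 1$, and $\ad_{Y_{ab}}^2(Y_{pq}) = 0$ otherwise (i.e.\ when the index sets are equal or disjoint). There are exactly $2(n-2)$ basis elements of the first type, namely $Y_{a,k}$ and $Y_{b,k}$ for $k \in \{1,\dots,n\}\setminus\{a,b\}$. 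Summing gives $B(Y_{ab},Y_{ab}) = -(n-2)$, as desired.

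For the off-diagonal cases the same mechanism forces vanishing: $\ad_{Y_{cd}}$ replaces one index of $Y_{pq}$ by an element of $\{c,d\}$, and the subsequent $\ad_{Y_{ab}}$ with $(a,b)\neq(c,d)$ replaces one more index by an element of $\{a,b\}$. Tracking the index combinatorics through the structure constants shows that the composition $\ad_{Y_{ab}}\ad_{Y_{cd}}(Y_{pq})$ is either zero or lies in the span of basis vectors $Y_{mn}$ with $\{m,n\}\neq\{p,q\}$, contributing nothing to the diagonal sum. Hence $B(Y_{ab},Y_{cd}) = 0$, matching $\tr(Y_{ab}Y_{cd}) = 0$. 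The main obstacle will be organizing this last argument cleanly, since it splits into the two sub-cases $\{a,b\}\cap\{c,d\}=\emptyset$ and $|\{a,b\}\cap\{c,d\}|=1$, each requiring its own index-tracking verification that the index set of the output differs from that of the input. Once this is done, the identity holds on basis pairs, and the general formula $B(X,Y) = (n-2)\cdot \tr(X\cdot Y)$ follows by bilinearity.
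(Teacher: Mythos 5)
Your proof is correct, but it takes a different computational route from the one in the thesis. Both arguments start from the same skeleton, namely computing $B(X,Y)=\tr(\ad_X\circ\ad_Y)$ as a sum over the orthonormal basis $\basis=\{Y_{rs}\}$ of Proposition \ref{basis so}; the difference is in how that sum is evaluated. The thesis keeps $X,Y\in\so{n}$ arbitrary and expands each term $\tr\bigl(Y_{rs}\cdot\ad_X\ad_Y(Y_{rs})\bigr)$ entry-by-entry as a matrix computation, collecting Kronecker deltas and using skew-symmetry at the end; this produces the formula for general $X,Y$ in a single (rather long) calculation. You instead invoke bilinearity and symmetry to reduce to basis pairs, and then work entirely with the structure constants
\begin{align*}
[Y_{rs},Y_{pq}] = \tfrac{1}{\sqrt 2}\bigl(\delta_{sp}Y_{rq} - \delta_{sq}Y_{rp} - \delta_{rp}Y_{sq} + \delta_{rq}Y_{sp}\bigr),
\end{align*}
which the thesis never writes down. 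Your diagonal computation $\ad_{Y_{ab}}^2(Y_{pq})=-\tfrac12 Y_{pq}$ for the $2(n-2)$ basis vectors sharing exactly one index with $\{a,b\}$ checks out, and your off-diagonal vanishing argument is in fact cleaner than you suggest: since a nonzero bracket sends $Y_{pq}$ to a multiple of the $Y$ indexed by the symmetric difference of the index sets, and symmetric difference is associative, $\ad_{Y_{ab}}\ad_{Y_{cd}}$ can return $Y_{pq}$ to its own line only if $\{a,b\}\triangle\{c,d\}=\emptyset$, so the two sub-cases you mention need not be treated separately. What your approach buys is transparent combinatorics and a reusable lemma (the structure constants are exactly the root-space picture of $\so{n}$ in disguise); what the thesis' approach buys is that the off-diagonal vanishing and the polarization identity come for free, since the formula is derived for arbitrary $X,Y$ at once. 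Both are complete proofs of the stated identity.
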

\begin{proof}
As we have seen before, the set
\begin{align*}
\basis_{\so{n}} = \{Y_{rs} \ | \ 1\leq r < s \leq n\}
\end{align*}
is an orthonormal basis for $\so{n}$. We may thus calculate the killing form as
\begin{align*}
    B(X,Y) &= \tr(\ad_{X}\circ\ad_{Y})\\
    &= -\sum_{r < s} \tr(Y_{rs} \cdot \ad_{X}\ad_Y (Y_{rs})),
\end{align*}
for $X,Y \in \so{n}$.
Fixing $1\leq r < s \leq n$ and denoting $(X)_{j\alpha} = x_{j\alpha}, (Y)_{j\alpha} = y_{j\alpha}$ we have
\begin{align*}
    &\tr(Y_{rs} \cdot \ad_{X}\ad_{Y}(Y_{rs}))\\
    &= -\bigg(\sum_{t= 1}^n e_t \cdot (Y_{rs}\cdot XY\cdot Y_{rs} -Y_{rs}\cdot X\cdot Y_{rs}\cdot Y\\
    &\sed - Y_{rs}\cdot Y\cdot Y_{rs}\cdot X + Y_{rs}^2\cdot YX) \cdot e_t^t\bigg)\\
    &= -\tfrac{1}{2}\cdot \bigg(\sum_{t=1}^n (XY)_{rs} \delta_{tr}\delta_{ts} -(XY)_{ss}\delta_{tr} - (XY)_{rr}\delta_{ts} + (XY)_{rs}\delta_{ts}\delta_{tr}
     -x_{rs}y_{st}\delta_{rt}\\
     &\sed +x_{ss}y_{rt}\delta_{rt} + x_{rr}y_{st}\delta_{st} - x_{rs}y_{rt}\delta_{st} -x_{st}y_{sr}\delta_{rt} + x_{rt}y_{ss}\delta_{rt}
     +x_{st}y_{rr}\delta_{st}\\
     &\sed-x_{rt}y_{rs}\delta_{st} + (YX)_{st}\delta_{sr}\delta_{tr} - (YX)_{rt}\delta_{rt} - (YX)_{st}\delta_{st} + (YX)_{rt}\delta_{tr}\delta_{ts}\bigg)\\
     &= -\tfrac{1}{2}\cdot \big((XY)_{sr}\delta_{sr} -(XY)_{ss} - (XY)_{rr} + \delta_{sr}(XY)_{rs}\\
     &\sed - 2\cdot x_{sr}y_{sr} + 2\cdot x_{ss}y_{rr} + 2\cdot x_{rr}y_{ss} - 2\cdot x_{rs}y_{rs}\\
     &\sed +(YX)_{sr}\delta_{sr} - (YX)_{rr} - (YX)_{ss} + (YX)_{rs}\delta_{rs}\big)\\
     &= (XY)_{ss} + (XY)_{rr} - 2\cdot x_{sr}y_{rs}.
\end{align*}
In the last step we have used that $Y$ and $X$ are skew-symmetric, and that $\delta_{rs} = 0$ since $r \neq s$. From this we then have
\begin{align*}
    -\sum_{r<s} \tr(Y_{rs}\cdot \ad_{X}\ad_{Y}(Y_{rs})) &= \sum_{r < s} (XY)_{ss} + (XY)_{rr} - 2 \cdot x_{sr}y_{rs}\\
    &= (n-1)\cdot \tr(XY) - \sum_{r\neq s} x_{sr}y_{rs}\\
    &= (n-1)\cdot \tr(XY) - \tr(XY)\\
    &= (n-2)\cdot\tr(XY)
\end{align*}
as desired.
\end{proof}
\begin{proposition}\label{killing form u & su}
The killing forms of $\u{n}$ and $\su{n}$ are given by
$$
    B_{\u{n}}(Z,W) = 2n\cdot \tr(Z\cdot W) - 2\cdot \tr(Z)\cdot \tr(W)
$$
and
$$
    B_{\su{n}}(Z,W) = 2n\cdot \tr(Z\cdot W),
$$
respectively.
\end{proposition}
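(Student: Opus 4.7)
My plan is to avoid a direct basis computation in $\u{n}$ (which would be appreciably messier than the calculations already carried out for $\gl{n}{\rn}$ and $\so{n}$, given the mixed real/imaginary basis of Proposition \ref{basis u}) and instead reduce both formulae to the Killing form of $\gl{n}{\cn}$ stated in the theorem. The key structural observation is that $\u{n}$ is a real form of $\gl{n}{\cn}$: every $A\in\gl{n}{\cn}$ decomposes uniquely as $A = B + iC$ with $B, C\in \u{n}$, where $B = \tfrac{1}{2}(A - \bar{A}^t)$ is the skew-Hermitian part of $A$ and $C = -\tfrac{i}{2}(A + \bar{A}^t)$. Hence $\gl{n}{\cn} = \u{n} \oplus i\u{n}$ as a real vector space and $\gl{n}{\cn}$ is the complexification of $\u{n}$.

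Next I would invoke the following general principle: for any real Lie algebra $\la{g}$ with complexification $\la{g}^{\cn} = \la{g}\otimes_{\rn}\cn$, and for $X, Y\in\la{g}$, the (real) Killing form $B_{\la{g}}$ satisfies $B_{\la{g}}(X, Y) = B_{\la{g}^{\cn}}(X, Y)$, where the right hand side is the complex Killing form of $\la{g}^{\cn}$. The justification is short: any $\rn$-basis of $\la{g}$ is automatically a $\cn$-basis of $\la{g}^{\cn}$, and with respect to such a basis the operators $\ad_X$ and $\ad_Y$ are represented by the same matrices (with real entries) whether acting on $\la{g}$ or on $\la{g}^{\cn}$, so the trace of $\ad_X\circ \ad_Y$ is the same real number in either setting. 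Applying this with $\la{g} = \u{n}$ and combining with the stated formula for $B_{\gl{n}{\cn}}$ yields $B_{\u{n}}(Z, W) = 2n\cdot\tr(Z\cdot W) - 2\cdot\tr(Z)\cdot\tr(W)$ directly.

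For $\su{n}$, I would appeal to Lemma \ref{restriction of Killing form}. The matrix $iI_n$ lies in the centre of $\u{n}$, so the splitting $\u{n} = \su{n} \oplus \rn\cdot iI_n$ separates the traceless part from its complement. Because $\tr([X, Y]) = 0$ for all $X, Y \in \u{n}$, we have $[\u{n}, \u{n}] \subseteq \su{n}$, so in particular $\su{n}$ is an ideal in $\u{n}$. Lemma \ref{restriction of Killing form} then gives $B_{\su{n}}(Z, W) = B_{\u{n}}(Z, W)$ for $Z, W \in \su{n}$, and the product-of-traces term vanishes on traceless matrices, leaving $B_{\su{n}}(Z, W) = 2n\cdot \tr(Z\cdot W)$ as claimed. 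The only real subtlety in the entire argument is the complexification principle, but this reduces to the one-line basis observation above rather than any substantive calculation.
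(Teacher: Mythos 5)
Your proof is correct, but it takes a genuinely different route from the paper. The paper's proof of this proposition is a direct basis computation: it takes the orthonormal basis $\{Y_{rs}, iX_{rs}, iD_t\}$ of $\u{n}$ from Proposition \ref{basis u}, evaluates $\tr(\ad_Z\circ\ad_W)$ by summing the three families of terms separately (reusing the bookkeeping from the $\so{n}$ calculation), and only then passes to $\su{n}$ via Lemma \ref{restriction of Killing form} exactly as you do. Your replacement of that computation by the complexification principle $B_{\la{g}}(X,Y) = B_{\la{g}^{\cn}}(X,Y)$ is sound: an $\rn$-basis of $\u{n}$ is a $\cn$-basis of $\gl{n}{\cn}$, and for $Z,W\in\u{n}$ the operator $\ad_Z\circ\ad_W$ is represented by the same real matrix in either setting, so the traces agree. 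Combined with the decomposition $\gl{n}{\cn}=\u{n}\oplus i\u{n}$ (which the paper itself records after the proof of the unitary groups' Lie algebras), this gives the formula for $B_{\u{n}}$ with no further calculation, and it buys you more than the paper's argument does: the same principle handles any compact real form of a complex Lie algebra whose Killing form is already known. Two caveats are worth making explicit. First, your argument requires that the stated $B_{\gl{n}{\cn}}$ be the Killing form of $\gl{n}{\cn}$ \emph{as a complex Lie algebra}, i.e.\ the trace of $\ad_Z\circ\ad_W$ as a $\cn$-linear map; since the formula $2n\cdot\tr(Z\cdot W)-2\cdot\tr(Z)\cdot\tr(W)$ is complex-bilinear and complex-valued, that is indeed the intended reading, and it is exactly the quantity your basis observation produces --- had the theorem instead meant the Killing form of the underlying real Lie algebra of dimension $2n^2$, the two would differ by taking twice the real part and your reduction would fail. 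Second, the appendix only carries out the computation of $B_{\gl{n}{\rn}}$, so your proof leans on the complex case being established separately; since that computation is verbatim the real one with the basis $E_{j\alpha}$ over $\cn$, this is a presentational dependency rather than a mathematical gap. The $\su{n}$ half of your argument coincides with the paper's.
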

\begin{proof}
Put $B = B_{\u{n}}$. The set 
\begin{align*}
    \basis_{\u{n}} = \{ Y_{rs}, iX_{rs}, iD_{t} \ | \ 1 \leq r < s \leq n, 1\leq t \leq n\}
\end{align*}
is an orthonormal basis for $\u{n}$. For $Z,W\in \u{n}$ we may therefore calculate $B(Z,W)$ as
\begin{align*}
    B(Z,W) &= \tr(\ad_Z \circ \ad_W)\\
    &= -\sum_{r<s} \tr(Y_{rs} \cdot \ad_Z\ad_W(Y_{rs}))\\
    &\sed + \sum_{r < s} \tr(X_{rs} \cdot \ad_Z\ad_W(X_{rs}))\\
    &\sed + \sum_{t=1}^n \tr(D_t \cdot \ad_Z \ad_W(D_t))\\
    & = \sum_{r<s} \tr(X_{rs} \cdot \ad_Z\ad_W(X_{rs}) - Y_{rs} \cdot \ad_Z\ad_W(Y_{rs}))\\
    &\sed + \sum_{t=1}^n \tr(D_t \cdot \ad_Z \ad_W(D_t)).
\end{align*}
We shall denote $(Z)_{j\alpha}$ by $z_{j\alpha}$ and $(W)_{j\alpha}$ by $w_{j\alpha}$. By similar calculations as in the proof of Proposition \ref{killing form so} we find
\begin{align*}
    &\sum_{r<s} \tr(X_{rs} \cdot \ad_Z\ad_W(X_{rs}) - Y_{rs} \cdot \ad_Z\ad_W(Y_{rs}))\\
    &= 2(n-1)\cdot \tr(Z\cdot W) - 2\cdot\sum_{r\neq s} z_{ss}w_{rr}
\end{align*}
and
\begin{align*}
    \sum_{t=1}^n \tr(D_t \cdot \ad_Z \ad_W(D_t)) = 2\cdot \tr(Z\cdot W) - 2\cdot \sum_{t=1}^n z_{tt}w_{tt}.
\end{align*}
Combining these then yields
\begin{align*}
    B(Z,W) = 2n\cdot \tr(Z\cdot W) - 2\cdot \tr(Z)\cdot \tr(W)
\end{align*}
as desired. For the killing form $B_{\su{n}}$ of $\su{n}$ we use Lemma \ref{restriction of Killing form}, since $\su{n}$ is an ideal in $\u{n}$. We thus obtain
\begin{align*}
    B_{\su{n}}(Z,W) = B(Z,W) = 2n\cdot \tr(Z\cdot W)
\end{align*}
for $Z, W \in \su{n}$. The last equality follows since $\tr(Z) = \tr(W) = 0$.
\end{proof}
\begin{proposition}
    The Killing form of $\sp{n}$ is given by
    \begin{align*}
        B_{\sp{n}}(W,Z) = 2n\cdot\tr(Z\cdot W).
    \end{align*}
\end{proposition}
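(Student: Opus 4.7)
My plan is to follow the direct computation approach used for $\so{n}$, $\u{n}$, and $\su{n}$ earlier in this appendix. Using the orthonormal basis $\basis_{\sp{n}}$ from Proposition \ref{basis sp} and the fact that the inner product on $\sp{n}$ inherited from $\u{2n}$ satisfies $\ip{A}{C} = -\re \tr(A C)$ for skew-hermitian matrices, I would expand
$$B_{\sp{n}}(Z,W) = \tr_{\sp{n}}(\ad_Z \circ \ad_W) = -\sum_{B \in \basis_{\sp{n}}} \re \tr\bigl(B \cdot [Z,[W,B]]\bigr),$$
the minus sign being unavoidable because each $B \in \basis_{\sp{n}}$ is skew-hermitian.

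To organize the calculation I would write $Z, W \in \sp{n}$ in block form
$$Z = \begin{pmatrix} A & C \\ -\bar C & \bar A \end{pmatrix}, \qquad W = \begin{pmatrix} A' & C' \\ -\bar{C}' & \bar{A}' \end{pmatrix},$$
where $A, A'$ are skew-hermitian and $C, C'$ are symmetric $n \times n$ complex matrices. Splitting $\basis_{\sp{n}}$ into its seven families (three block-diagonal and four block-off-diagonal) as in Proposition \ref{basis sp}, I would compute the contribution of each family separately by evaluating $[W, B]$ and then $[Z, [W, B]]$ in block form. The inner sums over the indices $1 \leq r < s \leq n$ and $1 \leq t \leq n$ should collapse to multiples of $I_n$ via the identities $\sum_{r<s} Y_{rs}^2 = -\tfrac{n-1}{2} I_n$, $\sum_{r<s} X_{rs}^2 = \tfrac{n-1}{2} I_n$, and $\sum_t D_t^2 = I_n$ of Lemma \ref{square sum relation}. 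After summing the seven contributions and repeatedly applying the defining relations $\bar A^t = -A$, $C^t = C$ (and similarly for $A', C'$), many cross-terms should cancel and the remainder consolidates into a constant multiple of $\tr(Z W)$.

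The main obstacle is purely combinatorial rather than conceptual: with seven basis families and each argument carrying two independent blocks, the double commutators generate a large number of sub-terms that must be tracked and simplified carefully. Since the underlying techniques are identical to those already used in the proofs of Propositions \ref{killing form so} and \ref{killing form u & su}, no new ideas are required; the task is simply to execute the analogous calculation faithfully for the Lie algebra $\sp{n}$, which is why the detailed work naturally belongs in this appendix rather than in the main text.
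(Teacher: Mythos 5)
Your strategy is exactly the one the paper has in mind: the paper offers no proof beyond the remark that the computation is ``similar but more lengthy'', and your setup --- the orthonormal basis of Proposition \ref{basis sp}, the sign convention $B(Z,W)=-\sum_{B}\re\,\tr\bigl(B\cdot[Z,[W,B]]\bigr)$ for a skew-Hermitian orthonormal basis, the block decomposition with $\bar A^t=-A$ and $C^t=C$, and the collapse of the index sums via Lemma \ref{square sum relation} --- is correct and is precisely how Propositions \ref{killing form so} and \ref{killing form u & su} are executed.

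The genuine problem is the one step you did not carry out: the assertion that the seven contributions consolidate to $2n\cdot\tr(Z\cdot W)$. If you run the computation you will find $2(n+1)\cdot\tr(Z\cdot W)$ instead, so the constant in the stated proposition (and in item (viii) of the theorem on Killing forms in the chapter on classical groups) cannot be reached by a correct calculation. A decisive sanity check is $n=1$: in the complex representation $\sp{1}$ is literally the same set of $2\times 2$ matrices as $\su{2}$, so its Killing form must equal $B_{\su{2}}(Z,W)=2\cdot 2\cdot\tr(ZW)=4\,\tr(ZW)$ by Proposition \ref{killing form u & su}, whereas the stated formula gives $2\,\tr(ZW)$. (More generally, the Killing form of $\sp{n}$ in its $2n$-dimensional complex representation is $2(n+1)\tr(XY)$; note that Lemma \ref{restriction of Killing form} gives no shortcut here since $\sp{n}$ is not an ideal in $\su{2n}$.) So when you track the seven families you should expect an excess of $+2\,\tr(ZW)$ coming from the off-diagonal (symmetric-block) contributions, and the target constant in the statement itself needs to be corrected before the proof can close.
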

This can be shown by similar computations as in Propositions \ref{killing form so} and \ref{killing form u & su} although they are more lengthy.

\addcontentsline{toc}{chapter}{Bibliography}{}
\backcover


\begin{thebibliography}{99}\label{biblio}
\normalsize

\bibitem{Baird and Eells}
P. Baird, J. Eells, {\it A conservation law for harmonic maps}, Geometry Symposium Utrecht 1980, Lecture notes in Mathematics {\bf 894}, 1-25, Springer (1981)

\bibitem{Baird and Wood}
P. Baird, J.C. Wood, {\it Harmonic Morphisms Between Riemannian Manifolds}, The London Mathematical Society Monographs {\bf 29}, Oxford University Press (2003).

\bibitem{Cartan 1}
E. Cartan, {\it Sur une classe remarquable d'espaces de Riemann}, Bull. Soc. Math. France {\bf 54}, (1926), 214-264.

\bibitem{Cartan 2}
E. Cartan, {\it Sur une classe remarquable d'espaces de Riemann II}, Bull. Soc. Math. France {\bf 55}, (1927), 114-134. 

\bibitem{Cheeger-Ebin}
J. Cheeger, D. G. Ebin, {\it Comparison Theorems in Riemannian Geometry}, North-Holland Mathematical Library {\bf 9} (1975),viii+174.

\bibitem{Fuglede}
B. Fuglede, {\it Harmonic morphisms between Riemannian manifolds}, Ann. Inst. Fourier {\bf 28} (1978), 107-144

\bibitem{biblio harm-morph}
S. Gudmundsson, {\it The Bibliography of Harmonic Morphisms}, 

{\tt https://www.matematik.lu.se/matematiklu/personal/sigma/harmonic/bibliography.html}

\bibitem{biblio p-harm}
S. Gudmundsson, {\it The Bibliography of $p$-Harmonic Functions}

{\tt https://www.matematik.lu.se/matematiklu/personal/sigma/harmonic/p-bibliography.html}

\bibitem{Sigmundur's notes}
S. Gudmundsson, {\it An Introduction to Riemannian Geometry}, Lecture Notes in Mathematics, Lund University (2023).

{\tt https://www.matematik.lu.se/matematiklu/personal/sigma/Riemann.pdf}

\bibitem{sg sym sps rank 1}
S. Gudmundsson {\it On the existence of harmonic morphisms from symmetric spaces of rank one}, Manuscripta Math. {\bf 93} (1997), 421-433.

\bibitem{Sigmundur's doctoral thesis}
S.Gudmundsson, {\it The Geometry of Harmonic Morphisms}, University of Leeds (1992) 

{\tt https://www.matematik.lu.se/matematiklu/personal/sigma/papers/Doctoral-thesis.pdf}

\bibitem{Gud-Mon-Rat}
S. Gudmundsson, S. Montaldo, A. Ratto. {\it Biharmonic functions on the classical compact simple Lie
groups}. J. Geom. Anal. {\bf 28} 1525-1547 (2018).

\bibitem{Gud-Sak}
S. Gudmundsson, A. Sakovich, {\it Harmonic morphisms from the classical compact semisimple Lie groups}, Ann. Global. Anal. Geom. {\bf 33} (2008) 343-356.

\bibitem{Gud-Sif-Sob}
S. Gudmundsson, A. Siffert, M. Sobak, {\it Explicit proper p-harmonic functions on the Riemannian symmetric spaces $\SU{n}/\SO{n}, \Sp{n}/\U{n},\SO{2n}/\U{n}, \SU{2n}/\Sp{n}$.} J. Geom. Anal. {\bf 31}, (2021), 11386-11409.

\bibitem{Gud-Sob}
S. Gudmundsson, M. Sobak {\it Proper $r$-harmonic functions from Riemannian manifolds}, Ann. Global Anal. Geom. {\bf 57} (2020) 217-223.

\bibitem{Gud-Sve}
S. Gudmundsson, M. Svensson, {\it Harmonic morphisms from the Grassmannians and their non-compact
duals}. Ann. Glob. Anal. Geom. {\bf 30}. 313-333 (2006).

\bibitem{p-harm on semi-riem}
S. Gudmundsson, E. Ghandour {\it Proper p-harmonic functions and harmonic morphisms on the classical non-compact semi-Riemannian Lie groups}, 	arXiv:2102.07547 [math.DG], preprint (2021).

\bibitem{Gud-Gha 1}
S. Gudmundsson, E. Ghandour {\it Explicit $p$-harmonic functions and harmonic morphisms on the real Grassmannians}, Adv. Geom. (to appear).

\bibitem{Gud-Gha 2}
S. Gudmundsson, E. Ghandour {\it Explicit harmonic morphisms and $p$-harmonic functions from the complex and quaternionic Grassmannians}, preprint (2023) arXiv:2007.14819 

\bibitem{Helgason}
S. Helgason, {\it Differential Geometry, Lie Groups, and Symmetric Spaces}, Graduate Studies in Mathematics {\bf 34}, AMS (2001).

\bibitem{Ishihara}
T. Ishihara, {\it A mapping of Riemannian manifolds which preserves harmonic functions}, J. Math. Soc. Japan {\bf 7} (1979), 345-370

\bibitem{Knapp}
A. W. Knapp, {\it Lie Groups Beyond an Introduction},  Progress in Mathematics {\bf 140}, Birkh\"auser (2002).

\bibitem{Jacobi}
C. G. J. Jacobi, {\it\"{U}ber eine particul\"{a}re {L}\"{o}sung der partiellen
              {D}ifferentialgleichung {$$\frac{\partial^2V}{\partial
              x^2}+\frac{\partial^2V}{\partial
              y^2}+\frac{\partial^2V}{\partial z^2}=0,$$}} J. Reine Angew. Math. {\bf 36} (1848), 113-134.

\bibitem{Kobayashi-Nomizu}
S. Kobayashi, K. Nomizu {\it Foundations of Differential Geometry Vol 1}, Interscience Publishers (1963).

\bibitem{Lee}
J. M. Lee, {\it Manifolds and Differential Geometry}, Graduate Studies in Mathematics {\bf 107}, AMS (2009).

\bibitem{Loos}
O. Loos, {\it Symmetric Spaces. I: General Theory}, W. A. Benjamin Inc. (1969).

\bibitem{Myers-Steenrod}
S. B. Myers, N. B. Steenrod {\it The group of isometries of a Riemannian manifold}, Ann. of Math. (2) {\bf 40}, (1939).

\bibitem{The fundamental equations of a submersion}
B. O'Neill, {\it The fundamental equations of a submersion}, Mich. Math. J. {\bf 13} (1966), 459-469.

\bibitem{Petersen}
P. Petersen, {\it Riemannian Geometry} ($3^{\text{rd}}$ edition), Graduate Texts in Mathematics {\bf 171}, Springer (2016).



\bibitem{Ziller's notes}
W. Ziller, {\it Lie Groups, representation Theory and Symmetric Spaces}, University of Pennsylvania (2021).




\end{thebibliography}
\end{document}